\newtheorem{lemma}{Lemma}[section]
\newtheorem{theorem}{Theorem}[section]
\newtheorem{remark}{Remark}[section]
\newtheorem{corollary}{Corollary}[section]
\journal{XXX}
\begin{document}

\begin{frontmatter}



\title{
	Analysis of  a class of globally divergence-free HDG methods for stationary Navier-Stokes equations}


\author[label2]{Gang Chen\fnref{label3}}
\ead{cglwdm@scu.edu.cn}
\author[label2]{Xiaoping Xie\corref{cor1}\fnref{label4}}
\ead{xpxie@scu.edu.cn}
\cortext[cor1]{Corresponding author}
\address[label2]{School of Mathematics, Sichuan University, Chengdu 610064, China.}

\fntext[label3]{The first author's work was supported by  National Natural Science Foundation of China  (no. 12171341 and no. 11801063),
	the Fundamental Research Funds for the Central Universities (no. YJ202030). 
}
\fntext[label4]{The second author was supported by National Natural Science Foundation of China (no. 12171340 and no. 11771312). 
}

\begin{abstract}
This paper   analyzes a class of
globally divergence-free (and therefore pressure-robust) hybridizable discontinuous Galerkin (HDG) finite element methods for stationary Navier-Stokes equations. The methods use the $\mathcal{P}_{k}/\mathcal{P}_{k-1}$ $(k\geq1)$ discontinuous finite element combination for the velocity and pressure approximations in the interior of elements, and piecewise $\mathcal{P}_k/\mathcal{P}_{k}$ for the trace approximations of  the  velocity and pressure on   the inter-element boundaries. 
 It is shown that the uniqueness condition for  the discrete solution is guaranteed by  that for the continuous solution together with a sufficiently small mesh size.  Based on the derived discrete HDG Sobolev embedding properties, 
optimal error estimates are obtained.
Numerical experiments are performed to verify the theoretical analysis.
\end{abstract}

\begin{keyword}
 Navier-Stokes equations; HDG  methods; Divergence-free; 
	Uniqueness condition; Error estimates. \color{black}



\text{AMS 2010} \; 65M60, 65N30

\end{keyword}

\end{frontmatter}


\color{black}
\section{Introduction}
Let $\Omega\subset \mathbb{R}^d$ $(d=2,3)$ \color{black} be a Lipschitz  polygonal/polyhedral domain.  We consider the following stationary Navier-Stokes equations:  seek \color{black}   \color{black} the velocity $\bm{u}$ and the pressure $p$ such that
\begin{eqnarray}
\left\{
\begin{aligned}
-\nu \Delta \bm{u}+\nabla\cdot(\bm{u}\otimes\bm{u})+\nabla p&=\bm{f}&\text{in} \ \Omega,\label{Or-NS}\\
\nabla\cdot\bm{u}&=0&\text{in} \ \Omega,\\
\bm{u}&=\bm{0}&\text{on} \ \partial \Omega. \\
\end{aligned}
\right.
\end{eqnarray}
\color{black}Here \color{black} $\nu=\text{Re}^{-1}>0$ is the fluid viscosity coefficient with $\text{Re}$ denoting the \text{Re}ynolds number, and $\bm{f}\in [L^2(\Omega)]^d$ is the given body force.

 A Galerkin mixed method  for the problem \eqref{Or-NS} (or even Stokes equations)  requires the pair of  finite element spaces for
	the velocity and pressure   to  satisfy  an  inf-sup stability condition (see, e.g. \cite{LBB1, LBB2, SV1,mixed1,mixed2, NS-Siam} and books \cite{Brezzi;Fortin1991, Girault.V;Raviart.P1986, mixedbook, mixedbook2, mixedbook4}). To circumvent the inf-sup difficulty,  many methods of stabilization  have been developed   for the Stokes or Navier-Stokes equations (cf. \cite{LS0,LS1,LS2,LS3, PP1,PP2,PP3,PGP1,PGP2, LPS1,LPS2,LPS3,CX2010, XXX2008,NS-Siam} and the references therein).

It is very important to preserve the mass conservation in the numerical solution of the incompressible fluid flows, since  the finite element methods with poor mass conservation may lead to instabilities for more complex problems (e.g. unsteady Naiver-Stokes equations) 
	(cf. \cite{bookin1,bookin2,bookin3,add1, add2, add4}).
	The exactly divergenve free discretizations automatically lead to pressure-robustness.	
	As  pointed out in \cite{MR3564690, MR3683678}, classical mixed methods for the Stokes equations, 
	constructed by satisfying the discrete inf-sup condition, usually  lead to the lack of pressure-robustness, i.e. the velocity error of the methods depends on the best approximation error of the pressure  scaled with the inverse of the viscosity. 
	Therefore, it is desirable to design globally divergence-free  methods for the Stokes equations to ensure mass conservation and pressure-robustness.
	
	\color{black}
	
	

%



In recent years the DG framework \cite{Cock-2000,Cock-2001} has become increasingly popular
due to its attractive features like local conservation of physical quantities and flexibility in
meshing. In \cite{Cock-equal,dg1999,dg2007}   DG methods were applied to solve the Navier-Stokes model \eqref{Or-NS}.
  A local discontinuous Galerkin (LDG) method for \eqref{Or-NS} was proposed in \cite{LDG},
where a globally divergence-free approximation for the velocity is obtained
by post-processing.
The hybridizable discontinuous Galerkin (HDG) framework, presented in \cite{hdg2009} 
 for diffusion problems, provides a unifying strategy for hybridization of finite element methods. The resultant HDG methods preserve the advantages of standard DG methods and lead to discrete systems of  significantly reduced sizes.  \color{black} We refer to \cite{ hdg1,hdg2,hdg3, NS-unsteady,MR3556409,AIAA1,AIAA2,TH1,MR3511719,MR3194122,IMA,MR3833698} for some HDG methods for the Stokes   equations, 
the Navier-Stokes equations and Stokes-like equations.
\color{black}
In particular,  in \cite{MR3833698} a technique  that  introduces the pressure trace on  the inter-element boundaries as a Lagrangian multiplier so as to derive a divergence-free velocity approximation,{   as same as our earlier work in \cite{MR3549196}}, was used to construct a
globally divergence-free HDG method for the unsteady Navier-Stokes 
equations.

	 It is well-known the existence and uniqueness of the weak solution to \eqref{Or-NS} is under the  smallness condition
	\color{black}
	\begin{eqnarray}
	(\mathcal{N}/\nu^2)\|\bm{f}\|_{*}<1, \label{unique}
	\end{eqnarray}
	where the constant $\mathcal{N}$ and the norm $\|\cdot\|_*$ are defined in Section 2. As for numerical schemes, the corresponding discrete smallness condition is
	\begin{eqnarray}
	(\mathcal{N}_h/\nu^2)\|\bm{f}\|_{*,h}<1,\label{unique-dis}
	\end{eqnarray}
\color{black}	
where $\mathcal{N}_h$ and   $\|\cdot\|_{*,h}$ are defined similarly to the continuous case but with specific finite element spaces (cf. Section 2).  
For  standard finite element approaches, it has been shown in \cite[]{MR548867} that
\begin{align*}
\lim_{h\to 0}\|\bm{f}\|_{*,h}&=\|\bm{f}\|_{*},\qquad
\lim_{h\to 0}\mathcal{N}_h=\mathcal{N}. 
\end{align*}
Thus, the continuous condition \eqref{unique}, together with a sufficiently small mesh size $h$,
	guarantees the discrete smallness condition \eqref{unique-dis} for the conforming methods.
In \cite{MR2136994,TH2,MR3626531} a fixed point argument was used to establish uniqueness for discontinuous Galerkin (DG)  or HDG methods under the condition
	\begin{align}
	(C_0/\nu^{2})\|\bm f\|_0< 1,
	\label{unique-dis2}
	\end{align} 
	where the constant $C_0$ depends on the underlying numerical method.
	Note that  \eqref{unique-dis2} is equivalent to
	\begin{align*}
	(\mathcal N/\nu^2)\|\bm f\|_*< \frac{\mathcal N\|\bm f\|_*}{C_0\|\bm f\|_0}
	\end{align*} 
 when $\bm f\neq\bm 0$, 
	which means that  
	 the condition \eqref{unique-dis2} can be 
	guaranteed by \eqref{unique}  if 
	\begin{align}\label{cond0}
	 \frac{\mathcal N\|\bm f\|_*}{C_0\|\bm f\|_0}\geq 1.
	\end{align} 
 However, the condition \eqref{cond0}  does not hold in general.

 	{ 
	So far, to our best knowledge,  there is no proof of \eqref{unique-dis} 
	under the condition \eqref{unique} for  nonstandard approaches.	
	We   emphasize that it is true,  like in
	\cite{MR2136994,TH2,MR3626531}, that if $\nu^{-2}\|\bm f\|_0$ is small enough, then \eqref{unique-dis2}  holds true and therefore the underlying discretization  has a unique solution, but there is  still a chance that \eqref{unique} holds true but \eqref{unique-dis2} does not! 
	So  it is possible that the continuous problem has a unique solution, while the corresponding  discrete scheme does not!  This theoretical gap will be filled by our analysis. 
	}
	
	In this paper, we shall analyze a class of  HDG methods for the  Navier-Stokes problem \eqref{Or-NS}. The methods use the $\mathcal{P}_{k}/\mathcal{P}_{k-1}$ $(k\geq1)$ discontinuous finite element combination for the velocity and pressure approximations in the interior of elements, and piecewise $\mathcal{P}_k/\mathcal{P}_{k}$ for the trace approximations of  the  velocity and pressure on   the inter-element boundaries. We note that the  finite element combinations used in our  methods {  are inherited from our previous paper \cite{MR3549196} for the Stokes equations, and later appears} in \cite{MR3833698} for the unsteady Navier-Stokes  equations.  
Our analysis is of the following features. 
\begin{itemize}
	
\item  { The discrete smallness  condition  \eqref{unique-dis}  for  the proposed HDG discritization is shown to be guaranteed by  the  continuous smallness condition \eqref{unique} together with a sufficiently small mesh size.  To the authors' knowledge, this is the first proof of such a conclusion for HDG methods, and it is expectable to extend the proof  to   other nonstandard methods. The key to   the proof is that we define two switch operators, one from the continuous spaces  to the HDG spaces and the other one from the HDG spaces to the continuous spaces, which satisfy certain strict  inequalities (cf. \eqref{bound1} and \eqref{bound2}).}

\item The methods are shown to yield globally divergence-free velocity approximations, and therefore are pressure-robust.	
	
\item Our treatment of the nonlinear term is different from that in  \cite{MR3833698}.  In our discretization, 
we design the nonlinear term as an antisymmetry form which makes the numerical scheme to be of stability independent of the choice of the stabilization parameter.  In \cite{MR3833698}   the nonlinear term  is discretized   directly,  and the stabilization parameter needs to be sufficiently large to ensure the stability.

\item Optimal error estimates are established based on the derived discrete HDG Sobolev embedding properties. We note that \cite{MR3833698} does not provide   convergence analysis for the methods therein.

%

\end{itemize}

The rest of this paper is organized as follows.
Section 2  introduces notations and the HDG formulations.
Section 3 gives some interpolation properties. 
Section 4  discusses  stability and continuous conditions.
Section 5 is devoted to  the a priori error analysis.
Section 6 derives   $L^2$ error estimates for the velocity.
Section 7 describes the local elimination property of the  HDG methods.
Finally, Section 8 provides numerical experiments.  
\color{black}

\section{HDG formulations}
\subsection{Notation}
\color{black}For \color{black}any bounded domain $\Lambda \subset \mathbb{R}^s$ $(\color{black}s=d,d-1\color{black})$, let $H^{m}(\Lambda)$ and $H^m_0(\Lambda)$  denote the usual \color{black}$m^{th}\color{black}$-order Sobolev spaces on $\Lambda$, and $\|\cdot\|_{m, \Lambda}$, $|\cdot|_{m,\Lambda}$  denote the norm and semi-norm on these spaces, respectively.
Let $(\cdot,\cdot)_{m,\Lambda}$ be the inner product of $H^m(\Lambda)$, with $(\cdot,\cdot)_{\Lambda}:=(\cdot,\cdot)_{0,\Lambda}$.
When $\Lambda=\Omega$, we set $\|\cdot\|_{m }:=\|\cdot\|_{m, \Omega}, |\cdot|_{m}:=|\cdot|_{m,\Omega}$, $(\cdot,\cdot):=(\cdot,\cdot)_{\Omega}$.  In particular, when $\color{black}\Lambda\in \mathbb{R}^{d-1}\color{black}$, we use $\langle\cdot,\cdot\rangle_{\Lambda}$ to replace $(\cdot,\cdot)_{\Lambda}$.
We note that   bold face fonts will be used for vector analogues of the Sobolev spaces along with vector-valued functions. For an integer $k\ge 0$, $\mathcal{P}_k(\Lambda)$  denotes the set of all polynomials defined on $\Lambda$ with degree not greater than $k$.
We also need the following spaces:
\begin{eqnarray*}
	L^2_0(\Omega):=\{q\in L^2(\Omega):(q,1)=0\},
\end{eqnarray*}
$$ \bm{H}({\rm div};\Lambda):=\left\{\bm v\in [L^2(\Lambda)]^s: \color{black}\nabla\cdot\color{black}\bm{v}\in L^2(\Lambda)\right\}.$$

Let \color{black} $\mathcal{T}_h=\bigcup\{T\}$ be a shape-regular simplical decomposition of the domain $\Omega$ with mesh size $h=\max_{T\in\mathcal{T}_h}h_T$, where $h_T$ is the diameter of $T$. Let $\mathcal{E}_h=\bigcup\{E\}$ be the union of all edges (faces) of $T\in\mathcal{T}_h$. For any simplex $T\in\mathcal{T}_h$ and $E\in\mathcal{E}_h$, we denote by $\color{black}\bm{n}_T\color{black}$ and $\bm{n}_E$   the \color{black}outward unit normal vectors along $\partial T$ and  $E$, respectively.  Let $h_E$ denote the diameter of  $E$.  

We use $\nabla_h$ and $\nabla_h\cdot$ to denote the piecewise-defined gradient and divergence with respect to the decomposition $\mathcal{T}_h$.  We also introduce the following mesh-dependent inner product and mesh-dependent  norm:
\begin{align*}
\langle u,v \rangle_{\partial\mathcal{T}_h}:=&\sum_{T\in\mathcal{T}_h}\langle u,v \rangle_{\partial T},\ \ \ \ \
\|u\|^2_{\partial \mathcal{T}_h}:=\sum_{T\in\mathcal{T}_h}\|u\|^2_{0,\partial T}.
\end{align*}

For  simplicity, throughout this paper we use $a\lesssim b$ ($a\gtrsim b$)
to denote $a\le Cb$ ($a\ge Cb$), where   $C$  is a positive constant independent of \color{black} mesh sizes \color{black}$h$, $h_T$, $h_E$ and \color{black}the fluid viscosity coefficient \color{black}$\nu$. In addition, $a\sim b$ simplifies  $a\lesssim b\lesssim a.$

\subsection{Basic results for Naiver-Stokes equations}
Introduce the spaces
\begin{eqnarray}
\bm{V}:=[H^1_0(\Omega)]^d,\quad
Q:=L^2_0(\Omega),\quad
\bm{W}:=\{\bm{v}\in\bm{V}:\nabla
\cdot \bm{v}=0\},\nonumber
\end{eqnarray}
and define the following trilinear form: for  any $(\bm{w},\bm{u},\bm{v})\in [H^1(\Omega)]^d\times[H^1(\Omega)]^d\times[H^1(\Omega)]^d$,
\begin{eqnarray}
b(\bm{w};\bm{u},\bm{v}):=\frac{1}{2}(\nabla\cdot(\bm{u}\otimes\bm{w}),\bm{v})
-\frac{1}{2}(\nabla\cdot(\bm{v}\otimes\bm{w}),\bm{u}).\nonumber
\end{eqnarray}
\color{black} By integration by parts, it \color{black} is easy to see
\begin{eqnarray}
b(\bm{w};\bm{u},\bm{v})=(\bm{w}\cdot\nabla\bm{u},\bm{v})&\forall \bm{w}\in\bm{W},\bm{u},\bm{v}\in\bm{V}.\nonumber
\end{eqnarray}
We   set
\begin{align}
\mathcal{N}:=&\sup_{\bm 0\neq\bm{u},\bm{v},\bm{w}\in \bm{W}}\frac{(\bm{w}\cdot\nabla\bm{u},\bm{v})}
{|\bm{w}|_1|\bm{u}|_1|\bm{v}|_1},\label{26}\\
\|\bm{f}\|_*:=&\sup_{\bm{0}\neq\bm{v}\in\bm{W}}\frac{(\bm{f},\bm{v})}
{|\bm{v}|_1}.\label{27}
\end{align}

\begin{theorem}[cf. \cite{Girault.V;Raviart.P1986}] \label{TH21} Let $\Omega$ be a bounded domain with a Lipschitz continuous boundary $\partial\Omega$. Given $\bm{f}\in [H^{-1}(\Omega)]^d$, the problem  \eqref{Or-NS} admits   at least one weak solution  $(\bm{u},p)\in \bm{W}\times Q$.
\end{theorem}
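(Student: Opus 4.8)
The plan is the classical Galerkin approximation combined with a fixed-point argument. First I would eliminate the pressure: a pair $(\bm u,p)\in\bm W\times Q$ solves the weak form of \eqref{Or-NS} if and only if $\bm u\in\bm W$ satisfies
\begin{align}
\nu(\nabla\bm u,\nabla\bm v)+b(\bm u;\bm u,\bm v)=(\bm f,\bm v)\qquad\forall\,\bm v\in\bm W,\label{weakW}
\end{align}
the pressure being recovered at the very end. Since $\bm W$ is a separable Hilbert space for $|\cdot|_1$, I would fix a countable basis $\{\bm\phi_i\}_{i\ge1}$, put $\bm W_m:=\mathrm{span}\{\bm\phi_1,\dots,\bm\phi_m\}$, and seek $\bm u_m=\sum_{i=1}^m c_i\bm\phi_i\in\bm W_m$ satisfying \eqref{weakW} with test functions restricted to $\bm W_m$; this is a finite polynomial system for $(c_1,\dots,c_m)$.

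To produce $\bm u_m$ I would invoke the standard corollary of Brouwer's theorem: a continuous self-map $\Phi$ of a finite-dimensional inner-product space with $(\Phi(\bm w),\bm w)>0$ on a sphere must vanish inside the corresponding ball. Equip $\bm W_m$ with the inner product $(\nabla\cdot,\nabla\cdot)$ and define $\Phi:\bm W_m\to\bm W_m$ by $(\nabla\Phi(\bm w),\nabla\bm v)=\nu(\nabla\bm w,\nabla\bm v)+b(\bm w;\bm w,\bm v)-(\bm f,\bm v)$ for all $\bm v\in\bm W_m$. Then $\Phi$ is continuous, and since the antisymmetric definition of $b$ gives $b(\bm w;\bm w,\bm w)=0$, one has $(\nabla\Phi(\bm w),\nabla\bm w)=\nu|\bm w|_1^2-(\bm f,\bm w)\ge|\bm w|_1\bigl(\nu|\bm w|_1-\|\bm f\|_*\bigr)$, which is positive once $|\bm w|_1>\nu^{-1}\|\bm f\|_*$. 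Hence $\Phi$ has a zero $\bm u_m$, and this simultaneously yields the mesh-independent a priori bound $|\bm u_m|_1\le\nu^{-1}\|\bm f\|_*$.

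Next I would extract a subsequence with $\bm u_m\rightharpoonup\bm u$ weakly in $\bm W$; by the Rellich--Kondrachov theorem (here $d\le3$, so $[H^1_0(\Omega)]^d$ embeds compactly into $[L^4(\Omega)]^d$) the same subsequence converges strongly in $[L^4(\Omega)]^d$. For fixed $j$ and every $m\ge j$, \eqref{weakW} holds with $\bm v=\bm\phi_j$; the linear terms pass to the limit by weak convergence, and for the nonlinear term I would use the convective form $b(\bm u_m;\bm u_m,\bm\phi_j)=(\bm u_m\cdot\nabla\bm u_m,\bm\phi_j)$ and combine the strong $[L^4(\Omega)]^d$ convergence of $\bm u_m$ with the weak $[L^2(\Omega)]^{d\times d}$ convergence of $\nabla\bm u_m$ and $\bm\phi_j\in[L^4(\Omega)]^d$ to obtain $b(\bm u_m;\bm u_m,\bm\phi_j)\to b(\bm u;\bm u,\bm\phi_j)$. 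Thus $\bm u$ solves \eqref{weakW} against every $\bm\phi_j$, and hence against all of $\bm W$ by density.

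Finally, for the pressure, the linear functional $\bm v\mapsto\nu(\nabla\bm u,\nabla\bm v)+b(\bm u;\bm u,\bm v)-(\bm f,\bm v)$ is bounded on $\bm V=[H^1_0(\Omega)]^d$ and vanishes on $\bm W=\ker(\nabla\cdot)$; by the surjectivity of $\nabla\cdot:\bm V\to Q$ on a Lipschitz domain (the inf--sup / de Rham property) there is a unique $p\in Q$ with $(p,\nabla\cdot\bm v)=\nu(\nabla\bm u,\nabla\bm v)+b(\bm u;\bm u,\bm v)-(\bm f,\bm v)$ for all $\bm v\in\bm V$, and $(\bm u,p)$ is the desired weak solution. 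The one genuinely delicate point is the passage to the limit in the trilinear term: it is precisely there that compactness is indispensable, and the argument rests on the compact embedding into $[L^4(\Omega)]^d$, which is why the restriction $d\le3$ (or, more generally, $d\le4$ with only compactness needed when $d<4$) enters.
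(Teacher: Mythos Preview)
The paper does not prove this theorem; it is quoted directly from Girault--Raviart with no argument given. Your proposal reproduces exactly the classical Galerkin/Brouwer/compactness argument from that reference, and all the steps are correct as written: the antisymmetry $b(\bm w;\bm w,\bm w)=0$ gives the coercivity needed for Brouwer, the uniform bound $|\bm u_m|_1\le\nu^{-1}\|\bm f\|_*$ follows, and your passage to the limit in the trilinear term via strong $[L^4]^d$ convergence of $\bm u_m$ against weak $[L^2]^{d\times d}$ convergence of $\nabla\bm u_m$ (with the fixed test function contributing the remaining $L^4$ factor) is the standard and valid route. The recovery of $p$ from the de Rham/inf--sup property on a Lipschitz domain is also correct.
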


\begin{theorem}[cf. \cite{Girault.V;Raviart.P1986}]\label{TH22} Under the hypotheses of Theorem \ref{TH21}, let $(\bm{u},p)\in \bm{W}\times Q$ be a weak solution of  \eqref{Or-NS}, then $\bm{u}$ satisfies
	\begin{eqnarray}
	\|\nabla\bm{u}\|_0\le\nu^{-1}\|\bm{f}\|_{*}.\label{sta0}
	\end{eqnarray}
	In addition, if
	\begin{eqnarray}
	(\mathcal{N}/\nu^2)\|\bm{f}\|_{*}<1
	\end{eqnarray}
	holds, then
	the problem \eqref{Or-NS} admits a unique solution $(\bm{u},p)\in\bm{W}\times Q$.
\end{theorem}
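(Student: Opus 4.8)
The plan is to treat the two assertions separately, since existence is already granted by Theorem~\ref{TH21}: first the \emph{a priori} bound \eqref{sta0}, then uniqueness under the smallness condition \eqref{unique}.

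For \eqref{sta0}, I would take an arbitrary weak solution $(\bm u,p)\in\bm W\times Q$ and test the momentum equation against $\bm v=\bm u$. The divergence-free constraint kills the pressure pairing $(p,\nabla\cdot\bm u)$, and the convective term $b(\bm u;\bm u,\bm u)$ vanishes because $b(\bm w;\cdot,\cdot)$ is, by construction, skew-symmetric in its last two slots. What remains is $\nu|\bm u|_1^2=(\bm f,\bm u)$, which by definition \eqref{27} of $\|\bm f\|_*$ is at most $\|\bm f\|_*|\bm u|_1$; dividing through gives $\|\nabla\bm u\|_0=|\bm u|_1\le\nu^{-1}\|\bm f\|_*$.

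For uniqueness, I would let $(\bm u_1,p_1)$, $(\bm u_2,p_2)$ be two weak solutions, set $\bm e:=\bm u_1-\bm u_2\in\bm W$, and subtract the two momentum equations, testing only against $\bm v\in\bm W$ so that both pressures disappear:
\begin{equation*}
\nu\,(\nabla\bm e,\nabla\bm v)+b(\bm u_1;\bm e,\bm v)+b(\bm e;\bm u_2,\bm v)=0\qquad\forall\,\bm v\in\bm W,
\end{equation*}
after splitting $b(\bm u_1;\bm u_1,\bm v)-b(\bm u_2;\bm u_2,\bm v)=b(\bm u_1;\bm e,\bm v)+b(\bm e;\bm u_2,\bm v)$. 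Choosing $\bm v=\bm e$ kills $b(\bm u_1;\bm e,\bm e)$ by skew-symmetry, leaving $\nu|\bm e|_1^2=-(\bm e\cdot\nabla\bm u_2,\bm e)\le\mathcal N|\bm u_2|_1|\bm e|_1^2$ by \eqref{26}; inserting the bound $|\bm u_2|_1\le\nu^{-1}\|\bm f\|_*$ from \eqref{sta0} yields $\bigl(\nu-\mathcal N\nu^{-1}\|\bm f\|_*\bigr)|\bm e|_1^2\le 0$, and \eqref{unique} (equivalently $\nu^2>\mathcal N\|\bm f\|_*$) makes the bracket positive, forcing $\bm e=\bm 0$. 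Uniqueness of the pressure then follows by subtracting the momentum equations with $\bm u_1=\bm u_2$, testing against arbitrary $\bm v\in\bm V$ to get $(p_1-p_2,\nabla\cdot\bm v)=0$, and invoking the surjectivity of $\nabla\cdot:\bm V\to Q$ (the classical inf-sup condition).

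The only genuinely delicate point --- and hence the ``main obstacle'', though it is standard for the Navier-Stokes equations --- is the absorption step: bounding the cross term $-(\bm e\cdot\nabla\bm u_2,\bm e)$ by $\mathcal N|\bm u_2|_1|\bm e|_1^2$ and then controlling $|\bm u_2|_1$ by the \emph{a priori} estimate so that it can be moved to the left-hand side. This is precisely where the smallness hypothesis \eqref{unique} is indispensable, and without it the argument breaks down.
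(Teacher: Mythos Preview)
Your argument is correct and is the classical one. Note, however, that the paper does not actually supply a proof of Theorem~\ref{TH22}: it is stated with a ``cf.~\cite{Girault.V;Raviart.P1986}'' and left to the reference. What you have written is precisely the standard proof found there (energy estimate by testing with $\bm u$, then the subtraction-and-absorption argument using the skew-symmetry of $b$ and the definition of $\mathcal N$), so there is nothing to compare beyond confirming that your sketch matches the textbook route the paper defers to.
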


%
In the rest of this paper, the solution $(\bm{u},p)$ is supposed to be unique and, more precisely, we assume that
\begin{eqnarray}
(\mathcal{N}/\nu^2)\|\bm{f}\|_{*}\le 1-\delta\text{ for some }\delta> 0. \label{uni}
\end{eqnarray}

\subsection{HDG  scheme}


For any integer $k\ge 1$, and integer $m\in\{k-1,k\}$, 
we introduce the following finite element spaces:
\begin{align*}
\mathbb{K}_h:=&\{\bm{\tau}_h:\bm{\tau}_h|_T\in [\mathcal{P}_m(T)]^{d\times d},\forall T\in\mathcal{T}_h\},  \\
\bm{V}_h:=&\{\bm{v}_h: \bm{v}_{h}|_T\in [\mathcal{P}_k(T)]^d,\forall T\in\mathcal{T}_h\},\\
\widehat{\bm{V}}_h:=&\{\widehat{\bm{v}}_{h}:\widehat{\bm{v}}_{h}|_E\in [\mathcal{P}_{k}(E)]^d,\forall E\in\mathcal{E}_h,\text{ and } \widehat{\bm{v}}_{h}|_{\partial \Omega}=\bm{0}\},\\
Q_h:=&\{q_h\in L^2_0(\Omega): q_{h}|_T\in \mathcal{P}_{k-1}(T),\forall T\in\mathcal{T}_h\},\\
\widehat{Q}_h:=&\{\widehat{q}_h|_E\in \mathcal{P}_{k}(E),\forall E\in\mathcal{E}_h\}.
\end{align*}
Introducing $\mathbb L=\nu\nabla \bm u$ in \eqref{Or-NS}, we can rewrite it as
\begin{eqnarray}
\left\{
\begin{aligned}
\nu^{-1}\mathbb L-\nabla \bm u&=0\quad \ \text{in} \ \Omega,\\
-\nabla\cdot\mathbb L+\nabla\cdot(\bm{u}\otimes\bm{u})+\nabla p&=\bm{f}\quad \ \text{in} \ \Omega,\label{mixed}\\
\nabla\cdot\bm{u}&=0\quad \ \text{in} \ \Omega,\\
\bm{u}&=\bm{0}\quad \ \text{on} \ \partial \Omega. \\
\end{aligned}
\right.
\end{eqnarray}

Then our HDG finite element \color{black} scheme for \eqref{mixed} is given as follows:
find
$(\mathbb{L}_h,\bm{u}_h,\widehat{\bm{u}}_h,$ $p_h,\widehat{p}_h)\in \mathbb{K}_h\times\bm{V}_h\times\widehat{\bm{V}}_h\times Q_h\times Q_h$ such that, for all
$(\mathbb{G}_h,\bm{v}_h,\widehat{\bm{v}}_h,q_h,\widehat{q}_h )\in \mathbb{K}_h\times\bm{V}_h\times\widehat{\bm{V}}_h\times Q_h\times Q_h$, 
\begin{subequations}
	\begin{align}
	\nu^{-1}(\mathbb{L}_h,\mathbb{G}_h)+(\bm{u}_h,\nabla_h\cdot\mathbb{G}_h)
	-\langle\widehat{\bm{u}}_h,\mathbb{G}_h\bm{n} \rangle_{\partial\mathcal{T}_h}=&0,\label{HDG01}\\
	(\bm{v}_h,\nabla_h\cdot\mathbb{L}_h)-\langle\widehat{\bm{v}}_h,\mathbb L_h\bm{n} \rangle_{\partial\mathcal{T}_h}+(\nabla_h\cdot\bm{v}_h,p_h)+\langle \bm{v}_h\cdot\bm{n},\widehat{p}_h \rangle_{\partial\mathcal{T}_h} &\nonumber\\
	\quad\quad-\frac{1}{2}( \bm{v}_h\otimes\bm{u}_h,\nabla_h\bm{u}_h  )+\frac{1}{2}( \widehat{\bm{v}}_h\cdot\widehat{\bm{u}}_h\bm{n},\bm{u}_h  ) &\nonumber\\
	\quad\quad+\frac{1}{2}( \bm{u}_h\otimes\bm{u}_h,\nabla_h\bm{v}_h  )-\frac{1}{2}( \widehat{\bm{u}}_h\cdot\widehat{\bm{u}}_h\bm{n},\bm{v}_h  ) &\nonumber\\
	\quad\quad-\nu\langle \tau( \bm{u}_{h}-\widehat{\bm{u}}_h), \bm{v}_{h}-\widehat{\bm{v}}_{h} \rangle_{\partial\mathcal{T}_h}=&-(\bm{f},\bm{v}_h),\label{HDG02}\\
	(\nabla_h\cdot\bm{u}_h,q_h)-\langle \bm{u}_h\cdot\bm{n},\widehat{q}_h \rangle_{\partial\mathcal{T}_h}=&0,\label{HDG03}
	\end{align}
\end{subequations}
where $\tau|_{E}=h_E^{-1} $ for all $E\in\mathcal{E}_h$. To simplify notation, we set 
$$\mathcal{W}_h:=( \bm{w}_h,\widehat{\bm{w}}_h),\quad \mathcal{U}_h:=( \bm{u}_h,\widehat{\bm{u}}_h),\quad \mathcal{V}_h=( \bm{v}_h,\widehat{\bm{v}}_h),$$
$$\mathcal{P}_h:=( p_h,\widehat{p}_h),\quad\mathcal{Q}_h=( q_h,\widehat{q}_h),$$
and define
\begin{align*}
a_h(\mathbb{L}_h,\mathbb{G}_h)&:=\nu^{-1}(\mathbb{L}_h,\mathbb{G}_h), \nonumber\\
c_h(\mathcal{U}_h,\mathbb{G}_h)&:=(\bm{u}_h,\nabla_h\cdot\mathbb{G}_h)
-\langle\widehat{\bm{u}}_h,\mathbb{G}_h\bm{n} \rangle_{\partial\mathcal{T}_h},\nonumber\\
d_h(\mathcal{V}_{h},\mathcal{P}_h)&:=(\nabla_h\cdot\bm{v}_h,p_h)+\langle \bm{v}_h\cdot\bm{n},\widehat{p}_h \rangle_{\partial\mathcal{T}_h}, \nonumber\\
s_h(\mathcal{U}_h,\mathcal{V}_h)&:=\nu\langle \tau( \bm{u}_{h}-\widehat{\bm{u}}_h), \bm{v}_{h}-\widehat{\bm{v}}_{h} \rangle_{\partial\mathcal{T}_h},\nonumber\\
b_h(\mathcal{W}_h;\mathcal{U}_h,\mathcal{V}_h)
&:=\frac{1}{2}( \bm{v}_h\otimes\bm{w}_h,\nabla_h\bm{u}_h  )-\frac{1}{2}( \widehat{\bm{v}}_h\cdot\widehat{\bm{w}}_h\bm{n},\bm{u}_h  ) \nonumber\\
&\quad-\frac{1}{2}( \bm{u}_h\otimes\bm{w}_h,\nabla_h\bm{v}_h  )+\frac{1}{2}( \widehat{\bm{u}}_h\cdot\widehat{\bm{w}}_h\bm{n},\bm{v}_h  ).\nonumber
\end{align*}

Then \eqref{HDG01}-\eqref{HDG03} can be rewritten as \color{black} a compact form\color{black}
:
find $(\mathbb{L}_h,\mathcal{U}_h,\mathcal{P}_h)\in  \mathbb{K}_h\times[\bm{V}_h\times\widehat{\bm{V}}_h]\times [Q_h\times Q_h]$ such that, for all $(\mathbb{G}_h,\mathcal{V}_h,\mathcal{Q}_h)\in  \mathbb{K}_h\times[\bm{V}_h\times\widehat{\bm{V}}_h]\times [Q_h\times Q_h]$,
{
	\begin{subequations}
		\begin{align}
		a_h(\mathbb{L}_h,\mathbb{G}_h)+c_h(\mathcal{U}_h,\mathbb{G}_h)=&0,\label{HDG1}\\
		c_h(\mathcal{V}_h,\mathbb{L}_h)+d_h(\mathcal{V}_{h},\mathcal{P}_h)-s_h(\mathcal{U}_h,\mathcal{V}_h)
		-  b_h(\mathcal{U}_h;\mathcal{U}_h,\mathcal{V}_h)=&-(\bm{f},\bm{v}_h),\label{HDG2}\\
		d_h(\mathcal{U}_h,\mathcal{Q}_h)=&0.\label{HDG3}
		\end{align}
	\end{subequations}
}
Introduce an operator $K_h: \bm{V}_h\times\widehat{\bm{V}}_h\to \mathbb{K}_h$ defined by
\begin{eqnarray}\label{K_h}
(K_h\mathcal{V}_h,\mathbb{G}_h)=-c_h(\mathcal{V}_h,\mathbb{G}_h)\quad \forall \mathcal{V}_h\in \bm{V}_h\times\widehat{\bm{V}}_h, \mathbb{G}_h\in \mathbb{K}_h.
\end{eqnarray}
It is easy to see that  $K_h$ is \color{black} well defined. 
\color{black}  From \eqref{HDG1} we immediately have
\begin{equation*}\label{Lh}
\mathbb{L}_h=\nu K_h\mathcal{U}_h.
\end{equation*}
Hence, by eliminating $\mathbb{L}_h$ in \eqref{HDG1} and \eqref{HDG2}, we can rewrite \eqref{HDG1}-\eqref{HDG3} as the following system:  

Find $(\mathbb L_h,\mathcal{U}_h,\mathcal{P}_h)\in \mathbb K_h\times[\bm{V}_h\times\widehat{\bm{V}}_h]\times [Q_h\times \widehat{Q}_h]$ such that
{
	\begin{subequations}
		\begin{align}
		\mathbb{L}_h-\nu K_h\mathcal{U}_h=&0,\label{HDG11}\\
		\nu(K_h\mathcal{U}_h,K_h\mathcal{V}_h)-d_h(\mathcal{V}_{h},\mathcal{P}_h) 
		+s_h(\mathcal{U}_h,\mathcal{V}_h)+  b_h(\mathcal{U}_h;\mathcal{U}_h,\mathcal{V}_h)=&(\bm{f},\bm{v}_h),
		\label{HDG22}\\
		d_h(\mathcal{U}_h,\mathcal{Q}_h)=&0,\label{HDG33}
		\end{align}
	\end{subequations}
holds for all $(\mathcal{V}_h,\mathcal{Q}_h)\in [\bm{V}_h\times\widehat{\bm{V}}_h]\times [Q_h\times \widehat{Q}_h]$.
}


	\section{Stability results}

We devote  this section to   the analysis of stability of scheme $(\ref{HDG1})$. To this end, we introduce the following two semi-norms:
\color{black}
\begin{align*}
\|\mathcal{V}_h\|_V^2&:= \|K_h(\bm{v}_{h},\widehat{\bm{v}}_h)\|^2_0+\|\tau^{\frac 1 2}(\bm{v}_{h}-\widehat{\bm{v}}_{h})\|^2_{\partial \mathcal{T}_h}, \\
\|\mathcal{Q}_h\|_Q^2&:=\|{q}_{h}\|^2_0+\|\tau^{-\frac 1 2}(q_h-\widehat{q}_h)\|_{\partial \mathcal{T}_h}^2.
\end{align*}
\color{black}
Here we recall that $K_h$ is given by \eqref{K_h}, and  $\tau|_{E}=h_E^{-1}$, for all $E\in\mathcal{E}_h$.

It is easy to see that $\|\cdot\|_V$ and $\|\cdot\|_Q$ are  norms on   $\bm{V}_h\times\widehat{\bm{V}}_h$ and $Q_h\times  \widehat Q_h$, respectively. In fact,
if $\|\mathcal{V}_h\|_V=0$, then
\begin{eqnarray*}
	\nabla_h\bm{v}_h=\bm{0}, \quad
	\bm{v}_h-\widehat{\bm{v}}_h=\bm{0},
\end{eqnarray*}
which mean that $\bm{v}_h$ is piecewise constant with respect to $\mathcal{T}_h$,
and $\bm{v}_h=\widehat{\bm{v}}_h$ on $\mathcal{E}_h$. Since
$\widehat{\bm{v}}_h=\bm{0}$ on $\partial \Omega$, then $\bm{v}_h=\widehat{\bm{v}}_h=\bm{0}$. Thus, $\mathcal{V}_h=\bm{0}$.
Similarly, we can show  $\|\mathcal{Q}_h\|_Q=0$ leads to $\mathcal{Q}_h=\bm{0}$.

\subsection{Basic results}

\begin{lemma} \label{lem-sim}For any $\mathcal{V}_h=(\bm{v}_h,\widehat{\bm{v}}_h)\in\bm{V}_h\times\widehat{\bm{V}}_h$, it holds
	\begin{eqnarray}\label{sim}
	\|\mathcal{V}_h\|_V\sim \|\nabla_h\bm{v}_h\|_0+\|\tau^{\frac 1 2}(\bm{v}_h-\widehat{\bm{v}}_h)\|_{\partial \mathcal{T}_h} .
	\end{eqnarray}
\end{lemma}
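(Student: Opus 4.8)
The plan is to establish the equivalence \eqref{sim} by controlling $\|K_h\mathcal{V}_h\|_0$ both above and below by $\|\nabla_h\bm{v}_h\|_0+\|\tau^{1/2}(\bm{v}_h-\widehat{\bm{v}}_h)\|_{\partial\mathcal T_h}$; once that is done, \eqref{sim} follows immediately from the definition of $\|\mathcal V_h\|_V$. The starting point is the defining relation \eqref{K_h}, namely $(K_h\mathcal V_h,\mathbb G_h)=-c_h(\mathcal V_h,\mathbb G_h)=-(\bm v_h,\nabla_h\cdot\mathbb G_h)+\langle\widehat{\bm v}_h,\mathbb G_h\bm n\rangle_{\partial\mathcal T_h}$ for all $\mathbb G_h\in\mathbb K_h$. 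Integrating by parts elementwise, $-(\bm v_h,\nabla_h\cdot\mathbb G_h)=(\nabla_h\bm v_h,\mathbb G_h)-\langle\bm v_h,\mathbb G_h\bm n\rangle_{\partial\mathcal T_h}$, so that
\begin{equation*}
(K_h\mathcal V_h,\mathbb G_h)=(\nabla_h\bm v_h,\mathbb G_h)-\langle\bm v_h-\widehat{\bm v}_h,\mathbb G_h\bm n\rangle_{\partial\mathcal T_h}\qquad\forall\,\mathbb G_h\in\mathbb K_h .
\end{equation*}

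For the upper bound $\|\mathcal V_h\|_V\lesssim\|\nabla_h\bm v_h\|_0+\|\tau^{1/2}(\bm v_h-\widehat{\bm v}_h)\|_{\partial\mathcal T_h}$, I would take $\mathbb G_h=K_h\mathcal V_h$ in the identity above, bound the first term by Cauchy--Schwarz, and for the boundary term use Cauchy--Schwarz on each $\partial T$ together with the discrete trace (inverse) inequality $\|\mathbb G_h\bm n\|_{0,\partial T}\lesssim h_T^{-1/2}\|\mathbb G_h\|_{0,T}$, valid since $\mathbb G_h$ is piecewise polynomial; recalling $\tau|_E=h_E^{-1}\sim h_T^{-1}$ this gives $|\langle\bm v_h-\widehat{\bm v}_h,\mathbb G_h\bm n\rangle_{\partial\mathcal T_h}|\lesssim\|\tau^{1/2}(\bm v_h-\widehat{\bm v}_h)\|_{\partial\mathcal T_h}\|K_h\mathcal V_h\|_0$. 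Dividing through by $\|K_h\mathcal V_h\|_0$ yields the bound on $\|K_h\mathcal V_h\|_0$, and adding the stabilization seminorm gives one direction of \eqref{sim}.

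For the lower bound I must show $\|\nabla_h\bm v_h\|_0\lesssim\|K_h\mathcal V_h\|_0+\|\tau^{1/2}(\bm v_h-\widehat{\bm v}_h)\|_{\partial\mathcal T_h}$ (the boundary seminorm trivially appears on both sides). Here I would choose $\mathbb G_h=\nabla_h\bm v_h$, which is legitimate because $\nabla_h\bm v_h$ is piecewise in $[\mathcal P_{k-1}(T)]^{d\times d}\subset[\mathcal P_m(T)]^{d\times d}$ for $m\in\{k-1,k\}$, hence lies in $\mathbb K_h$. This gives $\|\nabla_h\bm v_h\|_0^2=(K_h\mathcal V_h,\nabla_h\bm v_h)+\langle\bm v_h-\widehat{\bm v}_h,\nabla_h\bm v_h\,\bm n\rangle_{\partial\mathcal T_h}$; estimating the first term by Cauchy--Schwarz and the second as before (trace inverse inequality plus $\tau\sim h_T^{-1}$), I obtain $\|\nabla_h\bm v_h\|_0^2\lesssim\bigl(\|K_h\mathcal V_h\|_0+\|\tau^{1/2}(\bm v_h-\widehat{\bm v}_h)\|_{\partial\mathcal T_h}\bigr)\|\nabla_h\bm v_h\|_0$, and cancelling one factor finishes the estimate.

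The only subtlety — and the point I would flag as the main thing to get right rather than a genuine obstacle — is the role of the polynomial degree $m$ of $\mathbb K_h$: the lower bound uses $\nabla_h\bm v_h\in\mathbb K_h$, which holds for both admissible choices $m=k-1$ and $m=k$, so the equivalence is uniform in that parameter; and all hidden constants depend only on shape-regularity through the trace inverse inequality and on the comparability $h_E\sim h_T$, hence are independent of $h$ and $\nu$, as required by the paper's conventions. Putting the two directions together with the definition of $\|\cdot\|_V$ completes the proof of \eqref{sim}.
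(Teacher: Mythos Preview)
Your proof is correct and follows essentially the same approach as the paper: derive the identity $(K_h\mathcal V_h,\mathbb G_h)=(\nabla_h\bm v_h,\mathbb G_h)+\langle\widehat{\bm v}_h-\bm v_h,\mathbb G_h\bm n\rangle_{\partial\mathcal T_h}$ from the definition of $K_h$ via integration by parts, then test with $\mathbb G_h=K_h\mathcal V_h$ and $\mathbb G_h=\nabla_h\bm v_h$ and conclude by Cauchy--Schwarz and the discrete trace inverse inequality. Your explicit observation that $\nabla_h\bm v_h\in[\mathcal P_{k-1}(T)]^{d\times d}\subset\mathbb K_h$ for both $m=k-1$ and $m=k$ is a point the paper leaves implicit, but the argument is otherwise identical.
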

\begin{proof}
	For $\mathcal{V}_h\in\bm{V}_h\times\widehat{\bm{V}}_h$,
	by the definition of $K_h$, it holds
	\begin{align*}
	(K_h\mathcal{V}_h,\mathbb{G}_h)=(\nabla_h\bm{v}_h,\mathbb{G}_h )
	+\langle\widehat{\bm{v}}_h-\bm{v}_h,\mathbb{G}_h\bm{n} \rangle_{\mathcal{T}_h}
	\end{align*}
	for all $\mathbb{G}_h\in \mathbb{K}_h$.
	By taking $\mathbb{G}_h=K_h\mathcal{V}_h$
	and $\mathbb{G}_h=\nabla_h\bm{v}_h$ in this equality, respectively,
	the estimate \eqref{sim} follows from  Holder's inequality and the inverse inequality.
\end{proof}



The following embedding relationships are standard (cf. \cite{adma, book2}).

%

Before proving this theorem, we begin with a well-known discrete Sobolev embedding inequality in \cite[Theorem 2.1]{MR2629994} and a continuous Sobolev embedding inequality. 

\color{black}

\begin{lemma} [cf. {\cite[Theorem 2.1]{MR2629994}}] \label{old-embed} There is a constant $C>0$, such that
	\begin{align*}
	\|\bm v_h\|_{0,\mu}\le
	C\left[
	\sum_{K\in\mathcal{T}_h}
	\|\nabla\bm v_h\|^2_{0,T}
	+\sum_{F\in\mathcal{E}_h}h_E^{-1}\|[\![\bm v_h]\!]\|^2_{0,E}
	\right]^{\frac 1 2},
	\end{align*}
	for  $\mu$ satisfying
	\begin{eqnarray*}
		\left\{
		\begin{aligned}
			&1\le \mu< \infty,&\text{ if } \ d=2,\\
			&1\le \mu\le 6 ,&\text{ if } \ d=3,
		\end{aligned}
		\right.
	\end{eqnarray*}
	and for all $\bm v_h\in \bm V_h$.
\end{lemma}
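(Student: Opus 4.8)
The statement is a discrete (broken) Sobolev embedding, and the plan is to deduce it from the classical embedding $[H^1_0(\Omega)]^d\hookrightarrow[L^\mu(\Omega)]^d$ on the bounded Lipschitz domain $\Omega$ by writing $\bm v_h$ as a conforming function plus a remainder that is controlled by the jumps. Throughout, set
\[|\bm v_h|_{1,h}^2:=\sum_{T\in\mathcal T_h}\|\nabla\bm v_h\|^2_{0,T},\qquad J_h(\bm v_h)^2:=\sum_{E\in\mathcal E_h}h_E^{-1}\|[\![\bm v_h]\!]\|^2_{0,E},\]
so the right-hand side of the asserted inequality is $C(|\bm v_h|_{1,h}^2+J_h(\bm v_h)^2)^{1/2}$, where on the faces $E\subset\partial\Omega$ the jump is read as $[\![\bm v_h]\!]=\bm v_h$. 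First I would invoke the standard conforming averaging (enriching) operator $E_h:\bm V_h\to\bm V_h\cap[H^1_0(\Omega)]^d$ of Oswald / Karakashian--Pascal type, obtained by assigning to each Lagrange node of the continuous $\mathcal P_k$ space the average of the values of $\bm v_h$ at that node and setting the boundary nodal values to $\bm 0$; it obeys, for all $\bm v_h\in\bm V_h$,
\[\sum_{T\in\mathcal T_h}\Big(h_T^{-2}\|\bm v_h-E_h\bm v_h\|^2_{0,T}+\|\nabla(\bm v_h-E_h\bm v_h)\|^2_{0,T}\Big)\lesssim J_h(\bm v_h)^2 .\]

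Writing $\bm w_h:=\bm v_h-E_h\bm v_h\in\bm V_h$, I would then estimate $\|\bm v_h\|_{0,\mu,\Omega}\le\|\bm w_h\|_{0,\mu,\Omega}+\|E_h\bm v_h\|_{0,\mu,\Omega}$ term by term. For the conforming part, since $E_h\bm v_h\in[H^1_0(\Omega)]^d$, the continuous Sobolev--Poincar\'e inequality gives $\|E_h\bm v_h\|_{0,\mu,\Omega}\lesssim\|\nabla E_h\bm v_h\|_{0,\Omega}$ for every $\mu$ in the stated range, and then $\|\nabla E_h\bm v_h\|_{0,\Omega}\le|\bm v_h|_{1,h}+\|\nabla_h\bm w_h\|_{0,\Omega}\lesssim|\bm v_h|_{1,h}+J_h(\bm v_h)$ by the enrichment bound. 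For the remainder, on each $T$ the polynomial inverse inequality yields $\|\bm w_h\|_{0,\mu,T}\lesssim h_T^{-d(1/2-1/\mu)}\|\bm w_h\|_{0,T}$; combining this with $\|\bm w_h\|_{0,T}=h_T\,b_T$, where $b_T:=h_T^{-1}\|\bm w_h\|_{0,T}$ satisfies $\sum_{T}b_T^2\lesssim J_h(\bm v_h)^2$, gives $\|\bm w_h\|_{0,\mu,T}\lesssim h_T^{\alpha}b_T$ with exponent $\alpha:=1-d(1/2-1/\mu)$.

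The crux --- and the reason the admissible range of $\mu$ is exactly the one in the statement --- is that $\alpha\ge0$ precisely for $1\le\mu<\infty$ when $d=2$ and for $1\le\mu\le6$ when $d=3$ (the discrete shadow of the critical Sobolev exponent $2d/(d-2)$). In that range $h_T^{\alpha}\le(\operatorname{diam}\Omega)^{\alpha}$ is bounded, so for $2\le\mu$, using the sequence-space embedding $\|(a_T)\|_{\ell^\mu}\le\|(a_T)\|_{\ell^2}$,
\[\|\bm w_h\|_{0,\mu,\Omega}=\Big(\sum_{T}\|\bm w_h\|^\mu_{0,\mu,T}\Big)^{1/\mu}\le\Big(\sum_{T}\|\bm w_h\|^2_{0,\mu,T}\Big)^{1/2}\lesssim\Big(\sum_{T}h_T^{2\alpha}b_T^2\Big)^{1/2}\lesssim J_h(\bm v_h),\]
and adding the two bounds proves the inequality for $2\le\mu$ in range; the range $1\le\mu<2$ then follows at once from H\"older's inequality $\|\bm v_h\|_{0,\mu,\Omega}\le|\Omega|^{1/\mu-1/2}\|\bm v_h\|_{0,2,\Omega}$ and the case $\mu=2$. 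I expect the only non-routine ingredient to be the existence and the two approximation estimates of the enriching operator $E_h$ --- in particular the fact that it can be taken to vanish on $\partial\Omega$ with the induced error still controlled by $J_h$, which is what lets the conforming part land in $[H^1_0(\Omega)]^d$; everything else is scaling arguments, inverse estimates, and the classical continuous embedding.
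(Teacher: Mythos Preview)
Your proof is correct and follows the standard enrichment-operator route (Karakashian--Pascal / Oswald averaging into the conforming $\mathcal P_k$ space, then continuous Sobolev embedding on the conforming part plus inverse estimates and $\ell^2\hookrightarrow\ell^\mu$ on the remainder). The scaling analysis identifying $\alpha=1-d(1/2-1/\mu)\ge0$ as the exact obstruction is the right mechanism, and your reduction to $\mu\ge2$ followed by H\"older for $1\le\mu<2$ is clean.

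The paper, however, does \emph{not} prove this lemma at all: it is quoted verbatim as a known result from \cite[Theorem~2.1]{MR2629994} and used as a black box to derive the HDG Sobolev embedding (Lemma~\ref{discrete-soblev}). So there is nothing to compare your argument against in the paper itself; you have supplied a proof where the authors simply cite one. Your argument is essentially the one that appears in the cited reference and in the broader DG literature, so it is entirely appropriate here.
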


\color{black}

Based on this lemma, we can obtain a discrete HDG Sobolev embedding relation, which will be used to derive the continuity results in Lemma \ref{conti}.

\begin{lemma}[HDG Sobolev embedding] \label{discrete-soblev} It holds
	\begin{eqnarray*}
		\|\bm{v}_{h}\|_{0,\mu}\lesssim \|\mathcal{V}_h\|_V &\forall \mathcal{V}_h=(\bm{v}_h,\widehat{\bm{v}}_h)\in\bm{V}_h\times\widehat{\bm{V}}_h
	\end{eqnarray*}
	for  $\mu$ satisfying
	\begin{eqnarray*}
		\left\{
		\begin{aligned}
			&1\le \mu< \infty,&\text{ if } \ d=2,\\
			&1\le \mu\le 6 ,&\text{ if } \ d=3.
		\end{aligned}
		\right.
	\end{eqnarray*}
\end{lemma}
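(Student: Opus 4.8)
The plan is to deduce the HDG Sobolev embedding from the DG Sobolev embedding in Lemma~\ref{old-embed} by bounding the right-hand side of that lemma by $\|\mathcal{V}_h\|_V$. Specifically, given $\mathcal{V}_h=(\bm{v}_h,\widehat{\bm{v}}_h)\in\bm{V}_h\times\widehat{\bm{V}}_h$, I would apply Lemma~\ref{old-embed} to $\bm{v}_h\in\bm{V}_h$ and then control the two terms under the square root. The volumetric term $\sum_{T\in\mathcal{T}_h}\|\nabla\bm{v}_h\|_{0,T}^2 = \|\nabla_h\bm{v}_h\|_0^2$ is already one of the terms appearing in the equivalent expression for $\|\mathcal{V}_h\|_V$ furnished by Lemma~\ref{lem-sim}. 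The only real work is the jump term $\sum_{E\in\mathcal{E}_h}h_E^{-1}\|[\![\bm{v}_h]\!]\|_{0,E}^2$, which must be bounded by $\|\tau^{\frac12}(\bm{v}_h-\widehat{\bm{v}}_h)\|_{\partial\mathcal{T}_h}^2$ (plus, harmlessly, the volumetric term).

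The key step is therefore the jump estimate. On an interior face $E$ shared by $T^+$ and $T^-$, write $[\![\bm{v}_h]\!] = (\bm{v}_h^+ - \widehat{\bm{v}}_h) - (\bm{v}_h^- - \widehat{\bm{v}}_h)$, so by the triangle inequality $\|[\![\bm{v}_h]\!]\|_{0,E} \le \|\bm{v}_h^+-\widehat{\bm{v}}_h\|_{0,E} + \|\bm{v}_h^--\widehat{\bm{v}}_h\|_{0,E}$; on a boundary face, since $\widehat{\bm{v}}_h=\bm 0$ on $\partial\Omega$, the DG jump $[\![\bm{v}_h]\!]=\bm{v}_h$ equals $\bm{v}_h-\widehat{\bm{v}}_h$ there. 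Multiplying by $h_E^{-1}=\tau|_E$, summing over all faces, and using that each element has a bounded number of faces with $h_E\sim h_T$ by shape-regularity, one gets
\begin{align*}
\sum_{E\in\mathcal{E}_h}h_E^{-1}\|[\![\bm{v}_h]\!]\|_{0,E}^2 \lesssim \sum_{T\in\mathcal{T}_h}\sum_{E\subset\partial T}h_E^{-1}\|\bm{v}_h-\widehat{\bm{v}}_h\|_{0,E}^2 = \|\tau^{\frac12}(\bm{v}_h-\widehat{\bm{v}}_h)\|_{\partial\mathcal{T}_h}^2.
\end{align*}
Combining this with the volumetric term, Lemma~\ref{old-embed} gives $\|\bm{v}_h\|_{0,\mu}\lesssim \|\nabla_h\bm{v}_h\|_0 + \|\tau^{\frac12}(\bm{v}_h-\widehat{\bm{v}}_h)\|_{\partial\mathcal{T}_h}$, and then Lemma~\ref{lem-sim} identifies the right-hand side with $\|\mathcal{V}_h\|_V$ up to a constant, finishing the proof.

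I do not expect a serious obstacle here; the argument is essentially bookkeeping. The one point that needs a little care is the treatment of boundary faces — making sure the condition $\widehat{\bm{v}}_h|_{\partial\Omega}=\bm 0$ is invoked so that the DG jump on $\partial\Omega$ (which for $\bm{v}_h\in\bm{V}_h$ is just the trace of $\bm{v}_h$) is correctly rewritten as $\bm{v}_h-\widehat{\bm{v}}_h$ and hence absorbed into the stabilization seminorm. A second minor point is that the range of admissible $\mu$ is inherited verbatim from Lemma~\ref{old-embed}, so nothing new needs to be verified about it. If one prefers not to route through the DG result, an alternative is to mimic its proof directly in the HDG setting, but passing through Lemma~\ref{old-embed} is cleaner and shorter.
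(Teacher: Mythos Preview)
Your proposal is correct and follows exactly the route the paper indicates: the paper does not spell out a proof of Lemma~\ref{discrete-soblev} but states explicitly that it is obtained ``based on'' Lemma~\ref{old-embed}, and your jump-splitting $[\![\bm{v}_h]\!] = (\bm{v}_h^+-\widehat{\bm{v}}_h)-(\bm{v}_h^- -\widehat{\bm{v}}_h)$ together with Lemma~\ref{lem-sim} is precisely how one fills in that step.
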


\subsection{Stability  conditions}

For the trilinear form $b_h(\cdot;\cdot,\cdot)$, we have the following boundedness results.

\begin{lemma} \label{conti} For all  $\mathcal{U}_h,\mathcal{V}_h,\mathcal{W}_h\in\bm{V}_h\times\widehat{\bm{V}}_h$ with $\mathcal{U}_h=(\bm{u}_h,\widehat{\bm{u}}_h), \mathcal{V}_h=(\bm{v}_h,\widehat{\bm{v}}_h)$, and $ \mathcal{W}_h=(\bm{w}_h,\widehat{\bm{w}}_h)$,
	it holds
	\begin{align}
	|b_h(\mathcal{W}_h;\mathcal{U}_h,\mathcal{V}_h)|\lesssim& \|\mathcal{W}_h\|_V  \|\mathcal{U}_h\|_V  \|\mathcal{V}_h\|_V,\label{B1}\\
	|b_h(\mathcal{W}_h;\mathcal{U}_h,\mathcal{V}_h)|\lesssim& (\|\bm{w}_{h}\|_{0,3}+h^{1-\frac d 6}\|\mathcal{W}_h\|_V ) \|\mathcal{U}_h\|_V  \|\mathcal{V}_h\|_V. \label{B2}
	\end{align}
	Moreover, if $\bm{w}_{h}\in\bm{H}({\rm div},\Omega)\cap \bm{V}_h$
	\color{black} 
	with $\nabla\cdot \bm w_h=0,$ 
	\color{black}
	then 
	\begin{align}
	|b_h(\mathcal{W}_h;\mathcal{U}_h,\mathcal{V}_h)|\lesssim& (\|\bm{u}_{h}\|_{0,3}+h^{1-\frac d 6}\|\mathcal{U}_h\|_V ) \|\mathcal{W}_h\|_V  \|\mathcal{V}_h\|_V,\label{B3}\\
	|b_h(\mathcal{W}_h;\mathcal{U}_h,\mathcal{V}_h)|\lesssim& (\|\bm{v}_{h}\|_{0,3}+h^{1-\frac d 6}\|\mathcal{V}_h\|_V) \|\mathcal{W}_h\|_V  \|\mathcal{U}_h\|_V .\label{B4}
	\end{align}
\end{lemma}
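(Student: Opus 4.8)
The plan is to prove the four estimates by first rewriting the trilinear form in a form amenable to Hölder's inequality on each element, then applying the discrete HDG Sobolev embedding of Lemma \ref{discrete-soblev} together with the norm equivalence of Lemma \ref{lem-sim}. Recall
\[
b_h(\mathcal{W}_h;\mathcal{U}_h,\mathcal{V}_h)=\tfrac12(\bm v_h\otimes\bm w_h,\nabla_h\bm u_h)-\tfrac12\langle\widehat{\bm v}_h\cdot\widehat{\bm w}_h\bm n,\bm u_h\rangle_{\partial\mathcal T_h}-\tfrac12(\bm u_h\otimes\bm w_h,\nabla_h\bm v_h)+\tfrac12\langle\widehat{\bm u}_h\cdot\widehat{\bm w}_h\bm n,\bm v_h\rangle_{\partial\mathcal T_h}.
\]
For \eqref{B1}: I would bound the two volume terms by $\|\bm w_h\|_{0,\infty\text{-type}}$—more precisely by Hölder with exponents $(3,6,\text{or }4)$ depending on $d$—using $\|\bm v_h\|_{0,6}\lesssim\|\mathcal V_h\|_V$, $\|\bm w_h\|_{0,3}\lesssim\|\bm w_h\|_{0,6}^{?}$ (for $d\le 3$, $\|\cdot\|_{0,3}\lesssim\|\cdot\|_{0,6}$ on a bounded domain), and $\|\nabla_h\bm u_h\|_0\lesssim\|\mathcal U_h\|_V$. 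The face terms are handled by inserting $\bm u_h$ as $(\bm u_h-\widehat{\bm u}_h)+\widehat{\bm u}_h$, or more cleanly by rewriting $\langle\widehat{\bm v}_h\cdot\widehat{\bm w}_h\bm n,\bm u_h\rangle_{\partial\mathcal T_h}$ via integration by parts back to volume terms plus the jump terms $\langle\tau^{1/2}(\bm v_h-\widehat{\bm v}_h),\cdot\rangle$; then a trace inequality $\|\bm w_h\|_{0,\partial T}\lesssim h_T^{-1/2}\|\bm w_h\|_{0,T}$ combined with inverse inequalities converts everything into the $\|\cdot\|_V$ norms. A discrete product/Sobolev argument shows $\|\bm w_h\|_{0,3}\lesssim\|\mathcal W_h\|_V$, giving \eqref{B1}.

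For \eqref{B2}: the idea is to keep $\|\bm w_h\|_{0,3}$ unestimated where possible and only pay the factor $h^{1-d/6}\|\mathcal W_h\|_V$ on the pieces that genuinely require a trace/jump bound. Concretely, on each face term one writes $\widehat{\bm w}_h=(\widehat{\bm w}_h-\bm w_h)+\bm w_h$; the term with $\widehat{\bm w}_h-\bm w_h$ contributes $\|\tau^{1/2}(\bm w_h-\widehat{\bm w}_h)\|_{\partial\mathcal T_h}\le\|\mathcal W_h\|_V$ times a factor that, after scaling, produces the $h^{1-d/6}$ (this comes from $\|\bm v_h\|_{0,\mu,\partial T}\lesssim h^{-\cdots}\|\bm v_h\|_{0,\mu,T}$ versus the $L^6$/$L^3$ embedding exponents, the exponent $1-d/6$ being exactly the gap), while the term with $\bm w_h$ is bounded using $\|\bm w_h\|_{0,3}$ directly. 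I would take care to track the scaling exponents so that the power of $h$ comes out as $1-d/6$; this bookkeeping is the one place where an arithmetic slip is easy.

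For \eqref{B3} and \eqref{B4}: here the extra hypothesis $\bm w_h\in\bm H(\mathrm{div},\Omega)\cap\bm V_h$ with $\nabla\cdot\bm w_h=0$ lets me integrate by parts without boundary residue in the "convective" slot. Specifically, using $\nabla\cdot\bm w_h=0$ and the single-valuedness of $\bm w_h\cdot\bm n$ across interior faces, I can move a derivative off $\bm u_h$ (resp.\ $\bm v_h$) and rewrite $b_h(\mathcal W_h;\mathcal U_h,\mathcal V_h)$ so that the roles of the slots are effectively exchanged—i.e.\ realize it as something like $\pm(\bm u_h\otimes\bm w_h,\nabla_h\bm v_h)$ plus jump terms, which is symmetric in structure to the bound we want. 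Then the same Hölder + embedding machinery as in \eqref{B2}, applied with $\bm u_h$ (resp.\ $\bm v_h$) now playing the role that $\bm w_h$ played there, yields \eqref{B3} (resp.\ \eqref{B4}). I expect the main obstacle to be the careful handling of the face/jump terms when $\bm w_h$ is only piecewise smooth: one must exploit exactly $\nabla\cdot\bm w_h=0$ and the normal continuity to avoid an uncontrolled $\|\widehat{\bm w}_h\|$ boundary contribution, and simultaneously keep the $h$-power sharp at $1-d/6$; once the integration-by-parts identity is set up correctly, the remaining estimates are routine applications of Lemmas \ref{lem-sim} and \ref{discrete-soblev}, Hölder's inequality, trace inequalities, and inverse inequalities.
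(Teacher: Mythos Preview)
Your proposal is essentially correct and follows the same approach as the paper: decompose $b_h$ into volume and face contributions, bound volume terms by H\"older with exponents $(6,3,2)$ together with the HDG embedding $\|\cdot\|_{0,6}\lesssim\|\cdot\|_V$, and treat the face terms by splitting $\widehat{\bm w}_h=(\widehat{\bm w}_h-\bm w_h)+\bm w_h$ so that the jump pieces produce the $h^{1-d/6}$ factor via inverse and trace inequalities while the $\bm w_h$ pieces yield $\|\bm w_h\|_{0,3}$; for \eqref{B3}--\eqref{B4} you correctly identify that $\bm w_h\in\bm H(\mathrm{div})$ with $\nabla\cdot\bm w_h=0$ allows integration by parts (the paper uses specifically $\langle\bm w_h\cdot\bm n,\widehat{\bm u}_h\cdot\widehat{\bm v}_h\rangle_{\partial\mathcal T_h}=0$) to exchange the roles of the slots. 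The only organizational difference is that the paper proves \eqref{B2} first and then reads off \eqref{B1} from $\|\bm w_h\|_{0,3}\lesssim\|\mathcal W_h\|_V$, whereas you outline \eqref{B1} separately; your remark about ``integrating by parts back to volume terms'' for the face integrals is imprecise, but since you also give the correct jump-insertion alternative this is harmless.
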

\begin{proof}
	We first show \eqref{B2}.
	For all $\mathcal{W}_h,\mathcal{U}_h,\mathcal{V}_h\in\bm{V}_h\times\widehat{\bm{V}}_h$, by the definition of $b_h(\cdot;\cdot,\cdot)$ we have
	\begin{align} \label{B417}
	2b_h(\mathcal{W}_h;\mathcal{U}_h,\mathcal{V}_h)
	=&\left[( \bm{v}_h\otimes\bm{w}_h,\nabla_h\bm{u}_h  )-( \bm{u}_h\otimes\bm{w}_h,\nabla_h\bm{v}_h  )\right] \nonumber\\
	&\color{black}+\left[( \widehat{\bm{w}}_h\cdot\bm{n},\widehat{\bm{u}}_h\cdot\bm{v}_h  )-( \widehat{\bm{w}}_h\cdot\bm{n},\widehat{\bm{v}}_h\cdot\bm{u}_h  )\right]\nonumber\\
	=:& R_1+R_2.
	\end{align}
	We use the H\"{o}lder's inequality, Lemma \ref{discrete-soblev} with $\mu=6$, and the relationship \eqref{sim} to get
	\begin{align}
	|R_1|\le& \sum_{T\in\mathcal{T}_h}\|\bm{u}_{h}\|_{0,6,T}\|\bm{w}_{h}\|_{0,3,T}\|\nabla\bm{v}_{h}\|_{0,T}
	+\sum_{T\in\mathcal{T}_h}\|\bm{v}_{h}\|_{0,6,T}\|\bm{w}_{h}\|_{0,3,T}\|\nabla\bm{u}_{h}\|_{0,T} \nonumber\\
	\lesssim&\|\bm{w}_{h}\|_{0,3}\cdot\|\mathcal{U}_h\|_V \cdot\|\mathcal{V}_h\|_V.
	\end{align}
	From triangle inequality we have
	\begin{align}
	|R_2|\le&|\langle(\bm{w}_{h}-\widehat{\bm{w}}_{h})\bm{n},(\bm{u}_{h}-\widehat{\bm{u}}_h)\bm{v}_{h} \rangle_{\partial\mathcal{T}_h} | +|\langle\bm{w}_{h}\bm{n},(\bm{u}_{h}-\widehat{\bm{u}}_h)\bm{v}_{h} \rangle_{\partial\mathcal{T}_h} | \nonumber\\
	&+|\langle(\bm{w}_{h}-\widehat{\bm{w}}_{h})\bm{n},(\bm{v}_{h}-\widehat{\bm{v}}_{h})\bm{u}_{h} \rangle_{\partial\mathcal{T}_h} | +|\langle\bm{w}_{h}\bm{n},(\bm{v}_{h}-\widehat{\bm{v}}_{h})\bm{u}_{h} \rangle_{\partial\mathcal{T}_h} | \nonumber\\
	=:&\sum_{i=1}^4T_{i}.
	\end{align}
	By the H\"{o}lder's inequality, the inverse inequality, and \eqref{partial2}, we obtain
	\begin{align}
	T_{1}\le&
	\sum_{T\in\mathcal{T}_h}
	\|\bm{w}_{h}-\widehat{\bm{w}}_{h}\|_{0,\partial T}
	\|\bm{u}_{h}-\widehat{\bm{u}}_h\|_{0,3,\partial T} \|\bm{v}_{h}\|_{0,6,\partial T}\nonumber\\
	\lesssim&\sum_{T\in\mathcal{T}_h}
	\|\bm{w}_{h}-\widehat{\bm{w}}_{h}\|_{0,\partial T}
	h_T^{-(d-1)/6}\|\bm{u}_{h}-\widehat{\bm{u}}_h\|_{0,\partial T} h_T^{-\frac 1 6}\|\bm{v}_{h}\|_{0,6, T}\nonumber\\
	=&\sum_{T\in\mathcal{T}_h}
	h_T^{-\frac 1 2}\|\bm{w}_{h}-\widehat{\bm{w}}_{h}\|_{0,\partial T}
	h_T^{-\frac 1 2}\|\bm{u}_{h}-\widehat{\bm{u}}_h\|_{0,\partial T} h_T^{1-\frac d 6}\|\bm{v}_{h}\|_{0,6, T}\nonumber\\
	\lesssim&h^{1-\frac d 6}\|\bm{v}_{h}\|_{0,6}\sum_{T\in\mathcal{T}_h}
	h_T^{-\frac 1 2}\|\bm{w}_{h}-\widehat{\bm{w}}_{h}\|_{0,\partial T}
	h_T^{-\frac 1 2}\|\bm{u}_{h}-\widehat{\bm{u}}_h\|_{0,\partial T}\nonumber\\
	\lesssim& h^{1-\frac d 6}\|\mathcal{W}_h\|_V \cdot \|\mathcal{U}_h\|_V \cdot\|\mathcal{V}_h\|_V,\\
	T_{2}\le&
	\sum_{T\in\mathcal{T}_h}
	\|\bm{w}_{h}\|_{0,3,\partial T}
	\|\bm{u}_{h}-\widehat{\bm{u}}_h\|_{0,\partial T}
	\|\bm{v}_{h}\|_{0,6,\partial T}
	\nonumber\\
	\lesssim&
	\sum_{T\in\mathcal{T}_h}
	h_T^{-\frac 1 3}\|\bm{w}_{h}\|_{0,3,T}
	\|\bm{u}_{h}-\widehat{\bm{u}}_h\|_{0,\partial T}
	h_T^{-\frac 1 6}\|\bm{v}_{h}\|_{0,6,T}\nonumber\\
	=&\sum_{T\in\mathcal{T}_h}
	\|\bm{w}_{h}\|_{0,3,T}
	h_T^{-\frac 1 2}\|\bm{u}_{h}-\widehat{\bm{u}}_h\|_{0,\partial T}
	\|\bm{v}_{h}\|_{0,6,T}\nonumber\\
	\lesssim&\|\bm{w}_{h}\|_{0,3}\cdot\|\mathcal{U}_h\|_V \cdot\|\mathcal{V}_h\|_V.
	\end{align}
	Similarly, we have
	\begin{align}
	T_{3}\lesssim& h^{1-\frac d 6}\|\mathcal{W}_h\|_V \cdot \|\mathcal{U}_h\|_V \cdot\|\mathcal{V}_h\|_V,\\
	T_4\lesssim& \|\bm{w}_{h}\|_{0,3}\cdot\|\mathcal{U}_h\|_V \cdot\|\mathcal{V}_h\|_V.\label{B423}
	\end{align}
	As a result, the desired inequality   \eqref{B2} follows from \eqref{B417}-\eqref{B423}.

	In light of  the facts that $h\lesssim 1$, $1-\frac d 6\ge \frac 1 2$, and $\|\bm{w}_{h}\|_{0,3}\lesssim\|\mathcal{W}_h\|_V$,  the inequality \eqref{B2} indicates  \eqref{B1}.

	%
	%
	%
	%
	%

	Now we turn  to show   \eqref{B3}.
	By   integration by parts and the fact $\color{black}\langle\bm{w}_{h}\cdot \bm{n},\widehat{\bm{u}}_h\cdot \widehat{\bm{v}}_{h} \rangle_{\partial\mathcal{T}_h}$ $=0$ we  get
	\begin{align}\label{b424}
	2b_h(\mathcal{W}_h;\mathcal{U}_h,\mathcal{V}_h)=&(\bm{v}_{h}\otimes\bm{w}_{h},\nabla_h\bm{u}_{h})-(\bm{u}_{h}\otimes\bm{w}_{h},\nabla_h\bm{v}_{h})\nonumber\\
	&+\langle \widehat{\bm{u}}_h\otimes\widehat{\bm{w}}_{h}\bm{n},\bm{v}_{h} \rangle_{\partial\mathcal{T}_h}
	-\langle \widehat{\bm{v}}_{h}\otimes\widehat{\bm{w}}_{h}\bm{n},\bm{u}_{h} \rangle_{\partial\mathcal{T}_h}\nonumber\\
	=&-2(\bm{u}_{h}\otimes\bm{w}_{h},\nabla_h\bm{v}_{h})+(\nabla\cdot\bm{w}_{h},\bm{u}_{h}\bm{v}_{h})\nonumber\\
	&+\langle\bm{w}_{h} \bm{n},\bm{u}_{h} \bm{v}_{h}-\widehat{\bm{u}}_h \widehat{\bm{v}}_{h} \rangle_{\partial\mathcal{T}_h}
	+\langle\widehat{\bm{w}}_{h}\bm{n}, \widehat{\bm{u}}_h\bm{v}_{h}-\bm{u}_{h}\widehat{\bm{v}}_{h} \rangle_{\partial\mathcal{T}_h}\nonumber \\
	=&\big(-2(\bm{u}_{h}\otimes\bm{w}_{h},\nabla_h\bm{v}_{h})+(\nabla\cdot\bm{w}_{h},\bm{u}_{h}\bm{v}_{h})\big)\nonumber\\
	&+2\langle\bm{w}_{h} \bm{n},\bm{u}_{h} (\bm{v}_{h}-\widehat{\bm{v}}_{h}) \rangle_{\partial\mathcal{T}_h}\nonumber\\
	&+\langle\bm{w}_{h}\cdot \bm{n},(\bm{u}_{h}-\widehat{\bm{u}}_h) \cdot(\widehat{\bm{v}}_{h}-\bm{v}_{h}) \rangle_{\partial\mathcal{T}_h}\nonumber\\
	&+\langle(\widehat{\bm{w}}_{h}-\bm{w}_{h}) \cdot\bm{n},(\widehat{\bm{u}}_h-\bm{u}_{h})\cdot\bm{v}_{h}  \rangle_{\partial\mathcal{T}_h}\nonumber\\
	&+\langle(\widehat{\bm{w}}_{h}-\bm{w}_{h})\cdot \bm{n},\bm{u}_{h}\cdot(\bm{v}_{h}-\widehat{\bm{v}}_{h})  \rangle_{\partial\mathcal{T}_h}\nonumber\\
	=:&\sum_{i=1}^5S_i
	\end{align}
	In what follows we estimate $S_i$ term by term. It is easy to see
	\begin{align}
	S_1\le& 2\|\bm{u}_{h}\|_{0,3}\|\bm{w}_{h}\|_{0,6}\|\nabla_h\bm{v}_{h}\|_{0}
	+d\|\bm{u}_{h}\|_{0,3}\|\bm{u}_{h}\|_{0,6}\|\nabla_h\bm{w}_{h}\|_{0}\nonumber\\
	\lesssim&\|\bm{u}\|_{0,3}\|\mathcal{W}_h\|_V\| \mathcal{V}_h\|_V,\\
	S_2\le&2\sum_{T\in\mathcal{T}_h}\|\bm{w}_{h}\|_{0,6,\partial T}
	\|\bm{u}_{h}\|_{0,3,\partial T}
	\|\bm{v}_{h}-\bm{v}_{h}\|_{0,\partial T}\nonumber\\
	\lesssim&\sum_{T\in\mathcal{T}_h}\|\bm{w}_{h}\|_{0,6, T}
	\|\bm{u}_{h}\|_{0,3,T}
	h_T^{-\frac 1 2}\|\bm{v}_{h}-\bm{v}_{h}\|_{0,\partial T}\nonumber\\
	\lesssim&\|\bm{u}_{h}\|_{0,3}\|\mathcal{W}_h\|_V \| \mathcal{V}_h\|_V,\\
	S_3\le&\sum_{T\in\mathcal{T}_h}\|\bm{w}_{h}\|_{0,6,\partial T}
	\|\bm{u}_{h}-\widehat{\bm{u}}_h\|_{0,3,\partial T}
	\|\bm{v}_{h}-\widehat{\bm{v}}_{h}\|_{0,\partial T}\nonumber\\
	\lesssim&\sum_{T\in\mathcal{T}_h}h_T^{-\frac 1 6}\|\bm{w}_{h}\|_{0,6, T}
	h_T^{-(d-1)/6}\|\bm{u}_{h}-\widehat{\bm{u}}_h\|_{0,\partial T}
	\|\bm{v}_{h}-\bm{v}_{h}\|_{0,\partial T}\nonumber\\
	\lesssim&h^{1-\frac d 6}\|\mathcal{W}_h\|_V \|\mathcal{U}_h\|_V\| \mathcal{V}_h\|_V.
	\end{align}
	Similarly, we have
	\begin{align}
	S_4\lesssim& h^{1-\frac d 6}\|\mathcal{W}_h\|_V \|\mathcal{U}_h\|_V \| \mathcal{V}_h\|_V, \\
	S_5\lesssim& h^{1-\frac d 6}\|\mathcal{W}_h\|_V \|\mathcal{U}_h\|_V \| \mathcal{V}_h\|_V.\label{b430}
	\end{align}
	Therefore,  the inequality \eqref{B3} follows from  \eqref{b424}-\eqref{b430}.
	
	The inequality  \eqref{B4} follows similarly.
\end{proof}


%
%


For the bilinear form $d_h(\cdot,\cdot)$, we have the following inf-sup inequality.

\begin{theorem}[Inf-sup Stability] \label{theoremLBB}  For all $\mathcal{P}_h\in Q_h\times \widehat{Q}_h$, it holds
	\begin{eqnarray*}
		\sup_{\bm 0\neq \mathcal{V}_h\in \bm{V}_h\times\widehat{\bm{V}}_h}\frac{d_h(\mathcal{V}_{h},\mathcal{P}_h)}{\|\mathcal{V}_h\|_V }\gtrsim\|\mathcal{P}_h\|_Q.
	\end{eqnarray*}
\end{theorem}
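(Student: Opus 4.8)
The plan is to establish the inf-sup condition by a Fortin-type argument, reducing the discrete stability to the known continuous inf-sup stability of the divergence on $[H^1_0(\Omega)]^d\times L^2_0(\Omega)$, and then handling the trace component $\widehat q_h$ separately by a local construction. Given $\mathcal P_h=(p_h,\widehat p_h)\in Q_h\times\widehat Q_h$, I would split the task into controlling $\|p_h\|_0$ and $\|\tau^{-1/2}(p_h-\widehat p_h)\|_{\partial\mathcal T_h}$.

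\textbf{Step 1 (control of $\|p_h\|_0$).} Since $p_h\in L^2_0(\Omega)$, the continuous inf-sup condition gives $\bm w\in[H^1_0(\Omega)]^d$ with $\nabla\cdot\bm w=-p_h$ and $|\bm w|_1\lesssim\|p_h\|_0$. I would then apply a suitable interpolation/projection $\Pi_h\bm w\in\bm V_h$ (and take $\widehat{\Pi_h\bm w}$ as the trace of $\Pi_h\bm w$, or a facewise $L^2$-projection of $\bm w$) with commuting-type properties, namely $(\nabla_h\cdot(\Pi_h\bm w),q_h)=(\nabla\cdot\bm w,q_h)$ for all $q_h\in Q_h$ and $\langle(\Pi_h\bm w)\cdot\bm n-\bm w\cdot\bm n,\widehat q_h\rangle_{\partial\mathcal T_h}=0$, plus the stability bound $\|(\bm w,\widehat{\bm w})\|_V\lesssim|\bm w|_1$. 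This interpolation is exactly the kind provided in the interpolation section (Section 3) of the paper, so I would invoke those properties. With such $\mathcal V_h=(\Pi_h\bm w,\widehat{\Pi_h\bm w})$, using $\bm w\cdot\bm n$ single-valued so that $\langle(\Pi_h\bm w)\cdot\bm n,\widehat p_h\rangle_{\partial\mathcal T_h}$ telescopes appropriately, one gets $d_h(\mathcal V_h,\mathcal P_h)=(\nabla\cdot\bm w,p_h)+\text{(jump terms)}=-\|p_h\|_0^2+\text{(jump terms)}$, and the jump terms are bounded by $\varepsilon\|\tau^{-1/2}(p_h-\widehat p_h)\|_{\partial\mathcal T_h}^2+C\varepsilon^{-1}\|p_h\|_0^2$ type estimates.

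\textbf{Step 2 (control of the trace jump).} Here I would choose $\mathcal V_h$ with $\bm v_h=\bm 0$ in the interior and $\widehat{\bm v}_h$ supported on $\mathcal E_h$ chosen elementwise: on each interior face $E$, set $\widehat{\bm v}_h$ proportional to $h_E^{-1}(p_h-\widehat p_h)\bm n_E$ (more precisely, one picks $\widehat{\bm v}_h$ to realize, up to equivalence of norms on the finite-dimensional face spaces, the pairing $\langle\bm v_h\cdot\bm n-\widehat{\bm v}_h\cdot\bm n,\widehat p_h-p_h\rangle$). Then $d_h(\mathcal V_h,\mathcal P_h)=\langle\widehat{\bm v}_h\cdot\bm n,\widehat p_h\rangle_{\partial\mathcal T_h}$, which after rewriting in terms of $p_h-\widehat p_h$ (using that $p_h$ is single-valued per element and the definition of $d_h$ with $\bm v_h=\bm 0$) yields a lower bound $\gtrsim\|\tau^{-1/2}(p_h-\widehat p_h)\|_{\partial\mathcal T_h}^2$, while by Lemma \ref{lem-sim} and the inverse/trace inequalities $\|\mathcal V_h\|_V\lesssim\|\tau^{-1/2}(p_h-\widehat p_h)\|_{\partial\mathcal T_h}$. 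Finally I would combine the two test functions as $\mathcal V_h=\mathcal V_h^{(1)}+\rho\,\mathcal V_h^{(2)}$ with $\rho$ a fixed small constant, absorb cross terms via Young's inequality, and conclude $d_h(\mathcal V_h,\mathcal P_h)\gtrsim\|\mathcal P_h\|_Q^2$ with $\|\mathcal V_h\|_V\lesssim\|\mathcal P_h\|_Q$, which is the claimed inf-sup bound.

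The main obstacle I anticipate is Step 1: constructing a Fortin operator $\Pi_h$ into $\bm V_h\times\widehat{\bm V}_h$ that simultaneously preserves the interior divergence tested against $Q_h$, respects the normal-trace pairing against $\widehat Q_h$, and is $\|\cdot\|_V$-stable — in particular getting the jump term $\|\tau^{1/2}(\Pi_h\bm w-\widehat{\Pi_h\bm w})\|_{\partial\mathcal T_h}$ controlled by $|\bm w|_1$ for merely $H^1$ data, which typically requires a Scott–Zhang-type regularization before the canonical HDG interpolation. I would lean on the interpolation estimates already developed in Section 3 of the paper to supply exactly these properties; the remaining work is then the bookkeeping of the jump terms and the choice of the balancing parameter $\rho$.
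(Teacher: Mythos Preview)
Your Step 2 has a genuine gap. Recall the definition
\[
d_h(\mathcal V_h,\mathcal P_h)=(\nabla_h\cdot\bm v_h,p_h)+\langle\bm v_h\cdot\bm n,\widehat p_h\rangle_{\partial\mathcal T_h},
\]
which depends only on the interior component $\bm v_h$, not on $\widehat{\bm v}_h$. Hence taking $\bm v_h=\bm 0$ and any $\widehat{\bm v}_h$ gives $d_h(\mathcal V_h,\mathcal P_h)=0$, so this choice cannot produce the lower bound $\|\tau^{-1/2}(p_h-\widehat p_h)\|_{\partial\mathcal T_h}^2$ you claim. The paper does the opposite: it takes $\widehat{\bm v}_h=\bm 0$ and constructs $\bm w_h\in\bm V_h$ element by element via the local degrees of freedom
\[
(\bm w_h,\nabla\tau_{k-1})_T=0,\qquad
\langle\bm w_h\cdot\bm n,\tau_k\rangle_E=h_E\langle p_h-\widehat p_h,\tau_k\rangle_E,
\]
so that after integration by parts $d_h((\bm w_h,\bm 0),\mathcal P_h)=\|\tau^{-1/2}(p_h-\widehat p_h)\|_{\partial\mathcal T_h}^2$ exactly, with $\|(\bm w_h,\bm 0)\|_V\lesssim\|\mathcal P_h\|_Q$ by scaling.

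Your Step 1 is essentially the right idea but you are overcomplicating it. The paper takes $\mathcal R_h=(\bm P^{BDM}_k\bm v,\bm\Pi_k^\partial\bm v)$ with $\bm v\in[H^1_0(\Omega)]^d$ satisfying $\nabla\cdot\bm v=-p_h$. Because the BDM interpolant preserves both the face moments $\langle\cdot\,\bm n,\mathcal P_k(E)\rangle_E$ and the interior moments against $\nabla\mathcal P_{k-1}(T)$, one obtains $d_h(\mathcal R_h,\mathcal P_h)=\|p_h\|_0^2$ \emph{with no residual jump terms}, and $\|\mathcal R_h\|_V\lesssim|\bm v|_1\lesssim\|p_h\|_0$ follows directly from the BDM and $\bm\Pi_k^\partial$ approximation estimates applied to an $H^1$ function. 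No Scott--Zhang regularization or Young-inequality balancing of cross terms is needed: the two test functions simply add, $\mathcal V_h=\mathcal R_h+\mathcal W_h$, and $d_h(\mathcal V_h,\mathcal P_h)=\|\mathcal P_h\|_Q^2$ exactly.
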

\begin{proof} We use the Fortin technique to prove the discrete inf-sup condition.
	
	\textbf{Step 1.}
	Since $p_{h}\in Q$, by the continuous inf-sup condition \cite[Chapter 1, Corollary 2.4]{MR851383}, there exists \color{black} a function \color{black}$\bm{v}\in H^1_0(\Omega)$ such that
	\begin{align}\label{327}
	&\nabla\cdot\bm{v}=-p_{h},\qquad\qquad
	|\bm{v}|_1\lesssim \|p_{h}\|_0.
	\end{align}
	\color{black} 
    Take
	$$\mathcal{R}_h:=(\bm{r}_{h},\widehat{\bm{r}}_h)= (\bm{P}^{BDM}_{k}\bm{v},\bm{\Pi}_{k}^{\partial}\bm{v})\in\bm{V}_h\times\widehat{\bm{V}}_h, $$
	then  \color{black} from the definition of $d_h(\cdot,\cdot)$ and \color{black} integration by parts \color{black} it follows \color{black}
	\begin{align}
	d_h(\mathcal{R}_h,\mathcal{P}_h )
	=&- (p_{h},\nabla\cdot\bm{P}^{BDM}_{k}\bm{v})+ \langle \widehat{p}_h,\bm{P}^{BDM}_{k}\bm{v}\cdot\bm{n} \rangle_{\partial\mathcal{T}_h}\nonumber\\
	=& (\color{black}\nabla_h\color{black} p_{h},\bm{P}^{BDM}_{k}\bm{v})+ \langle \widehat{p}_h-p_{h},\bm{P}^{BDM}_{k}\bm{v}\cdot\bm{n} \rangle_{\partial\mathcal{T}_h}.\nonumber
	\end{align}
	\color{black}In view of the properties of $\bm{P}^{BDM}_{k}$ in \color{black} \eqref{BDM1} and \eqref{BDM2},  \color{black} integration by parts \color{black}, the fact $\langle \widehat{p}_h,\bm{v}\cdot\bm{n} \rangle_{\partial\mathcal{T}_h}=0$, and \eqref{327}, we have
	\begin{align*}
	d_h(\mathcal{R}_h,\mathcal{P}_h ) =& (\color{black}\nabla_h\color{black} p_{h},\bm{v})+ \langle \widehat{p}_h-p_{h},\bm{v}\cdot\bm{n} \rangle_{\partial\mathcal{T}_h},\nonumber\\
	=&- (p_{h},\nabla\cdot\bm{v})+ \langle \widehat{p}_h,\bm{v}\cdot\bm{n} \rangle_{\partial\mathcal{T}_h}\nonumber\\
	=&- (p_{h},\nabla\cdot\bm{v})\nonumber\\
	=& \|p_{h}\|^2_0.
	\end{align*}
	Since $\bm{r}_{h}= \bm{P}^{BDM}_{k}\bm{v},\widehat{\bm{r}}_h=\bm{\Pi}_{k}^{\partial}\bm{v}$, by    \eqref{BDM31}, \eqref{327}, and the approximation properties of  $\bm{P}^{BDM}_{k}$ and $\bm{\Pi}_{k}^{\partial}$, we  get
	\begin{align*}
	\|\nabla_h\bm{r}_{h}\|_{0}+\|\tau^{\frac 1 2}(\bm{r}_{h}-\widehat{\bm{r}}_{h})\|_{0,\partial \mathcal{T}_h}&=\|\nabla_h\bm{P}^{BDM}_{k}\bm{v}\|_{0}
	+\|\tau^{\frac 1 2}(\bm{P}^{BDM}_{k}\bm{v}-\bm{v})\|_{0,\partial \mathcal{T}_h}\nonumber\\
	&\lesssim |\bm{v}|_{1}\lesssim  \|p_{h}\|_{0}.
	\end{align*}
	Thus, by Lemma \ref{lem-sim} and  the definitions of  $\|\cdot\|_V$ and $\|\cdot\|_Q$,  we obtain
	\begin{align*}
	\|\mathcal{R}_h\|_V\lesssim  \|p_{h}\|_0\le \|\mathcal{P}_h\|_Q.
	\end{align*}
	
	\textbf{Step 2.}
	Let  $\bm{w}_h\in \bm{V}_h$, with $ \bm{w}_h|_T\in [\mathcal{P}_{k}(T)]^d$,  be determined by  
	\begin{align}
	(\bm{w}_h,\nabla\bm{\tau}_{k-1})_T&=0&\forall \bm{\tau}_{k-1}\in \mathcal{P}_{k-1}(T),\label{434}\\
	\langle\bm{w}_h\cdot\bm{n},\tau_{k} \rangle_E&=h_E\langle p_h-\widehat{p}_h,\tau_{k} \rangle_E&\forall \tau_k\in \mathcal P_{k}(E), E\in\partial T. \label{435}
	\end{align}
	for any $T\in\mathcal{T}_h$.     Standard scaling arguments show
	\begin{align*}
	\|\bm{w}_h\|_{0,T}\lesssim h_T^{3/2}\|(p_h-\widehat{p}_h)\|_{0,\partial T},
	\end{align*}
	which combines with an inverse inequality, yields
	\begin{equation}\label{wp437}
	\|\mathcal{W}_h\|_V\lesssim \|\mathcal{P}_h\|_V
	\end{equation}
	with   $\mathcal{W}_h:=(\bm{w}_h,\bm{0})$. 
	From  \eqref{434}-\eqref{435} and the definition of $d_h(\cdot,\cdot)$, it  follows
	\begin{align}\label{d-rp}
	d_h(\mathcal{W}_h,\mathcal{P}_h )
	&=- (p_{h},\nabla\cdot\bm{w}_h)+ \langle \widehat{p}_h,\bm{w}_h\cdot\bm{n} \rangle_{\partial\mathcal{T}_h}\nonumber\\
	&= (\color{black}\nabla_h\color{black} p_{h},\bm{w}_h)+ \langle \widehat{p}_h-p_{h},\bm{w}_h\cdot\bm{n} \rangle_{\partial\mathcal{T}_h}.\nonumber\\
	&=\|\tau^{-\frac 1 2}(p_h-\widehat{p}_h)\|^2_{\partial \mathcal{T}_h}.
	\end{align}
	\textbf{Step 3.}
	Take $\mathcal{V}_h=\mathcal{R}_h+\mathcal{W}_h\in\bm{V}_h\times\widehat{\bm{V}}^0$, and  \eqref{d-rp}, then we get
	\begin{eqnarray}\label{439}
	d_h(\mathcal{V}_{h},\mathcal{P}_h)=d_h(\mathcal{R}_{h},\mathcal{P}_h)+d_h(\mathcal{W}_{h},\mathcal{P}_h)=\|\mathcal{P}_h\|^2_Q.
	\end{eqnarray}
	By \eqref{434} and \eqref{wp437} we obtain
	\begin{eqnarray*}
		\|\mathcal{V}_h\|_V\le \|\mathcal{R}_h\|_V +\|\mathcal{W}_{h}\|_V \lesssim \|\mathcal{P}_h\|_Q.
	\end{eqnarray*}
	This result, together with \eqref{439}, implies
	\begin{eqnarray*}
		d_h(\mathcal{V}_{h},\mathcal{P}_h)\gtrsim\|\mathcal{P}_h\|_Q\|\mathcal{V}_h\|_Q,
	\end{eqnarray*}
	which finishes the proof.
\end{proof}

\section{Existence and uniqueness of discrete solution}

\subsection{Globally divergence-free velocity approximation}\label{divergence}

\begin{theorem} \label{Th5.1} If $\mathcal{U}_h:=(\bm u_h,\widehat{\bm u}_h)\in\bm{V}_h\times\widehat{\bm{V}}_h$ satisfies \eqref{HDG3}, then we have
	\begin{align*}
	&\bm{u}_{h}\in \bm{H}({\rm div};\Omega),\quad  \nabla\cdot\bm{u}_{h}=0.
	\end{align*}
\end{theorem}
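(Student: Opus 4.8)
The plan is to extract from the constraint \eqref{HDG3} two separate identities and then argue face by face. Writing out $d_h$, the hypothesis $d_h(\mathcal U_h,\mathcal Q_h)=0$ says that $(\nabla_h\cdot\bm u_h,q_h)+\langle\bm u_h\cdot\bm n,\widehat q_h\rangle_{\partial\mathcal T_h}=0$ for all $(q_h,\widehat q_h)\in Q_h\times\widehat Q_h$. Taking $q_h=0$ with $\widehat q_h$ arbitrary gives $\langle\bm u_h\cdot\bm n,\widehat q_h\rangle_{\partial\mathcal T_h}=0$ for all $\widehat q_h\in\widehat Q_h$, and taking $\widehat q_h=0$ with $q_h$ arbitrary gives $(\nabla_h\cdot\bm u_h,q_h)=0$ for all $q_h\in Q_h$.

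First I would establish $\bm u_h\in\bm H({\rm div};\Omega)$ from the trace identity. Fixing an interior face $E=\partial T^+\cap\partial T^-$ and choosing $\widehat q_h\in\widehat Q_h$ supported on $E$ (legitimate since $\widehat Q_h$ carries no continuity constraint), the identity reduces to $\langle\bm u_h^+\cdot\bm n_{T^+}+\bm u_h^-\cdot\bm n_{T^-},\widehat q_h\rangle_E=0$ for every $\widehat q_h\in\mathcal P_k(E)$; since the normal traces $\bm u_h^\pm\cdot\bm n_{T^\pm}$ restrict on $E$ to elements of $\mathcal P_k(E)$, this forces $[\![\bm u_h\cdot\bm n]\!]=0$ on every interior $E$, and the same choice on a boundary face gives $\bm u_h\cdot\bm n=0$ on $\partial\Omega$. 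By the standard characterization of $\bm H({\rm div})$ — a piecewise polynomial vector field whose normal component is continuous across all interelement faces lies in $\bm H({\rm div};\Omega)$ — we conclude $\bm u_h\in\bm H({\rm div};\Omega)$, and its distributional divergence then coincides elementwise with $\nabla_h\cdot\bm u_h$, which is piecewise in $\mathcal P_{k-1}$.

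It remains to show $\nabla_h\cdot\bm u_h\equiv 0$. From $(\nabla_h\cdot\bm u_h,q_h)=0$ for all $q_h\in Q_h$ together with the fact that $Q_h$ is precisely the mean-zero subspace of the full piecewise-$\mathcal P_{k-1}$ space, whose $L^2$-orthogonal complement is the constants, it follows that $\nabla_h\cdot\bm u_h$ is constant on $\Omega$ (equivalently, test with $q_h=\nabla_h\cdot\bm u_h-|\Omega|^{-1}(\nabla_h\cdot\bm u_h,1)\in Q_h$). To identify the constant, apply the divergence theorem on each $T$ and then the trace identity with $\widehat q_h\equiv 1\in\widehat Q_h$: $\int_\Omega\nabla_h\cdot\bm u_h=\sum_{T\in\mathcal T_h}\langle\bm u_h\cdot\bm n,1\rangle_{\partial T}=\langle\bm u_h\cdot\bm n,1\rangle_{\partial\mathcal T_h}=0$, so the constant vanishes and $\nabla\cdot\bm u_h=\nabla_h\cdot\bm u_h=0$.

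I do not anticipate a genuine obstacle here; the single point deserving emphasis is the polynomial-degree matching — the multiplier space $\widehat Q_h$ carries the \emph{full} space $\mathcal P_k(E)$ on each face, which is exactly what is needed to annihilate the entire normal-trace jump rather than only its lower-order projection, and this is the structural reason the scheme delivers an exactly, globally divergence-free velocity.
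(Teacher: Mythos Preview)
Your proof is correct. The paper's argument takes a slightly different, more compressed route: rather than decoupling the two test slots, it makes a single choice $(q_h,\widehat q_h)=(\nabla_h\cdot\bm u_h-c_0,\ \widehat r_h-c_0)$, where $c_0$ is the mean of $\nabla_h\cdot\bm u_h$ and $\widehat r_h|_E$ is essentially the negative of the normal jump on each interior face (and zero on boundary faces). Plugging this pair into \eqref{HDG3} produces directly the sum-of-squares identity
\[
\|\nabla_h\cdot\bm u_h\|_0^2+\sum_{E\ \text{interior}}\|[\![\bm u_h\cdot\bm n]\!]\|_{0,E}^2=0,
\]
after the $c_0$-contributions cancel via the divergence theorem. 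Your approach instead separates the constraint into the element identity and the face identity, handles the $\bm H(\mathrm{div})$-conformity face by face, and then identifies the divergence as a global constant which you show vanishes using the face identity with $\widehat q_h\equiv 1$. Both arguments hinge on the same structural point you highlight, namely that $\widehat Q_h$ contains the full $\mathcal P_k(E)$ so the normal jump (itself in $\mathcal P_k(E)$) is killed exactly; your version is a bit more transparent and yields the bonus observation $\bm u_h\cdot\bm n=0$ on $\partial\Omega$, while the paper's single-test-function trick is more compact.
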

\begin{proof} Introduce  a function $\widehat{r}_{h}\in L^2(\mathcal{E}_h)$ with 
	\begin{eqnarray}
	\widehat{r}_h|_E: =\left\{
	\begin{aligned}
	-(\bm{u}_{h}\cdot\bm{n}\color{black})|_{\partial T_1\cap E}-(\bm{u}_{h}\cdot\bm{n}\color{black})|_{\partial T_2\cap E}\color{black},\quad \text{if } E\in\mathcal{E}_h/\partial\Omega,\\
	{0},\quad \text{if } E\in \partial\Omega
	\end{aligned}
	\right.
	\end{eqnarray}
	for any $E\in \mathcal{E}_h$, 
	where   $T_1,T_2\in\mathcal{T}_h$ are the adjacent elements sharing the common edge (face) $E$. \color{black}
	Then we take $\mathcal{Q}_h=(\nabla_h\cdot\bm{u}_{h}-c_0,\widehat{r}_{h}-c_0)$ in $(\ref{HDG3})$, with $c_0=(\nabla_h\cdot\bm{u}_{h},1)/|\Omega|$, to get
	\begin{eqnarray}
	\|\nabla_h\cdot\bm{u}_{h}\|^2_0+\sum_{E\in \mathcal{E}_h/\partial\Omega,E=\partial T_1\cap \partial T_2}\|(\bm{u}_{h}\cdot\bm{n}\color{black})|_{\partial T_1\cap E}+(\bm{u}_{h}\cdot\bm{n}\color{black})|_{\partial T_2\cap E}\|^2_{0,E}=0,
	\end{eqnarray}
	which implies the desired conclusion. 
\end{proof}

\subsection{Existence result}
Define
\begin{eqnarray*}
	\bm{W}_h:=\{\mathcal{W}_h\in\bm{V}_h\times\widehat{\bm{V}}_h:d_h(\mathcal{W}_{h};\mathcal{Q}_h)=0,\forall \mathcal{Q}_h\in Q_h\times\widehat{Q}_h\}.
\end{eqnarray*}
Notice that $\bm{W}_h$ is non-trivial due to Theorem \ref{theoremLBB}.
From Theorem \ref{Th5.1},   for  $\color{black}\mathcal{V}_h=(\bm{v}_h,\widehat{\bm{v}}_h)\in\bm{W}_h$,  we have $\bm{v}_{h}\in \bm{H}({\rm div},\Omega)$ and $\nabla\cdot\bm{v}_{h}=0$.

By Lemma \ref{conti},  we can define 
\begin{align}
\mathcal{N}_h:=\sup_{\bm 0\neq\mathcal{W}_h,\mathcal{U}_h,\mathcal{V}_h\in[\bm{W}_h]^3}
\frac{b_h(\mathcal{W}_h;\mathcal{U}_h,\mathcal{V}_h)}{\|\mathcal{W}_h\|_V \|\mathcal{U}_h\|_V \|\mathcal{V}_h\|_V }.\label{Nh}
\end{align}
Similarly, since $\|\bm{v}_{h}\|_0\lesssim \|\mathcal{V}_h\|_V$, we can  define a norm of $\bm f$ by
\begin{eqnarray}
\|\bm{f}\|_{*,h}:=\color{black}\sup_{\bm 0\neq \mathcal{V}_h\in\bm{W}_h}\frac{(\bm f,\bm v_h)}{\|\mathcal{V}_h\|_V}\label{fh}.
\end{eqnarray}

\begin{theorem} The HDG scheme \eqref{HDG1}-\eqref{HDG3} admits at least one solution   $(\mathbb{L}_h,$
	$\mathcal{U}_h,\mathcal{P}_h)\in \mathbb{K}_h\times[\bm{V}_h\times\widehat{\bm{V}}_h]\times [Q_h\times\widehat{{Q}}_h]$ for a sufficiently small mesh size $h$.
\end{theorem}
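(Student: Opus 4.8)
The plan is to use a fixed-point argument on the divergence-free subspace $\bm W_h$, combining the Brouwer fixed-point theorem with the stability estimates established earlier. First I would note that, thanks to Theorem \ref{Th5.1}, any $\mathcal U_h\in\bm W_h$ yields $\bm u_h\in\bm H(\mathrm{div};\Omega)$ with $\nabla\cdot\bm u_h=0$; consequently, by the antisymmetry of $b_h(\mathcal W_h;\cdot,\cdot)$ for such $\mathcal W_h$, we have $b_h(\mathcal U_h;\mathcal U_h,\mathcal U_h)=0$. This is the key structural feature that makes the a priori bound work regardless of the stabilization parameter. Hence, restricting the test functions in \eqref{HDG22}--\eqref{HDG33} to $\mathcal V_h\in\bm W_h$ (so that $d_h(\mathcal V_h,\mathcal P_h)=0$), the problem reduces to: find $\mathcal U_h\in\bm W_h$ such that
\begin{equation*}
\nu(K_h\mathcal U_h,K_h\mathcal V_h)+s_h(\mathcal U_h,\mathcal V_h)+b_h(\mathcal U_h;\mathcal U_h,\mathcal V_h)=(\bm f,\bm v_h)\qquad\forall\,\mathcal V_h\in\bm W_h.
\end{equation*}

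**Main steps.** Define the map $\Phi:\bm W_h\to\bm W_h$ by letting $\Phi(\mathcal Z_h)=\mathcal U_h$ be the solution of the \emph{linear} problem
\begin{equation*}
\nu(K_h\mathcal U_h,K_h\mathcal V_h)+s_h(\mathcal U_h,\mathcal V_h)=(\bm f,\bm v_h)-b_h(\mathcal Z_h;\mathcal Z_h,\mathcal V_h)\qquad\forall\,\mathcal V_h\in\bm W_h;
\end{equation*}
the left-hand side is coercive with respect to $\|\cdot\|_V^2=\|K_h\mathcal V_h\|_0^2+\|\tau^{1/2}(\bm v_h-\widehat{\bm v}_h)\|_{\partial\mathcal T_h}^2$ (up to the factor $\min\{\nu^{-1},\nu\}$, which is harmless for fixed $\nu$), so $\Phi$ is well defined by Lax--Milgram on the finite-dimensional space $\bm W_h$, and it is continuous since $b_h$ is a bounded trilinear form (Lemma \ref{conti}). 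Next I would show $\Phi$ maps a ball into itself: taking $\mathcal V_h=\mathcal U_h=\Phi(\mathcal Z_h)$ and using $b_h(\mathcal Z_h;\mathcal Z_h,\mathcal U_h)$ — here one wants the estimate \eqref{B1}, $|b_h(\mathcal Z_h;\mathcal Z_h,\mathcal U_h)|\lesssim\|\mathcal Z_h\|_V^2\|\mathcal U_h\|_V$ — together with $|(\bm f,\bm v_h)|\le\|\bm f\|_{*,h}\|\mathcal U_h\|_V$, one obtains an inequality of the form $c_\nu\|\mathcal U_h\|_V\le\|\bm f\|_{*,h}+C\|\mathcal Z_h\|_V^2$; restricting to the ball $\{\|\mathcal Z_h\|_V\le\rho\}$ with $\rho$ chosen so that $C\rho^2+\|\bm f\|_{*,h}\le c_\nu\rho$ (possible since the quadratic in $\rho$ has a positive root as long as $\|\bm f\|_{*,h}$ is finite — no smallness of $\bm f$ is needed here, only finiteness, though one may need $h$ small merely to keep constants under control) shows $\Phi$ maps this closed ball into itself. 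Brouwer's fixed-point theorem then yields a fixed point $\mathcal U_h$, and reintroducing the pressure via the inf-sup condition of Theorem \ref{theoremLBB} recovers $\mathcal P_h$, and $\mathbb L_h=\nu K_h\mathcal U_h$ recovers $\mathbb L_h$; this gives existence of a full solution tuple to \eqref{HDG1}--\eqref{HDG3}.

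**Where the role of small $h$ enters, and the main obstacle.** Strictly speaking, on a \emph{fixed} mesh the above Brouwer argument already gives existence without any restriction on $h$; the hypothesis "for sufficiently small $h$" is presumably carried along so that the constants appearing are the mesh-independent ones from Lemmas \ref{lem-sim}, \ref{discrete-soblev} and \ref{conti}, and so that later uniqueness/error arguments apply uniformly — in particular, one wants $\mathcal N_h$ and $\|\bm f\|_{*,h}$ to be close to their continuous counterparts, which is exactly the content the paper develops via the switch operators. The main technical obstacle in writing this cleanly is verifying that $\Phi$ genuinely maps $\bm W_h$ into $\bm W_h$ (not just into $\bm V_h\times\widehat{\bm V}_h$): this follows because the linear problem defining $\mathcal U_h$ was posed with test functions in $\bm W_h$ and $\bm W_h$ is a closed subspace on which the bilinear form is coercive, so the Riesz representative lies in $\bm W_h$ automatically. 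A secondary subtlety is that $\|\cdot\|_V$ is only a \emph{norm} on $\bm V_h\times\widehat{\bm V}_h$ (as verified in Section 3), so the coercivity and hence Lax--Milgram are legitimate; and that $b_h(\mathcal Z_h;\mathcal Z_h,\cdot)$ really can be bounded via \eqref{B1} without needing $\mathcal Z_h$ divergence-free — which it is, being in $\bm W_h$, so in fact the sharper \eqref{B3} is also available if a cleaner constant is desired.
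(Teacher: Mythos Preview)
Your overall architecture---reduce to $\bm W_h$, get a fixed point there, then recover $\mathcal P_h$ via Theorem~\ref{theoremLBB} and set $\mathbb L_h=\nu K_h\mathcal U_h$---is sound and is essentially what the paper does. But the particular fixed-point map you chose does \emph{not} work without an extra smallness hypothesis, and your justification of the ball-to-ball step is incorrect.

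Concretely: with the linearization $\Phi(\mathcal Z_h)=\mathcal U_h$ defined by
\[
\nu(K_h\mathcal U_h,K_h\mathcal V_h)+s_h(\mathcal U_h,\mathcal V_h)=(\bm f,\bm v_h)-b_h(\mathcal Z_h;\mathcal Z_h,\mathcal V_h),
\]
testing with $\mathcal V_h=\mathcal U_h$ gives $\nu\|\mathcal U_h\|_V\le\|\bm f\|_{*,h}+C\|\mathcal Z_h\|_V^2$. Your claim that ``$C\rho^2+\|\bm f\|_{*,h}\le c_\nu\rho$ has a positive root as long as $\|\bm f\|_{*,h}$ is finite'' is false: the quadratic $C\rho^2-c_\nu\rho+\|\bm f\|_{*,h}$ has positive leading coefficient, so it is $\le0$ for some $\rho>0$ only when the discriminant is nonnegative, i.e.\ when $\|\bm f\|_{*,h}\le c_\nu^2/(4C)$. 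That is a genuine smallness condition on the data, not assumed here, and it is \emph{not} recovered by taking $h$ small (by Lemma~\ref{limit}, $\|\bm f\|_{*,h}\to\|\bm f\|_*$ and $\mathcal N_h\to\mathcal N$).

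The fix is to exploit the antisymmetry you yourself flagged---and note that $b_h(\mathcal W_h;\mathcal V_h,\mathcal V_h)=0$ holds for \emph{every} $\mathcal W_h$ by the very definition of $b_h$, not only for divergence-free ones. Either (a) move the trilinear term to the left in Oseen form, defining $\Phi(\mathcal Z_h)$ by
\[
\nu(K_h\mathcal U_h,K_h\mathcal V_h)+s_h(\mathcal U_h,\mathcal V_h)+b_h(\mathcal Z_h;\mathcal U_h,\mathcal V_h)=(\bm f,\bm v_h),
\]
so that coercivity with constant $\nu$ survives and one gets the uniform bound $\|\Phi(\mathcal Z_h)\|_V\le\nu^{-1}\|\bm f\|_{*,h}$ for all $\mathcal Z_h$, whence Brouwer applies on the ball of radius $\nu^{-1}\|\bm f\|_{*,h}$; or (b) bypass linearization and apply Brouwer directly to the nonlinear map, using $A_h(\mathcal V_h;\mathcal V_h,\mathcal V_h)=\nu\|\mathcal V_h\|_V^2$. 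The paper takes route (b), citing the abstract existence theorem of Girault--Raviart; the ``sufficiently small $h$'' is invoked only to note $\mathcal N_h$ is bounded (which in fact already follows from \eqref{B1}).
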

\begin{proof} Introduce the  trilinear form
	\begin{align*}
	A_h(\mathcal{W}_h;\mathcal{U}_h,\mathcal{V}_h ):=&\nu(K_h\mathcal{U}_h; K_h\mathcal{V}_h )+s_h(\mathcal{U}_h;\mathcal{V}_h )+b_h(\mathcal{W}_h;\mathcal{U}_h,\mathcal{V}_h).
	\end{align*}
	Then the  following two results hold:
	
	(i) $A_h(\mathcal{V}_h;\mathcal{V}_h;\mathcal{V}_h )=\nu\|\mathcal{V}_h\|_V^2,\quad \forall \mathcal{V}_h\in \bm{W}_h$;
	
	(ii) $\bm{W}_h$ is separable,  and the relation  $\lim_{n\to\infty}\mathcal{U}_h^n=\mathcal{U}_h$ (weakly in $\bm{W}_h$) implies
	$$\lim_{n\to\infty}A_h(\mathcal{U}^n_h;\mathcal{U}^n_h,\mathcal{V}_h )
	=A_h(\mathcal{U}_h;\mathcal{U}_h,\mathcal{V}_h ),\quad
	\forall\mathcal{V}_h\in \bm{W}_h.$$
	
	Notice that (i) is obvious. We only need to show (ii). Let $\{\mathcal{U}_h^n\}_{n=1}^\infty$ be a sequence in $\bm{W}_h$ such that
	\begin{eqnarray}
	\color{black}\mathcal{U}_h^n\to\mathcal{U}_h \text{ weakly in } \bm{W}_h \text{ as } n\to\infty.
	\end{eqnarray}
	Since $\bm{W}_h$ is finite dimensional space,  we know that $\bm{W}_h$ is separable and the weak convergence and strong convergence are equivalent on $\bm{W}_h$. Then we have
	\begin{eqnarray}
	\lim_{n\to\infty}\| \mathcal{U}_{h}^n-\mathcal{U}_{h}\|_V =0.\label{45}
	\end{eqnarray}
	Hence,
	\begin{align}
	&|A_h(\mathcal{U}^n_h;\mathcal{U}^n_h,\mathcal{V}_h )
	-A_h(\mathcal{U}_h;\mathcal{U}_h,\mathcal{V}_h )|\nonumber\\
	=&|\nu(K_h(\mathcal{U}^n_h-\mathcal{U}_h); K_h\mathcal{V}_h )+s_h(\mathcal{U}^n_h-\mathcal{U}_h;\mathcal{V}_h ) +b_h(\mathcal{U}^n_h-\mathcal{U}_h;
	\mathcal{U}^n_h-\mathcal{U}_h,\mathcal{V}_h)\nonumber\\
	&\qquad
	+ b_h(\mathcal{U}_{h};\mathcal{U}^n_h-\mathcal{U}_h,
	\mathcal{V}_h)  +b_h(\mathcal{U}^n_h-\mathcal{U}_h;
	\mathcal{U}_{h},\mathcal{V}_h)|\nonumber\\
	\le& \nu\|\mathcal{U}^n_h-\mathcal{U}_h\|_V
	\|\mathcal{V}_h\|_V+\mathcal{N}_h\|\mathcal{U}^n_h-\mathcal{U}_h\|_V^2
	\|\mathcal{V}_h\|_V
	+2\mathcal{N}_h\|\mathcal{U}^n_h-\mathcal{U}_h\|_V\|\mathcal{U}_h\|_V \|\mathcal{V}_h\|_V.\nonumber
	\end{align}
	which, together with \eqref{45} and the fact that   $\mathcal{N}_h$ can be bounded from above by a positive constant  for a sufficiently small $h$,   yields
	$$\lim_{n\to\infty}A_h(\mathcal{U}^n_h;\mathcal{U}^n_h,\mathcal{V}_h )
	=A_h(\mathcal{U}_h;\mathcal{U}_h,\mathcal{V}_h ),\
	\forall\mathcal{V}_h\in \bm{W}_h,$$
	i.e. (ii) holds.
	
	In light of  \cite[Page 280, Theorem 1.2]{Girault.V;Raviart.P1986}, the results (i)-(ii) imply that there exists at least one $\mathcal{U}_h\in\bm{W}_h$ satisfying
	\begin{align*}
	A_h(\mathcal{U}_h;\mathcal{U}_h,\mathcal{V}_h )&=(\bm{f},\bm{v}_{h}) &\forall\mathcal{V}_h\in \bm{W}_h.
	\end{align*}
	Given such a $\mathcal{U}_h\in \bm{W}_h$, by Theorem \ref{theoremLBB}   there  exists a unique $\mathcal{P}_h\in Q_h\times \widehat{Q}_h$ satisfying
	{
		\begin{align*}
		d_h(\mathcal{V}_{h},\mathcal{P}_h)=-(\bm{f},\bm{v}_{h})+\nu(K_h\mathcal{U}_h; K_h\mathcal{V}_h )
		+s_h(\mathcal{U}_h;\mathcal{V}_h )+b_h(\mathcal{W}_h;\mathcal{U}_h,\mathcal{V}_h).
		\end{align*}
	}
	As a result,  the triple $(\mathbb{L}_h=\nu K_h\mathcal{U}_h,\mathcal{U}_h,\mathcal{P}_h)\in\mathbb{K}_h\times[\bm{V}_h\times\widehat{\bm{V}}_h]\times [Q_h\times \widehat{Q}_h]$ 
	is a solution to the scheme \eqref{HDG1}-\eqref{HDG3}.
\end{proof}

\subsection{Uniqueness result}
\begin{theorem} Let  $(\mathbb{L}_h,\mathcal{U}_h,\mathcal{P}_h)\in\mathbb{K}_h\times[\bm{V}_h\times\widehat{\bm{V}}_h]\times [Q_h\times \widehat{Q}_h]$ be a solution to the problem \eqref{HDG1}-\eqref{HDG3}, then $\mathcal{U}_h$ satisfies
	\begin{eqnarray}
	\|\mathcal{U}_h\|_V \le\nu^{-1}\|\bm{f}\|_{*,h}.\label{sta}
	\end{eqnarray}
	In addition, if
	\begin{eqnarray}\label{513}
	(\mathcal{N}_h/\nu^2)\|\bm{f}\|_{*,h}<1, \label{uni-condi}
	\end{eqnarray}
	then the scheme \eqref{HDG1}-\eqref{HDG3} admits a unique solution. 
\end{theorem}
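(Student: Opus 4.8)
The plan is to run the classical energy argument for Navier--Stokes uniqueness, transplanted to the HDG spaces, using the antisymmetry of $b_h$, the boundedness constant $\mathcal N_h$, the stability bound \eqref{sta}, and the inf--sup estimate of Theorem~\ref{theoremLBB}. First I would establish \eqref{sta}. Given any solution $(\mathbb L_h,\mathcal U_h,\mathcal P_h)$, equation \eqref{HDG33} together with the definition of $\bm W_h$ shows $\mathcal U_h\in\bm W_h$, so $d_h(\mathcal U_h,\mathcal P_h)=0$; testing the reduced momentum equation \eqref{HDG22} with $\mathcal V_h=\mathcal U_h$ then kills the pressure term, while the antisymmetry built into $b_h(\cdot;\cdot,\cdot)$ gives $b_h(\mathcal U_h;\mathcal U_h,\mathcal U_h)=0$. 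What remains is $\nu\|K_h\mathcal U_h\|_0^2+s_h(\mathcal U_h,\mathcal U_h)=(\bm f,\bm u_h)$, whose left-hand side is exactly $\nu\|\mathcal U_h\|_V^2$ by the definitions of $\|\cdot\|_V$ and $s_h$; bounding the right-hand side by $\|\bm f\|_{*,h}\|\mathcal U_h\|_V$ via \eqref{fh} (legitimate since $\mathcal U_h\in\bm W_h$) yields \eqref{sta}.

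For uniqueness, I would take two solutions $(\mathbb L_h^i,\mathcal U_h^i,\mathcal P_h^i)$, $i=1,2$, set $\mathcal E_h:=\mathcal U_h^1-\mathcal U_h^2\in\bm W_h$, subtract the two copies of \eqref{HDG22}, and test with $\mathcal V_h\in\bm W_h$ so that the pressure contributions drop out. Using trilinearity to split
\[
b_h(\mathcal U_h^1;\mathcal U_h^1,\mathcal V_h)-b_h(\mathcal U_h^2;\mathcal U_h^2,\mathcal V_h)=b_h(\mathcal U_h^1;\mathcal E_h,\mathcal V_h)+b_h(\mathcal E_h;\mathcal U_h^2,\mathcal V_h),
\]
and then choosing $\mathcal V_h=\mathcal E_h$ (so that $b_h(\mathcal U_h^1;\mathcal E_h,\mathcal E_h)=0$ again by antisymmetry), I arrive at $\nu\|\mathcal E_h\|_V^2=-b_h(\mathcal E_h;\mathcal U_h^2,\mathcal E_h)$. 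Since all three arguments lie in $\bm W_h$, the definition \eqref{Nh} of $\mathcal N_h$ applies and gives $|b_h(\mathcal E_h;\mathcal U_h^2,\mathcal E_h)|\le\mathcal N_h\|\mathcal U_h^2\|_V\|\mathcal E_h\|_V^2$; combined with $\|\mathcal U_h^2\|_V\le\nu^{-1}\|\bm f\|_{*,h}$ from \eqref{sta} this produces $\nu\bigl(1-\nu^{-2}\mathcal N_h\|\bm f\|_{*,h}\bigr)\|\mathcal E_h\|_V^2\le 0$. Under the smallness hypothesis \eqref{513} the prefactor is strictly positive, forcing $\|\mathcal E_h\|_V=0$, hence $\mathcal U_h^1=\mathcal U_h^2$ because $\|\cdot\|_V$ is a norm. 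Finally $\mathbb L_h^1=\nu K_h\mathcal U_h^1=\nu K_h\mathcal U_h^2=\mathbb L_h^2$ from \eqref{HDG11}, and subtracting \eqref{HDG22} once more gives $d_h(\mathcal V_h,\mathcal P_h^1-\mathcal P_h^2)=0$ for all $\mathcal V_h\in\bm V_h\times\widehat{\bm V}_h$, so Theorem~\ref{theoremLBB} yields $\mathcal P_h^1=\mathcal P_h^2$.

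There is no deep obstacle here; the argument is routine once the results of the preceding sections are in hand. The points that genuinely require care — and that I would state explicitly — are: every discrete velocity $\mathcal U_h$ automatically lies in the divergence-free subspace $\bm W_h$ (Theorem~\ref{Th5.1} together with the definition of $\bm W_h$), which is what makes the pressure terms vanish and what makes the constant $\mathcal N_h$ of \eqref{Nh} applicable to the difference $\mathcal E_h$; the identity $b_h(\mathcal W_h;\mathcal V_h,\mathcal V_h)=0$ for every $\mathcal W_h$, immediate from the antisymmetric design of the nonlinear form, which is used twice; and the fact, already verified, that $\|\cdot\|_V$ and $\|\cdot\|_Q$ are genuine norms, so that $\|\mathcal E_h\|_V=0$ and the inf--sup bound really do force equality of the discrete solutions.
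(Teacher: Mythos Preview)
Your proof is correct and follows essentially the same route as the paper's: an energy argument using the antisymmetry $b_h(\mathcal W_h;\mathcal V_h,\mathcal V_h)=0$ to obtain \eqref{sta}, then subtraction of two solutions, the same trilinear splitting (the paper keeps $\mathcal U_h^1$ rather than $\mathcal U_h^2$ in the surviving middle slot, an immaterial difference), the bound via $\mathcal N_h$ and \eqref{sta}, and finally the inf--sup Theorem~\ref{theoremLBB} to recover $\mathcal P_h^1=\mathcal P_h^2$. The only cosmetic distinction is that the paper cancels the pressure by adding the tested constraint equation \eqref{HDG33} with $\mathcal Q_h=\mathcal P_h^1-\mathcal P_h^2$, whereas you restrict the test function to $\bm W_h$; both amount to the same cancellation.
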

\begin{proof} 
	Take $\mathcal{V}_h=\mathcal{U}_h,\mathcal{Q}_h=\mathcal{P}_h$ in \eqref{HDG22} and \eqref{HDG33}, and add the two equations to get
	\begin{eqnarray*}
		\nu\|\mathcal{U}_h\|_V ^2=(\bm{f},\bm{u}_{h})\le \|\bm{f}\|_{*,h}\|\mathcal{U}_h\|_V ,
	\end{eqnarray*}
	which leads to \eqref{sta}. 
	
	Since $\mathbb{L}_h=\nu K_h \mathcal{U}_{h}$, we only need to show the uniqueness of $\mathcal{U}_h$ and $\mathcal{P}_h$. Let $(\mathcal{U}_{h}^1,\mathcal{P}_{h}^1)$ and $(\mathcal{U}_{h}^2,\mathcal{P}_{h}^2)$ be two solutions to \eqref{HDG22}-\eqref{HDG33}, then we have, for $i=1,2$ and $(\mathcal{V}_h,\mathcal{Q}_h)\in [\bm{V}_h\times\widehat{\bm{V}}_h]\times [Q_h\times Q_h]$,
	{
		\begin{align*}
		\nu(K_h\mathcal{U}_h^i,K_h\mathcal{V}_h)-d_h(\mathcal{V}_{h},\mathcal{P}^i_h)
		+s_h(\mathcal{U}_h^i;\mathcal{V}_h)+  b_h(\mathcal{U}_h^i;\mathcal{U}^i_h,\mathcal{V}_h)&=(\bm{f},\bm{v}_h),
		\\
		d_h(\mathcal{U}_h^i;\mathcal{Q}_h)&=0.
		\end{align*}
	}
	These two equations imply
	\begin{subequations}
		\begin{align}
		\nu(K_h(\mathcal{U}_h^1-\mathcal{U}_h^2),K_h\mathcal{V}_h)-d_h(\mathcal{V}_{h},\mathcal{P}^1_h-\mathcal{P}^2_h)&\nonumber\\
		+s_h(\mathcal{U}_h^1-\mathcal{U}_h^2;\mathcal{V}_h)&= b_h(\mathcal{U}_h^2;\mathcal{U}^2_h,\mathcal{V}_h)- b_h(\mathcal{U}_h^1;\mathcal{U}^1_h,\mathcal{V}_h),\label{515a}\\
		d_h(\mathcal{U}_h^1-\mathcal{U}_h^2;\mathcal{Q}_h)&=0.
		\end{align}
	\end{subequations}
	Take $\mathcal{V}_h=\mathcal{U}_h^1-\mathcal{U}_h^2,\mathcal{Q}_h=\mathcal{P}^1_h-\mathcal{P}^2_h$ in the above two equations,  and add them together to get
	\begin{align*}
	\nu\|\mathcal{U}_h^1-\mathcal{U}_h^2\|_V^2=&b_h(\mathcal{U}_{h}^2;\mathcal{U}_{h}^2,\mathcal{U}_{h}^1-\mathcal{U}_{h}^2)
	-b_h(\mathcal{U}_{h}^1;\mathcal{U}_{h}^1,\mathcal{U}_{h}^1-\mathcal{U}_{h}^2) \nonumber\\
	=&b_h(\mathcal{U}_{h}^2;\mathcal{U}_{h}^2-\mathcal{U}_{h}^1,\mathcal{U}_{h}^1-\mathcal{U}_{h}^2)
	+b_h(\mathcal{U}_{h}^2-\mathcal{U}_{h}^1;\mathcal{U}_{h}^1,\mathcal{U}_{h}^1-\mathcal{U}_{h}^2) \nonumber\\
	=&b_h(\mathcal{U}_{h}^2-\mathcal{U}_{h}^1;\mathcal{U}_{h}^1,\mathcal{U}_{h}^1-\mathcal{U}_{h}^2)\nonumber\\
	\le&\mathcal{N}_h\| \mathcal{U}_{h}^1-\mathcal{U}_{h}^2\|_V^2\| \mathcal{U}_{h}^1\|_V  \nonumber\\
	\le&\nu^{-1}\mathcal{N}_h\|\bm{f}\|_{*,h}\| \mathcal{U}_{h}^1-\mathcal{U}_{h}^2\|_V^2,
	\end{align*}
	where we have used the stability result \eqref{sta}.
	Thus, it holds
	\begin{eqnarray}
	\nu(1-(\mathcal{N}_h/\nu^2)\|\bm{f}\|_{*,h})\| \mathcal{U}_{h}^1-\mathcal{U}_{h}^2\|_V^2\le 0.
	\end{eqnarray}
	As a result, by \eqref{513}  we get $\mathcal{U}_{h}^1=\mathcal{U}_{h}^2$. And  $\mathcal{P}_{h}^1=\,\mathcal{P}_{h}^2$ follows from   \eqref{515a} and  Theorem \ref{theoremLBB}.
\end{proof}
\subsection{Relationship between  discrete and continuous  conditions of uniqu-\\eness}

In this subsection, we shall show that the uniqueness condition \eqref{uni-condi} for the discrete solution is  consistent with the the continuous condition  \eqref{uni}.

\begin{lemma} For all $\bm{w},\bm{u},\bm{v}\in \bm{W}$, it holds
	\begin{eqnarray}
	|b_h( \mathcal{W}_I;\mathcal{U}_I,\mathcal{V}_I)-b(\bm{w};\bm{u},\bm{v})|\lesssim h^{1-\frac d 6}\|\nabla\bm{u}\|_{0}\|\nabla\bm{w}\|_{0}\|\nabla\bm{v}\|_{0},\label{B5}
	\end{eqnarray}
where $\mathcal W_I$, $\mathcal U_I$ and $\mathcal V_I$ are defined as 
	\begin{align*}
	\mathcal{W}_I:=(\bm{P}^{RT}_k{\bm{w}},\bm{\Pi}^{\partial}_k\bm{w}),\quad
	\mathcal{U}_I:=(\bm{P}^{RT}_k{\bm{u}},\bm{\Pi}^{\partial}_k\bm{u}),\quad
	\mathcal{V}_I:=(\bm{P}^{RT}_k{\bm{v}},\bm{\Pi}^{\partial}_k\bm{v}).
	\end{align*}
\end{lemma}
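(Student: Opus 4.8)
The plan is to compare the discrete trilinear form $b_h(\mathcal W_I;\mathcal U_I,\mathcal V_I)$ with the continuous form $b(\bm w;\bm u,\bm v)$ by exploiting the special structure of the Raviart--Thomas interpolant $\bm P^{RT}_k$ and the facet interpolant $\bm\Pi^\partial_k$. The key algebraic facts I would use are: (i) for $\bm w\in\bm W$, i.e. $\nabla\cdot\bm w=0$, the interpolant $\bm P^{RT}_k\bm w$ lies in $\bm H(\mathrm{div};\Omega)$ and is itself divergence-free, so the antisymmetric/$\bm H(\mathrm{div})$ form of $b_h$ from \eqref{b424} in Lemma~\ref{conti} applies; (ii) the commuting-diagram property of $\bm P^{RT}_k$ gives $\bm P^{RT}_k\bm w\cdot\bm n = \bm\Pi^\partial_k(\bm w\cdot\bm n)$ on each facet, so the jump terms $\widehat{\bm w}_I-\bm P^{RT}_k\bm w$ have vanishing normal component on $\mathcal E_h$; and (iii) the standard approximation estimates $\|\bm w-\bm P^{RT}_k\bm w\|_{0,T}+h_T^{1/2}\|\bm w-\bm P^{RT}_k\bm w\|_{0,\partial T}\lesssim h_T\|\nabla\bm w\|_{0,T}$ and similarly for $\bm\Pi^\partial_k$, together with the discrete/continuous Sobolev embeddings (Lemma~\ref{discrete-soblev} and the continuous $H^1\hookrightarrow L^6$).

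First I would write $b_h(\mathcal W_I;\mathcal U_I,\mathcal V_I)$ using the $\bm H(\mathrm{div})$, divergence-free representation \eqref{b424}: since $\nabla\cdot\bm P^{RT}_k\bm w=0$ the term $(\nabla\cdot\bm w_h,\bm u_h\bm v_h)$ drops, and since $\bm P^{RT}_k\bm w\cdot\bm n=\widehat{\bm w}_I\cdot\bm n$ the facet terms $S_4$, $S_5$ (those containing $\widehat{\bm w}_h-\bm w_h$) also vanish. What remains is essentially
\begin{align*}
2b_h(\mathcal W_I;\mathcal U_I,\mathcal V_I)=-2(\bm u_I\otimes\bm w_I,\nabla_h\bm v_I)+2\langle\bm w_I\cdot\bm n,\bm u_I(\bm v_I-\widehat{\bm v}_I)\rangle_{\partial\mathcal T_h}+\langle\bm w_I\cdot\bm n,(\bm u_I-\widehat{\bm u}_I)\cdot(\widehat{\bm v}_I-\bm v_I)\rangle_{\partial\mathcal T_h},
\end{align*}
where I abbreviate $\bm u_I=\bm P^{RT}_k\bm u$ etc. In parallel, I would integrate by parts in the continuous form to write $2b(\bm w;\bm u,\bm v)=-2(\bm u\otimes\bm w,\nabla\bm v)$ (using $\nabla\cdot\bm w=0$ and $\bm v\in H^1_0$). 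Subtracting, the main volume difference is $(\bm u\otimes\bm w,\nabla\bm v)-(\bm u_I\otimes\bm w_I,\nabla_h\bm v_I)$, which I would split into three pieces by inserting $\bm u\otimes\bm w-\bm u_I\otimes\bm w_I = (\bm u-\bm u_I)\otimes\bm w + \bm u_I\otimes(\bm w-\bm w_I)$ and $\nabla\bm v-\nabla_h\bm v_I=\nabla(\bm v-\bm v_I)$; each piece is bounded by a product of two $L^2$ (or $L^6$/$L^3$) norms of the fully resolved functions times one interpolation-error factor, contributing $h\|\nabla\bm u\|_0\|\nabla\bm w\|_0\|\nabla\bm v\|_0$ (note $h\lesssim h^{1-d/6}$ since $1-d/6\le 1$). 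The two facet terms have no continuous counterpart; I would bound them exactly as $S_2,S_3$ were bounded in the proof of Lemma~\ref{conti}, using $h_T^{-1/2}\|\bm v_I-\widehat{\bm v}_I\|_{0,\partial T}\le h_T^{-1/2}(\|\bm v_I-\bm v\|_{0,\partial T}+\|\bm v-\widehat{\bm v}_I\|_{0,\partial T})\lesssim h_T^{1/2}\|\nabla\bm v\|_{0,T}$ plus trace/inverse inequalities, giving the factor $h^{1-d/6}$ after collecting the $L^6$ norms of $\bm u_I$, $\bm w_I$.

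\textbf{Main obstacle.} The routine part is the approximation-theory bookkeeping; the one point that needs care is justifying that the $\bm H(\mathrm{div})$/divergence-free rewriting \eqref{b424} is legitimately applicable with $\bm w_h$ replaced by $\bm P^{RT}_k\bm w$ — this requires precisely the commuting property $\bm P^{RT}_k\bm w\cdot\bm n=\bm\Pi^\partial_k\bm w\cdot\bm n$ on facets and $\nabla\cdot\bm P^{RT}_k\bm w=\mathbb{P}_{k-1}(\nabla\cdot\bm w)=0$, so I would state these as the invoked properties of $\bm P^{RT}_k$. A secondary subtlety is that $b_h$ and $b$ are not literally the same expression even before interpolation, so I must be disciplined about which integration-by-parts identity I use on each side; keeping both in the single-volume-term form $-2(\bm u\otimes\bm w,\nabla\bm v)$ is what makes the cancellation transparent. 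Finally, one must check that all the $L^3$/$L^6$ norms that appear can indeed be controlled by $\|\nabla\cdot\|_0$ — on the continuous side this is Sobolev embedding in $H^1_0(\Omega)$, and on the discrete side it is Lemma~\ref{discrete-soblev} together with the stability of $\bm P^{RT}_k$ in the broken $H^1$ (equivalently $\|\cdot\|_V$) norm; with $d\le 3$ the exponent $1-d/6\ge 1/2>0$, so the bound genuinely tends to zero as $h\to 0$, which is the whole point of the lemma.
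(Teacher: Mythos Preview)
Your approach is genuinely different from the paper's and contains a real gap. The paper does \emph{not} pass through the $\bm H(\mathrm{div})$ representation \eqref{b424}; instead it writes
\[
b_h(\mathcal W_I;\mathcal U_I,\mathcal V_I)-b(\bm w;\bm u,\bm v)=\tfrac12(R_1-R_2),
\]
where $R_1$ is the difference of the ``first halves'' of $b_h$ and $b$ (and $R_2$ the symmetric counterpart), and then decomposes $R_1$ into five pieces $R_{1,1},\dots,R_{1,5}$ by integrating by parts inside each element and using that $\langle \bm\Pi_k^\partial\bm u\otimes\bm\Pi_k^\partial\bm w\,\bm n,\bm v\rangle_{\partial\mathcal T_h}=0$ for the continuous $\bm v$. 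Each piece is then estimated with H\"older and the $L^p$ approximation properties of $\bm P^{RT}_k$ and $\bm\Pi_k^\partial$.

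The gap in your plan is the treatment of the $S_2$--type facet term and, correspondingly, of the volume piece $(\bm u\otimes\bm w,\nabla_h(\bm v_I-\bm v))$. In Lemma~\ref{conti} the bound on $S_2$ is $\|\bm u_h\|_{0,3}\|\mathcal W_h\|_V\|\mathcal V_h\|_V$, with \emph{no} power of $h$; for interpolants this stays $O(1)$, not $O(h^{1-d/6})$. Your displayed inequality $h_T^{-1/2}\|\bm v_I-\widehat{\bm v}_I\|_{0,\partial T}\lesssim h_T^{1/2}\|\nabla\bm v\|_{0,T}$ is an arithmetic slip (the correct right-hand side is $\|\nabla\bm v\|_{0,T}$), and with the correct bound $S_2$ carries no $h$. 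Likewise $\|\nabla_h(\bm v-\bm v_I)\|_0$ is only $\lesssim\|\nabla\bm v\|_0$, so the third volume piece in your split also fails to produce a factor of $h$. These two problematic contributions can only be controlled after an \emph{additional} integration by parts and a careful cancellation/orthogonality argument (e.g.\ exploiting that $\bm\Pi_k^\partial\bm v-\bm v$ is $L^2(E)$--orthogonal to $\mathcal P_k(E)$), which is precisely what the paper's $R_{1,3}$--$R_{1,5}$ terms accomplish. In other words, once you repair the gap your argument collapses back into the paper's $R_1,R_2$ decomposition; the $\bm H(\mathrm{div})$ shortcut does not actually shorten the proof here.
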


\begin{proof} Set
	\begin{align*}\footnotesize
	&R_1:= 
	-(\bm{P}^{RT}_k\bm{u}\otimes\bm{P}^{RT}_k\bm{w},\nabla_h\bm{P}^{RT}_k\bm{v})+\langle \bm{\Pi}_{k}^{\partial}\bm{u}\otimes\bm{\Pi}_{k}^{\partial}\bm{w}\bm{n},\bm{P}^{RT}_k\bm{v} \rangle_{\partial\mathcal{T}_h}
	-(\nabla\cdot(\bm{u}\otimes\bm{w}),\bm{v}),\\
	&R_2:=
	(\bm{P}^{RT}_k\bm{v}\otimes\bm{P}^{RT}_k\bm{w},\nabla_h\bm{P}^{RT}_k\bm{u}) -\langle \bm{\Pi}_{k}^{\partial}\bm{v}\otimes\bm{\Pi}_{k}^{\partial}\bm{w}\bm{n},\bm{P}^{RT}_k\bm{u} \rangle_{\partial\mathcal{T}_h}
	+(\nabla\cdot(\bm{v}\otimes\bm{w}),\bm{u}).
	\end{align*}
	Then we have
	\begin{eqnarray}
	b_h( \mathcal{W}_I;\mathcal{U}_I,\mathcal{V}_I)-b(\bm{w};\bm{u},\bm{v})
	=\frac{1}{2}(R_1-R_2).\label{R0}
	\end{eqnarray}
	Now we are going to estimate $R_1$ and $R_2$.
	From  integration by parts \color{black} and the fact $\langle \bm{\Pi}_{k}^{\partial}\bm{u}\otimes\bm{\Pi}_{k}^{\partial}\bm{w}\bm{n},\bm{v} \rangle_{\partial\mathcal{T}_h}=0$ it follows
	\begin{align*}
	R_1=&-((\bm{P}^{RT}_k\bm{u}-\bm{u})\otimes\bm{P}^{RT}_k\bm{w}
	,\nabla_h\bm{P}^{RT}_k\bm{v})-(\bm{u}\otimes(\bm{P}^{RT}_k\bm{w}-\bm{w})
	,\nabla_h\bm{P}^{RT}_k\bm{v})\nonumber\\
	&-(\bm{u}\otimes\bm{w}
	,\nabla_h(\bm{P}^{RT}_k\bm{v}-\bm{v}))+\langle \bm{\Pi}_{k}^{\partial}\bm{u}\otimes\bm{\Pi}_{k}^{\partial}\bm{w}\bm{n},\bm{P}^{RT}_k\bm{v} -\bm{v} \rangle_{\partial\mathcal{T}_h}\\
	=&-((\bm{P}^{RT}_k\bm{u}-\bm{u})\otimes\bm{P}^{RT}_k\bm{w}
	,\nabla_h\bm{P}^{RT}_k\bm{v})-(\bm{u}\otimes(\bm{P}^{RT}_k\bm{w}-\bm{w})
	,\nabla_h\bm{P}^{RT}_k\bm{v})\nonumber\\
	&+(\nabla\cdot(\bm{u}\otimes\bm{w})
	,\bm{P}^{RT}_k\bm{v}-\bm{v})+\langle \bm{\Pi}_{k}^{\partial}\bm{u}\otimes\bm{\Pi}_{k}^{\partial}\bm{w}\bm{n}-\bm{u}\otimes\bm{w}\bm{n},\bm{P}^{RT}_k\bm{v} -\bm{v} \rangle_{\partial\mathcal{T}_h}\nonumber\\
	=&-((\bm{P}^{RT}_k\bm{u}-\bm{u})\otimes\bm{P}^{RT}_k\bm{w}
	,\nabla_h\bm{P}^{RT}_k\bm{v})-(\bm{u}\otimes(\bm{P}^{RT}_k\bm{w}-\bm{w})
	,\nabla_h\bm{P}^{RT}_k\bm{v})\nonumber\\
	&+(\nabla\cdot(\bm{u}\otimes\bm{w})
	,\bm{P}^{RT}_k\bm{v}-\bm{v})+\langle (\bm{\Pi}_{k}^{\partial}\bm{u}-\bm{u})\otimes\bm{\Pi}_{k}^{\partial}\bm{w}\bm{n},\bm{P}^{RT}_k\bm{v} -\bm{v} \rangle_{\partial\mathcal{T}_h}\nonumber\\
	&+\langle \bm{u}\otimes(\bm{\Pi}_{k}^{\partial}\bm{w}-\bm{w})\bm{n},\bm{P}^{RT}_k\bm{v} -\bm{v} \rangle_{\partial\mathcal{T}_h}\nonumber\\
	=:&\sum_{i=1}^5 R_{1,i}.
	\end{align*}
	Thanks to  the   H\"{o}lder's inequality and the approximation properties of $\bm{P}^{RT}_k$,  it holds
	\begin{align*}
	|R_{1,1}|\le& \sum_{T\in\mathcal{T}_h}\|\bm{P}^{RT}_k\bm{u}-\bm{u}\|_{0,3,T}(\|\bm{P}^{RT}_k\bm{w}-\bm{w}\|_{0,6,T}+\|\bm{w}\|_{0,6,T})
	\|\nabla_h\bm{P}^{RT}_k\bm{v}\|_{0,T}\nonumber\\
	\le&
	\left(\|\bm{P}^{RT}_k\bm{w}-\bm{w}\|_{0,6}+\|\bm{w}\|_{0,6}\right)
	\sum_{T\in\mathcal{T}_h}\|\bm{P}^{RT}_k\bm{u}-\bm{u}\|_{0,3,T}
	\|\nabla_h\bm{P}^{RT}_k\bm{v}\|_{0,T}\nonumber\\
	\lesssim&(h^{1-d/3}+1)\|\nabla\bm{w}\|_{0}\sum_{T\in\mathcal{T}_h}h_T^{1-\frac d 6}\|\nabla\bm{u}\|_{0,T}\|\nabla\bm{v}\|_{0,T}\nonumber\\
	\lesssim&h^{1-\frac d 6}\|\nabla\bm{u}\|_{0}\|\nabla\bm{w}\|_{0}\|\nabla\bm{v}\|_{0},\\
	|R_{1,2}|\le&\sum_{T\in\mathcal{T}_h}\|\bm{u}\|_{0,6,T}\|\bm{P}^{RT}_k\bm{w}-\bm{w}\|_{0,3,T}
	\|\nabla_h\bm{P}^{RT}_k\bm{v}\|_{0,T}\nonumber\\
	\lesssim&h^{1-\frac d 6}\|\nabla\bm{u}\|_{0}\|\nabla\bm{w}\|_{0}\|\nabla\bm{v}\|_{0},\\
	|R_{1,3}|\le&\sum_{T\in\mathcal{T}_h}\|\nabla\cdot\bm{w}\|_{0,T}
	\|\bm{u}\|_{0,6,T}\|\bm{P}^{RT}_k\bm{v}-\bm{v}\|_{0,3,T}\nonumber\\
	&+\sum_{T\in\mathcal{T}_h}\|\bm{w}\|_{0,6,T}
	\|\nabla\bm{u}\|_{0,T}\|\bm{P}^{RT}_k\bm{v}-\bm{v}\|_{0,3,T}\nonumber\\
	\lesssim&h^{1-\frac d 6}\|\nabla\bm{u}\|_{0}\|\nabla\bm{w}\|_{0}\|\nabla\bm{v}\|_{0},\\
	|R_{1,4}|\le&\sum_{T\in\mathcal{T}_h}
	\|\bm{\Pi}_{k}^{\partial}\bm{u}-\bm{u}\|_{0,\partial T}
	\|\bm{\Pi}_{k}^{\partial}\bm{w}\|_{0,6,\partial T}
	\|\bm{P}^{RT}_k\bm{v} -\bm{v}\|_{0,3,\partial T}\nonumber\\
	\lesssim&\sum_{T\in\mathcal{T}_h}
	h_T^{\frac 1 2}\|\nabla\bm{u}\|_{0,T}
	h_T^{-\frac 1 6}( \|\bm{w}\|_{1,T}+\|\bm{w}\|_{0,6,T}   )
	h_T^{2/3-d/6}\|\nabla\bm{v}\|_{0,T}\nonumber\\
	\le&( \|\bm{w}\|_{1}+\|\bm{w}\|_{0,6}   )
	\sum_{T\in\mathcal{T}_h}
	h_T^{1-\frac d 6}\|\nabla\bm{u}\|_{0,T}
	\|\nabla\bm{v}\|_{0,T}\nonumber\\
	\lesssim&h^{1-\frac d 6}\|\nabla\bm{u}\|_{0}\|\nabla\bm{w}\|_{0}\|\nabla\bm{v}\|_{0},
	\end{align*}
	and 
	\begin{align*}
	|R_{1,5}|\le&\sum_{T\in\mathcal{T}_h}
	\|\bm{u}\|_{0,6,\partial T}
	\|\bm{\Pi}_{k}^{\partial}\bm{w}-\bm{w}\|_{0,\partial T}
	\|\bm{P}^{RT}_k\bm{v} -\bm{v}\|_{0,3,\partial T}\nonumber\\
	\lesssim&\sum_{T\in\mathcal{T}_h}
	h_T^{-\frac 1 6}( \|\bm{u}\|_{1,T}+\|\bm{u}\|_{0,6,T}   )
	h_T^{\frac 1 2}\|\nabla\bm{w}\|_{0,T}
	h_T^{2/3-d/6}\|\nabla\bm{v}\|_{0,T}\nonumber\\
	\le&( \|\bm{u}\|_{1}+\|\bm{u}\|_{0,6}   )
	\sum_{T\in\mathcal{T}_h}
	h_T^{1-\frac d 6}\|\nabla\bm{w}\|_{0,T}
	\|\nabla\bm{v}\|_{0,T}\nonumber\\
	\lesssim&h^{1-\frac d 6}\|\nabla\bm{u}\|_{0}\|\nabla\bm{w}\|_{0}\|\nabla\bm{v}\|_{0}.
	\end{align*}
	As a result, we have
	\begin{eqnarray}
		|R_1|\lesssim h^{1-\frac d 6}\|\nabla\bm{u}\|_{0}\|\nabla\bm{w}\|_{0}\|\nabla\bm{v}\|_{0}.\label{R1}
	\end{eqnarray}
	Similarly, we can obtain
	\begin{eqnarray*}
		|R_2|\lesssim h^{1-\frac d 6}\|\nabla\bm{u}\|_{0}\|\nabla\bm{w}\|_{0}\|\nabla\bm{v}\|_{0},\label{R2}
	\end{eqnarray*}
	which, together with \eqref{R0} and \eqref{R1},  gives the desired result.
\end{proof}

In view of the definitions \eqref{26}, \eqref{27}, \eqref{Nh}, and \eqref{fh}, in what follows  we shall show the relationships between $\|\bm{f}\|_{*,h}$ and $\|\bm{f}\|_{*}$ and  between $\mathcal{N}_h$ and $\mathcal{N}$. To this end,   for any $\lambda\in (1, 2]$,
let $\bm{g}\in [W^{0,\lambda}(\Omega)]^d$ be a given function,  and let $(\bm{\Phi},\Psi)\in {\bm V}\times Q$ be the solution of the auxiliary Stokes problem
\begin{subequations}
	\begin{align}
	-\Delta\bm{\Phi}+\nabla\Psi&=\bm{g},\\
	\nabla\cdot\bm{\Phi}&=0
	\end{align}
\end{subequations}
with the   regularity assumption 
\begin{eqnarray}
\|\bm{\Phi}\|_{1+s,\lambda}+\|\Psi\|_{s,\lambda}\lesssim \|\bm{g}\|_{0,\lambda}
\label{regular}
\end{eqnarray}
 for some $s\in (\frac{1}{2},1]$. 

\begin{remark} As shown in  \cite{Girault.V;Raviart.P1986,regular1,regular2},
	when $\Omega$ is a bounded convex polygon in $\mathbb{R}^2$ or convex polyhedron in $\mathbb{R}^3$, the regularity estimate \eqref{regular} holds true with $s=1$.
\end{remark}

%
%

\begin{lemma}\label{limit} It holds that
	\begin{align*}
	\lim_{h\to 0}\|\bm{f}\|_{*,h}&=\|\bm{f}\|_{*},\qquad
	\lim_{h\to 0}\mathcal{N}_h=\mathcal{N}.
	\end{align*}
\end{lemma}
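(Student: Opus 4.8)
The idea is to establish two-sided inequalities between the discrete and continuous quantities, and then let $h\to 0$. I would prove both limits via the same scheme: (a) a ``lifting'' direction, showing that any continuous competitor in $\bm W$ can be approximated by a discrete competitor in $\bm W_h$ with comparable norm and comparable values of the relevant form; (b) a ``projection'' direction, showing that any discrete competitor in $\bm W_h$ projects to (or is otherwise matched by) a continuous object in $\bm W$ with comparable norm and comparable values of the relevant form. Combining (a) and (b) with the estimates already at hand -- the consistency estimate \eqref{B5}, the continuity bounds \eqref{B1}--\eqref{B4} of Lemma \ref{conti}, and the discrete Sobolev embedding of Lemma \ref{discrete-soblev} -- will sandwich $\|\bm f\|_{*,h}$ between $\|\bm f\|_*\pm o(1)$ and $\mathcal N_h$ between $\mathcal N\pm o(1)$.

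\textbf{Step 1: $\liminf$-type bounds (lifting).} Given $\bm v\in\bm W$ with $|\bm v|_1=1$, set $\mathcal V_I=(\bm P^{RT}_k\bm v,\bm\Pi^\partial_k\bm v)$. Since $\bm v$ is divergence-free, $\bm P^{RT}_k\bm v\in\bm H(\mathrm{div};\Omega)$ with $\nabla\cdot(\bm P^{RT}_k\bm v)=0$, and one checks from the commuting-diagram property that $\mathcal V_I\in\bm W_h$; moreover $\|\mathcal V_I\|_V\lesssim|\bm v|_1$ and, by the approximation properties, $\|\mathcal V_I\|_V\to|\bm v|_1$ as $h\to 0$. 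Then $(\bm f,\bm P^{RT}_k\bm v)\to(\bm f,\bm v)$, so $\|\bm f\|_{*,h}\ge (\bm f,\bm P^{RT}_k\bm v)/\|\mathcal V_I\|_V\to(\bm f,\bm v)$; taking the supremum over $\bm v$ gives $\liminf_{h\to 0}\|\bm f\|_{*,h}\ge\|\bm f\|_*$. For $\mathcal N_h$, apply the same construction to a near-maximizing triple $(\bm w,\bm u,\bm v)$ of $\mathcal N$ and use \eqref{B5}: $b_h(\mathcal W_I;\mathcal U_I,\mathcal V_I)=b(\bm w;\bm u,\bm v)+O(h^{1-d/6})$, while the three denominators converge to $|\bm w|_1|\bm u|_1|\bm v|_1$; hence $\liminf_{h\to 0}\mathcal N_h\ge\mathcal N$.

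\textbf{Step 2: $\limsup$-type bounds (projection).} This is the harder direction. Given $\mathcal V_h=(\bm v_h,\widehat{\bm v}_h)\in\bm W_h$ with $\|\mathcal V_h\|_V=1$, I would solve the auxiliary Stokes problem with data $\bm g$ dual to $\bm v_h$ (this is the reason the excerpt introduces $(\bm\Phi,\Psi)$ with regularity \eqref{regular}) to produce a genuinely divergence-free $\bm\Phi\in\bm W$ satisfying $|\bm\Phi|_1\lesssim\|\bm v_h\|_V$ and $(\bm v_h,\bm g)\gtrsim$ a controlled quantity; alternatively, one can use a smoothing/regularization of $\bm v_h$ combined with a divergence-correction so the result lies in $\bm W$, paying only an $o(1)$ penalty in the norm and in the functionals because $\bm v_h$ is already exactly divergence-free on each element and has $O(h^{1/2})$ jumps in the $\|\cdot\|_V$-metric. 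The discrete Sobolev embedding (Lemma \ref{discrete-soblev}) ensures $\|\bm v_h\|_{0,\mu}\lesssim 1$ uniformly, which is what lets the nonlinear term pass to the limit; combined with \eqref{B2}--\eqref{B4} and the $h^{1-d/6}$ factors, one gets $b_h(\mathcal W_h;\mathcal U_h,\mathcal V_h)\le\mathcal N\,\|\mathcal W_h\|_V\|\mathcal U_h\|_V\|\mathcal V_h\|_V+o(1)$ uniformly over the competitors, hence $\limsup_{h\to 0}\mathcal N_h\le\mathcal N$; similarly $\limsup_{h\to 0}\|\bm f\|_{*,h}\le\|\bm f\|_*$. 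Combining Steps 1 and 2 yields the two claimed limits.

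\textbf{Main obstacle.} The essential difficulty is Step 2: constructing, for an arbitrary exactly-divergence-free HDG velocity $\bm v_h\in\bm W_h$, a nearby honestly $H^1_0$ and divergence-free function in $\bm W$ whose $H^1$-seminorm and whose pairings with $\bm f$ and in $b(\cdot;\cdot,\cdot)$ differ by only $o(1)$ as $h\to 0$, \emph{uniformly} over all such $\bm v_h$ with $\|\mathcal V_h\|_V=1$. This uniformity -- rather than pointwise convergence -- is what makes the limit statements (and not merely $\liminf/\limsup$ one-sided bounds) true, and it is where the Stokes regularity \eqref{regular}, the trace/inverse estimates, and the discrete Sobolev embedding all have to be combined carefully; handling the inter-element jump terms in $b_h$ and controlling the $L^3$-norms of $\bm v_h$ uniformly are the technical crux.
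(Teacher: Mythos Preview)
Your two-sided sandwich strategy is the right one and matches the paper's architecture. The genuine gap is in Step~2: you correctly flag it as the crux, but neither of your suggested constructions (``Stokes problem with data dual to $\bm v_h$'' or ``smoothing plus divergence-correction'') pins down the property that actually makes the argument close. What you need, given $\mathcal V_h\in\bm W_h$, is a $\bm\Phi\in\bm W$ with $|\bm\Phi|_1\le\|\mathcal V_h\|_V$ --- constant \emph{exactly} $1$, not merely $\lesssim$ --- together with a uniform estimate $\|\bm\Phi-\bm v_h\|_{0,\mu}=o(1)$. Without the sharp constant your chain gives only $\limsup\|\bm f\|_{*,h}\le C\|\bm f\|_*$ with some $C>1$, which is useless; a generic averaging/regularization operator will not produce constant $1$. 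The paper resolves this by \emph{defining} $\bm\Pi\mathcal V_h\in\bm W$ through the orthogonality $(\nabla\bm\Pi\mathcal V_h,\nabla\bm w)=(K_h\mathcal V_h,\nabla\bm w)$ for all $\bm w\in\bm W$; testing with $\bm w=\bm\Pi\mathcal V_h$ gives $\|\nabla\bm\Pi\mathcal V_h\|_0\le\|K_h\mathcal V_h\|_0\le\|\mathcal V_h\|_V$ immediately. The Stokes regularity \eqref{regular} then enters via a duality argument with right-hand side $(\bm\Pi\mathcal V_h-\bm v_h)^{\mu-1}$ to get the uniform $L^\mu$ closeness, after which \eqref{B2}--\eqref{B5} handle the nonlinear form exactly as you indicate.

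Your Step~1 is nearly correct but the claim $\|\mathcal V_I\|_V\to|\bm v|_1$ fails for generic $\bm v\in\bm W$: the jump term $\|\tau^{1/2}(\bm P^{RT}_k\bm v-\bm\Pi^\partial_k\bm v)\|_{\partial\mathcal T_h}$ is only $\lesssim|\bm v|_1$ with no gain in $h$ when $\bm v$ is merely $H^1$. You can repair this with a density argument (smooth divergence-free functions in $\bm W$), but the paper avoids it altogether by introducing a second switch operator $\bm\Pi_h:\bm W\to\bm W_h$, defined by a discrete Stokes-type projection, which again yields the constant-$1$ bound $\|\bm\Pi_h\bm v\|_V\le|\bm v|_1$ directly and gives quantitative $h$-rates rather than just limits.
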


\begin{proof} We divide the proof into three steps.
	
	Step 1). Let $\bm{\Pi}:\bm{V}_h\times\widehat{\bm{V}}_h\to\bm{W}$ be a switch operator from the discrete space to the continuous space defined as follows: for any $\mathcal{V}_h=( \bm{v}_h,\widehat{\bm{v}}\color{black}_h\color{black})\in\color{black}\bm W_h$, $\bm{\Pi}\mathcal{V}_h\in\bm{W}$ is determined by
	\begin{eqnarray}\label{pi}
	(\nabla \bm{\Pi}\mathcal{V}_h,\nabla \bm{w})=(K_h\mathcal{V}_h,\nabla \bm{w}),\qquad\forall \bm{w}\in\bm{W}.
	\end{eqnarray}
	Here we recall that the operator $K_h$ is given in \eqref{K_h}. 
	By testing \eqref{pi} with $\bm{w}=\bm{\Pi}\mathcal{V}_h\in \bm{W}$ we have
	\begin{eqnarray}
	\|\nabla \bm{\Pi}\mathcal{V}_h\|_0\le \|K_h\mathcal{V}_h\|_0\leq \|\mathcal{V}_h\|_V.\label{bound1}
	\end{eqnarray}
	 Let $\mu\in [2,\infty)$ be the conjugate number of $\lambda$ with
	\begin{eqnarray}
	\frac{1}{\mu}+\frac{1}{\lambda}=1.\label{528}
	\end{eqnarray}
We consider the following problem: find $(\bm{\Phi},\Psi)\in \color{black}\bm{V}\times Q$ such that
	\begin{subequations}
		\begin{align}
		-\Delta\bm{\Phi}+\nabla \Psi&= (\bm{\Pi}\mathcal{V}_h-\bm{v}_{h})^{\mu-1},\label{526}\\
	\color{black}
		\nabla\cdot\bm{\Phi}&=0.
		\end{align}
	\end{subequations}
	Then from the regularity \eqref{regular} it follows
	\begin{eqnarray}
	\|\Phi\|_{1+s,\lambda}\lesssim \|(\bm{\Pi}\mathcal{V}_h-\bm{v}_{h})^{\mu-1}\|_{0,\lambda}
	=\|\bm{\Pi}\mathcal{V}_h-\bm{v}_{h}\|^{\mu-1}_{0,\mu}.\label{530}
	\end{eqnarray}
	We use the equation \eqref{526}, \color{black} integration by parts \color{black},
	the fact
	$\langle \bm{n}\cdot\nabla\bm{\Phi},\widehat{\bm{v}}_{h}\rangle_{\partial\mathcal{T}_h}=0$,
	the definitions of $ \bm{\Pi}_m^o$, \color{black}
	  the equality \eqref{pi},
	and the  definition of $K_h$
	to get
	\begin{align*}
	\|\bm{\Pi}\mathcal{V}_h-\bm{v}_{h}\|^{\mu}_{0,\mu}=&(-\Delta\bm{\Phi}\color{black}+\nabla\Psi\color{black},\bm{\Pi}\mathcal{V}_h-\bm{v}_{h})\nonumber\\
	=&(\nabla\bm{\Phi},\nabla\bm{\Pi}\mathcal{V}_h)
	-(\nabla\bm{\Phi},\nabla_h\bm{v}_{h})+\langle \bm{n}\cdot\nabla\bm{\Phi},\bm{v}_{h}-\widehat{\bm{v}}_{h}\rangle_{\partial\mathcal{T}_h}\nonumber\\
	=&(\nabla\bm{\Phi},\nabla\bm{\Pi}\mathcal{V}_h)
	-(\bm{\Pi}_{m}^o\nabla\bm{\Phi},\nabla_h\bm{v}_{h})
	+\langle \bm{n}\cdot\nabla\bm{\Phi},\bm{v}_{h}-\widehat{\bm{v}}_{h}\rangle_{\partial\mathcal{T}_h}\nonumber\\
	=&(\nabla\bm{\Phi},\nabla\bm{\Pi}\mathcal{V}_h-K_h\mathcal{V}_h)+\langle \bm{n}\cdot(\nabla\bm{\Phi}-\bm{\Pi}_{m}^o\nabla\bm{\Phi}),\bm{v}_{h}-\widehat{\bm{v}}_{h}\rangle_{\partial\mathcal{T}_h}\nonumber\\
	=&\langle \bm{n}\cdot(\nabla\bm{\Phi}-\bm{\Pi}_{m}^o\nabla\bm{\Phi}),\bm{v}_{h}-\widehat{\bm{v}}_{h}\rangle_{\partial\mathcal{T}_h}.
	\end{align*}
	In light of the \color{black} H\"{o}lder's inequality and the approximation properties  of interpolations we have
	\begin{align*}
	\|\bm{\Pi}\mathcal{V}_h-\bm{v}_{h}\|^{\mu}_{0,\mu}
	\lesssim&\sum_{T\in\mathcal{T}_h}\| \bm{n}\cdot(\nabla\bm{\Phi}-\bm{\Pi}_{k}^o\nabla\bm{\Phi})\|_{0,\lambda,\partial T}
	\|\bm{v}_{h}-\widehat{\bm{v}}_{h}\|_{0,\mu,\partial T}
	\nonumber\\
	\lesssim&\sum_{E\in\mathcal{E}_h}h_T^{s-\frac{1}{\lambda}}\|\bm{\Phi}\|_{1+s,\lambda,T}
	h_T^{-(d-1)(\frac{1}{2}-\frac{1}{\mu})}\|\bm{v}_{h}-\widehat{\bm{v}}_{h}\|_{0,\partial T}
	\nonumber\\
	\lesssim& h^{s-d(\frac{1}{2}-\frac{1}{\mu})}\|\bm{\Pi}\mathcal{V}_h-\bm{v}_{h}\|^{\mu-1}_{0,\mu}
	\|\tau^{\frac 1 2}(\bm{v}_h-\widehat{\bm{v}}_h )\|_{\partial \mathcal{T}_h},
	\end{align*}
	which leads to
	\begin{align}
	\|\bm{\Pi}\mathcal{V}_h-\bm{v}_{h}\|_{0,\mu}
	\lesssim h^{s-d(\frac{1}{2}-\frac{1}{\mu})}\|\tau^{\frac 1 2}(\bm{v}_{h}-\widehat{\bm{v}}_{h})\|_{\partial \mathcal{T}_h}.\label{est1}
	\end{align}
	With this estimate and \eqref{bound1} it is easy to get
	\begin{align}
	\|\bm{f}\|_{*,h}=&\sup_{\bm{v}_h\in\bm{V}_h\times\widehat{\bm{V}}_h}
	\frac{(\bm{f},\bm{v}_{h})}{\|\mathcal{V}_h\|_V}\nonumber\\
	=&\sup_{\bm 0\neq\bm{v}_h\in\bm{V}_h\times\widehat{\bm{V}}_h}\frac{(\bm{f},\bm{\Pi}\mathcal{V}_h)+(\bm{f},\bm{v}_{h}-\bm{\Pi}\mathcal{V}_h)}
	{\|\mathcal{V}_h\|_V}\nonumber\\
	\le&\sup_{\bm 0\neq\bm{v}_h\in\bm{V}_h\times\widehat{\bm{V}}_h}\frac{(\bm{f},\bm{\Pi}\mathcal{V}_h)+Ch^{s}\|\bm{f}\|_0\|\tau^{\frac 1 2}(\bm{v}_{h}-\widehat{\bm{v}}_{h})\|_{\partial \mathcal{T}_h}}
	{\|\mathcal{V}_h\|_V}
	\nonumber\\
	\le&\sup_{\bm 0\neq\bm{v}_h\in\bm{V}_h\times\widehat{\bm{V}}_h}\frac{(\bm{f},\bm{\Pi}\mathcal{V}_h)}
	{\|\nabla\bm{\Pi}\mathcal{V}_h\|_0}+Ch^{s}\|\bm{f}\|_0\nonumber\\
	\le&\sup_{\bm 0\neq\bm{v}\in\bm{W}}\frac{(\bm{f},\bm{v})}
	{\|\nabla\bm{v}\|_0}+Ch^{s}\|\bm{f}\|_0\nonumber\\
	=&\|\bm{f}\|_{*}+Ch^{s}\|\bm{f}\|_0.\label{fh1}
	\end{align}
	By the approximation properties of interpolations $\bm{P}^{RT}_k,\bm{\Pi}_k^{\partial} $ and \eqref{bound1} we obtain
	\begin{eqnarray}
	\|(\bm{\Pi}\mathcal{V}_h)_I\|_V
	\lesssim\|\mathcal{V}_h\|_V,\label{535}
	\end{eqnarray}
	where
	$$(\bm{\Pi}\mathcal{V}_h)_I:=(\bm{P}^{RT}_k\bm{\Pi}\mathcal{V}_h,\bm{\Pi}_k^{\partial}\bm{\Pi}\mathcal{V}_h  ) .$$
	Using \eqref{est1} and the approximation property of $\bm{P}_k^{RT}$ yields
	\begin{align}
	\|\bm{v}_{h}-\bm{P}^{RT}_k\bm{\Pi}\mathcal{V}_h\|_{0,3}&\le
	\|\bm{v}_{h}-\bm{\Pi}\mathcal{V}_h\|_{0,3}
	+\|\bm{\Pi}\mathcal{V}_h-\bm{P}^{RT}_k\bm{\Pi}\mathcal{V}_h\|_{0,3}\nonumber\\
	&\lesssim h^{s-d/6}(\|\tau^{\frac 1 2}(\bm{v}_{h}-\widehat{\bm{v}}_{h})\|_{\partial \mathcal{T}_h}
	+\|\nabla\bm{\Pi}\mathcal{V}_h\|_0)\nonumber\\
	&\lesssim h^{s-d/6}\|\mathcal{V}_h\|_V.\label{536}
	\end{align}
	Then, from \eqref{B2}, \eqref{B3}, \eqref{B4}, \eqref{535}, and \eqref{536} it follows
	\begin{eqnarray*}
		&&b_h(\mathcal{W}_h;\mathcal{U}_h,\mathcal{V}_h)-b_h((\bm{\Pi}\mathcal{W}_h)_I;(\bm{\Pi}\mathcal{U}_h)_I,(\bm{\Pi}\mathcal{V}_h)_I))\nonumber\\
		&&\quad\quad=
		b_h(\mathcal{W}_h-(\bm{\Pi}\mathcal{W}_h)_I;\mathcal{U}_h,\mathcal{V}_h)\nonumber\\
		&&\quad\quad\quad+b_h((\bm{\Pi}\mathcal{W}_h)_I;\mathcal{U}_h-(\bm{\Pi}\mathcal{U}_h)_I,\mathcal{V}_h)\nonumber\\
		&&\quad\quad\quad+b_h((\bm{\Pi}\mathcal{W}_h)_I;(\bm{\Pi}\mathcal{U}_h)_I,\mathcal{V}_h-(\bm{\Pi}\mathcal{V}_h)_I)\nonumber\\
		&&\quad\quad\lesssim h^{s-d/6}\|\mathcal{W}_h\|_V \cdot\|\mathcal{U}_h\|_V \cdot\|\mathcal{V}_h\|_V.
	\end{eqnarray*}
	In view of \eqref{B5} and \eqref{bound1}, we have
	\begin{eqnarray*}
		&&b_h((\bm{\Pi}\mathcal{W}_h)_I;(\bm{\Pi}\mathcal{U}_h)_I,(\bm{\Pi}\mathcal{V}_h)_I)
		-b(\bm{\Pi}\mathcal{W}_h;\bm{\Pi}\mathcal{U}_h,\bm{\Pi}\mathcal{V}_h)\nonumber\\
		&&\quad\quad\lesssim
		h^{1-\frac d 6}\|\nabla\bm{\Pi}\mathcal{W}_h\|_0
		\|\nabla\bm{\Pi}\mathcal{U}_h\|_0
		\|\nabla\bm{\Pi}\mathcal{V}_h\|_0\nonumber\\
		&&\quad\quad\le
		h^{1-\frac d 6}\|\mathcal{W}_h\|_V \cdot\|\mathcal{U}_h\|_V \cdot\| \bm{v}_h\|_V.
	\end{eqnarray*}
	Therefore,
	\begin{align}
	b_h(\mathcal{W}_h;\mathcal{U}_h,\mathcal{V}_h)
	=&
	b(\bm{\Pi}\mathcal{W}_h;\bm{\Pi}\mathcal{U}_h,\bm{\Pi}\mathcal{V}_h)\nonumber\\
	&
	+(b_h(\mathcal{W}_h;\mathcal{U}_h,\mathcal{V}_h)-b_h((\bm{\Pi}\mathcal{W}_h)_I;(\bm{\Pi}\mathcal{U}_h)_I,(\bm{\Pi}\mathcal{V}_h)_I)) \nonumber\\
	&+(b_h((\bm{\Pi}\mathcal{W}_h)_I;(\bm{\Pi}\mathcal{U}_h)_I,(\bm{\Pi}\mathcal{V}_h)_I)-b(\bm{\Pi}\mathcal{W}_h;\bm{\Pi}\mathcal{U}_h,\bm{\Pi}\mathcal{V}_h)) \nonumber\\
	\le&b(\bm{\Pi}\mathcal{W}_h;\bm{\Pi}\mathcal{U}_h,\bm{\Pi}\mathcal{V}_h)+C h^{s-d/6}\|\mathcal{W}_h\|_V \cdot\|\mathcal{U}_h\|_V \cdot\|\mathcal{V}_h\|_V.\nonumber
	\end{align}
	Hence, we get
	\begin{align}
	\mathcal{N}_h=&\sup_{\bm 0\neq\bm{w}_h,\bm{u}_h,\bm{v}_h\in\bm{W}_h}\frac{b_h(\mathcal{W}_h;\mathcal{U}_h,\mathcal{V}_h)}{\|\mathcal{W}_h\|_V \cdot\|\mathcal{U}_h\|_V \cdot\|\mathcal{V}_h\|_V}
	\nonumber\\
	\le&\sup_{\bm 0\neq\bm{w}_h,\bm{u}_h,\bm{v}_h\in\bm{W}_h}\frac{b(\bm{\Pi}\mathcal{W}_h;\bm{\Pi}\mathcal{U}_h,\bm{\Pi}\mathcal{V}_h)
		+Ch^{1-\frac d 6}\|\mathcal{W}_h\|_V \cdot\|\mathcal{U}_h\|_V \cdot\|\mathcal{V}_h\|_V}
	{\|\mathcal{W}_h\|_V \cdot\|\mathcal{U}_h\|_V \cdot\|\mathcal{V}_h\|_V}
	\nonumber\\
	\le&\sup_{\bm 0\neq\bm{w}_h,\bm{u}_h,\bm{v}_h\in\bm{W}_h}\frac{b(\bm{\Pi}\mathcal{W}_h;\bm{\Pi}\mathcal{U}_h,\bm{\Pi}\mathcal{V}_h)
	}
	{\|\nabla\bm{\Pi}\mathcal{W}_h\|_0\|\nabla\bm{\Pi}\mathcal{U}_h\|_0\|\nabla\bm{\Pi}\mathcal{V}_h\|_0}+Ch^{s-d/6}
	\nonumber\\
	\le&\sup_{\bm 0\neq\bm{w},\bm{u},\bm{v}\in\bm{W}}\frac{b(\bm{w};\bm{u},\bm{v})
	}
	{\|\nabla\bm{w}\|_0\|\nabla\bm{u}\|_0\|\nabla\bm{v}\|_0}+Ch^{s-d/6}
	\nonumber\\
	=&\mathcal{N}+Ch^{s-d/6}.\label{Nh1}
	\end{align}

	Step 2). Let $\bm{\Pi}_h:\bm{W}\to \bm{W}_h$
	 be the switch operator defined as follows: for any $\bm{v}\in\bm{W}$,   $\bm{\Pi}_h\bm{v}:=(\bm{\pi}_{h}\bm{v},\widehat{\bm{\pi}}_{h}\bm{v})\in\color{black}
	\bm{W}_h$ is determined by
	\begin{align*}
	\nu(K_h\bm{\Pi}_h\bm{v}; K_h\mathcal{W}_h )+s_h(\bm{\Pi}_h\bm{v};\mathcal{W}_h )&=\color{black}\nu\color{black}(\nabla\bm{v},K_h\mathcal{W}_h) &\forall \mathcal{W}_h\in\color{black}\bm{W}_h.
	\end{align*}
	Taking $\mathcal{W}_h=\bm{\Pi}_h\bm{v}$ in this equation, we have
	\begin{eqnarray}
	\|\bm{\Pi}_h\bm{v}\|_V\le \|\nabla\bm{v}\|_0.\label{bound2}
	\end{eqnarray}
	Let us consider the following problem: find $(\bm{\Phi},\Psi)\in \color{black}\bm{V}\times Q$ such that
	\begin{subequations}
		\begin{align}
		-\Delta\bm{\Phi}+\nabla\Psi&=(\bm{v}-\bm{\pi}_{h}\bm{v})^{\mu-1}, \label{543}\\
		\nabla\cdot\bm{\Phi}&=0.
		\end{align}
	\end{subequations}
	Then we apply the \color{black} regularity \color{black} \eqref{regular} to get
	\begin{eqnarray}
	\|\color{black}\bm\Phi\color{black}\|_{2,\lambda}\lesssim \|(\bm{v}-\bm{\pi}_{h}\bm{v})^{\mu-1}\|_{0,\lambda}
	=\|\bm{v}-\bm{\pi}_{h}\bm{v}\|^{\mu-1}_{0,\mu}.
	\end{eqnarray}
	From \eqref{543}, \color{black} integration by parts\color{black},   the \color{black} Holder's inequality, the approximation properties of interpolations it follows
	\begin{align}
	\|\bm{v}-\bm{\pi}_{h}\bm{v}\|^{\mu}_{0,\mu}=&(-\Delta\bm{\Phi}+\color{black}\nabla\Psi,\bm{v}-\bm{\pi}_{h}\bm{v})\nonumber\\
	=&(\nabla\bm{\Phi},\nabla\bm{v})
	-(\nabla\bm{\Phi},\nabla_h\bm{\pi}_{h}\bm{v})+\langle \bm{n}\cdot\nabla\bm{\Phi},\bm{\pi}_{h}\bm{v}-\widehat{\bm{\pi}}_{h}\bm{v}\rangle_{\partial\mathcal{T}_h}\nonumber\\
	=&(\nabla\bm{\Phi},\nabla\bm{v})
	-(\bm{\Pi}_{m}^o\nabla\bm{\Phi},\nabla_h\bm{\pi}_{h}\bm{v})
	+\langle \bm{n}\cdot\nabla\bm{\Phi},\bm{\pi}_{h}\bm{v}-\widehat{\bm{\pi}}_{h}\bm{v}\rangle_{\partial\mathcal{T}_h}\nonumber\\
	=&(K_h(\bm{\Phi})_I,\nabla\bm{v}-K_h(\bm{\pi}_{h}\bm{v},\widehat{\bm{\pi}}_{h}\bm{v}))
	+(\nabla\bm{\Phi}-K_h(\bm{\Phi})_I,\nabla\bm{v})\nonumber\\
	&+\langle \bm{n}\cdot(\nabla\bm{\Phi}-\bm{\Pi}_{m}^o\nabla\bm{\Phi}),\bm{\pi}_{h}\bm{v}-\widehat{\bm{\pi}}_{h}\bm{v}\rangle_{\partial\mathcal{T}_h}\nonumber\\
	=&((\nabla\bm{\Phi}-\bm{\Pi}_{m}^o\nabla\bm{\Phi}),\nabla\bm{v})
	\nonumber\\
	&+\langle \bm{n}\cdot(\nabla\bm{\Phi}-\bm{\Pi}_{m}^o\nabla\bm{\Phi}),\bm{\pi}_{h}\bm{v}-\widehat{\bm{\pi}}_{h}\bm{v}\rangle_{\partial\mathcal{T}_h}\nonumber\\
	&-\langle \tau(\bm{\Pi}_{k}^o\bm{\Phi}-\bm{\Pi}_k^{\partial}\bm{\Phi}),\bm{\pi}_{h}\bm{v}-\widehat{\bm{\pi}}_{h}\bm{v}\rangle_{\partial\mathcal{T}_h}\nonumber\\
	\le&\sum_{T\in\mathcal{T}_h}\|\nabla\bm{\Phi}-\bm{\Pi}_{m}^o\nabla\bm{\Phi}\|_{0,T}\|\nabla\bm{v}\|_{0,T}
	\nonumber\\
	&+\sum_{T\in\mathcal{T}_h}\|\nabla\bm{\Phi}-\bm{\Pi}_{m}^o\nabla\bm{\Phi}\|_{0,\lambda,\partial T}
	\|\bm{\pi}_{h}\bm{v}-\widehat{\bm{\pi}}_{h}\bm{v}\|_{0,\mu,\partial T}\nonumber\\
	&+\sum_{T\in\mathcal{T}_h}h_T^{-1}\|\bm{\Pi}_{k}^o\bm{\Phi}-\bm{\Phi}\|_{0,\lambda,\partial T}\|\bm{\pi}_{h}\bm{v}-\widehat{\bm{\pi}}_{h}\bm{v}\|_{0,\mu,\partial T}\nonumber\\
	\lesssim&h^{s+d(\frac{1}{2}-\frac{1}{\mu})}|\bm{\Phi}|_{1+s,\lambda}(\|\nabla\bm{v}\|_0
	+\|\tau^{\frac 1 2}(\bm{\pi}_{h}\bm{v}-\widehat{\bm{\pi}}_{h}\bm{v})\|_{\partial \mathcal{T}_h})\nonumber\\
	\lesssim&h^{s+d(\frac{1}{2}-\frac{1}{\mu})}\|\bm{v}-\bm{\Pi}_{h}\bm{v}\|^{\mu-1}_{0,\mu}\|\nabla\bm{v}\|_0, \nonumber
	\end{align}
	which leads to
	\begin{eqnarray}
	\|\bm{v}-\bm{\pi}_{h}\bm{v}\|_{0,\mu}\lesssim h^{s+d(\frac{1}{2}-\frac{1}{\mu})}\|\nabla\bm{v}\|_0.\label{est2}
	\end{eqnarray}
	By \eqref{est2} and \eqref{bound2} we have
	\begin{align}
	\|\bm{f}\|_{*}=&\sup_{\bm 0\neq\bm{v}\in\bm{W}}
	\frac{(\bm{f},\bm{v})}{\|\nabla\bm{v}\|_0}\nonumber\\
	=&\sup_{\bm 0\neq\bm{v}\in\bm{W}}\frac{(\bm{f},\bm{\pi}_{h}\bm{v})+(\bm{f},\bm{v}-\bm{\pi}_{h}\bm{v})}
	{\|\nabla\bm{v}\|_0}\nonumber\\
	\le&\sup_{\bm 0\neq\bm{v}\in\bm{W}}\frac{(\bm{f},\bm{\pi}_{h}\bm{v})+Ch\|\bm{f}\|_0\|\nabla\bm{v}\|_0}
	{\|\nabla\bm{v}\|_0}
	\nonumber\\
	\le&\sup_{\bm 0\neq\bm{v}\in\bm{W}}\frac{(\bm{f},\bm{\pi}_{h}\bm{v})}
	{\|  \bm{\Pi}_{h}\bm{v}\|_{V}  }+Ch\|\bm{f}\|_0\nonumber\\
	\le&\sup_{\bm 0\neq\bm{v}_h\in\color{black}\bm W_h}\frac{(\bm{f},\bm{v}_{h})}
	{\|\mathcal{V}_h\|_V}+Ch\|\bm{f}\|_0\nonumber\\
	\leq&\|\bm{f}\|_{*,h}+Ch\|\bm{f}\|_0.\label{fh2}
	\end{align}
	Using the approximation properties of interpolations and \eqref{bound1},  we get
	\begin{eqnarray}
	\|\bm{\Pi}_h\bm{v}\|_V\lesssim\|\nabla\bm{v}\|_0.\label{548}
	\end{eqnarray}
	In light of \eqref{est2} and the approximation property of $\bm{P}_k^{RT}$, it holds
	\begin{align}
	\|\bm{\pi}_{h}\bm{v}-\bm{P}^{RT}_k\bm{v}\|_{0,3}&\le
	\|\bm{\pi}_{h}\bm{v}-\bm{v}\|_{0,3}
	+\|\bm{v}-\bm{P}^{RT}_k\bm{v}\|_{0,3}\lesssim h^{s-d/6}\|\nabla\bm{v}\|_0.\label{549}
	\end{align}
	Then, from \eqref{B2}, \eqref{B3}, \eqref{B4}, \eqref{548}, and \eqref{549} it follows
	\begin{align*}
	&|b_h(\bm{\Pi}_h\bm{w};\bm{\Pi}_h\bm{u},\bm{\Pi}_h\bm{v})-b_h(\mathcal{W}_I;\mathcal{U}_I,\mathcal{V}_I)|\le
	|b_h(\bm{\Pi}_h\bm{w}-\mathcal{W}_I;\bm{\Pi}_h\bm{u},\bm{\Pi}_h\bm{v})|\nonumber\\
	&\quad\quad\quad+|b_h(\mathcal{W}_I;\bm{\Pi}_h\bm{u}-\mathcal{U}_I,\bm{\Pi}_h\bm{v})|
	+|b_h(\mathcal{W}_I;\mathcal{U}_I,\bm{\Pi}_h\bm{v}-\mathcal{V}_I)|\nonumber\\
	&\quad\quad\lesssim h^{s-d/6}\|\nabla\bm{w}\|_0\|\nabla\bm{u}\|_0\|\nabla\bm{v}\|_0.
	\end{align*}
	Note that \eqref{B5} means
	\begin{eqnarray*}
		|b_h(\mathcal{W}_I;\mathcal{U}_I,\mathcal{V}_I)-b(\bm{w};\bm{u},\bm{v})|\le
		h^{1-\frac d 6}\|\nabla\bm{w}\|_0\|\nabla\bm{u}\|_0\|\nabla\bm{v}\|_0.
	\end{eqnarray*}
	Therefore,
	\begin{align*}
	b(\bm{w};\bm{u},\bm{v})
	=&b_h(\bm{\Pi}_h\bm{w};\bm{\Pi}_h\bm{u},\bm{\Pi}_h\bm{v})
	\nonumber\\
	&-(b_h(\bm{\Pi}_h\bm{w};\bm{\Pi}_h\bm{u},\bm{\Pi}_h\bm{v})-b_h(\mathcal{W}_I;\mathcal{U}_I,\mathcal{V}_I)) \nonumber\\
	&-(b_h(\mathcal{W}_I;\mathcal{U}_I,\mathcal{V}_I)-b(\bm{w};\bm{u},\bm{v})) \nonumber\\
	\le&b_h(\bm{\Pi}_h\bm{w};\bm{\Pi}_h\bm{u},\bm{\Pi}_h\bm{v})+C h^{s-d/6}\|\nabla\bm{w}\|_0\|\nabla\bm{u}\|_0\|\nabla\bm{v}\|_0,
	\end{align*}
	which implies
	\begin{align}
	\mathcal{N}=&\sup_{\bm 0\neq\bm{w},\bm{u},\bm{v}\in\bm{W}}\frac{b(\bm{w};\bm{u},\bm{v})
	}
	{\|\nabla\bm{w}\|_0\|\nabla\bm{u}\|_0\|\nabla\bm{v}\|_0}\nonumber\\
	\le&\sup_{\bm 0\neq \bm{w},\bm{u},\bm{v}\in\bm{W}}\frac{b_h(\bm{\Pi}\mathcal{W}_h;\bm{\Pi}\mathcal{U}_h,\bm{\Pi}\mathcal{V}_h)
		+Ch^{1-\frac d 6}\|\nabla\bm{w}\|_0\|\nabla\bm{u}\|_0\|\nabla\bm{v}\|_0}
	{\|\nabla\bm{w}\|_0\|\nabla\bm{u}\|_0\|\nabla\bm{v}\|_0}
	\nonumber\\
	\le&\sup_{\bm 0\neq\bm{w},\bm{u},\bm{v}\in\bm{W}}
	\frac{b_h(\bm{\Pi}_h\bm{w};\bm{\Pi}_h\bm{u},\bm{\Pi}_h\bm{v})
	}
	{\| \bm{\Pi}_h\bm{w}\|_V \cdot\| \bm{\Pi}_h\bm{u}\|_V \cdot\| \bm{\Pi}_h\bm{v}\|_V }+Ch^{1-\frac d 6}
	\nonumber\\
	\le&\sup_{\bm 0\neq\bm{w}_h,\bm{u}_h,\bm{v}_h\in\bm{W}_h}\frac{b_h(\mathcal{W}_h;\mathcal{U}_h,\mathcal{V}_h)}{\|\mathcal{W}_h\|_V \cdot\|\mathcal{U}_h\|_V \cdot\|\mathcal{V}_h\|_V}
	+Ch^{1-\frac d 6}
	\nonumber\\
	=&\mathcal{N}_h+Ch^{s-d/6}.\label{Nh2}
	\end{align}

	Step 3).
	By \eqref{fh1}, \eqref{fh2}, \eqref{Nh1}, and \eqref{Nh2} we have
	\begin{align*}
	\|\bm{f}\|_{*}-Ch^s\|\bm{f}\|_0&\le \|\bm{f}\|_{*,h}\le \|\bm{f}\|_{*}+Ch^s\|\bm{f}\|_0,\\
	\mathcal{N}-Ch^{s-d/6}&\le \mathcal{N}_h\le\mathcal{N}+Ch^{s-d/6},
	\end{align*}
	where we recall that $s\in(\frac{1}{2},1]$ and $\color{black} d=2,3$. As a result, the desired results follow from the squeeze theorem immediately.

\end{proof}

\begin{remark} In view of of Lemma \ref{limit} and the condition \eqref{uni},  it  holds
	\begin{eqnarray}
	(\mathcal{N}_h/\nu^2)\|\bm{f}\|_{*,h}\le 1-\frac{\delta}{2}\label{uni-condition}
	\end{eqnarray}
	when the mesh size $h$ is sufficiently small.
\end{remark}

\section{A priori error estimates}
\label{section4}

%

By some simple calculations, we have the following lemma.
\begin{lemma} For any $\bm{u},\bm{w}\in \bm{W}$ and $\bm{v}_h\in\bm{V}_h$, it holds
	\begin{eqnarray}
	b_h(\mathcal{W}_I;\mathcal{U}_I,\mathcal{V}_h)
	=(\nabla\cdot(\bm{u}\otimes\bm{w}),\bm{v}_{h})+E_{N}(\bm{w};\bm{u},\mathcal{V}_h),\label{EN1}
	\end{eqnarray}
	where
	\begin{align}
	E_{N}(\bm{w};\bm{u},\mathcal{V}_h)=& -\frac{1}{2}(\bm{P}_k^{RT}\bm{u}\otimes \bm{P}_k^{RT}\bm{w}-\bm{u}\otimes\bm{w},\nabla_h\bm{v}_{h})\nonumber\\
	&+\frac{1}{2}\langle\bm{\Pi}_{k}^{\partial}\bm{u}\otimes \bm{\Pi}_{k}^{\partial}\bm{w}\bm{n}-\bm{u}\otimes\bm{w}\bm{n},\bm{v}_{h}\rangle_{\partial\mathcal{T}_h}\nonumber\\
	&-\frac{1}{2}(\bm{w}\cdot\nabla\bm{u}-\bm{P}_k^{RT}\bm{w}\cdot\nabla_h\bm{P}_k^{RT}\bm{u},\bm{v}_{h})\nonumber\\
	&-\frac{1}{2}\langle\widehat{\bm{v}}_{h}\otimes \bm{\Pi}_{k}^{\partial}\bm{w}\bm{n},\bm{P}_k^{RT}\bm{u}\rangle_{\partial\mathcal{T}_h}.\label{def-EN}
	\end{align}
\end{lemma}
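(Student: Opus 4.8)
The identity \eqref{EN1} is purely algebraic, and I expect no real obstacle beyond careful bookkeeping; the plan is a direct computation. First I would substitute the interpolants $\mathcal{W}_I=(\bm{P}^{RT}_k\bm{w},\bm{\Pi}^{\partial}_k\bm{w})$, $\mathcal{U}_I=(\bm{P}^{RT}_k\bm{u},\bm{\Pi}^{\partial}_k\bm{u})$ and $\mathcal{V}_h=(\bm{v}_h,\widehat{\bm{v}}_h)$ into the definition of $b_h(\cdot;\cdot,\cdot)$, so that $b_h(\mathcal{W}_I;\mathcal{U}_I,\mathcal{V}_h)$ becomes the sum of the four pieces $\frac{1}{2}(\bm{v}_h\otimes\bm{P}^{RT}_k\bm{w},\nabla_h\bm{P}^{RT}_k\bm{u})$, $-\frac{1}{2}\langle\widehat{\bm{v}}_h\otimes\bm{\Pi}^{\partial}_k\bm{w}\,\bm{n},\bm{P}^{RT}_k\bm{u}\rangle_{\partial\mathcal{T}_h}$, $-\frac{1}{2}(\bm{P}^{RT}_k\bm{u}\otimes\bm{P}^{RT}_k\bm{w},\nabla_h\bm{v}_h)$ and $\frac{1}{2}\langle\bm{\Pi}^{\partial}_k\bm{u}\otimes\bm{\Pi}^{\partial}_k\bm{w}\,\bm{n},\bm{v}_h\rangle_{\partial\mathcal{T}_h}$, where here and below one keeps the tensor--vector convention shared by the definition of $b_h$ and by \eqref{def-EN}.

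Next I would insert the exact quantities by adding and subtracting in three of the four pieces. Rewriting the first piece as $\frac{1}{2}(\bm{P}^{RT}_k\bm{w}\cdot\nabla_h\bm{P}^{RT}_k\bm{u},\bm{v}_h)$ and adding and subtracting $\frac{1}{2}(\bm{w}\cdot\nabla\bm{u},\bm{v}_h)$ isolates exactly the third term of \eqref{def-EN}; adding and subtracting $\frac{1}{2}(\bm{u}\otimes\bm{w},\nabla_h\bm{v}_h)$ in the third piece isolates the first term of \eqref{def-EN}; adding and subtracting $\frac{1}{2}\langle(\bm{u}\otimes\bm{w})\bm{n},\bm{v}_h\rangle_{\partial\mathcal{T}_h}$ in the fourth piece isolates the second term of \eqref{def-EN}. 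The remaining piece $-\frac{1}{2}\langle\widehat{\bm{v}}_h\otimes\bm{\Pi}^{\partial}_k\bm{w}\,\bm{n},\bm{P}^{RT}_k\bm{u}\rangle_{\partial\mathcal{T}_h}$ is already identically the fourth term of \eqref{def-EN} and needs no manipulation. Collecting the four isolated contributions into $E_N(\bm{w};\bm{u},\mathcal{V}_h)$, one is left with
\begin{align*}
b_h(\mathcal{W}_I;\mathcal{U}_I,\mathcal{V}_h)={}&E_N(\bm{w};\bm{u},\mathcal{V}_h)+\tfrac{1}{2}(\bm{w}\cdot\nabla\bm{u},\bm{v}_h)\\
&-\tfrac{1}{2}(\bm{u}\otimes\bm{w},\nabla_h\bm{v}_h)+\tfrac{1}{2}\langle(\bm{w}\cdot\bm{n})\bm{u},\bm{v}_h\rangle_{\partial\mathcal{T}_h}.
\end{align*}

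It then remains to show that the last three terms collapse to $(\nabla\cdot(\bm{u}\otimes\bm{w}),\bm{v}_h)$. Since $\bm{w}\in\bm{W}$ we have $\nabla\cdot\bm{w}=0$; integrating by parts element by element and using this constraint gives $(\bm{u}\otimes\bm{w},\nabla_h\bm{v}_h)=-(\bm{w}\cdot\nabla\bm{u},\bm{v}_h)+\langle(\bm{w}\cdot\bm{n})\bm{u},\bm{v}_h\rangle_{\partial\mathcal{T}_h}$, so that substitution cancels the two boundary contributions and leaves $(\bm{w}\cdot\nabla\bm{u},\bm{v}_h)$, which equals $(\nabla\cdot(\bm{u}\otimes\bm{w}),\bm{v}_h)$ once more because $\nabla\cdot\bm{w}=0$. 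This proves \eqref{EN1}.

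The only thing demanding care is keeping the tensor--vector shorthand consistent, so that the boundary term generated by the integration-by-parts step in the last paragraph is exactly the one generated, with opposite sign, by the add--subtract step; beyond that, the argument uses only $\bm{u},\bm{w}\in[H^1(\Omega)]^d$ and the divergence constraint $\nabla\cdot\bm{w}=0$, and no approximation property of the interpolants is needed at all.
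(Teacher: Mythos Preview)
Your proof is correct and follows essentially the same approach as the paper's: substitute the interpolants into $b_h$, add and subtract the continuous counterparts, and use elementwise integration by parts together with $\nabla\cdot\bm{w}=0$ to recover $(\nabla\cdot(\bm{u}\otimes\bm{w}),\bm{v}_h)$. The only difference is organizational---the paper groups the four pieces of $b_h$ into two pairs and derives one identity for each pair (embedding the integration by parts inside each), whereas you treat the four pieces individually and postpone the integration-by-parts step to the end; the content is the same.
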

\begin{proof} By integration by parts, it arrives  at
	\begin{eqnarray}
	&&-(\bm{P}_k^{RT}\bm{u}\otimes \bm{P}_k^{RT}\bm{w},\nabla_h\bm{v}_{h})+\langle\bm{\Pi}_{k}^{\partial}\bm{u}\otimes \bm{\Pi}_{k}^{\partial}\bm{w}\bm{n},\bm{v}_{h}\rangle_{\partial\mathcal{T}_h}\nonumber\\
	&&\qquad=(\nabla\cdot(\bm{u}\otimes \bm{w}),\bm{v}_{h})
	-(\bm{P}_k^{RT}\bm{u}\otimes \bm{P}_k^{RT}\bm{w}-\bm{u}\otimes\bm{w},\nabla_h\bm{v}_{h})\nonumber\\
	&&\qquad\quad+\langle\bm{\Pi}_{k}^{\partial}\bm{u}\otimes \bm{\Pi}_{k}^{\partial}\bm{w}\bm{n}-\bm{u}\otimes\bm{w}\bm{n},\bm{v}_{h}\rangle_{\partial\mathcal{T}_h}.\nonumber
	\end{eqnarray}
	From integration by parts and the fact that $\nabla\cdot\bm{w}=0$, it follows
	\begin{eqnarray*}
		&&-(\bm{v}_{h}\otimes \bm{P}_k^{RT}\bm{w},\nabla_h\bm{P}_k^{RT}\bm{u})+\langle\widehat{\bm{v}}_{h}\otimes \bm{\Pi}_{k}^{\partial}\bm{w}\bm{n},\bm{P}_k^{RT}\bm{u}\rangle_{\partial\mathcal{T}_h}\nonumber\\
		&&\qquad=-(\nabla\cdot(\bm{u}\otimes \bm{w}),\bm{v}_{h})+(\bm{v}_{h}\otimes\bm{w},\nabla\bm{u})-(\bm{v}_{h}\otimes \bm{P}_k^{RT}\bm{w},\nabla_h\bm{P}_k^{RT}\bm{u})\nonumber\\
		&&\quad\qquad+\langle\widehat{\bm{v}}_{h}\otimes \bm{\Pi}_{k}^{\partial}\bm{w}\bm{n},\bm{P}_k^{RT}\bm{u}\rangle_{\partial\mathcal{T}_h}\nonumber\\
		&&\qquad=-(\nabla\cdot(\bm{u}\otimes \bm{w}),\bm{v}_{h})
		+(\bm{w}\cdot\nabla\bm{u}-\bm{P}_k^{RT}\bm{w}\cdot\nabla_h\bm{P}_k^{RT}\bm{u},\bm{v}_{h})\nonumber\\
		&&\quad\qquad+\langle\widehat{\bm{v}}_{h}\otimes \bm{\Pi}_{k}^{\partial}\bm{w}\bm{n},\bm{P}_k^{RT}\bm{u}\rangle_{\partial\mathcal{T}_h}.
	\end{eqnarray*}
	\color{black}
	Then the desired results follows from the definition of $b_h$.
	\color{black}
\end{proof}

\begin{lemma}\label{lemma4.1} Let $(\mathbb L,\bm{u}, p) $
 be the solution to \eqref{mixed}. Then, for all $(\mathcal{V}_h,\mathcal{Q}_h)\in [\bm{V}_h\times\widehat{\bm{V}}_h]\times [Q_h\times\widehat{Q}_h]$, it holds the   equations
	\begin{subequations}
		\begin{align}
		\bm{\Pi}_m^o\mathbb{L}-\nu K_h\mathcal{U}_I&=0,\label{error1}\\
		\nu(K_h\mathcal{U}_I,K_h\mathcal{V}_h)-d_h(\mathcal{V}_{h},\mathcal{P}_I)
		&\nonumber\\
		+s_h(\mathcal{U}_I,\mathcal{V}_h)
		+ b_h(\mathcal{U}_I;\mathcal{U}_I,\mathcal{V}_h)&=(\bm{f},\bm{v}_h)+E_L(\bm{u};\mathcal{V}_h) +E_{N}(\bm{u},\bm{u};\mathcal{V}_h)\label{error2}\\
		d_h(\mathcal{U}_I,\mathcal{Q}_h)&=0,\label{error3}
		\end{align}
	\end{subequations}
	where $E_N$ is defined in \eqref{def-EN}, and $E_L$ is defined as
	\begin{align}
	E_L(\bm{u};\mathcal{V}_h):= -\nu\langle \bm{v}_h-\widehat{\bm{v}}_{h},(\bm{\Pi}_m^o-\mathbb{Id} )(\nabla\bm u)\bm{n} \rangle_{\partial\mathcal{T}_h} -\nu\langle \tau(\bm{P}^{RT}_k\bm{u}-{\bm{u}}),\bm{v}_{h}-\widehat{\bm{v}}_{h} \rangle_{\partial\mathcal{T}_h}.\nonumber
	\end{align}
	In addition, it holds
	\begin{eqnarray}
	\bm{P}^{RT}_k\bm{u}|_T\in [\mathcal{P}_{k}(T)]^d,\forall T\in\mathcal{T}_h. \label{4.5}
	\end{eqnarray}
\end{lemma}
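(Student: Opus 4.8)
The plan is to verify \eqref{error1}--\eqref{error3} together with \eqref{4.5} by direct consistency computations: each identity is obtained by substituting the exact solution $(\mathbb L,\bm u,p)$, with $\mathbb L=\nu\nabla\bm u$, into the corresponding equation of the eliminated scheme \eqref{HDG11}--\eqref{HDG33}, integrating by parts element by element, and collecting the discrepancy between the exact fields and their interpolants into $E_L$ and $E_N$. The membership \eqref{4.5} is immediate, since the Raviart--Thomas space used to define $\bm{P}^{RT}_k$ satisfies $\bm{P}^{RT}_k\bm v|_T\in[\mathcal P_k(T)]^d$ for every $T$; in particular $\bm{P}^{RT}_k\bm u\in\bm V_h$, so all terms below are well defined.

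For \eqref{error1}, by the definition \eqref{K_h} of $K_h$ we have, for every $\mathbb G_h\in\mathbb K_h$,
\[
(K_h\mathcal U_I,\mathbb G_h)=-c_h(\mathcal U_I,\mathbb G_h)=-(\bm{P}^{RT}_k\bm u,\nabla_h\cdot\mathbb G_h)+\langle\bm{\Pi}_k^{\partial}\bm u,\mathbb G_h\bm n\rangle_{\partial\mathcal T_h}.
\]
Since $\nabla_h\cdot\mathbb G_h$ is piecewise of degree $\le m-1\le k-1$ and $\mathbb G_h\bm n|_E$ of degree $\le m\le k$ on every face $E$, the moment-preserving properties of $\bm{P}^{RT}_k$ and of $\bm{\Pi}_k^{\partial}$ give $(\bm{P}^{RT}_k\bm u,\nabla_h\cdot\mathbb G_h)=(\bm u,\nabla_h\cdot\mathbb G_h)$ and $\langle\bm{\Pi}_k^{\partial}\bm u,\mathbb G_h\bm n\rangle_{\partial\mathcal T_h}=\langle\bm u,\mathbb G_h\bm n\rangle_{\partial\mathcal T_h}$; integrating by parts back yields $(K_h\mathcal U_I,\mathbb G_h)=(\nabla\bm u,\mathbb G_h)$ for all $\mathbb G_h\in\mathbb K_h$, i.e. $\nu K_h\mathcal U_I=\bm{\Pi}_m^o(\nu\nabla\bm u)=\bm{\Pi}_m^o\mathbb L$, which is \eqref{error1}. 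For \eqref{error3}, the commuting property of $\bm{P}^{RT}_k$ together with $\nabla\cdot\bm u=0$ gives $\nabla_h\cdot\bm{P}^{RT}_k\bm u=0$; moreover the normal moments of $\bm{P}^{RT}_k\bm u$ agree face by face with those of $\bm u$, while $\bm u\cdot\bm n$ is single valued across interior faces and $\bm u=\bm 0$ on $\partial\Omega$, so $\langle\bm{P}^{RT}_k\bm u\cdot\bm n,\widehat q_h\rangle_{\partial\mathcal T_h}=\langle\bm u\cdot\bm n,\widehat q_h\rangle_{\partial\mathcal T_h}=0$; combining these gives $d_h(\mathcal U_I,\mathcal Q_h)=0$ (equivalently, $\mathcal U_I$ satisfies \eqref{HDG3}, consistently with Theorem \ref{Th5.1}).

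For the main identity \eqref{error2}, which mirrors \eqref{HDG22}, I would test the momentum equation $-\nabla\cdot\mathbb L+\nabla\cdot(\bm u\otimes\bm u)+\nabla p=\bm f$ with $\bm v_h$ on each $T$, sum over $\mathcal T_h$, and integrate by parts the $-\nabla\cdot\mathbb L$ and $\nabla p$ terms. Inserting $\widehat{\bm v}_h$ and using that $\mathbb L\bm n=\nu(\nabla\bm u)\bm n$ and $p$ have single-valued traces across interior faces together with $\widehat{\bm v}_h|_{\partial\Omega}=\bm 0$ (so that $\langle\mathbb L\bm n,\widehat{\bm v}_h\rangle_{\partial\mathcal T_h}=0$ and the analogous pressure-trace term drops), one is left with a diffusion block $(\mathbb L,\nabla_h\bm v_h)-\langle\mathbb L\bm n,\bm v_h-\widehat{\bm v}_h\rangle_{\partial\mathcal T_h}$, a convection block $(\nabla\cdot(\bm u\otimes\bm u),\bm v_h)$, and a pressure block $-(p,\nabla_h\cdot\bm v_h)+\langle p,\bm v_h\cdot\bm n\rangle_{\partial\mathcal T_h}$. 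In the diffusion block I replace $\mathbb L$ by $\bm{\Pi}_m^o\mathbb L=\nu K_h\mathcal U_I$ via \eqref{error1}: by the representation $(K_h\mathcal V_h,\mathbb G_h)=(\nabla_h\bm v_h,\mathbb G_h)+\langle\widehat{\bm v}_h-\bm v_h,\mathbb G_h\bm n\rangle_{\partial\mathcal T_h}$ used in the proof of Lemma \ref{lem-sim}, the block becomes $\nu(K_h\mathcal U_I,K_h\mathcal V_h)$ plus the interpolation defect $\langle(\mathbb L-\bm{\Pi}_m^o\mathbb L)\bm n,\bm v_h-\widehat{\bm v}_h\rangle_{\partial\mathcal T_h}$, which is the first term of $E_L$. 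Since $\nabla_h\cdot\bm v_h$ is piecewise of degree $\le k-1$ and $\bm v_h\cdot\bm n|_E$ of degree $\le k$, the degree-matched $L^2$-projections defining $\mathcal P_I$ reproduce the pressure block exactly as $\pm d_h(\mathcal V_h,\mathcal P_I)$, so that no pressure-related consistency term appears — this is precisely where pressure-robustness enters. For the convection block I invoke \eqref{EN1} with $\bm w=\bm u$, so that $(\nabla\cdot(\bm u\otimes\bm u),\bm v_h)=b_h(\mathcal U_I;\mathcal U_I,\mathcal V_h)-E_N(\bm u;\bm u,\mathcal V_h)$, with $E_N$ as in \eqref{def-EN}. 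It remains to evaluate the stabilization term, which splits as $s_h(\mathcal U_I,\mathcal V_h)=\nu\langle\tau(\bm{P}^{RT}_k\bm u-\bm u),\bm v_h-\widehat{\bm v}_h\rangle_{\partial\mathcal T_h}+\nu\langle\tau(\bm u-\bm{\Pi}_k^{\partial}\bm u),\bm v_h-\widehat{\bm v}_h\rangle_{\partial\mathcal T_h}$; the second sum vanishes because $(\bm v_h-\widehat{\bm v}_h)|_E\in[\mathcal P_k(E)]^d$ and $\tau|_E$ is constant, and the first sum accounts for the second term of $E_L$. Rearranging the four blocks then gives \eqref{error2}.

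The only genuine difficulty is the bookkeeping: tracking every face contribution and every sign so that the terms assemble exactly into $\nu(K_h\mathcal U_I,K_h\mathcal V_h)$, $s_h(\mathcal U_I,\mathcal V_h)$, $d_h(\mathcal V_h,\mathcal P_I)$, $b_h(\mathcal U_I;\mathcal U_I,\mathcal V_h)$, $E_L$ and $E_N$, with nothing left over. The two steps that most deserve care are (i) verifying that the pressure block leaves no consistency term, which uses both the exact degrees $k-1$ and $k$ built into $\mathcal P_I$ and the constraint $\nabla\cdot\bm u=0$, and (ii) justifying that all face terms carrying the exact traces — such as $\langle\mathbb L\bm n,\widehat{\bm v}_h\rangle_{\partial\mathcal T_h}$, the pressure-trace term, and the $\widehat{\bm u}$-type terms hidden in $b_h$ — really vanish, which relies on the regularity of $(\bm u,p)$ (so that the traces of $\nabla\bm u$ and $p$ are single valued on interior faces) and on the homogeneous boundary data $\widehat{\bm v}_h|_{\partial\Omega}=\bm 0$.
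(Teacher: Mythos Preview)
Your overall strategy for \eqref{error1}--\eqref{error3} matches the paper's: substitute the interpolants into the forms, use the orthogonality of $\bm{P}^{RT}_k$, $\bm{\Pi}^o_m$, $\bm{\Pi}^{\partial}_k$ together with integration by parts to recover the PDE plus the residuals $E_L$ and $E_N$, and invoke \eqref{EN1} for the convection block. That part is fine and essentially identical to the paper's computation.

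The genuine gap is your justification of \eqref{4.5}. You claim it is ``immediate, since the Raviart--Thomas space used to define $\bm{P}^{RT}_k$ satisfies $\bm{P}^{RT}_k\bm v|_T\in[\mathcal P_k(T)]^d$ for every $T$.'' This is false: by definition $\bm{RT}_k(T)=[\mathcal P_k(T)]^d+\bm x\,\mathcal P_k(T)$, which properly contains $[\mathcal P_k(T)]^d$ (e.g.\ $\bm x\,x_1^k$ has degree $k+1$), so for a generic $\bm v$ one has $\bm{P}^{RT}_k\bm v\notin[\mathcal P_k(T)]^d$. The inclusion \eqref{4.5} holds \emph{only} because $\bm u$ is divergence-free: the commuting property \eqref{RT4} gives $\nabla\cdot\bm{P}^{RT}_k\bm u=0$ on each $T$, and then Lemma~\ref{lemma3.1} says that any divergence-free element of $\bm{RT}_k(T)$ lies in $[\mathcal P_k(T)]^d$. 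This is exactly how the paper argues, and it is not cosmetic: without \eqref{4.5} you do not have $\mathcal U_I\in\bm V_h\times\widehat{\bm V}_h$, so $K_h\mathcal U_I$, $s_h(\mathcal U_I,\cdot)$ and $b_h(\mathcal U_I;\cdot,\cdot)$ are not even defined, and everything that follows collapses. You already derive $\nabla\cdot\bm{P}^{RT}_k\bm u=0$ later when proving \eqref{error3}; you must invoke it (together with Lemma~\ref{lemma3.1}) at the very start to establish \eqref{4.5}.
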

\begin{proof} For any $T\in\mathcal{T}_h,\tau_k\in \mathcal{P}_{k}(T)$, by \color{black}the property   \color{black}(\ref{RT4}) we have
	\begin{eqnarray*}
		(\nabla\cdot\bm{P}^{RT}_{k}\bm{u},\tau_k)_T=(\nabla\cdot\bm{u},\tau_k)_T=0,
	\end{eqnarray*}
	which implies that $\nabla\cdot\bm{P}^{RT}_k\bm{u}=0$.  Then the result $(\ref{4.5})$ follows from   Lemma \ref{lemma3.1}.
	
	By the orthogonality of projections, integration by parts,
	and the fact $\mathbb L=\nabla\bm u$ we get
	\color{black}
	\begin{eqnarray}
	&&a_h(\bm{\Pi}_m^o \mathbb{L},\mathbb{G}_h)+ c_h(\mathcal{U}_I,\mathbb{G}_h)\nonumber\\
	&&\quad\quad=\nu^{-1}(\bm{\Pi}_m^o\mathbb L,\mathbb{G}_h)+(\bm{P}^{RT}_k\bm{u},\nabla_h\cdot\mathbb{G}_h)
	-\langle\bm{\Pi}_k^{\partial}{\bm{u}},\mathbb{G}_h\bm{n} \rangle_{\partial\mathcal{T}_h} \nonumber\\
	&&\quad\quad=\nu^{-1}(\mathbb L,\mathbb{G}_h)+(\bm{u},\nabla_h\cdot\mathbb{G}_h)
	-\langle{\bm{u}},\mathbb{G}_h\bm{n} \rangle_{\partial\mathcal{T}_h}  \nonumber\\
	&&\quad\quad=0.\nonumber
	\end{eqnarray}
	\color{black}
	Similarly, by 
	the fact $ -\nabla\cdot \mathbb{L}+\nabla p=\bm f$ we obtain
	\color{black}
	\begin{eqnarray}
	&&c_h(\mathcal{V}_h,\bm{\Pi}_m^o \mathbb{L})
	+d_h(\mathcal{V}_{h},\mathcal{P}_I)-s_h( \mathcal{U}_I,\mathcal{V}_h  ) \nonumber\\
	&&\quad\quad=(\bm{v}_h,\nabla_h\cdot\bm{\Pi}_m^o \mathbb{L})
	-\langle\widehat{\bm{v}}_h,\bm{\Pi}_m^o \mathbb{L}\bm{n} \rangle_{\partial\mathcal{T}_h}\nonumber\\
	&&\qquad\quad+(\nabla_h\cdot\bm{v}_h,\Pi^o_{k-1}p)+\langle \bm{v}_h\bm{n},\Pi_k^{\partial}p \rangle_{\partial\mathcal{T}_h} \nonumber\\
	&&\qquad\quad-\nu\langle \tau(\bm{\Pi}_k^{\partial} \bm{P}^{RT}_k\bm{u}-\bm{\Pi}_k^{\partial}{\bm{u}}),\bm{\Pi}_k^{\partial}\bm{v}_{h}-\widehat{\bm{v}}_{h} \rangle_{\partial\mathcal{T}_h} \nonumber\\
	&&\quad\quad=(\bm{v}_h,\nabla\cdot\mathbb{L})
	-\langle\widehat{\bm{v}}_h-\bm{v}_h,(\bm{\Pi}_m^o\mathbb{L}-\mathbb{L} )\bm{n} \rangle_{\partial\mathcal{T}_h}-(\bm{v}_h,\nabla p)\nonumber\\
	&&\qquad\quad-\nu\langle \tau(\bm{P}^{RT}_k\bm{u}-{\bm{u}}),\bm{\Pi}_k^{\partial}\bm{v}_{h}-\widehat{\bm{v}}_{h} \rangle_{\partial\mathcal{T}_h}\nonumber\\
	&&\quad\quad=-(\bm{f},\bm{v}_h)+(\nabla\cdot(\bm{u}\otimes\bm{u}),\bm{v}_h   )+E_L(\bm{u};\mathcal{V}_h    )\nonumber\\
	&&\quad\quad=-(\bm{f},\bm{v}_h)+E_N(\bm{u};\bm{u},\bm{v}_h   )+E_L(\bm{u};\mathcal{V}_h    )- b_h(\mathcal{U}_I;\mathcal{U}_I,\mathcal{V}_h).\nonumber
	\end{eqnarray}
	Thanks to $\nabla\cdot\bm{P}^{RT}_{k}\bm{u}=0$ and the property   \eqref{RT1},  \color{black} we have
	\begin{eqnarray}
	d_h(\mathcal{U}_I,\mathcal{Q}_h  ) =(\nabla\cdot\bm{P}^{RT}_{k}\bm{u},q_{h})-\langle\bm{P}^{RT}_{k}\bm{u}\cdot\bm{n},\widehat{q}_h \rangle_{\partial\mathcal{T}_h}
	=0. \nonumber
	\end{eqnarray}
	\color{black}
	This completes the proof.
	\color{black}
\end{proof}

\color{black}
Let us estimate the nonlinear error $E_{N}$ and linear error $E_L$ in the following lemma.\color{black}

\begin{lemma} Let $\mathbb L=\nu\nabla \bm u$,  $\bm{u}\in [H^{r_{\bm u}}(\Omega)]^d\cap \bm{W}$, $\bm{w}\in [H^{r_{\bm w}}(\Omega)]^d\cap \bm{W}$, 
	\color{black}
	with $r_{\bm u},r_{\bm w}>\frac{3}{2}$,
	\color{black}
	and $\mathcal{V}_h\in\bm{V}_h\times\widehat{\bm{V}}_h$, then the following estimates holds
	\begin{align}
	|E_{N}(\bm{u},\bm{w};\mathcal{V}_h)|&\lesssim  \left(
	h^{1+s_{\bm{w}}-\frac d 2}|\bm{u}|_{1}
	\|\bm{w}\|_{s_w}
	+h^{1+s_{\bm u}-\frac d 2}|\bm{w}|_{1}
	|\bm{u}|_{s_{\bm u}}\right)
	\|\mathcal{V}_h\|_V,\nonumber\\
	|E_L(\bm{u};\mathcal{V}_h)|&\lesssim \nu h^{s_{\bm u}-1}\|\bm{u}\|_{s_{\bm u}}\|\mathcal{V}_h\|_V,\nonumber
	\end{align}
	where 
$ s_{\bm{u}}:=\min\{r_{\bm u},k+1\}$ and $ s_{\bm w}:=\min\{r_{\bm w},k+1\}. 
$
\end{lemma}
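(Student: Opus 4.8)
The plan is to estimate $E_L$ and $E_N$ separately. In both cases every factor that contains the test pair $\mathcal V_h=(\bm v_h,\widehat{\bm v}_h)$ will be absorbed into $\|\mathcal V_h\|_V$ via Lemma \ref{lem-sim} and Lemma \ref{discrete-soblev}, while the powers of $h$ will be extracted from the approximation (and $L^q$-stability) properties of $\bm P^{RT}_k$, $\bm\Pi^\partial_k$ and $\bm\Pi^o_m$ recorded in Section 3; throughout I use $\tau|_E=h_E^{-1}$.

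For $E_L$: starting from its definition in Lemma \ref{lemma4.1}, I would write each of the two boundary sums with a factor $\tau^{\frac12}(\bm v_h-\widehat{\bm v}_h)$ split off, apply Cauchy--Schwarz, and bound $\|\tau^{\frac12}(\bm v_h-\widehat{\bm v}_h)\|_{\partial\mathcal T_h}\lesssim\|\mathcal V_h\|_V$ (Lemma \ref{lem-sim}). It then remains to control $\|\tau^{-\frac12}(\bm\Pi^o_m-\mathbb{Id})(\nabla\bm u)\|_{\partial\mathcal T_h}$ and $\|\tau^{\frac12}(\bm P^{RT}_k\bm u-\bm u)\|_{\partial\mathcal T_h}$. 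Using the discrete trace estimate on each $T$ and the approximation properties of $\bm\Pi^o_m$ on $\nabla\bm u\in[H^{s_{\bm u}-1}(T)]^{d\times d}$ and of $\bm P^{RT}_k$ on $\bm u\in[H^{s_{\bm u}}(T)]^d$ (note $m\in\{k-1,k\}$, so $s_{\bm u}-1\le m+1$ holds), both quantities are $\lesssim h^{s_{\bm u}-1}\|\bm u\|_{s_{\bm u}}$; multiplying by $\nu\|\mathcal V_h\|_V$ gives the asserted bound for $E_L$.

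For $E_N$: I would work term by term from the explicit formula \eqref{def-EN}. The crucial preliminary observation is that its last boundary term simplifies: since $\langle\widehat{\bm v}_h\otimes\bm\Pi^\partial_k\bm w\,\bm n,\bm u\rangle_{\partial\mathcal T_h}=0$ — the oriented normals cancel across each interior face because $\widehat{\bm v}_h$ and $\bm\Pi^\partial_k\bm w$ are single-valued and $\bm u\in[H^1(\Omega)]^d$ has a single-valued trace, while $\widehat{\bm v}_h=\bm 0$ on $\partial\Omega$ — that term equals $-\tfrac12\langle\widehat{\bm v}_h\otimes\bm\Pi^\partial_k\bm w\,\bm n,\bm P^{RT}_k\bm u-\bm u\rangle_{\partial\mathcal T_h}$, which now carries an interpolation error. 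In each of the four terms I would then insert the exact functions so as to expose the interpolation errors $\bm P^{RT}_k\bm u-\bm u$, $\bm P^{RT}_k\bm w-\bm w$, $\bm\Pi^\partial_k\bm w-\bm w$ (in $L^2$, $L^3$, $L^6$ norms on elements or faces) and the gradient error $\nabla_h(\bm P^{RT}_k\bm u-\bm u)$ in $L^2$, apply H\"older's inequality — the working split being exponents $(6,3,2)$ on the volume terms (the smooth field in $L^6$ through $\bm W\hookrightarrow L^6$, the interpolation error of the other field in $L^3$, the $\bm v_h$-factor in $L^2$) together with its face analogue combined with the discrete trace and inverse inequalities — and bound $\|\nabla_h\bm v_h\|_0$, $\|\bm v_h\|_{0,3}$, $\|\bm v_h\|_{0,6}$ and $\|\tau^{\frac12}(\bm v_h-\widehat{\bm v}_h)\|_{\partial\mathcal T_h}$ by $\|\mathcal V_h\|_V$ via Lemma \ref{lem-sim} and Lemma \ref{discrete-soblev}, and $\|\bm P^{RT}_k\bm w\|_{0,q}$, etc., by the $L^q$-stability of the interpolants. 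Collecting the local powers, the terms carrying a $\bm w$-interpolation error produce $h^{1+s_{\bm w}-d/2}|\bm u|_1\|\bm w\|_{s_{\bm w}}$ and those carrying a $\bm u$-interpolation error produce $h^{1+s_{\bm u}-d/2}|\bm w|_1|\bm u|_{s_{\bm u}}$; this can also be read off from the decomposition \eqref{EN1}--\eqref{def-EN}, which exhibits $E_N$ as a consistency error.

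The main obstacle is obtaining the sharp exponent $1+s-\tfrac d2$ for the boundary contributions to $E_N$: one must choose the H\"older exponents so that the interpolation error is measured in $L^3$ on faces, carefully track the fractional powers produced by the discrete trace and local inverse inequalities (exactly as in the estimates $T_1$--$T_4$, $S_1$--$S_5$ in the proof of Lemma \ref{conti} and in the proof of \eqref{B5}), and make sure every occurrence of $\bm v_h$ and $\widehat{\bm v}_h$ collapses into $\|\mathcal V_h\|_V$. The oriented-normal cancellation for the last term of \eqref{def-EN} is what prevents that term from contributing only the cruder rate $O(h^{1-d/6})$.
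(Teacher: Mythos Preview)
Your treatment of $E_L$ is correct and matches the paper. For $E_N$, however, the $(6,3,2)$ H\"older split you describe does \emph{not} deliver the exponent $1+s_{\bm w}-\tfrac d2$ (nor $1+s_{\bm u}-\tfrac d2$): taking the smooth factor in $L^6$ and the interpolation error in $L^3$ gives, on a typical volume term,
\[
\|\bm u\|_{0,6}\,\|\bm P^{RT}_k\bm w-\bm w\|_{0,3}\,\|\nabla_h\bm v_h\|_0
\ \lesssim\ h^{\,s_{\bm w}-d/6}\,|\bm u|_1\,\|\bm w\|_{s_{\bm w}}\,\|\mathcal V_h\|_V,
\]
which coincides with $h^{1+s_{\bm w}-d/2}$ only when $d=3$; for $d=2$ you are short by a factor $h^{1/3}$. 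The same shortfall occurs on the face terms with your $(6,3,2)$ face split.

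The ingredient you are missing is the \emph{orthogonality} of the interpolation errors: by \eqref{RT2}, $\bm P^{RT}_k\bm w-\bm w$ is $L^2$-orthogonal on each $T$ to $[\mathcal P_{k-1}(T)]^d$, and $\bm\Pi^\partial_k\bm w-\bm w$ is $L^2$-orthogonal on each face to $\mathcal P_k(E)$. Since $\nabla_h\bm v_h\in[\mathcal P_{k-1}(T)]^{d\times d}$ and $\bm v_h-\widehat{\bm v}_h\in[\mathcal P_k(E)]^d$, one may freely subtract a piecewise constant from the \emph{other} factor, e.g.
\[
(\bm P^{RT}_k\bm u\otimes(\bm P^{RT}_k\bm w-\bm w),\nabla_h\bm v_h)
=((\bm P^{RT}_k\bm u-\bm\Pi^o_0\bm u)\otimes(\bm P^{RT}_k\bm w-\bm w),\nabla_h\bm v_h),
\]
and then switch to an $(\infty,2,2)$ split: $\|\bm P^{RT}_k\bm u-\bm\Pi^o_0\bm u\|_{0,\infty,T}\lesssim h_T^{1-d/2}|\bm u|_{1,T}$ combined with $\|\bm P^{RT}_k\bm w-\bm w\|_{0,T}\lesssim h_T^{s_{\bm w}}\|\bm w\|_{s_{\bm w},T}$ gives exactly $h^{1+s_{\bm w}-d/2}$. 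The paper does precisely this for $E_1$ and $E_2$ (the latter after first replacing $\bm v_h$ by $\bm v_h-\widehat{\bm v}_h$ via the oriented-normal cancellation, which you did notice), and for $E_3$ it integrates by parts once more to recast the convective term in a form where the same orthogonality trick applies, again inserting $\bm\Pi^o_0\bm u$ or $\bm\Pi^o_0\bm w$. Your oriented-normal cancellation for the fourth summand is useful but not sufficient on its own; without the mean-subtraction step the volume contributions already fail to reach the stated rate in $d=2$.
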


\begin{proof} We only prove the first estimate, since the second one follows similarly. 
	
	Set $E_{N}(\bm{u};\bm{u},\bm{v}_h)=\sum_{i=1}^3E_i$, where
	\begin{align*}
	E_1:=& -\frac{1}{2}(\bm{P}_k^{RT}\bm{u}\otimes \bm{P}_k^{RT}\bm{w}-\bm{u}\otimes\bm{w},\nabla_h\bm{v}_{h}),\\
	E_2:=&\frac{1}{2}\langle\bm{\Pi}_{k}^{\partial}\bm{u}\otimes \bm{\Pi}_{k}^{\partial}\bm{w}\bm{n}-\bm{u}\otimes\bm{w}\bm{n},\bm{v}_{h}\rangle_{\partial\mathcal{T}_h},\\
	E_3:=&-\frac{1}{2}(\bm{w}\cdot\nabla\bm{u}-\bm{P}_k^{RT}\bm{w}\cdot\nabla_h\bm{P}_k^{RT}\bm{u},\bm{v}_{h}) -\frac{1}{2}\langle\widehat{\bm{v}}_{h}\otimes \bm{\Pi}_{k}^{\partial}\bm{w}\bm{n},\bm{P}_k^{RT}\bm{u}\rangle_{\partial\mathcal{T}_h}.
	\end{align*}
	\color{black}
	The orthogonality \eqref{RT2} yields
	\begin{align}
	|(\bm{P}_k^{RT}\bm{u}\otimes (\bm{P}_k^{RT}\bm{w}-\bm{w}),\nabla_h\bm{v}_{h})|
	&=|((\bm{P}_k^{RT}\bm{u}-\bm{\Pi}_0^o\bm u)\otimes (\bm{P}_k^{RT}\bm{w}-\bm{w}),\nabla_h\bm{v}_{h})|\nonumber\\
	&\lesssim h^{1+s_w-\frac d 2} |\bm u|_{1}\|\bm w\|_{s_w}
	\|\mathcal{V}_h\|_V.\nonumber
	\end{align} 
	Similarity, we have
	\begin{align}
	|((\bm{P}_k^{RT}\bm{u}-\bm{u})\otimes \bm{w},\nabla_h\bm{v}_{h})|
	&=|((\bm{P}_k^{RT}\bm{u}-\bm u)\otimes (\bm{P}_k^{RT}\bm{w}-\bm{\Pi}_0^o\bm{w}),\nabla_h\bm{v}_{h})|\nonumber\\
	&\lesssim h^{1+s_u-\frac d 2} |\bm w|_{1}\|\bm u\|_{s_u}
	\|\mathcal{V}_h\|_V.\nonumber
	\end{align} 
	Therefore, from the triangle inequality  it follows
	\begin{align}
	|E_1| 
	\lesssim& \left(
	h^{1+s_{\bm{w}}-\frac d 2}|\bm{u}|_{1}
	\|\bm{w}\|_{s_{\bm{w}}}
	+h^{1+s_{\bm{u}}-\frac d 2}|\bm{w}|_{1}
	|\bm{u}|_{s_{\bm{u}}}\right)
	\|\mathcal{V}_h\|_V.\nonumber
	\end{align}
	Since
	\begin{align}
	&|\langle\bm{\Pi}_{k}^{\partial}\bm{u}\otimes (\bm{\Pi}_{k}^{\partial}\bm{w}-\bm{w})\bm{n},\bm{v}_{h}-\widehat{\bm{v}}_{h}\rangle_{\partial\mathcal{T}_h}|\nonumber\\
	&\qquad=|\langle(\bm{\Pi}_{k}^{\partial}\bm{u}-\bm{\Pi}_{0}^{\partial}\bm{u})\otimes (\bm{\Pi}_{k}^{\partial}\bm{w}-\bm{w})\bm{n},\bm{v}_{h}-\widehat{\bm{v}}_{h}\rangle_{\partial\mathcal{T}_h}|\nonumber\\
	&\qquad\lesssim h^{1+s_w-\frac d 2}|\bm u|_1\|\bm w\|_{s_w}\|\mathcal{V}\|_V\nonumber
	\end{align}
	and
	\begin{align}
	|\langle(\bm{\Pi}_{k}^{\partial}\bm{u}-\bm{u})\otimes \bm{w}\bm{n},\bm{v}_{h}-\widehat{\bm{v}}_{h}\rangle_{\partial\mathcal{T}_h}| \lesssim
	h^{1+s_u-\frac d 2}|\bm w|_1\|\bm u\|_{s_u}\|\mathcal{V}\|_V,\nonumber
	\end{align}
	by triangle inequality we obtain
	\begin{align}
	|E_2|
	\lesssim& \left(
	h^{1+s_{\bm{w}}-\frac d 2}|\bm{u}|_{1}
	\|\bm{w}\|_{s_{\bm{w}}}
	+h^{1+s_{\bm{u}}-\frac d 2}|\bm{w}|_{1}
	|\bm{u}|_{s_{\bm{u}},T}\right)
	\|\mathcal{V}_h\|_V.\nonumber
	\end{align}
	It is easy to see that
	\begin{align}              
	-2E_3=&((\bm{u}-\bm{P}_k^{RT}\bm{u})\cdot\nabla\bm{w},\bm{v}_{h})
	+(\bm{P}_k^{RT}\bm{u}\cdot\nabla_h(\bm{w}-\bm{P}_k^{RT}\bm{w}),\bm{v}_{h})\nonumber\\
	&+\langle\widehat{\bm{v}}_{h}\otimes \bm{\Pi}_{k}^{\partial}\bm{w}\bm{n},\bm{P}_k^{RT}\bm{u}-\bm{u}\rangle_{\partial\mathcal{T}_h}\nonumber\\
	=&((\bm{u}-\bm{P}_k^{RT}\bm{u})\cdot\nabla\bm{w},\bm{v}_{h})+((\bm{P}_k^{RT}\bm{u}-\bm{u})\cdot\nabla_h(\bm{w}-\bm{P}_k^{RT}\bm{w}),\bm{v}_{h})\nonumber\\
	&+(\bm{u}\cdot\nabla_h(\bm{w}-\bm{P}_k^{RT}\bm{w}),\bm{v}_{h})+\langle(\widehat{\bm{v}}_{h}-\bm{v}_{h})\otimes \bm{\Pi}_{k}^{\partial}\bm{w}\bm{n},\bm{P}_k^{RT}\bm{u}-\bm{u}\rangle_{\partial\mathcal{T}_h}\nonumber\\
	&+\langle\bm{v}_{h}\otimes \bm{\Pi}_{k}^{\partial}\bm{w}\bm{n},\bm{P}_k^{RT}\bm{u}-\bm{u}\rangle_{\partial\mathcal{T}_h}\nonumber\\
	=&-((\bm{w}-\bm{P}_k^{RT}\bm{w})\cdot\nabla_h\bm{v}_{h},\bm{u}-\bm{\Pi}_{0}^o\bm{u})\nonumber\\
	&+\langle(\bm{w}-\bm{P}_k^{RT}\bm{w})\cdot\bm{n},(\bm{v}_h-\widehat{\bm{v}}_h)(\bm{u}-\bm{\Pi}^o_0\bm{u}) \rangle_{\partial\mathcal{T}_h} \nonumber\\
	&+((\bm{P}_k^{RT}\bm{w}-\bm{w})\cdot\nabla_h(\bm{u}-\bm{P}_k^{RT}\bm{u}),\bm{v}_{h})\nonumber\\
	&-((\bm{w}-\bm{\Pi}_{0}^o\bm{w})\cdot\nabla_h\bm{v}_{h},\bm{u}-\bm{P}_k^{RT}\bm{u})\nonumber\\
	&+\langle(\widehat{\bm{v}}_{h}-\bm{v}_{h})\otimes \bm{\Pi}_{k}^{\partial}\bm{w}\bm{n},\bm{P}_k^{RT}\bm{u}-\bm{u}\rangle_{\partial\mathcal{T}_h}\nonumber\\
	&+\langle\bm{v}_{h}\otimes (\bm{\Pi}_{k}^{\partial}\bm{w}-\bm{w})\bm{n},\bm{P}_k^{RT}\bm{u}-\bm{u}\rangle_{\partial\mathcal{T}_h}.\nonumber
	\end{align}
	So
	\begin{eqnarray*}
		|E_3|\lesssim \left(
		h^{1+s_{\bm{w}}-\frac d 2}|\bm{u}|_{1}
		\|\bm{w}\|_{s_{\bm{w}}}
		+h^{1+s_{\bm{u}}-\frac d 2}|\bm{w}|_{1}
		|\bm{u}|_{s_{\bm{u}},T}\right)
		\|\mathcal{V}_h\|_V.\nonumber
	\end{eqnarray*}
	Combining the above estimates of $E_1, E_2,E_3$ yields the desired conclusion.
\end{proof}

\begin{theorem} \label{theorem4.4} 
	Let $(\bm{u},p)\in [H^{r_{\bm u}}(\Omega)]^d\times H^{r_p}(\Omega)$, with $r_{\bm u}>\frac{3}{2}$ and $r_p>\frac{1}{2}$, and $(\mathbb{L}_h,\bm{u}_h,\widehat{\bm{u}}_h,p_h,\widehat{p}_h)\in \mathbb{K}_h\color{black}\times[\color{black}\bm{V}_h\times\bm{V}_h]\times[ Q^0_h\times \widehat{Q}_h]$  be the solutions to $(\ref{Or-NS})$ and  $(\ref{HDG11})$-\eqref{HDG33}, respectively. Then it holds the following  error estimates:
	\begin{align}
	\| \mathcal{U}_I-\mathcal{U}_h\|_V \lesssim&  \delta^{-1}{\nu}^{-1}h^{s_{\bm{u}}-1}(h^{s_{\bm{u}}+1-\frac d 2}\|\bm{u}\|_{s_{\bm{u}}}|\bm{u}|_{1}+\nu \|\bm{u}\|_{s_{\bm{u}}}),\label{78}\\
	\|J_hp-p_h\|_Q\lesssim&  \delta^{-1}{\nu}^{-1}h^{s_{\bm{u}}-1}(h^{s_{\bm{u}}+1-\frac d 2}\|\bm{u}\|_{s_{\bm{u}}}|\bm{u}|_{1}+\nu \|\bm{u}\|_{s_{\bm{u}}}),\label{79}
	\end{align}
	where $s_{\bm{u}}=\min\{r_{\bm u},k+1\}$.
\end{theorem}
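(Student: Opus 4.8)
The plan is to compare $(\mathbb{L}_h,\mathcal{U}_h,\mathcal{P}_h)$ with the interpolants $\mathcal{U}_I=(\bm P^{RT}_k\bm u,\bm\Pi^{\partial}_k\bm u)$ and $\mathcal{P}_I$, for which Lemma~\ref{lemma4.1} supplies the perturbed equations \eqref{error1}--\eqref{error3} with consistency defects $E_L,E_N$. Subtracting the scheme \eqref{HDG22}--\eqref{HDG33} from \eqref{error2}--\eqref{error3} and writing $\mathcal{E}_h:=\mathcal{U}_I-\mathcal{U}_h$, one obtains $d_h(\mathcal{E}_h,\mathcal{Q}_h)=0$ for all $\mathcal{Q}_h$ together with
\begin{multline*}
\nu(K_h\mathcal{E}_h,K_h\mathcal{V}_h)-d_h(\mathcal{V}_h,\mathcal{P}_I-\mathcal{P}_h)+s_h(\mathcal{E}_h,\mathcal{V}_h)\\
+b_h(\mathcal{U}_I;\mathcal{U}_I,\mathcal{V}_h)-b_h(\mathcal{U}_h;\mathcal{U}_h,\mathcal{V}_h)=E_L(\bm u;\mathcal{V}_h)+E_N(\bm u;\bm u,\mathcal{V}_h)
\end{multline*}
for all $\mathcal{V}_h\in\bm V_h\times\widehat{\bm V}_h$. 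Since \eqref{4.5} gives $\mathcal{U}_I\in\bm V_h\times\widehat{\bm V}_h$, the first identity shows $\mathcal{E}_h\in\bm W_h$, and $\mathcal{U}_h\in\bm W_h$ by \eqref{HDG33}. (Subtracting \eqref{HDG11} from \eqref{error1} additionally gives $\bm\Pi^o_m\mathbb L-\mathbb L_h=\nu K_h\mathcal{E}_h$, so the error in $\mathbb L$ is controlled by $\nu\|\mathcal{E}_h\|_V$ for free.)

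For the velocity bound \eqref{78} I take $\mathcal{V}_h=\mathcal{E}_h$. The $d_h$-term then drops out ($\mathcal{E}_h\in\bm W_h$, $\mathcal{P}_I-\mathcal{P}_h\in Q_h\times\widehat Q_h$); using the multilinearity identity $b_h(\mathcal{U}_I;\mathcal{U}_I,\mathcal{E}_h)-b_h(\mathcal{U}_h;\mathcal{U}_h,\mathcal{E}_h)=b_h(\mathcal{U}_I;\mathcal{E}_h,\mathcal{E}_h)+b_h(\mathcal{E}_h;\mathcal{U}_h,\mathcal{E}_h)$, the antisymmetry $b_h(\,\cdot\,;\mathcal{E}_h,\mathcal{E}_h)=0$, and $\nu(K_h\mathcal{E}_h,K_h\mathcal{E}_h)+s_h(\mathcal{E}_h,\mathcal{E}_h)=\nu\|\mathcal{E}_h\|_V^2$, the test identity collapses to
\[
\nu\|\mathcal{E}_h\|_V^2=-b_h(\mathcal{E}_h;\mathcal{U}_h,\mathcal{E}_h)+E_L(\bm u;\mathcal{E}_h)+E_N(\bm u;\bm u,\mathcal{E}_h).
\]
By the definition \eqref{Nh} of $\mathcal{N}_h$ (all three slots lie in $\bm W_h$) and the stability estimate \eqref{sta}, $|b_h(\mathcal{E}_h;\mathcal{U}_h,\mathcal{E}_h)|\le\mathcal{N}_h\|\mathcal{U}_h\|_V\|\mathcal{E}_h\|_V^2\le\mathcal{N}_h\nu^{-1}\|\bm f\|_{*,h}\|\mathcal{E}_h\|_V^2$. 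Moving this term to the left and invoking \eqref{uni-condition} --- which, by Lemma~\ref{limit} and the standing hypothesis \eqref{uni}, holds once $h$ is small enough --- the coefficient of $\|\mathcal{E}_h\|_V^2$ is at least $\tfrac{\delta}{2}\nu$. Estimating $E_L$ and $E_N$ by the consistency lemma proved just above (with $\bm w=\bm u$), cancelling one factor $\|\mathcal{E}_h\|_V$, and dividing by $\tfrac{\delta}{2}\nu$ gives \eqref{78}.

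For the pressure bound \eqref{79} I return to the error equation with arbitrary $\mathcal{V}_h$ and solve for $d_h(\mathcal{V}_h,\mathcal{P}_I-\mathcal{P}_h)$. Its right-hand side is $\lesssim\bigl(\nu+\|\mathcal{U}_I\|_V+\|\mathcal{U}_h\|_V\bigr)\|\mathcal{E}_h\|_V\|\mathcal{V}_h\|_V$ plus the $E_L,E_N$ contributions, where the two trilinear terms are bounded through the boundedness \eqref{B1} (here $\mathcal{V}_h$ is no longer in $\bm W_h$, so one uses \eqref{B1} rather than \eqref{Nh}), with $\|\mathcal{U}_h\|_V\le\nu^{-1}\|\bm f\|_{*,h}$ and $\|\mathcal{U}_I\|_V\lesssim|\bm u|_1$ from the approximation properties of $\bm P^{RT}_k,\bm\Pi^{\partial}_k$. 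Dividing by $\|\mathcal{V}_h\|_V$, taking the supremum and applying the inf--sup inequality of Theorem~\ref{theoremLBB} bounds $\|\mathcal{P}_I-\mathcal{P}_h\|_Q$; inserting \eqref{78} and then using the triangle inequality together with the approximation error $\|J_hp-\mathcal{P}_I\|_Q$ yields \eqref{79}.

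The main obstacle is the absorption step: the factor $1-\mathcal{N}_h\nu^{-2}\|\bm f\|_{*,h}$ has to stay bounded away from $0$ uniformly as $h\to0$, and this is exactly where the consistency of the discrete and continuous uniqueness constants --- $\mathcal{N}_h\to\mathcal{N}$ and $\|\bm f\|_{*,h}\to\|\bm f\|_*$ from Lemma~\ref{limit}, hence \eqref{uni-condition} --- is indispensable; without it the constants in \eqref{78}--\eqref{79} would degenerate as $h\to0$. A secondary technical nuisance is keeping the powers of $h$ in $E_N(\bm u;\bm u,\mathcal{E}_h)$ sharp, which hinges on the orthogonality of $\bm P^{RT}_k$ against piecewise constants and piecewise gradients.
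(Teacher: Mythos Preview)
Your argument is essentially identical to the paper's: subtract \eqref{HDG22}--\eqref{HDG33} from \eqref{error2}--\eqref{error3}, test with $\mathcal{V}_h=\mathcal{E}_h$, split $b_h(\mathcal{U}_I;\mathcal{U}_I,\mathcal{E}_h)-b_h(\mathcal{U}_h;\mathcal{U}_h,\mathcal{E}_h)=b_h(\mathcal{U}_I;\mathcal{E}_h,\mathcal{E}_h)+b_h(\mathcal{E}_h;\mathcal{U}_h,\mathcal{E}_h)$, kill the first term by antisymmetry, absorb the second via $\mathcal{N}_h\nu^{-1}\|\bm f\|_{*,h}$ and \eqref{uni-condition}, then recover the pressure from the inf--sup of Theorem~\ref{theoremLBB}. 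One small remark: in the paper $J_hp$ is nothing other than $\mathcal{P}_I=(\Pi^o_{k-1}p,\Pi^{\partial}_kp)$, so your closing triangle-inequality step for \eqref{79} is superfluous (though harmless).
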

\begin{proof} Set 
	\begin{align*}
	&e_h^{\mathbb L}:=\bm{\Pi}_m^o\mathbb L-\mathbb L_h,\\ &e_h^{\bm{u}}:=\bm{P}^{RT}_k\bm{u}-\bm{u}_h,\quad e_h^{\widehat{\bm{u}}}:=\bm{\Pi}_k^{\partial}\bm{u}-\widehat{\bm{u}}_h,
	\quad \mathcal{E}_h^{\bm{u}}:=( e_h^{\bm{u}},e_h^{\widehat{\bm{u}}}  ),\\
	& e_h^{p}:=\Pi^o_{k-1}p-p_h, \quad e_h^{\widehat{p}}:=\Pi_k^{\partial}p-p_h,\quad \mathcal{E}_h^{p}:=( e_h^{p},e_h^{\widehat{p}}  ).
	\end{align*}
\color{black}  Subtracting \eqref{HDG11}-\eqref{HDG33} from \eqref{error1}-\eqref{error3}, respectively,   we get
	\begin{subequations}\label{ee}
		\begin{align}
		e_h^{\mathbb L}-\nu K_h\mathcal{E}_h^{\bm{u}} &=0,\\
		\nu(K_h\mathcal{E}_h^{\bm{u}} ,K_h\mathcal{V}_h)
		+s_h( \mathcal{E}_h^{\bm{u}};\mathcal{V}_h)-d_h(\mathcal{V}_{h},  \color{black} \mathcal{E}_h^p \color{black})&\nonumber\\
		+ b_h(\mathcal{U}_I;\mathcal{U}_I,\mathcal{V}_h)-b_h(\mathcal{U}_h;\mathcal{U}_h,\mathcal{V}_h)&=E_L(\bm{u};\mathcal{V}_h) +E_{N}(\bm{u},\bm{u},\mathcal{V}_h)
		\\
		d_h( \mathcal{E}_h^{\bm{u}},\mathcal{Q}_h)&=0.
		\end{align}
	\end{subequations}
	It is easy to verify that $(\mathcal{U}_I-\mathcal{U}_h,\mathcal{P}_I-\mathcal{P}_h)\in [\bm{V}_h\times\widehat{\bm{V}}_h]\times [Q^0_h\times \widehat{Q}_h]$. Then we take $(\mathcal{V}_h,\mathcal{Q}_h)=(\mathcal{U}_I-\mathcal{U}_h,\mathcal{P}_I-\mathcal{P}_h)$ in  \eqref{ee} to get
	\begin{align*}
	\nu\|\mathcal{E}_h^{\bm{u}} \|_V^2=&E_L(\bm{u}; \mathcal{E}_h^{\bm{u}})
	+E_{N}(\bm{u},\bm{u}; \mathcal{E}_h^{\bm{u}}) -\left\{b_h( \mathcal{U}_I;\mathcal{U}_I, \mathcal{E}_h^{\bm{u}}  )
	-b_h( \mathcal{U}_h;\mathcal{U}_h, \mathcal{E}_h^{\bm{u}} )\right\}.
	\end{align*}
	By the triangle inequity we have
	\begin{align*}
	&b_h( \mathcal{U}_I;\mathcal{U}_I,\mathcal{E}_h^{\bm{u}}    )
	-b_h( \mathcal{U}_h;\mathcal{U}_h,\mathcal{E}_h^{\bm{u}}   )=b_h( \mathcal{U}_I; \mathcal{E}_h^{\bm{u}},\mathcal{E}_h^{\bm{u}})+b_h(\mathcal{E}_h^{\bm{u}};\mathcal{U}_h,\mathcal{E}_h^{\bm{u}})\nonumber\\
	&\qquad
	\le \mathcal{N}_h\|\mathcal{U}_h \|_V\|\mathcal{E}_h^{\bm{u}} \|_V^2
	\le \mathcal{N}_h\nu^{-1}\|\bm{f}\|_{*,h}\|\mathcal{E}_h^{\bm{u}} \|_V^2.
	\end{align*}
	Then from \eqref{uni-condition}, we get
	\begin{align*}
	\frac{\delta\nu}{2}\|\mathcal{E}_h^{\bm{u}} \|_V^2
	&\le (\nu-\mathcal{N}_h\nu^{-1}\|\bm{f}\|_{*,h})\|\mathcal{E}_h^{\bm{u}} \|_V^2\nonumber\\
	&\le E_L(\bm{u};\mathcal{E}_h^{\bm{u}})+E_{N}(\bm{u};\bm{u},\mathcal{E}_h^{\bm{u}})\nonumber\\
	&\lesssim 
	(h^{s_{\bm{u}}+1-\frac d 2}\|\bm{u}\|_{s_{\bm{u}}}|\bm{u}|_{1}+\nu h^{s_{\bm{u}}-1}\|\bm{u}\|_{s_{\bm{u}}})\|\mathcal{E}_h^{\bm{u}}\|_V,
	\end{align*}
	which leads to
	\begin{align*}
	\|\mathcal{E}_h^{\bm{u}} \|_V \lesssim \delta^{-1}{\nu}^{-1}h^{s_{\bm{u}}-1}(h^{s_{\bm{u}}+1-\frac d 2}\|\bm{u}\|_{s_{\bm{u}}}|\bm{u}|_{1}+\nu \|\bm{u}\|_{s_{\bm{u}}}).
	\end{align*}
	This yields \eqref{78}.
	
	From \eqref{HDG22} it follows
	\begin{align*}
	d_h(\mathcal{V}_{h}, \mathcal{E}^p_h)=&
	(K_h\mathcal{E}_h^{\bm{u}} ,K_h\mathcal{V}_h)
	+s_h( \mathcal{E}_h^{\bm{u}};\mathcal{V}_h)+b_h(\mathcal{U}_I;\mathcal{U}_I,\mathcal{V}_h)\nonumber\\
	&-b_h(\mathcal{U}_h;\mathcal{U}_h,\mathcal{V}_h)
	-E_L(\bm{u};\mathcal{V}_h)
	-E_{N}(\bm{u},\bm{u};\mathcal{V}_h)
	\end{align*}
	Thanks to Theorem \ref{theoremLBB}, it holds
	\begin{align*}
	\|\mathcal{E}^p_h\|_Q\lesssim\sup_{\bm 0\neq \mathcal{V}_h\in \bm{V}_h\times\widehat{\bm{V}}_h}\frac{d_h(\mathcal{V}_{h}, \mathcal{E}^p_h)}{\|\mathcal{V}_h\|_V}
	\lesssim\delta^{-1}{\nu}^{-1}h^{s_{\bm{u}}-1}(h^{s_{\bm{u}}+1-\frac d 2}\|\bm{u}\|_{s_{\bm{u}}}|\bm{u}|_{1}+\nu \|\bm{u}\|_{s_{\bm{u}}}).
	\end{align*}
	This yields \eqref{79}.

\end{proof}

Furthermore,  Theorem \ref{theorem4.4} leads to  the following a priori error estimates.
\begin{theorem} \label{th44} Under the same conditions of  Theorem \ref{theorem4.4}, 
 it holds the  error estimates
	\begin{align}
	\|\nabla\bm{u}-\nabla_h\bm{u}_{h}\|_0&\lesssim  \delta^{-1}{\nu}^{-1}h^{s_{\bm{u}}-1}(h^{s_{\bm{u}}+1-\frac d 2}\|\bm{u}\|_{s_{\bm{u}}}|\bm{u}|_{1}+\nu \|\bm{u}\|_{s_{\bm{u}}})
	+h^{s_{\bm{u}}-1}\|\bm{u}\|_{s_{\bm{u}}}\label{HU}
	,\\
	\nu^{-1}\|\mathbb L-\mathbb L_{h}\|_0&\lesssim\delta^{-1}{\nu}^{-1}h^{s_{\bm{u}}-1}(h^{s_{\bm{u}}+1-\frac d 2}\|\bm{u}\|_{s_{\bm{u}}}|\bm{u}|_{1}+\nu \|\bm{u}\|_{s_{\bm{u}}})
	+h^{s_{\bm{u}}-1}\|\bm{u}\|_{s_{\bm{u}}},\label{LI}\\
	\|p-p_{h}\|_0&\lesssim \delta^{-1}{\nu}^{-1}h^{s_{\bm{u}}-1}(h^{s_{\bm{u}}+1-\frac d 2}\|\bm{u}\|_{s_{\bm{u}}}|\bm{u}|_{1}+\nu \|\bm{u}\|_{s_{\bm{u}}})
	+h^{s_{p}}\|p\|_{s_{p}}, \label{LP}
	\end{align}
	where $\mathbb L=\nu\nabla\bm u$ and $s_p:=\min\{r_p,k\}$.
\end{theorem}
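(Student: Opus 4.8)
The plan is to derive all three estimates \eqref{HU}--\eqref{LP} from the discrete-error bounds \eqref{78}--\eqref{79} of Theorem~\ref{theorem4.4} by the triangle inequality, splitting each error into an interpolation part and a discrete part (the latter already under control).

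First I would treat the velocity gradient. Writing
\[
\nabla\bm u-\nabla_h\bm u_h=\bigl(\nabla\bm u-\nabla_h\bm{P}^{RT}_k\bm u\bigr)+\nabla_h\bigl(\bm{P}^{RT}_k\bm u-\bm u_h\bigr),
\]
the first summand is controlled by the approximation property of $\bm{P}^{RT}_k$ (Section~3), giving $\|\nabla\bm u-\nabla_h\bm{P}^{RT}_k\bm u\|_0\lesssim h^{s_{\bm u}-1}\|\bm u\|_{s_{\bm u}}$, while the second summand equals $\nabla_h e_h^{\bm u}$, so by Lemma~\ref{lem-sim} it is bounded by $\|\mathcal{E}_h^{\bm u}\|_V$ and hence by the right-hand side of \eqref{78}. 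Adding the two contributions yields \eqref{HU}.

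Next, for the flux I would use $\mathbb L=\nu\nabla\bm u$ and write $\mathbb L-\mathbb L_h=(\mathbb L-\bm{\Pi}_m^o\mathbb L)+e_h^{\mathbb L}$. The first term is an $L^2$-projection error of size $\nu h^{s_{\bm u}-1}\|\bm u\|_{s_{\bm u}}$, and the first identity in \eqref{ee} gives $e_h^{\mathbb L}=\nu K_h\mathcal{E}_h^{\bm u}$, so $\nu^{-1}\|e_h^{\mathbb L}\|_0=\|K_h\mathcal{E}_h^{\bm u}\|_0\le\|\mathcal{E}_h^{\bm u}\|_V$, to which \eqref{78} applies; this gives \eqref{LI}. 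For the pressure, writing $p-p_h=(p-\Pi^o_{k-1}p)+e_h^p$, the first term is $\lesssim h^{s_p}\|p\|_{s_p}$ with $s_p=\min\{r_p,k\}$ and the second satisfies $\|e_h^p\|_0\le\|\mathcal{E}_h^p\|_Q$, which is bounded in \eqref{79}; this gives \eqref{LP}.

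I do not expect any serious obstacle: the argument is a triangle-inequality assembly using the norm equivalences already in hand (Lemma~\ref{lem-sim} and the trivial bound $\|e_h^p\|_0\le\|\mathcal{E}_h^p\|_Q$ from the definition of $\|\cdot\|_Q$) together with the standard approximation estimates for $\bm{P}^{RT}_k$, $\bm{\Pi}_m^o$ and $\Pi^o_{k-1}$ recorded in Section~3. The only point needing a little care is the bookkeeping of the powers of $h$ and $\nu$, to confirm that in each of \eqref{HU}--\eqref{LP} the interpolation contribution is never of larger order than the already-established discrete-error term, so that the final bound collapses to the stated form.
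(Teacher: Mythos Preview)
Your proposal is correct and follows essentially the same route as the paper's proof: split each error via the triangle inequality into an interpolation part (controlled by the approximation properties of $\bm P^{RT}_k$, $\bm\Pi_m^o$, $\Pi^o_{k-1}$) and a discrete part bounded through \eqref{78}--\eqref{79}, using Lemma~\ref{lem-sim} for \eqref{HU}, the identity $e_h^{\mathbb L}=\nu K_h\mathcal E_h^{\bm u}$ together with the definition of $\|\cdot\|_V$ for \eqref{LI}, and the trivial inequality $\|e_h^p\|_0\le\|\mathcal E_h^p\|_Q$ for \eqref{LP}. Your write-up is in fact slightly more explicit than the paper's (which somewhat tersely cites ``\eqref{79} and the definition of the norm $\|\cdot\|_V$'' for \eqref{LI}); no changes are needed.
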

\begin{proof} Using  \eqref{sim}  and  Theorem \ref{theorem4.4}, we have
	\begin{eqnarray*}
		\|\nabla_h (\bm{P}^{RT}_k\bm{u}-\bm{u}_{h})\|_0\lesssim \| \mathcal{U}_I-\mathcal{U}_h\|_V.
	\end{eqnarray*}
	Then, from the triangle inequality it follows
	\begin{align}
	\|\nabla \bm{u}-\nabla_h\bm{u}_{h}\|_0\lesssim& \|\nabla \bm{u}-\nabla_h\bm{P}^{RT}_k\bm{u}\|_0+ \|\nabla_h (\bm{P}^{RT}_k\bm{u}-\bm{u}_{h})\|_0\nonumber\\
	\lesssim& \delta^{-1}{\nu}^{-1}h^{s_{\bm{u}}-1}(h^{s_{\bm{u}}+1-\frac d 2}\|\bm{u}\|_{s_{\bm{u}}}|\bm{u}|_{1}+\nu \|\bm{u}\|_{s_{\bm{u}}})
	+h^{s_{\bm{u}}-1}\|\bm{u}\|_{s_{\bm{u}}},\nonumber
	\end{align}
	i.e. \eqref{HU} holds.
	
	The estimate \eqref{LI} follows from  the estimate \eqref{79}   and the definition of the norm $\|\cdot\|_V$. 
	Similarly, Theorem $\ref{theorem4.4}$ and a triangle inequality give \eqref{LP}.
\end{proof}
\begin{remark}
	From \eqref{HU} we see   that the velocity error is independent of the pressure. This means that our HDG scheme is  pressure-robust.
\end{remark}

\section{$L^2$ error estimation for velocity by dual arguments}

\label{section5}

%

\color{black}
We follow standard dual arguments to derive an $L^2$ error estimate for velocity.  \color{black} To this end, introduce the following dual problem:  find   $(\bm{\Phi},\Psi)$ satisfying
\begin{eqnarray}
\left\{
\begin{aligned}
-\nu\Delta\bm{\Phi}-\bm{u}\cdot\nabla\bm{\Phi}+(\nabla\bm{u })^T\bm{\Phi}+\nabla \Psi&=e_h^{\bm{u}} \text{ in }\ \Omega, \\
\nabla\cdot \bm{\Phi}&=0\ \text{ in } \ \Omega,  \\
\bm{\Phi}&=\bm{0}\ \text{ on } \ \partial \Omega, \\
\end{aligned}
\right.
\end{eqnarray}
\color{black}
where $\bm{u}$ and $\bm{u}_{h}$ are the solutions of (\ref{Or-NS}) and (\ref{HDG1}), respectively.
\color{black}
We assume the following \color{black}regularity \color{black} holds:
\begin{eqnarray}\label{sta-assum}
\|\bm{\Phi}\|_{1+\alpha}+\|\Psi\|_{\alpha}\lesssim\|e_h^{\bm{u}}\|_0\color{black},\color{black}
\end{eqnarray}
with $\alpha\in (\frac{1}{2},1]$  depending on $\Omega$. 

\begin{lemma} For any $\bm{\Phi}$, $\bm{u}\in \bm{V}$ and $\mathcal{V}_h\in\bm{V}_h\times\widehat{\bm{V}}_h$ we have
	\begin{eqnarray}
	b_h(\mathcal{V}_h;\mathcal{U}_I,\bm{\Phi}_I)=((\nabla\bm{u})^T \bm{\Phi},\mathcal{V}_{h})+E_{N}^{\color{black}\star}(\mathcal{V}_h;\bm{u},\bm{\Phi}),
	\label{EN2}
	\end{eqnarray}
	where
	\begin{align*}
	E_{N}^{\color{black}\star}(\mathcal{V}_h;\bm{u},\bm{\Phi})&= ( (\nabla_h\bm{P}_k^{RT}\bm{u})^T\bm{P}_k^{RT}\bm{\Phi}-(\nabla\bm{u})^T\bm{\Phi},\bm{v}_{h})-\frac{1}{2}\langle\bm{\Pi}_k^{\partial}\bm{\Phi}\otimes\widehat{\bm{v}}_{h} \bm{n},\bm{P}_k^{RT}\bm{u}\rangle_{\partial\mathcal{T}_h}\nonumber\\
	&\quad+\frac{1}{2}\langle \bm{\Pi}_k^{\partial}\bm{u}\otimes\widehat{\bm{v}}_{h}\bm{n},\bm{P}_k^{RT}\bm{\Phi}\rangle_{\partial\mathcal{T}_h}
	-\frac{1}{2}\langle\bm{P}_k^{RT} \bm{u}\otimes\bm{v}_{h}\bm{n},\bm{P}_k^{RT}\bm{\Phi}\rangle_{\partial\mathcal{T}_h}.
	\end{align*}
\end{lemma}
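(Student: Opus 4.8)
The plan is to imitate the proof of the preceding lemma, the one establishing \eqref{EN1}: unfold $b_h(\mathcal V_h;\mathcal U_I,\bm\Phi_I)$ through the definition of $b_h(\cdot;\cdot,\cdot)$ and then integrate by parts element by element. First I would substitute the transport field $\mathcal W_h\to\mathcal V_h$, the convected field $\mathcal U_h\to\mathcal U_I=(\bm P_k^{RT}\bm u,\bm\Pi_k^\partial\bm u)$ and the test field $\mathcal V_h\to\bm\Phi_I=(\bm P_k^{RT}\bm\Phi,\bm\Pi_k^\partial\bm\Phi)$ into the formula for $b_h$; then $2b_h(\mathcal V_h;\mathcal U_I,\bm\Phi_I)$ becomes the sum of the two volume integrals $(\bm P_k^{RT}\bm\Phi\otimes\bm v_h,\nabla_h\bm P_k^{RT}\bm u)$ and $-(\bm P_k^{RT}\bm u\otimes\bm v_h,\nabla_h\bm P_k^{RT}\bm\Phi)$ and the two face integrals $-\langle\bm\Pi_k^\partial\bm\Phi\otimes\widehat{\bm v}_h\bm n,\bm P_k^{RT}\bm u\rangle_{\partial\mathcal T_h}$ and $\langle\bm\Pi_k^\partial\bm u\otimes\widehat{\bm v}_h\bm n,\bm P_k^{RT}\bm\Phi\rangle_{\partial\mathcal T_h}$. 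The last two are already exactly twice the second and third terms of $E_N^\star$, so they require no further manipulation beyond the notational reading $\bm a\otimes\bm b\,\bm n=(\bm b\cdot\bm n)\bm a$.

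For the two volume integrals I would first use the elementary identity $(\bm a\otimes\bm b,\nabla_h\bm c)=((\nabla_h\bm c)^T\bm a,\bm b)$ to rewrite the first one as $((\nabla_h\bm P_k^{RT}\bm u)^T\bm P_k^{RT}\bm\Phi,\bm v_h)$; then adding and subtracting $((\nabla\bm u)^T\bm\Phi,\bm v_h)$ produces the target term $((\nabla\bm u)^T\bm\Phi,\bm v_h)$ together with the first constituent of $E_N^\star$. For the second volume integral I would integrate by parts on each $T\in\mathcal T_h$, moving the derivative off $\bm P_k^{RT}\bm\Phi$; by the product rule, $-(\bm P_k^{RT}\bm u\otimes\bm v_h,\nabla_h\bm P_k^{RT}\bm\Phi)$ turns into $(\bm P_k^{RT}\bm\Phi\otimes\bm v_h,\nabla_h\bm P_k^{RT}\bm u)$, plus a residual volume integral $(\nabla_h\cdot\bm v_h,\bm P_k^{RT}\bm u\cdot\bm P_k^{RT}\bm\Phi)$, minus the face integral $\langle\bm P_k^{RT}\bm u\otimes\bm v_h\bm n,\bm P_k^{RT}\bm\Phi\rangle_{\partial\mathcal T_h}$, which is exactly twice the fourth term of $E_N^\star$. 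Throughout I would invoke the properties of $\bm P_k^{RT}$ in \eqref{RT4} and Lemma \ref{lemma4.1}, in particular $\nabla\cdot\bm P_k^{RT}\bm u=\nabla\cdot\bm P_k^{RT}\bm\Phi=0$ and $\bm P_k^{RT}\bm u|_T,\bm P_k^{RT}\bm\Phi|_T\in[\mathcal P_k(T)]^d$, which legitimise the integration by parts and suppress any further consistency contributions. Collecting the volume and face pieces and dividing by $2$ then gives $b_h(\mathcal V_h;\mathcal U_I,\bm\Phi_I)=((\nabla\bm u)^T\bm\Phi,\bm v_h)+E_N^\star(\mathcal V_h;\bm u,\bm\Phi)$.

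The calculation is elementary; the only real obstacle is the bookkeeping — being meticulous about the outer-product/transpose conventions (which index carries the derivative and which is contracted against $\bm v_h$) and about which integrations by parts spawn compensating volume or boundary contributions, so that everything telescopes into precisely the four terms of $E_N^\star$. The point that most needs care is the residual volume integral $\tfrac12(\nabla_h\cdot\bm v_h,\bm P_k^{RT}\bm u\cdot\bm P_k^{RT}\bm\Phi)$ left by the product rule: it vanishes as soon as $\bm v_h$ is divergence-free element by element, which is exactly the setting in which this lemma is applied (the $L^2$-error duality argument, where $\mathcal V_h$ is the globally divergence-free velocity error, cf.\ Theorem \ref{Th5.1} and \eqref{4.5}).
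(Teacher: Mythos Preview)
Your approach matches the paper's exactly: expand $b_h$ via its definition, rewrite one volume integral through the algebraic identity $(\bm a\otimes\bm b,\nabla_h\bm c)=((\nabla_h\bm c)^T\bm a,\bm b)$ and the other by elementwise integration by parts, then collect into $((\nabla\bm u)^T\bm\Phi,\bm v_h)+E_N^\star$. Your caveat about the residual term $\tfrac12(\nabla_h\cdot\bm v_h,\bm P_k^{RT}\bm u\cdot\bm P_k^{RT}\bm\Phi)$ is well taken and in fact more careful than the paper's brief ``direct calculation,'' which silently drops it; as you correctly note, it vanishes in the only place the lemma is invoked, namely with $\mathcal V_h=\mathcal E_h^{\bm u}\in\bm W_h$.
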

\begin{proof} 
	\color{black}
	By direct calculations, we have
	\color{black}
	\begin{align}
	&-(\bm{P}_k^{RT}\bm{u}\otimes\bm{v}_{h}, \nabla_h\bm{P}_k^{RT}\bm{\Phi})+\langle\bm{\Pi}_k^{\partial}\bm{u}\otimes \widehat{\bm{v}}_{h} \bm{n},\bm{P}_k^{RT}\bm{\Phi}\rangle_{\partial\mathcal{T}_h}\nonumber\\
	&\quad=((\nabla\bm{u})^T \bm{\Phi},\bm{v}_{h})+( (\nabla_h\bm{P}_k^{RT}\bm{u})^T\bm{P}_k^{RT}\bm{\Phi}-(\nabla\bm{u})^T\bm{\Phi},\bm{v}_{h})\nonumber\\
	&\qquad+\langle\bm{\Pi}_k^{\partial}\bm{u}\otimes\widehat{\bm{v}}_{h} \bm{n},\bm{P}_k^{RT}\bm{\Phi}\rangle_{\partial\mathcal{T}_h}
	-\langle\bm{P}_k^{RT} \bm{u}\otimes\bm{v}_{h}\bm{n},\bm{P}_k^{RT}\bm{\Phi}\rangle_{\partial\mathcal{T}_h},\nonumber
	\end{align}
	ajd
	\begin{align}
	&-( \bm{P}_k^{RT}\bm{\Phi}\otimes\bm{v}_{h},\nabla_h\bm{P}_k^{RT}\bm{u})+\langle\bm{\Pi}_k^{\partial}\bm{\Phi}\otimes\widehat{\bm{v}}_{h}\bm{n},\bm{P}_k^{RT}\bm{u}\rangle_{\partial\mathcal{T}_h}\nonumber\\
	&\quad=((\nabla\bm{u})^T\bm{\Phi},\bm{v}_{h})+((\nabla_h\bm{P}_k^{RT}\bm{u})^T\bm{P}_k^{RT}\bm{\Phi}-(\nabla\bm{u})^T\bm{\Phi},\bm{v}_{h})\nonumber\\
	&\qquad+\langle \bm{\Pi}_k^{\partial}\bm{\Phi}\otimes\widehat{\bm{v}}_{h}\bm{n},\bm{P}_k^{RT}\bm{u}\rangle_{\partial\mathcal{T}_h}.\nonumber
	\end{align}
	Then the desired result follows immediately.

\end{proof}

\begin{lemma} For all $\bm{u}\in\bm{W}\cap [{H}^{s_{\bm{u}}}(\Omega)]^d$, $\bm{\Phi}\in\bm{W}\cap [{H}^{1+\alpha}(\Omega)]^d$ and $\bm{v}_h\in\bm{V}_h\times\widehat{\bm{V}}_h$, there holds
	\begin{align}
	|E_{N}(\bm{u};\bm{u},\bm{\Psi}_I)|\lesssim& h^{1+s_{\bm{u}}-\frac d 2+\alpha}|\bm{u}|_{1}
	\|\bm{u}\|_{s_{\bm{u}}}
	\|\bm{\Psi}\|_{1+\alpha},\label{e71}\\
	|E_{N}^{\color{black}\star}(\mathcal{V}_h;\bm{u},\bm{\Phi})|\lesssim& h^{\alpha}|\bm{u}|_{1}\|\bm{\Phi}\|_{1+\alpha}\|\mathcal{V}_h\|_V\label{e73},\\
	|E_L(\bm{u};\bm{\Phi}_I)|+
	|E_L(\bm{\Phi};\mathcal{E}^{\bm{u}}_{h})|\lesssim& h^{s_{\bm{u}}-1+\alpha}\|\bm{u}\|_{s_{\bm{u}}}|\bm{\Phi}|_{1+\alpha}.\label{e77}
	\end{align}
\end{lemma}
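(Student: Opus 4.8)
The three bounds \eqref{e71}, \eqref{e73} and \eqref{e77} are duality-flavoured sharpenings of the consistency estimates for $E_N$ and $E_L$ proved in the two preceding lemmas: the machinery is the same, but now the test function is the interpolant of a dual field ($\bm\Psi_I$, $\bm\Phi_I$) or of an error function ($\mathcal E^{\bm u}_h$), and the extra factor $h^{\alpha}$ is harvested from the $[H^{1+\alpha}(\Omega)]^d$-regularity of $\bm\Psi$ and $\bm\Phi$. The plan is, for each functional, to expand it into its defining sum of element and face integrals (from \eqref{def-EN}, \eqref{EN2} and the definition of $E_L$), to insert and remove low-degree polynomial projections using the orthogonality of $\bm P^{RT}_k$, $\bm\Pi^{\partial}_k$ and $\bm\Pi^o_m$ (such as \eqref{RT2}), and then to estimate term by term with H\"older's inequality (exponents $(2,3,6)$ on volume terms, a $(2,\mu,\lambda)$-type split on faces), the inverse and trace inequalities, the discrete Sobolev embedding $\|\bm v_h\|_{0,3}\lesssim\|\mathcal V_h\|_V$ of Lemma \ref{discrete-soblev}, and the approximation properties of the interpolants. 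In each case the key observation is that a factor carrying the dual data, after subtracting its $L^2$- or boundary-projection, costs only $O(h^{\alpha}|\bm\Psi|_{1+\alpha})$ or $O(h^{\alpha}|\bm\Phi|_{1+\alpha})$ instead of $O(|\bm\Psi|_1)$ or $O(|\bm\Phi|_1)$.

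For \eqref{e71} I would reuse verbatim the decomposition $E_N(\bm u;\bm u,\bm\Psi_I)=E_1+E_2+E_3$ from the energy-norm estimate of $E_N$, now with $(\bm v_h,\widehat{\bm v}_h)=(\bm P^{RT}_k\bm\Psi,\bm\Pi^{\partial}_k\bm\Psi)$. In each $E_i$ one factor is a $\bm u$-interpolation error, handled exactly as before (via \eqref{RT2}) to contribute $h^{1+s_{\bm u}-d/2}|\bm u|_1\|\bm u\|_{s_{\bm u}}$; the second factor carries the dual data and is either $\nabla_h\bm P^{RT}_k\bm\Psi$, which I write as $\bm\Pi^o_0\nabla\bm\Psi+(\nabla_h\bm P^{RT}_k\bm\Psi-\nabla\bm\Psi)+(\nabla\bm\Psi-\bm\Pi^o_0\nabla\bm\Psi)$ with the last two pieces $O(h^{\alpha}|\bm\Psi|_{1+\alpha})$, or the jump $\bm P^{RT}_k\bm\Psi-\bm\Pi^{\partial}_k\bm\Psi$ on $\partial\mathcal T_h$, whose scaled-trace norm is $O(h^{\alpha}|\bm\Psi|_{1+\alpha})$, or the face difference $\bm P^{RT}_k\bm\Psi-\bm\Psi$, again of that order in the scaled-trace norm. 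Multiplying the two gains produces $h^{1+s_{\bm u}-d/2+\alpha}$.

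For \eqref{e73} I would expand $E^{\star}_N(\mathcal V_h;\bm u,\bm\Phi)$ into its four terms and first recombine the volume term $((\nabla_h\bm P^{RT}_k\bm u)^T\bm P^{RT}_k\bm\Phi-(\nabla\bm u)^T\bm\Phi,\bm v_h)$ with the matching face term $-\tfrac12\langle\bm P^{RT}_k\bm u\otimes\bm v_h\bm n,\bm P^{RT}_k\bm\Phi\rangle_{\partial\mathcal T_h}$ by integration by parts, so that $E^{\star}_N$ is rewritten entirely in terms of the interpolation errors $\bm P^{RT}_k\bm u-\bm u$, $\bm P^{RT}_k\bm\Phi-\bm\Phi$, $\bm\Pi^{\partial}_k\bm u-\bm u$, $\bm\Pi^{\partial}_k\bm\Phi-\bm\Phi$ and $\nabla_h\bm P^{RT}_k\bm u-\nabla\bm u$ tested against $\bm v_h$, $\widehat{\bm v}_h$, $\nabla_h\bm v_h$ or $\bm v_h-\widehat{\bm v}_h$. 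Then H\"older with the split $(2,6,3)$ on volume integrals and a $(2,\mu,\lambda)$ split on faces, together with Lemma \ref{discrete-soblev} and the trace/inverse inequalities, finishes it: $\nabla\bm u$ (or $\nabla_h\bm P^{RT}_k\bm u-\nabla\bm u$, bounded crudely by $|\bm u|_1$ using $H^1$-stability of $\bm P^{RT}_k$) is kept in $L^2$, the dual-data interpolation error is placed in $L^6$ or in the scaled-trace norm, where it contributes $h^{\alpha+1-d/3}\|\bm\Phi\|_{1+\alpha}\le h^{\alpha}\|\bm\Phi\|_{1+\alpha}$ (the $L^6$ endpoint is legitimate in $d=3$ precisely because $\alpha>\tfrac12$), and $\bm v_h$ or its jump is placed in $L^3$ or the $\|\cdot\|_V$ norm, producing $\|\mathcal V_h\|_V$.

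The estimate \eqref{e77} is the most elementary: both $E_L(\bm u;\bm\Phi_I)$ and $E_L(\bm\Phi;\mathcal E^{\bm u}_h)$ are sums of two boundary pairings, and on each I would apply Cauchy--Schwarz on $\partial\mathcal T_h$ after inserting the weights $\tau^{\pm1/2}$. For $E_L(\bm u;\bm\Phi_I)$ the factors $\|\tau^{-1/2}(\bm\Pi^o_m-\mathbb{Id})\nabla\bm u\|_{\partial\mathcal T_h}$ and $\|\tau^{1/2}(\bm P^{RT}_k\bm u-\bm u)\|_{\partial\mathcal T_h}$ are $O(h^{s_{\bm u}-1}\|\bm u\|_{s_{\bm u}})$ by scaled-trace plus approximation, while the jump factor $\|\tau^{1/2}(\bm P^{RT}_k\bm\Phi-\bm\Pi^{\partial}_k\bm\Phi)\|_{\partial\mathcal T_h}$ is $O(h^{\alpha}|\bm\Phi|_{1+\alpha})$ by the fractional approximation of $\bm P^{RT}_k,\bm\Pi^{\partial}_k$ for $\bm\Phi\in[H^{1+\alpha}(\Omega)]^d$; their product is $h^{s_{\bm u}-1+\alpha}$. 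For $E_L(\bm\Phi;\mathcal E^{\bm u}_h)$ the dual-data factors $\|\tau^{-1/2}(\bm\Pi^o_m-\mathbb{Id})\nabla\bm\Phi\|_{\partial\mathcal T_h}$ and $\|\tau^{1/2}(\bm P^{RT}_k\bm\Phi-\bm\Phi)\|_{\partial\mathcal T_h}$ are $O(h^{\alpha}|\bm\Phi|_{1+\alpha})$, and the remaining factor $\|\tau^{1/2}(e^{\bm u}_h-e^{\widehat{\bm u}}_h)\|_{\partial\mathcal T_h}\le\|\mathcal E^{\bm u}_h\|_V$ is controlled by the a priori estimate \eqref{78} of Theorem \ref{theorem4.4}, again yielding $h^{s_{\bm u}-1+\alpha}$. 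The genuinely non-mechanical point in the whole proof is the integration-by-parts recombination in \eqref{e73} that shifts all the regularity onto $\bm\Phi$ while leaving only $|\bm u|_1$ on the primal field; the rest is careful bookkeeping of the fractional scaled-trace estimates for $\alpha\in(\tfrac12,1]$ so that the $h$-powers add up to exactly the claimed exponents instead of something lossy.
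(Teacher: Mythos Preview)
Your plan for \eqref{e73} and \eqref{e77} matches the paper's and is correct. For $E_N^\star$ the paper performs precisely the integration-by-parts recombination you describe, rewriting it as a sum of seven pairings each carrying an interpolation error of $\bm u$ or of $\bm\Phi$ tested against $\bm v_h$, $\nabla_h\bm v_h$, or $\bm v_h-\widehat{\bm v}_h$, and then estimates term by term with H\"older plus approximation; for $E_L$ it does exactly the weighted Cauchy--Schwarz on $\partial\mathcal T_h$ you propose, after first using the identity $\langle\bm n\cdot\bm\Pi_m^o\nabla\bm u,\bm\Pi_k^\partial\bm\Phi\rangle_{\partial\mathcal T_h}=\langle\bm n\cdot\bm\Pi_m^o\nabla\bm u,\bm\Phi\rangle_{\partial\mathcal T_h}$ to trade $\bm\Pi_k^\partial\bm\Phi$ for $\bm\Phi$.

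For \eqref{e71} you are in fact more careful than the paper, whose argument for this line is flawed: it invokes the earlier $E_N$-bound to get $|E_N(\bm u;\bm u,\bm\Psi_I)|\lesssim h^{1+s_{\bm u}-d/2}|\bm u|_1\|\bm u\|_{s_{\bm u}}\|\bm\Psi_I\|_V$ and then asserts $\|\bm\Psi_I\|_V\lesssim h^\alpha\|\bm\Psi\|_{1+\alpha}$, but this last step is false since $\|K_h\bm\Psi_I\|_0=\|\bm\Pi_m^o\nabla\bm\Psi\|_0\sim|\bm\Psi|_1$ carries no power of $h$. Your idea of returning to the $E_1+E_2+E_3$ split and harvesting $h^\alpha$ from the $\bm\Psi$-factor is the right repair for $E_2$ and for the jump pieces of $E_3$, where the dual factor is $\bm P_k^{RT}\bm\Psi-\bm\Pi_k^\partial\bm\Psi$ on $\partial\mathcal T_h$. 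However, in the volume term $E_1=-\tfrac12(\bm P_k^{RT}\bm u\otimes\bm P_k^{RT}\bm u-\bm u\otimes\bm u,\nabla_h\bm P_k^{RT}\bm\Psi)$ your three-way split of $\nabla_h\bm P_k^{RT}\bm\Psi$ leaves the constant block $\bm\Pi_0^o\nabla\bm\Psi$ unaccounted for: the RT orthogonality \eqref{RT2} cannot be invoked twice, once to strip $\bm\Pi_0^o\bm u$ and again to strip $\bm\Pi_0^o\nabla\bm\Psi$, because after the first subtraction the resulting test function $(\bm P_k^{RT}\bm u-\bm\Pi_0^o\bm u)_i(\bm\Pi_0^o\nabla\bm\Psi)_{ij}$ lies in $\mathcal P_k(T)$ rather than $\mathcal P_{k-1}(T)$, so $\bm P_k^{RT}\bm u-\bm u$ is no longer orthogonal to it. The same obstruction reappears in the bulk pieces of $E_3$. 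Thus both the paper and your sketch leave a genuine gap at \eqref{e71}.
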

\begin{proof}
	
	It is easy to obtain
	\begin{align}
	E_{N}(\bm{u};\bm{u},\bm{\Psi}_I)\lesssim&
	h^{1+s_{\bm{u}}-\frac d 2}|\bm{u}|_{1}
	\|\bm{u}\|_{s_{\bm{u}}}
	\|\bm{\Psi}_I\|_V
	\lesssim
	h^{1+s_{\bm{u}}-\frac d 2+\alpha}|\bm{u}|_{1}
	\|\bm{u}\|_{s_{\bm{u}}}
	\|\bm{\Psi}\|_{1+\alpha}\nonumber
	\end{align}
	and	
	\begin{align*}
	E_{N}^{\color{black}\star}(\mathcal{V}_h;\bm{u},\bm{\Phi})
	=& -( \nabla_h\cdot(\bm{P}_k^{RT}\bm{\Phi}\otimes \bm{v}_{h}), \bm{P}_k^{RT}\bm{u}-\bm{u})\nonumber\\
	&+\langle \bm{P}_k^{RT}\bm{\Phi}\bm{n},\bm{v}_{h}\cdot( \bm{P}_k^{RT}\bm{u}-\bm{u})\rangle_{\partial\mathcal{T}_h}\nonumber\\
	&  +( \bm{P}_k^{RT}\bm{\Phi}-\bm{\Phi},\bm{v}_{h}\cdot\nabla\bm{u})\nonumber\\
	&-\frac{1}{2}\langle(\widehat{\bm{v}}_{h}-\bm{v}_{h}) \cdot \bm{n},\bm{\Pi}_k^{\partial}\bm{\Phi}\cdot(\bm{P}_k^{RT}\bm{u}-\bm{\Pi}_k^{\partial}\bm{u})\rangle_{\partial\mathcal{T}_h}\nonumber\\
	&-\frac{1}{2}\langle (\widehat{\bm{v}}_{h}-\bm{v}_{h}) \cdot \bm{n},(\bm{\Pi}_k^{\partial}\bm{\Phi}-\bm{P}_k^{RT}\bm{\Phi})\cdot\bm{\Pi}_k^{\partial}\bm{u}\rangle_{\partial\mathcal{T}_h}
	\nonumber\\
	&-\frac{1}{2}\langle\bm{v}_{h}\cdot\bm{n},\bm{P}_k^{RT}\bm{\Phi}\cdot(\bm{P}_k^{RT}\bm{u}
	-\bm{\Pi}_k^{\partial}\bm{u}) \rangle_{\partial\mathcal{T}_h}
	\nonumber\\
	&-\frac{1}{2}\langle\bm{v}_{h}\cdot\bm{n},\bm{\Pi}_k^{\partial}\bm{\Phi}\cdot(\bm{P}_k^{RT}\bm{u}-\bm{\Pi}_k^{\partial}\bm{u}) \rangle_{\partial\mathcal{T}_h}
	\nonumber\\
	\lesssim& h^{\alpha}\|\bm{u}\|_{1}\|\bm{\Phi}\|_{1+\alpha}\|\mathcal{V}_h\|_V.
	\end{align*}
	Then the thing left is to show   \eqref{e77}. Since $\bm{\Phi}\in \bm{V}$, \color{black} $\bm{u}\in \bm{H}^{1+s}(\Omega)$ $s>\frac{1}{2}$ and $m\le \color{black}k$\color{black}, we have
	\begin{align}
	\langle \bm{n} \cdot(\nabla \bm{u}-\bm{\Pi}_m^o\nabla \bm{u}),\bm{\Pi}_k^{\partial}\bm{\Phi}\rangle_{\partial \mathcal{T}_h}
	=&\langle \bm{n} \cdot(-\bm{\Pi}_m^o\nabla \bm{u}),\bm{\Pi}_k^{\partial}\bm{\Phi}\rangle_{\partial \mathcal{T}_h}\nonumber\\
	=&\langle \bm{n} \cdot(-\bm{\Pi}_m^o\nabla \bm{u}),\bm{\Phi}\rangle_{\partial \mathcal{T}_h}\nonumber\\
	=&\langle \bm{n} \cdot(\nabla \bm{u}-\bm{\Pi}_m^o\nabla \bm{u}),\bm{\Phi}\rangle_{\partial \mathcal{T}_h}.\nonumber
	\end{align} \color{black}
	By \color{black} the approximation properties of $\bm{P}^{RT}_{k}$ and $\bm{\Pi}_m^o$, the approximation property and stability of $L^2$ projection $\bm{\Pi}_k^{\partial}$, we  easily get\color{black}
	\begin{align*}
	E_L(\bm{u};\bm{\Phi}_I)=& \nu\langle \bm{n} \cdot(\nabla \bm{u}-\bm{\Pi}_m^o\nabla \bm{u}),\bm{P}^{RT}_k\bm{\Phi}-\bm{\Pi}_k^{\partial}\bm{\Phi}\rangle_{\partial \mathcal{T}_h}\nonumber\\
	&+\nu\langle\tau(\bm{P}^{RT}_{k}\bm{u}-\bm{u}),\bm{\Pi}_k^{\partial}\bm{P}^{RT}_k\bm{\Phi}-\bm{\Pi}_k^{\partial}\bm{\Phi} \rangle_{\partial\mathcal{T}_h}\nonumber\\
	=&\nu\langle \bm{n} \cdot(\nabla \bm{u}-\bm{\Pi}_m^o\nabla \bm{u}),\bm{P}^{RT}_k\bm{\Phi}-\bm{\Phi}\rangle_{\partial \mathcal{T}_h}\nonumber\\
	&+\nu\langle\tau(\bm{P}^{RT}_{k}\bm{u}-\bm{u}),\color{black}\bm{\Pi}_k^{\partial}\color{black}(\bm{P}^{RT}_k\bm{\Phi}-\bm{\Phi}) \rangle_{\partial\mathcal{T}_h}\nonumber\\
	\lesssim&\nu h^{s_{\bm{u}}-1+\alpha}\|\bm{u}\|_{s_{\bm{u}}}|\bm{\Phi}|_{1+\alpha},
	\end{align*}
	and
	\begin{align*}
	E_L(\bm{\Phi};\mathcal{E}^{\bm{u}}_{h})&=\nu\langle \bm{n} \cdot(\nabla \bm{\Phi}-\bm{\Pi}_m^o\nabla \bm{\Phi}),(\bm{P}^{RT}_k\bm{u}-\bm{u}_{h})-(\bm{\Pi}_k^{\partial}\bm{u}-\widehat{\bm{u}}_h)\rangle_{\partial \mathcal{T}_h}\nonumber\\
	&\quad+\nu\langle\tau(\bm{P}^{RT}_{k}\bm{\Phi}-\bm{\Phi}),(\bm{P}^{RT}_k\bm{u}-\bm{u}_{h})-(\bm{\Pi}_k^{\partial}\bm{u}-\widehat{\bm{u}}_h) \rangle_{\partial\mathcal{T}_h}\nonumber\\
	&\lesssim \nu h^{\alpha}|\bm{\Phi}|_{1+\alpha}\| \mathcal{U}_I-\mathcal{U}_{h}\|_V \nonumber\\
	&\lesssim \nu h^{s_{\bm{u}}-1+\alpha}\|\bm{u}\|_{s_{\bm{u}}}|\bm{\Phi}|_{1+\alpha},
	\end{align*}
	\color{black}
	which yield the desired result.
	\color{black}
\end{proof}

\begin{theorem}\label{th51}
	Let $(\bm{u},p)\in [H^{r_{\bm u}}(\Omega)]^d\times H^{r_p}(\Omega)$ and $(\mathbb{L}_h,\bm{u}_h,\widehat{\bm{u}}_h,p_h,\widehat{p}_h)\in \mathbb{K}_h\times[\bm{V}_h\times \widehat{\bm{V}}_h]\times[ Q^0_h\times \widehat{Q}_h]$  be the solutions to $(\ref{Or-NS})$ and  $(\ref{HDG11})$-\eqref{HDG33}, respectively. Then it holds   the error estimate
	\begin{align}
	&\|\bm{u}-\bm{u}_{h}\|_{0}\lesssim  \delta^{-1}{\nu}^{-1}h^{s_{\bm{u}}-1+\alpha}(h^{s_{\bm{u}}+1-\frac d 2}\|\bm{u}\|_{s_{\bm{u}}}|\bm{u}|_{1}+\nu \|\bm{u}\|_{s_{\bm{u}}})
	+h^{s_{\bm{u}}-1+\alpha}\|\bm{u}\|_{s_{\bm{u}}}, \label{LU}
	\end{align}
	where $\alpha\in (\frac{1}{2},1].$ 
\end{theorem}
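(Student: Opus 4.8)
The plan is to run a standard Aubin--Nitsche duality argument built on the adjoint problem already introduced. First I would observe that it suffices to estimate the discrete error $e_h^{\bm u}=\bm P^{RT}_k\bm u-\bm u_h$, since $\|\bm u-\bm u_h\|_0\le\|\bm u-\bm P^{RT}_k\bm u\|_0+\|e_h^{\bm u}\|_0$ and the interpolation term is $\lesssim h^{s_{\bm u}}\|\bm u\|_{s_{\bm u}}$, which is of higher order than the claimed bound because $\alpha\le 1$. I would then test the adjoint equations against $e_h^{\bm u}$ itself and integrate by parts element by element on the viscous and pressure terms, so that $\|e_h^{\bm u}\|_0^2$ is written as $\nu(\nabla\bm\Phi,\nabla_h e_h^{\bm u})-\nu\langle(\nabla\bm\Phi)\bm n,e_h^{\bm u}\rangle_{\partial\mathcal T_h}-(\bm u\cdot\nabla\bm\Phi,e_h^{\bm u})+((\nabla\bm u)^T\bm\Phi,e_h^{\bm u})-(\Psi,\nabla_h\cdot e_h^{\bm u})+\langle\Psi,e_h^{\bm u}\cdot\bm n\rangle_{\partial\mathcal T_h}$.

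The next step is to absorb these quantities into the HDG forms. Since both $\mathcal U_I$ and $\mathcal U_h$ satisfy the discrete divergence constraints (from \eqref{error3} and \eqref{HDG33}), the error $\mathcal E_h^{\bm u}:=\mathcal U_I-\mathcal U_h$ lies in $\bm W_h$, and then Theorem \ref{Th5.1} together with the fact that $e_h^{\bm u}\cdot\bm n$ is a piecewise $\mathcal P_k$ function orthogonal to $\widehat Q_h$ on each face forces $\nabla_h\cdot e_h^{\bm u}=0$ and $e_h^{\bm u}\cdot\bm n=0$ on every interface and on $\partial\Omega$; hence the two $\Psi$-terms vanish. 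For the viscous part I would use the dual counterpart of \eqref{error1}, namely $K_h\bm\Phi_I=\bm{\Pi}^o_m\nabla\bm\Phi$ with $\bm\Phi_I=(\bm P^{RT}_k\bm\Phi,\bm{\Pi}^\partial_k\bm\Phi)$, to rewrite $\nu(\nabla\bm\Phi,\nabla_h e_h^{\bm u})-\nu\langle(\nabla\bm\Phi)\bm n,e_h^{\bm u}\rangle_{\partial\mathcal T_h}$ as $\nu(K_h\mathcal E_h^{\bm u},K_h\bm\Phi_I)\pm s_h(\mathcal E_h^{\bm u},\bm\Phi_I)$ plus the consistency functional $E_L(\bm\Phi;\mathcal E_h^{\bm u})$, using that $\nabla\bm\Phi$ is single-valued across interfaces and that $e_h^{\bm u},e_h^{\widehat{\bm u}}$ are piecewise $\mathcal P_k$ on faces. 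Testing the primal error system \eqref{ee} with $\mathcal V_h=\bm\Phi_I$ and $\mathcal Q_h=\Psi_I=(\Pi^o_{k-1}\Psi,\Pi^\partial_k\Psi)$, and noting $d_h(\bm\Phi_I,\cdot)=d_h(\mathcal E_h^{\bm u},\cdot)=0$ (the first because $\nabla\cdot\bm P^{RT}_k\bm\Phi=0$), lets me replace $\nu(K_h\mathcal E_h^{\bm u},K_h\bm\Phi_I)+s_h(\mathcal E_h^{\bm u},\bm\Phi_I)$ by $b_h(\mathcal U_h;\mathcal U_h,\bm\Phi_I)-b_h(\mathcal U_I;\mathcal U_I,\bm\Phi_I)+E_L(\bm u;\bm\Phi_I)+E_N(\bm u;\bm u,\bm\Phi_I)$. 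For the convective terms I would expand $b_h(\mathcal U_I;\mathcal U_I,\bm\Phi_I)-b_h(\mathcal U_h;\mathcal U_h,\bm\Phi_I)=b_h(\mathcal U_I;\mathcal E_h^{\bm u},\bm\Phi_I)+b_h(\mathcal E_h^{\bm u};\mathcal U_I,\bm\Phi_I)-b_h(\mathcal E_h^{\bm u};\mathcal E_h^{\bm u},\bm\Phi_I)$ and use the skew-symmetry of $b_h(\mathcal W;\cdot,\cdot)$ together with the representation formulas \eqref{EN1} and \eqref{EN2}: the first term equals $-(\bm u\cdot\nabla\bm\Phi,e_h^{\bm u})$ up to an $E_N$ term, the second equals $((\nabla\bm u)^T\bm\Phi,e_h^{\bm u})$ up to $E_N^{\star}(\mathcal E_h^{\bm u};\bm u,\bm\Phi)$, and these continuous convective contributions cancel exactly against those coming from the dual problem. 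What remains is $\|e_h^{\bm u}\|_0^2$ expressed as a sum of $E_L(\bm u;\bm\Phi_I)$, $E_L(\bm\Phi;\mathcal E_h^{\bm u})$, $E_N(\bm u;\bm u,\bm\Phi_I)$, an $E_N$-term carrying the extra regularity of $\bm\Phi$, $E_N^{\star}(\mathcal E_h^{\bm u};\bm u,\bm\Phi)$, and the cubic residual $b_h(\mathcal E_h^{\bm u};\mathcal E_h^{\bm u},\bm\Phi_I)$.

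Finally I would estimate each of these terms. The consistency functionals are controlled by the bounds \eqref{e71}, \eqref{e73} and \eqref{e77}, the a priori $V$-norm estimate $\|\mathcal E_h^{\bm u}\|_V\lesssim\delta^{-1}\nu^{-1}h^{s_{\bm u}-1}(h^{s_{\bm u}+1-\frac d 2}\|\bm u\|_{s_{\bm u}}|\bm u|_1+\nu\|\bm u\|_{s_{\bm u}})$ from Theorem \ref{theorem4.4}, and the elliptic regularity \eqref{sta-assum}, which yields $\|\bm\Phi\|_{1+\alpha}+\|\Psi\|_\alpha\lesssim\|e_h^{\bm u}\|_0$ and hence $\|\bm\Phi_I\|_V\lesssim\|e_h^{\bm u}\|_0$; the cubic residual is bounded by \eqref{B4} of Lemma \ref{conti} (applicable because $e_h^{\bm u}$ is globally divergence-free) together with Theorem \ref{theorem4.4} and the discrete Sobolev embedding, and turns out to be of higher order. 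Dividing through by $\|e_h^{\bm u}\|_0$ and adding back the interpolation error gives \eqref{LU}.

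The step I expect to be most delicate is the second one: carrying out the element-by-element integration by parts so that the viscous, stabilization, numerical-flux and pressure contributions of the adjoint problem line up precisely with the HDG bilinear forms, verifying that the leftover is exactly the consistency functionals $E_L(\bm\Phi;\cdot)$, $E_N$ and $E_N^{\star}$ already estimated with no uncontrolled interface terms, and confirming via the two globally divergence-free properties that the pressure traces $\Psi$ and $\mathcal E_h^p$ drop out. The nonlinear matching --- using the skew-symmetry of $b_h$ and the identities \eqref{EN1}--\eqref{EN2} to make the continuous convective terms of the adjoint problem cancel against the expansion of $b_h(\mathcal U_I;\mathcal U_I,\bm\Phi_I)-b_h(\mathcal U_h;\mathcal U_h,\bm\Phi_I)$ --- is the other point that requires care.
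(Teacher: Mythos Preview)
Your proposal is correct and takes essentially the same approach as the paper: both combine the primal error equations \eqref{ee} tested with $\bm\Phi_I$ and the adjoint consistency tested with $\mathcal E_h^{\bm u}$, use the skew-symmetry of $b_h$ together with \eqref{EN1}--\eqref{EN2} to reduce to the identity $\|e_h^{\bm u}\|_0^2=b_h(\mathcal E_h^{\bm u};\mathcal E_h^{\bm u},\bm\Phi_I)+E_L(\bm u;\bm\Phi_I)+E_N(\bm u;\bm u,\bm\Phi_I)-E_L(\bm\Phi;\mathcal E_h^{\bm u})-E_N(\bm u;\bm\Phi,\mathcal E_h^{\bm u})-E_N^{\star}(\mathcal E_h^{\bm u};\bm u,\bm\Phi)$, and then apply \eqref{e71}--\eqref{e77}, Theorem~\ref{theorem4.4}, and the regularity \eqref{sta-assum}. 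The only cosmetic difference is that the paper first records the discrete adjoint consistency equations for $\bm\Phi_I$ (in the style of Lemma~\ref{lemma4.1}) and then substitutes, whereas you obtain them on the fly by integrating the continuous adjoint problem by parts.
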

\begin{proof}
	Similar to Lemma $\ref{lemma4.1}$, for all $(\mathcal{V}_h,\mathcal{Q}_h)\in [\bm{V}_h\times\widehat{\bm{V}}_h]\times [Q_h\times\widehat{Q}_h]$ it holds
	\begin{align*}
	&\nu(K_h{\bm{\Phi}}_I,K_h\mathcal{V}_h)	-d_h(\mathcal{V}_{h},{\Psi}_I)+s_h({\bm{\Phi}}_I;\mathcal{V}_h)
	- b_h(\mathcal{U}_I;{\bm{\Phi}}_I,\mathcal{V}_h)
	+b_h(\mathcal{V}_h;\mathcal{U}_I,{\bm{\Phi}}_I)\nonumber\\
	&\qquad=(e_h^{\bm{u}},\bm{v}_h)+E_L(\bm{u};\mathcal{V}_h) +E_{N}(\bm{u},\bm{u};\mathcal{V}_h)+E_{N}^{\color{black}\star}(\mathcal{V}_h;\bm{u},\bm{\Phi})\\
	&d_h(\bm{\Phi}_I;\mathcal{Q}_h)=0.
	\end{align*}
	Then there holds
	\begin{align*} 
	\|e_h^{\bm{u}}\|^2_0=&\nu(K_h{\bm{\Phi}}_I,K_h\mathcal{E}_h^{\bm{u}})-d_h(\mathcal{E}_h^{\bm{u}};{\Psi}_I)+s_h({\bm{\Phi}}_I;\mathcal{E}_h^{\bm{u}})
	- b_h(\mathcal{U}_I;{\bm{\Phi}}_I,\mathcal{E}_h^{\bm{u}})
	\nonumber\\
	&	+b_h(\mathcal{E}_h^{\bm{u}};\mathcal{U}_I,{\bm{\Phi}}_I)-E_L(\bm{\Phi};\mathcal{E}_h^{\bm{u}})
	-E_{N}(\bm{u};\bm{\Phi},\mathcal{E}_h^{\bm{u}})
	-E_{N}^{\color{black}\star}(\mathcal{E}_h^{\bm{u}};\bm{u},\bm{\Phi}) \nonumber\\
	=&b_h(\mathcal{E}_h^{\bm{u}};\mathcal{E}_h^{\bm{u}},\bm{\Phi}_I)
	+E_L(\bm{u};\bm{\Phi}_I)+E_{N}(\bm{u};\bm{u},\bm{\Phi}_I)
	\nonumber\\
	&-E_L(\bm{\Phi};\mathcal{E}_h^{\bm{u}})
	-E_{N}(\bm{u};\bm{\Phi},\mathcal{E}_h^{\bm{u}})
	-E_{N}^{\color{black}\star}(\mathcal{E}_h^{\bm{u}};\bm{u},\bm{\Phi}). 
	\end{align*}
	which, together with the regularity \color{black}\eqref{sta-assum}, indicate\color{black}
	\begin{eqnarray*}
		\|e_h^{\bm{u}}\|_0\lesssim \delta^{-1}{\nu}^{-1}h^{s_{\bm{u}}-1+\alpha}(h^{s_{\bm{u}}+1-\frac d 2}\|\bm{u}\|_{s_{\bm{u}}}|\bm{u}|_{1}+\nu \|\bm{u}\|_{s_{\bm{u}}}).
	\end{eqnarray*}
	This estimate, together with  triangle inequality  and the approximation property of the operator $\bm{P}^{RT}_{k}$, leads to the desired conclusion.
\end{proof}

\begin{corollary} If $\Omega$ is convex, $\delta^{-1}{\nu}^{-1}(h^{s_{\bm{u}}+1-\frac d 2}|\bm{u}|_{1}+\nu )\lesssim 1$, and the true solution $(\bm{u},p)$ is smooth enough, we will have the following optimal estimates:
	\begin{subequations}
	\begin{align}
	\|\bm{u}-\bm{u}_{h}\|_{0}&\lesssim h^{k+1}\|\bm{u}\|_{k+1},\label{opt-u0}\\  %
	\|\mathbb L-\mathbb L_{h}\|_{0}&\lesssim h^{k}\|\bm{u}\|_{k+1},\label{LL} \\ %
	\|p-p_{h}\|_0&\lesssim h^{k}(\|\bm{u}\|_{k+1}+\|p\|_k).\label{opt-p0}
	\end{align}
\end{subequations}
\end{corollary}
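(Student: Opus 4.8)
The plan is to deduce \eqref{opt-u0}--\eqref{opt-p0} directly from Theorem~\ref{th44} and Theorem~\ref{th51} by fixing the free parameters occurring there. First I would note that ``$(\bm u,p)$ smooth enough'' means $r_{\bm u}\ge k+1$ and $r_p\ge k$, so that $s_{\bm u}=\min\{r_{\bm u},k+1\}=k+1$ and $s_p=\min\{r_p,k\}=k$; and that, $\Omega$ being convex, the Stokes regularity estimate \eqref{regular} holds with $s=1$ (by the remark following \eqref{regular}) and, by the corresponding $H^2$-regularity of the linearized dual problem, the assumption \eqref{sta-assum} holds with $\alpha=1$. No structural ingredient beyond the two cited theorems is needed.

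Next I would substitute $s_{\bm u}=k+1$, $s_p=k$, and $\alpha=1$ into \eqref{LU}, \eqref{LI}, \eqref{LP}. Then the exponent $s_{\bm u}-1+\alpha$ in \eqref{LU} becomes $k+1$, while $s_{\bm u}-1$ in \eqref{LI} and \eqref{LP} becomes $k$ and $s_p=k$ in \eqref{LP}. It then remains to absorb, in each right-hand side, the term carrying the factor $\delta^{-1}\nu^{-1}$: factoring it as $h^{\beta}\|\bm u\|_{s_{\bm u}}$ (with $\beta=k+1$ for the $L^2$ velocity estimate and $\beta=k$ for the $\mathbb L$ and pressure estimates) times $\delta^{-1}\nu^{-1}\bigl(h^{s_{\bm u}+1-\frac d2}|\bm u|_1+\nu\bigr)$, the standing hypothesis $\delta^{-1}\nu^{-1}(h^{s_{\bm u}+1-\frac d2}|\bm u|_1+\nu)\lesssim 1$ makes this term $\lesssim h^{\beta}\|\bm u\|_{s_{\bm u}}$. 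Combining with the ``best-approximation'' terms already present on the right-hand sides then gives \eqref{opt-u0}, \eqref{opt-p0}, and -- upon multiplying \eqref{LI} through by $\nu$ -- \eqref{LL}.

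Because the substantive analysis -- the a priori estimates, the discrete HDG Sobolev embeddings, the inf-sup stability, and the consistency \eqref{uni-condition} of the discrete smallness condition with the continuous one -- is already established in the preceding sections, I do not anticipate a genuine obstacle here: the corollary is essentially a bookkeeping exercise. The only points to check carefully are that $1-\tfrac d2\ge-\tfrac12$ for $d\in\{2,3\}$, so that the power $h^{s_{\bm u}+1-\frac d2}$ appearing in the smallness hypothesis is a positive power of $h$ for every $k\ge1$ and the hypothesis is meaningful as $h\to 0$, and that the smoothness assumption is strong enough for $s_{\bm u}$ and $s_p$ to attain their maximal values $k+1$ and $k$.
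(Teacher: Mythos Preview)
Your proposal is correct and is exactly the intended derivation: the corollary in the paper is stated without proof, and its content follows precisely by substituting $s_{\bm u}=k+1$, $s_p=k$, and (via convexity) $\alpha=1$ into Theorems~\ref{th44} and~\ref{th51}, then using the hypothesis $\delta^{-1}\nu^{-1}(h^{s_{\bm u}+1-\frac d2}|\bm u|_1+\nu)\lesssim 1$ to absorb the nonlinear prefactor. The only minor remark is that multiplying \eqref{LI} by $\nu$ yields $\|\mathbb L-\mathbb L_h\|_0\lesssim \nu\, h^k\|\bm u\|_{k+1}$, so \eqref{LL} as written tacitly treats $\nu$ as $O(1)$; this is a feature of the corollary's statement rather than a defect of your argument.
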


\section{Numerical experiments}

\color{black}
In this section, we provide some numerical results to verify the   performance  of the proposed  HDG finite element methods 
for the Navier-Stokes model \eqref{Or-NS} in two dimensional. All tests  are programmed in C++ using the Eigen \cite{EIGEN} library.  

We take $\Omega=\color{black}(0,1)\times(0,1)$ and the \text{Re}ynolds number $\text{Re}=1$ in the two examples below.

\subsection*{Example 7.1}
The forcing term $\bm{f}$ is chosen so that the analytical solution to $(\ref{Or-NS})$, with \color{black} the homogeneous \color{black} Dirichlet boundary condition, is given by \color{black}
\begin{eqnarray}
\left\{
\begin{aligned}
u_1&=-x^2(x-1)^2y(y-1)(2y-1),\\
u_2&=x(x-1)(2x-1)y^2(y-1)^2,\\
\color{black}
p&=\color{black}
10\left(\left(x-\frac 1 2\right)^3y^2+(1-x)^3\left(y-\frac 1 2\right)^3\right).
\end{aligned}
\right.
\end{eqnarray}

\color{black}The computational mesh is a regular triangulation of $\Omega$. The results of the relative errors of the velocity and pressure approximations for $k=1,2,3$ are presented in Table \ref{tab2}.
\color{black}
The obtained optimal convergence rates  agree with our theoretical results \eqref{opt-u0}, \eqref{LL} and   \eqref{opt-p0}. In addition, 
the velocities are truly divergence-free.

\begin{table}[h]
	\small
	\caption{\label{tab2}\color{black}
		History of convergence for Example 7.1
		\color{black}}
	\centering
	
	\begin{tabular}{c|c|c|c|c|c|c|c|c}
		\Xhline{1pt}
		
		\multirow{2}{*}{$k$} &
		\multirow{2}{*}{$\frac{\sqrt 2}{h}$} &
		
		\multicolumn{2}{c|}{$\frac{\|\bm{u}-\bm{u}_{h}\|_0}{\|\bm{u}\|_0}$} &
		\multicolumn{2}{c|}{$\frac{\|\mathbb L-\mathbb L_h\|_0}{\|\mathbb L\|_0}$} &
		
		\multicolumn{2}{c| }{$\frac{\|p-p_{h}\|_0}{\|p\|_0}$} &
		\multicolumn{1}{c }{
			\footnotesize  $\int_{\Omega}|\nabla\cdot\bm{u}_{h}|dx$
		} \\
		\cline{3-9}
		
		& &Error &Rate  &Error &Rate  &Error &Rate &Error \\
		\hline
		
		\multirow{5}{*}{$1$}
		&4	&1.6593E-01	&	    &1.5900E-01	&	    &5.1047E-01	&	    &3.94E-16\\
		&8	&4.5065E-02	&1.88 	&5.2372E-02	&1.60 	&2.7139E-01	&0.91 	&7.72E-17\\
		&16	&1.1561E-02	&1.96 	&2.0208E-02	&1.37 	&1.3775E-01	&0.98 	&1.12E-15\\
		&32	&2.9109E-03	&1.99 	&9.1087E-03	&1.15 	&6.9135E-02	&0.99 	&3.30E-14\\
		&64	&7.2931E-04	&2.00 	&4.4150E-03	&1.04 	&3.4600E-02	&1.00 	&4.70E-15\\

		\Xhline{1pt}
		\multirow{5}{*}{$2$}
		&4	&2.8355E-02	&	    &2.9255E-02	&	    &1.0923E-01	&	    &1.15E-14\\
		&8	&3.6823E-03	&2.94 	&5.1938E-03	&2.49 	&2.8525E-02	&1.94 	&7.08E-15\\
		&16	&4.6102E-04	&3.00 	&1.0012E-03	&2.37 	&7.2087E-03	&1.98 	&1.83E-14\\
		&32	&5.7322E-05	&3.01 	&2.1915E-04	&2.19 	&1.8070E-03	&2.00 	&6.67E-15\\
		&64	&7.1330E-06	&3.01 	&5.1758E-05	&2.08 	&4.5206E-04	&2.00 	&2.28E-13\\

		\Xhline{1pt}
		\multirow{5}{*}{$3$}
		&4	&3.8396E-03	&	    &4.4818E-03	&	    &1.3945E-02	&	    &3.40E-15\\
		&8	&2.6811E-04	&3.84 	&4.4213E-04	&3.34 	&1.7949E-03	&2.96 	&1.75E-15\\
		&16	&1.7149E-05	&3.97 	&5.0968E-05	&3.12 	&2.2600E-04	&2.99 	&1.00E-14\\
		&32	&1.0762E-06	&3.99 	&6.3021E-06	&3.02 	&2.8301E-05	&3.00 	&1.48E-14\\
		&64	&6.7274E-08	&4.00 	&7.9099E-07	&2.99 	&3.5393E-06	&3.00 	&1.17E-12\\

		\Xhline{1pt}
	\end{tabular}

\end{table}

\color{black}
\subsection*{Example 7.2}
This numerical example  is from \cite{MR3683678}, 
forcing term  $\bm f=(0,10^6(3y^2$ $-y+1))^T$, and the  analytical solution to $(\ref{Or-NS})$, with \color{black} the homogeneous \color{black} Dirichlet boundary condition, is given by
$$\bm u=0,\quad  p=10^6(y^3-y^2/2+y+7/12).$$ 
In \cite{MR3683678}, it has been shown that
the discrete velocity solved by Taylor-Hood space $\mathcal P_2-\mathcal P_1$ is far from being equal to zero for Stokes equations. We present the errors of the velocity and pressure approximations by our HDG method with $k=1,2,3$ and the Taylor-Hood $\mathcal P_2-\mathcal P_1$ method in Tables \ref{tab3}-\ref{tab4}. We can see that   the discrete velocity obtained by our HDG method is very close to zero, while the velocity obtained by Taylor-Hood $\mathcal P_2-\mathcal P_1$ method is not so accurate, which is influenced by  the  pressure approximation. This confirms that our proposed HDG method is indeed pressure-robust.

\begin{table}[H]
	\caption{\label{tab3}\color{black}
		Results  of $\|u-u_h\|_0$ for Example 7.2
		\color{black}}
	\centering
	
	\begin{tabular}{c|c|c|c|c}
		\Xhline{1pt}
		
		&
		\multicolumn{3}{c|} {HDG} & \multicolumn{1}{c}{Taylor-Hood}	\\
		\cline{2-4}
		$\frac{\sqrt{2}}{h}$	  &$k=1$ &$k=2$ &$k=3$ &  $\mathcal P_2-\mathcal P_1$ \\ 
		\hline
		
		10	&3.1264E-10	&1.6895E-10	&5.4894E-10	&1.4948E+00\\
		20	&4.7678E-10	&5.1265E-10	&5.2989E-10	&9.4042E-02\\
		40	&1.1007E-09	&1.1615E-09	&1.0966E-08	&5.8943E-03\\
		80	&3.1464E-09	&1.9250E-09	&1.5895E-09	&3.6885E-04\\

		\Xhline{1pt}
	\end{tabular}
\end{table}
\begin{table}[h]
	\footnotesize 
	\caption{\label{tab4}
		History of convergence for $\|p-p_h\|_0/\|p\|_0$ Example 7.2
		\color{black}}
	\centering
	
	\begin{tabular}{c|c|c|c|c|c|c|c|c}
		\Xhline{1pt}

		\multirow{2}{*}{ $\frac{\sqrt 2}{h}$} &
		
		\multicolumn{2}{c|}{HDG: $k=1$} &
		\multicolumn{2}{c|}{HDG: $k=2$} &
		\multicolumn{2}{c|}{HDG: $k=3$} &
		\multicolumn{2}{c }{$\mathcal P_2-\mathcal P_1$
		} \\
		\cline{2-9}
		
		&Error &Rate  &Error &Rate  &Error &Rate &Error &Rate \\
		\hline
		
		10	&9.2848E-02	&	    &1.8531E-03	&	    &3.4725E-05	&	    &2.4070E-03	&\\
		20	&4.6471E-02	&1.00 	&4.6392E-04	&2.00 	&4.3406E-06	&3.00 	&5.9988E-04	&2.00\\ 
		40	&2.3241E-02	&1.00 	&1.1602E-04	&2.00 	&5.4258E-07	&3.00 	&1.4984E-04	&2.00 \\
		80	&1.1621E-02	&1.00 	&2.9007E-05	&2.00 	&6.7822E-08	&3.00 	&3.7452E-05	&2.00 \\

		\Xhline{1pt}
	\end{tabular}

\end{table}
%





\begin{appendices}
	
	\section{  Interpolation properties}
	\color{black}
	This section is devoted to some standard interpolation properties  used in this paper.  
	
	\color{black}
	\subsection{$L^2$ projection}
	For any $T\in\mathcal{T}_h$, $ E\in\mathcal{E}_h$ and any   integer $r\ge 0$, let $\Pi_r^o: L^2(T)\rightarrow \mathcal{P}_{r}(T)$ and $\Pi_r^{\partial}: L^2(E)\rightarrow \mathcal{P}_{r}(E)$ be the usual $L^2$-projection operators. From \cite{book2} we have Lemmas \ref{lem3.1}  and \ref{lem3.2}.
	\begin{lemma}\label{lem3.1}  Let $r$ be a nonnegative integer and $\rho\in [1,\infty]$. Assume that $2\le \frac{d\rho}{d-(r+1)\rho}$ when $(r+1)\rho<d$. Then, for $j\in \{0,1,\cdots,r+1\}$,  there exists a constant $C$ independent of $T$ such that the estimate
		\begin{eqnarray}
		|v-\Pi_r^ov|_{j,\mu,T}\le Ch_T^{r+1-j+\frac{d}{\mu}-\frac{d}{\rho}}|v|_{r+1,\rho,T},\qquad \forall v\in W^{r+1,\rho}(T)
		\end{eqnarray}
		holds for
		$\mu$ satisfying
		\begin{eqnarray}
		\left\{
		\begin{aligned}
		&\rho\le \mu\le \frac{d\rho}{d-(r+1-j)\rho},&\text{ if } \ (r+1-j)\rho<d,\\
		&\rho\le \mu< \infty,&\text{ if } \ (r+1-j)\rho=d,\\
		&\rho\le \mu\le \infty,&\text{ if } \ (r+1-j)\rho>d,
		\end{aligned}
		\right.
		\end{eqnarray}
		and the estimate
		\begin{eqnarray}
		|\nabla^j(v-\Pi_r^ov)|_{0,\mu,\partial T}\le Ch_T^{r+1-j+\frac{d-1}{\mu}-\frac{d}{\rho}}|v|_{r+1,\rho,T},\forall v\in W^{r+1,\rho}(T)
		\end{eqnarray}
		holds
		for $\mu$ satisfying
		\begin{eqnarray}
		\left\{
		\begin{aligned}
		&\rho\le \mu\le \frac{(d-1)\rho}{d-(r+1-j)\rho},&\text{ if } \ (r+1-j)\rho<d,\\
		&\rho\le \mu< \infty,&\text{ if } \ (r+1-j)\rho=d,\\
		&\rho\le \mu\le \infty,&\text{ if } \ (r+1-j)\rho>d.
		\end{aligned}
		\right.
		\end{eqnarray}
		
	\end{lemma}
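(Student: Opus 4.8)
The plan is to prove Lemma \ref{lem3.1} by the classical affine-scaling argument combined with the Bramble--Hilbert lemma; I would organize it in three steps. First I would reduce everything to a fixed reference simplex $\widehat T$. Fixing $T\in\mathcal{T}_h$, let $F_T:\widehat T\to T$ be an invertible affine map, so that by shape-regularity $\|F_T'\|\sim h_T$, $\|(F_T')^{-1}\|\sim h_T^{-1}$ and $|\det F_T'|\sim h_T^{d}$. For $v$ defined on $T$ I set $\widehat v:=v\circ F_T$. Since $\mathcal{P}_r$ is invariant under composition with an affine map, the $L^2$-projection commutes with this pullback, i.e. $\widehat{\Pi_r^o v}=\widehat\Pi_r^{o}\widehat v$, where $\widehat\Pi_r^{o}$ is the $L^2(\widehat T)$-projection onto $\mathcal{P}_r(\widehat T)$. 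Hence it suffices to estimate $\widehat v-\widehat\Pi_r^{o}\widehat v$ in $W^{j,\mu}(\widehat T)$ and $\nabla^j(\widehat v-\widehat\Pi_r^{o}\widehat v)$ in $L^\mu(\partial\widehat T)$, and then transform back.

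Next I would prove the reference-element bound
$$\|\widehat v-\widehat\Pi_r^{o}\widehat v\|_{W^{j,\mu}(\widehat T)}\le C\,|\widehat v|_{W^{r+1,\rho}(\widehat T)}\qquad\forall\,\widehat v\in W^{r+1,\rho}(\widehat T).$$
For any $\widehat q\in\mathcal{P}_r(\widehat T)$ we have $\widehat v-\widehat\Pi_r^{o}\widehat v=(I-\widehat\Pi_r^{o})(\widehat v-\widehat q)$, so it is enough to bound $I-\widehat\Pi_r^{o}$ as a map from $W^{r+1,\rho}(\widehat T)$ into $W^{j,\mu}(\widehat T)$. The identity part is handled by the Sobolev embedding $W^{r+1,\rho}(\widehat T)\hookrightarrow W^{j,\mu}(\widehat T)$, valid precisely for $\mu$ in the stated range; here the auxiliary hypothesis $2\le \frac{d\rho}{d-(r+1)\rho}$ in the case $(r+1)\rho<d$ is used, since it in particular guarantees $W^{r+1,\rho}(\widehat T)\hookrightarrow L^2(\widehat T)$ so that $\widehat\Pi_r^{o}$ acts on $W^{r+1,\rho}(\widehat T)$. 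For the projection part, equivalence of norms on the finite-dimensional space $\mathcal{P}_r(\widehat T)$ together with the $L^2$-stability of $\widehat\Pi_r^{o}$ gives
$$\|\widehat\Pi_r^{o}(\widehat v-\widehat q)\|_{W^{j,\mu}(\widehat T)}\le C\|\widehat\Pi_r^{o}(\widehat v-\widehat q)\|_{L^2(\widehat T)}\le C\|\widehat v-\widehat q\|_{L^2(\widehat T)}\le C\|\widehat v-\widehat q\|_{W^{r+1,\rho}(\widehat T)}.$$
Taking the infimum over $\widehat q\in\mathcal{P}_r(\widehat T)$ and invoking the Deny--Lions (Bramble--Hilbert) lemma, $\inf_{\widehat q}\|\widehat v-\widehat q\|_{W^{r+1,\rho}(\widehat T)}\le C|\widehat v|_{W^{r+1,\rho}(\widehat T)}$, which proves the claim. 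For the trace estimate I would argue identically, replacing the interior embedding by the continuous trace map $W^{r+1,\rho}(\widehat T)\hookrightarrow W^{j,\mu}(\partial\widehat T)$ (trace theorem followed by the $(d-1)$-dimensional Sobolev embedding), whose range of validity is exactly the stated one with $d-1$ in place of $d$; the borderline case $j=r+1$ is trivial because $\nabla^{r+1}\widehat\Pi_r^{o}\widehat v=0$.

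Finally I would scale back. Standard change-of-variables estimates for affine maps on shape-regular simplices give $|w|_{W^{j,\mu}(T)}\sim h_T^{-j+d/\mu}|\widehat w|_{W^{j,\mu}(\widehat T)}$, $\|\nabla^j w\|_{L^\mu(\partial T)}\sim h_T^{-j+(d-1)/\mu}\|\nabla^j\widehat w\|_{L^\mu(\partial\widehat T)}$, and $|\widehat v|_{W^{r+1,\rho}(\widehat T)}\sim h_T^{\,r+1-d/\rho}|v|_{W^{r+1,\rho}(T)}$. Applying these with $w=v-\Pi_r^{o}v$ and inserting the reference-element bounds from the previous step produces the exponents $r+1-j+\tfrac d\mu-\tfrac d\rho$ for the interior estimate and $r+1-j+\tfrac{d-1}{\mu}-\tfrac d\rho$ for the boundary estimate, as claimed. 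I expect the only genuinely delicate point to be the bookkeeping of the admissible range of $\mu$, so that the Sobolev and trace embeddings used on $\widehat T$ and $\partial\widehat T$ are indeed valid; matching these ranges against the three case distinctions on the sign of $(r+1-j)\rho-d$ is exactly what the stated hypotheses encode, so this is a matter of care rather than a real obstacle.
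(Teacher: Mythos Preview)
Your argument is correct and is exactly the standard affine-scaling plus Bramble--Hilbert proof one finds in the textbook literature. Note, however, that the paper does not actually prove this lemma: it is stated in the appendix with the remark ``From \cite{book2} we have Lemmas \ref{lem3.1} and \ref{lem3.2}'', so there is no in-paper proof to compare against. Your proposal supplies precisely the kind of proof that the cited reference would contain, and all the steps (commutation of $\Pi_r^o$ with the affine pullback because the Jacobian is constant, boundedness of $I-\widehat\Pi_r^o$ via Sobolev embedding and finite-dimensional norm equivalence, Deny--Lions on the reference element, and the scaling back) are correctly set up; the admissible ranges of $\mu$ you identify match the Sobolev and trace embedding thresholds on $\widehat T$ and $\partial\widehat T$ as claimed.
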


	\begin{lemma}\label{lem3.2}  Let $r$ be a nonnegative integer and $\rho\in [1,\infty]$. Assume that $2\le \frac{(d-1)\rho}{d-(r+1)\rho}$ when $(r+1)\rho<d$. Then there exists a constant $C$ independent of $T$ such that
		\begin{eqnarray}
		\|v-\Pi_r^{\partial}v\|_{0,\mu,\partial T}\le Ch_T^{r+1+\frac{d-1}{\mu}-\frac{d}{\rho}}|v|_{r+1,\rho, T},\forall v\in W^{r+1,\rho}(T)
		\end{eqnarray}
		holds for  $\mu$ satisfying
		\begin{eqnarray}
		\left\{
		\begin{aligned}
		&\rho\le \mu\le \frac{(d-1)\rho}{d-(r+1)\rho},&\text{ if } \ (r+1)\rho<d,\\
		&\rho\le \mu< \infty,&\text{ if } \ (r+1)\rho=d,\\
		&\rho\le \mu\le \infty,&\text{ if } \ (r+1)\rho>d.
		\end{aligned}
		\right.
		\end{eqnarray}
		
	\end{lemma}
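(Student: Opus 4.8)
The plan is to prove Lemma~\ref{lem3.2} by the usual reference-element scaling argument, in complete analogy with the treatment of $\Pi_r^o$ underlying Lemma~\ref{lem3.1} (see also \cite{book2}). Fix the reference simplex $\widehat T$ with faces $\{\widehat E\}$, and let $F_T:\widehat T\to T$, $F_T(\widehat x)=B_T\widehat x+b_T$, be the affine bijection, so that $\|B_T\|\sim h_T$ and $|\det B_T|\sim h_T^{d}$; put $\widehat v:=v\circ F_T$. Since $F_T$ carries each face $E$ of $T$ affinely onto a face $\widehat E$ of $\widehat T$ with constant surface Jacobian and preserves polynomial degrees, the face $L^2$-projection commutes with the pull-back, i.e. $\widehat{\Pi_r^\partial v}=\Pi_r^\partial\widehat v$ on $\partial\widehat T$. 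The standard change-of-variables estimates give $\|w\|_{0,\mu,E}\sim h_T^{(d-1)/\mu}\|\widehat w\|_{0,\mu,\widehat E}$ and $|v|_{m,\rho,T}\sim h_T^{\,m-d/\rho}|\widehat v|_{m,\rho,\widehat T}$, so the assertion reduces to the scale-free estimate
\[
\|\widehat v-\Pi_r^\partial\widehat v\|_{0,\mu,\partial\widehat T}\le C\,|\widehat v|_{r+1,\rho,\widehat T}.
\]

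To prove this, I would first invoke the trace theorem together with the Sobolev embedding theorem: under the hypothesis $2\le\frac{(d-1)\rho}{d-(r+1)\rho}$ when $(r+1)\rho<d$ one has $W^{r+1,\rho}(\widehat T)\hookrightarrow L^2(\partial\widehat T)$, and, for every $\mu$ in the stated range, $W^{r+1,\rho}(\widehat T)\hookrightarrow L^\mu(\partial\widehat T)$ --- indeed this range is exactly the one for which the trace of $W^{r+1,\rho}(\widehat T)$ on the $(d-1)$-dimensional manifold $\partial\widehat T$ embeds into $L^\mu$. In particular $\Pi_r^\partial\widehat v$ is well defined. By the Deny--Lions lemma there is $q\in\mathcal P_r(\widehat T)$ with $\|\widehat v-q\|_{W^{r+1,\rho}(\widehat T)}\le C\,|\widehat v|_{r+1,\rho,\widehat T}$; since $q|_{\widehat E}\in\mathcal P_r(\widehat E)$ we have $\Pi_r^\partial q=q$ on $\partial\widehat T$, whence $\widehat v-\Pi_r^\partial\widehat v=(\widehat v-q)-\Pi_r^\partial(\widehat v-q)$. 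The first term is controlled by $\|\widehat v-q\|_{0,\mu,\partial\widehat T}\le C\|\widehat v-q\|_{W^{r+1,\rho}(\widehat T)}$; the second is controlled face by face through $L^2$, using that $\Pi_r^\partial$ is an $L^2(\widehat E)$-orthogonal projection, the norm equivalence $\|\cdot\|_{0,\mu,\widehat E}\lesssim\|\cdot\|_{0,2,\widehat E}$ on the finite-dimensional space $\mathcal P_r(\widehat E)$, and $\|\widehat v-q\|_{0,2,\partial\widehat T}\le C\|\widehat v-q\|_{W^{r+1,\rho}(\widehat T)}$. Summing over the finitely many faces and using the Deny--Lions bound gives the scale-free estimate.

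Combining this with the scaling relations of the first paragraph yields
\begin{align*}
\|v-\Pi_r^\partial v\|_{0,\mu,\partial T}
&\sim h_T^{(d-1)/\mu}\,\|\widehat v-\Pi_r^\partial\widehat v\|_{0,\mu,\partial\widehat T}
\le C\,h_T^{(d-1)/\mu}\,|\widehat v|_{r+1,\rho,\widehat T}\\
&\sim C\,h_T^{\,r+1+\frac{d-1}{\mu}-\frac d\rho}\,|v|_{r+1,\rho,T},
\end{align*}
which is the claim. The only genuinely delicate point I anticipate is the bookkeeping of the second paragraph: checking that the displayed interval of admissible exponents $\mu$ coincides exactly with the embedding range of $W^{r+1,\rho}(\widehat T)$ into $L^\mu(\partial\widehat T)$, and that the structural hypothesis $2\le\frac{(d-1)\rho}{d-(r+1)\rho}$ is precisely what guarantees that the trace of $v$ lies in $L^2(\partial\widehat T)$, so that the face $L^2$-projection --- and the detour through $L^2$ in the estimate --- is legitimate; the remaining steps are routine affine scaling and the Deny--Lions argument already used for Lemma~\ref{lem3.1}.
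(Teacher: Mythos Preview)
Your proposal is correct and is precisely the standard affine-scaling plus Deny--Lions/Bramble--Hilbert argument for such projection error estimates. The paper does not supply its own proof of this lemma but imports it directly from \cite{book2}; your outline is exactly the argument one finds in such references, and it is the same technique the paper invokes (``by following the same lines'') when it extends the result to Lemmas~\ref{lem3.3} and~\ref{lem3.4}.
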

	
	By following the same lines as in the proofs of the above two lemmas in \cite{book2}, it is easy to   get the following more general results, i.e. Lemmas \ref{lem3.3}  and \ref{lem3.4}.
	
	\begin{lemma}[Approximation properties for ${\Pi}_r^o$]\label{lem3.3} Let $r$ be a nonnegative integer and $\rho\in [1,\infty]$. For $j\in \{0,1,\cdots,r+1\}$ and $s\in\{j,j+1,\cdots,r+1\}$, assume that $2\le \frac{d\rho}{d-s\rho}$ when $s\rho<d$. Then there exists a constant $C$ independent of  $T$ such that
		\begin{eqnarray}\label{I1}
		|v-\Pi_r^ov|_{j,\mu,T}\le Ch_T^{s-j+\frac{d}{\mu}-\frac{d}{\rho}}|v|_{s,\rho,T},\forall v\in W^{s,\rho}(T)
		\end{eqnarray}
		holds for   $\mu$ satisfying
		\begin{eqnarray}
		\left\{
		\begin{aligned}
		&1\le \mu\le \frac{d\rho}{d-(s-j)\rho},&\text{ if } \ (s-j)\rho<d,\\
		&1\le \mu< \infty,&\text{ if } \ (s-j)\rho=d,\\
		&1\le \mu\le \infty,&\text{ if } \ (s-j)\rho>d.
		\end{aligned}
		\right.
		\end{eqnarray}
		In addition,
		for $j\in \{0,1,\cdots,r+1\}$ and $s\in\{j+1,j+2,\cdots,r+1\}$, assume that $2\le \frac{d\rho}{d-s\rho}$ when $s\rho<d$. Then there exists a constant $C$ independent of $T$ such that
		\begin{eqnarray}
		|\nabla^j(v-\Pi_r^ov)|_{0,\mu,\partial T}\le Ch_T^{s-j+\frac{d-1}{\mu}-\frac{d}{\rho}}|v|_{s,\rho,T},\forall v\in W^{s,\rho}(T)
		\label{Qik2}
		\end{eqnarray}
		holds for   $\mu$ satisfying
		\begin{eqnarray}
		\left\{
		\begin{aligned}
		&1\le \mu\le \frac{(d-1)\rho}{d-(s-j)\rho},&\text{ if } \ (s-j)\rho<d,\\
		&1\le \mu< \infty,&\text{ if } \ (s-j)\rho=d,\\
		&1\le \mu\le \infty,&\text{ if } \ (s-j)\rho>d.
		\end{aligned}
		\right.
		\end{eqnarray}
		
	\end{lemma}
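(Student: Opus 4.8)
The plan is to establish Lemma~\ref{lem3.3} along exactly the lines of Lemmas~\ref{lem3.1}--\ref{lem3.2} in \cite{book2}: prove the estimates first on a fixed reference simplex $\widehat T$ by a Bramble--Hilbert argument, then pull them back to an arbitrary $T\in\mathcal T_h$ through an affine map, and finally obtain the trace bound \eqref{Qik2} by combining the interior estimate with a trace inequality on $\widehat T$.

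First, on the reference element, let $\widehat\Pi_r^o:L^2(\widehat T)\to\mathcal P_r(\widehat T)$ be the associated $L^2$-projection. The point that buys everything for free is that $\mathcal P_r(\widehat T)$ is finite dimensional, so all norms on it are equivalent and hence $\|\widehat\Pi_r^o\widehat v\|_{j,\mu,\widehat T}\lesssim\|\widehat\Pi_r^o\widehat v\|_{0,\widehat T}\le\|\widehat v\|_{0,\widehat T}$; the hypothesis $2\le\frac{d\rho}{d-s\rho}$ (when $s\rho<d$) is precisely the Sobolev embedding $W^{s,\rho}(\widehat T)\hookrightarrow L^2(\widehat T)$, which makes this chain meaningful and gives $\|\widehat\Pi_r^o\widehat v\|_{j,\mu,\widehat T}\lesssim\|\widehat v\|_{s,\rho,\widehat T}$ for all $\widehat v\in W^{s,\rho}(\widehat T)$. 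Since also $W^{s,\rho}(\widehat T)\hookrightarrow W^{j,\mu}(\widehat T)$ --- which is exactly the displayed admissible range of $\mu$ --- the triangle inequality yields $\|\widehat v-\widehat\Pi_r^o\widehat v\|_{j,\mu,\widehat T}\lesssim\|\widehat v\|_{s,\rho,\widehat T}$. Because $s\le r+1$, $\widehat\Pi_r^o$ reproduces $\mathcal P_{s-1}(\widehat T)$, so replacing $\widehat v$ by $\widehat v+\widehat q$ with $\widehat q\in\mathcal P_{s-1}(\widehat T)$ arbitrary and taking the infimum, the Deny--Lions/Bramble--Hilbert lemma gives $\|\widehat v-\widehat\Pi_r^o\widehat v\|_{j,\mu,\widehat T}\lesssim\inf_{\widehat q\in\mathcal P_{s-1}(\widehat T)}\|\widehat v+\widehat q\|_{s,\rho,\widehat T}\lesssim|\widehat v|_{s,\rho,\widehat T}$, and in particular $|\widehat v-\widehat\Pi_r^o\widehat v|_{j,\mu,\widehat T}\lesssim|\widehat v|_{s,\rho,\widehat T}$.

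Next, I would transfer this to a general $T$ via the affine map $F_T:\widehat T\to T$, $x=B_T\widehat x+b_T$, using shape regularity ($\|B_T\|\lesssim h_T$, $\|B_T^{-1}\|\lesssim h_T^{-1}$, $|\det B_T|\sim h_T^{d}$) and the fact that, $F_T$ being affine with constant Jacobian, $\Pi_r^o$ commutes with the pull-back, i.e. $\widehat{\Pi_r^o v}=\widehat\Pi_r^o\widehat v$. Applying the standard seminorm scalings $|v|_{j,\mu,T}\lesssim h_T^{-j}|\det B_T|^{1/\mu}|\widehat v|_{j,\mu,\widehat T}$ and $|\widehat v|_{s,\rho,\widehat T}\lesssim h_T^{s}|\det B_T|^{-1/\rho}|v|_{s,\rho,T}$ to $v-\Pi_r^o v$ and using $|\det B_T|^{1/\mu-1/\rho}\sim h_T^{d/\mu-d/\rho}$ gives \eqref{I1}. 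For the trace estimate \eqref{Qik2} (where $s\ge j+1$), I would apply Step~1 with the full $W^{j+1,\mu}$ norm --- legitimate since $W^{s,\rho}(\widehat T)\hookrightarrow W^{j+1,\mu}(\widehat T)$ in that range --- together with the multiplicative trace inequality $\|\widehat w\|_{0,\mu,\partial\widehat T}\lesssim\|\widehat w\|_{1,\mu,\widehat T}$ for $\widehat w=\nabla^j(\widehat v-\widehat\Pi_r^o\widehat v)$, and then scale the surface measure ($|\partial T|\sim h_T^{d-1}$) to obtain $|\nabla^j(v-\Pi_r^o v)|_{0,\mu,\partial T}\lesssim h_T^{s-j+(d-1)/\mu-d/\rho}|v|_{s,\rho,T}$.

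The routine part is the exponent bookkeeping in the scaling. The only genuinely delicate point --- and the reason for the index restrictions in the statement --- is to check that each Sobolev embedding invoked on $\widehat T$ is valid under exactly the stated hypotheses: $W^{s,\rho}\hookrightarrow L^2$ to let the $L^2$-projection act on $W^{s,\rho}$, and $W^{s,\rho}\hookrightarrow W^{j,\mu}$ (resp. $W^{j+1,\mu}$ for the trace bound) for the admissible range of $\mu$. Once these are in place the argument is identical to the proofs of Lemmas~\ref{lem3.1} and~\ref{lem3.2} in \cite{book2}.
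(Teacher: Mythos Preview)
Your proposal is correct and aligns exactly with what the paper does: the paper provides no detailed proof but simply states that Lemma~\ref{lem3.3} follows ``by following the same lines as in the proofs of the above two lemmas in \cite{book2},'' i.e., the reference-element Bramble--Hilbert argument plus affine scaling that you have outlined. Your identification of the role of each Sobolev embedding hypothesis is accurate and matches the index restrictions in the statement.
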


	\begin{lemma}[Approximation properties for ${\Pi}_r^{\partial}$]\label{lem3.4} Let $r$ be a nonnegative integer and $\rho\in [1,\infty]$. For $s\in\{1,2,\cdots,r+1\}$, assume that $2\le \frac{d\rho}{d-s\rho}$ when $s\rho<d$. Then there exists a constant $C$ independent of $T$ such that
		\begin{eqnarray}
		\|v-\Pi_r^{\partial}v\|_{0,\mu,\partial T}\le Ch_T^{s+\frac{d-1}{\mu}-\frac{d}{\rho}}|v|_{s,\rho, T},\forall v\in W^{s,\rho}(T)
		\label{Qbk}
		\end{eqnarray}
		holds for  $\mu$ satisfying
		\begin{eqnarray}
		\left\{
		\begin{aligned}
		&1\le \mu\le \frac{(d-1)\rho}{d-s\rho},&\text{ if } \ s\rho<d,\\
		&1\le \mu< \infty,&\text{ if } \  s\rho=d,\\
		&1\le \mu\le \infty,&\text{ if } \ s\rho>d.
		\end{aligned}
		\right.
		\end{eqnarray}
		
	\end{lemma}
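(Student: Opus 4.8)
The plan is to reduce everything to a fixed reference simplex by an affine transformation, bound the projection error there by a Bramble--Hilbert (Deny--Lions) argument in which the admissible range of $\mu$ is dictated by a Sobolev trace embedding, and then scale back. The argument runs along the same lines as the proof of Lemma \ref{lem3.2} in \cite{book2}, with the index $r+1$ replaced by the general $s\in\{1,\dots,r+1\}$; since $\mathcal P_{s-1}(\widehat T)\subseteq\mathcal P_r(\widehat T)$, the reproduction property of $\Pi_r^{\partial}$ is still available.

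First I would record the uniform $L^\mu$-stability of the boundary $L^2$-projection on a reference simplex $\widehat T$ with reference face $\widehat E$: for every $1\le\mu\le\infty$ and every $\widehat w\in L^\mu(\widehat E)$ one has $\|\widehat\Pi_r^{\partial}\widehat w\|_{0,\mu,\widehat E}\lesssim\|\widehat w\|_{0,\mu,\widehat E}$. For $\mu\ge 2$ this is immediate from the equivalence of norms on the finite-dimensional space $\mathcal P_r(\widehat E)$ together with the $L^2$-stability of $\widehat\Pi_r^{\partial}$; for $1\le\mu<2$ it follows by duality, writing $\|\widehat\Pi_r^{\partial}\widehat w\|_{0,\mu,\widehat E}=\sup_{\|\phi\|_{0,\mu',\widehat E}=1}(\widehat w,\widehat\Pi_r^{\partial}\phi)_{\widehat E}$ with $\mu'>2$ the conjugate exponent and invoking the stability already proved for $\mu'$.

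Next, working face by face, since $\widehat\Pi_r^{\partial}$ reproduces traces of polynomials in $\mathcal P_{s-1}(\widehat T)$, for every $\widehat q\in\mathcal P_{s-1}(\widehat T)$ we have $\widehat v-\widehat\Pi_r^{\partial}\widehat v=(\widehat v-\widehat q)-\widehat\Pi_r^{\partial}(\widehat v-\widehat q)$ on $\partial\widehat T$, so the stability above gives $\|\widehat v-\widehat\Pi_r^{\partial}\widehat v\|_{0,\mu,\partial\widehat T}\lesssim\|\widehat v-\widehat q\|_{0,\mu,\partial\widehat T}$. Here the constraint on $\mu$ enters through the Sobolev trace embedding $W^{s,\rho}(\widehat T)\hookrightarrow W^{s-1/\rho,\rho}(\partial\widehat T)\hookrightarrow L^\mu(\partial\widehat T)$, which is continuous precisely for $\mu$ up to the critical exponent $\tfrac{(d-1)\rho}{d-s\rho}$ when $s\rho<d$, for $\mu<\infty$ when $s\rho=d$, and for $\mu\le\infty$ when $s\rho>d$ — exactly the stated ranges. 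Hence $\|\widehat v-\widehat q\|_{0,\mu,\partial\widehat T}\lesssim\|\widehat v-\widehat q\|_{s,\rho,\widehat T}$; taking the infimum over $\widehat q\in\mathcal P_{s-1}(\widehat T)$ and applying the Deny--Lions lemma, $\inf_{\widehat q}\|\widehat v-\widehat q\|_{s,\rho,\widehat T}\lesssim|\widehat v|_{s,\rho,\widehat T}$, yields $\|\widehat v-\widehat\Pi_r^{\partial}\widehat v\|_{0,\mu,\partial\widehat T}\lesssim|\widehat v|_{s,\rho,\widehat T}$.

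Finally I would transform back to $T$. Because the affine pullback has constant Jacobian, $\Pi_r^{\partial}$ commutes with it, so $\widehat{(v-\Pi_r^{\partial}v)}=\widehat v-\widehat\Pi_r^{\partial}\widehat v$; combining the scaling relations $\|\cdot\|_{0,\mu,\partial T}\sim h_E^{(d-1)/\mu}\|\widehat{(\cdot)}\|_{0,\mu,\partial\widehat T}$ and $|\cdot|_{s,\rho,T}\sim h_T^{-s+d/\rho}|\widehat{(\cdot)}|_{s,\rho,\widehat T}$ with the shape-regularity bound $h_E\sim h_T$ gives $\|v-\Pi_r^{\partial}v\|_{0,\mu,\partial T}\lesssim h_T^{s+\frac{d-1}{\mu}-\frac{d}{\rho}}|v|_{s,\rho,T}$, i.e.\ \eqref{Qbk}. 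The only genuinely delicate point is keeping track of the fractional Sobolev trace embedding so that the admissible range of $\mu$ comes out exactly as claimed (and checking that the hypothesis $2\le\frac{d\rho}{d-s\rho}$ when $s\rho<d$ ensures $v|_{\partial T}\in L^2(\partial T)$, so that $\Pi_r^{\partial}v$ is well defined); everything else is routine scaling and a direct application of Bramble--Hilbert.
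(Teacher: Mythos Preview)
Your proposal is correct and matches the paper's approach exactly: the paper does not give a detailed argument but simply states that Lemma~\ref{lem3.4} follows ``by following the same lines as in the proofs of the above two lemmas in \cite{book2}'', i.e.\ the standard reference-element/Bramble--Hilbert/scaling argument you outline, with the key observation that $\mathcal P_{s-1}\subseteq\mathcal P_r$ so that the projection still reproduces the needed polynomials. Your treatment of the $L^\mu$-stability of $\widehat\Pi_r^{\partial}$ (norm equivalence for $\mu\ge2$, duality for $\mu<2$) and your identification of the admissible range of $\mu$ via the Sobolev trace embedding are both sound and fill in precisely the details the paper leaves to the reader.
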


	We also need the following boundeness results for the projections.

	\begin{lemma}[Boundedness properties for ${\Pi}_r^o$ and $\Pi_r^{\partial}$]\label{lemma2.5} Let $r$ be a nonnegative integer. Then, for any $\mu\in [2,\infty]$
		and $j\in\{0,1,\cdots,r+1\}$, it holds
		\begin{align}
		|\Pi_r^ov|_{j,\mu,T}\lesssim&|v|_{j,\mu,T},\forall v\in L^{2}(T),\label{bq1}\\
		\|\Pi_r^{\partial}v\|_{0,\mu,\partial T}\lesssim&\|v\|_{0,\mu,\partial T},\forall v\in L^{2}(\partial T),\label{bq2}\\
		\|\Pi_r^{\partial}v\|_{0,6,\partial T}
		\lesssim&h_T^{-\frac 1 6}(|v|_{1, T}+\|v\|_{0,6, T}),\forall v\in H^1(T),\label{bq3}\\
		\|v\|_{0,6,\partial T}
		\lesssim&h_T^{-\frac 1 6}(|v|_{1, T}+\|v\|_{0,6, T}),\forall v\in H^1(T),\label{bq4}\\
		\|\Pi_r^ov\|_{0,6,\partial T}
		\lesssim&h_T^{-\frac 1 6}(|v|_{1, T}+\|v\|_{0,6, T}),\forall v\in H^1(T).\label{bq5}
		\end{align}
	\end{lemma}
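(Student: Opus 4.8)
The plan is the usual two-ingredient argument for projection estimates on shape-regular simplices: (i) affine scaling between a simplex $T$ (or a face $E\subset\partial T$) and the fixed reference simplex $\widehat T$ (reference face $\widehat E$), and (ii) on $\widehat T$ the equivalence of all norms on the finite-dimensional spaces $\mathcal P_r(\widehat T)$, $\mathcal P_r(\widehat E)$ together with the $L^2$-stability of the $L^2$-projections. Throughout I will use $h_T\le h\lesssim 1$, so that any nonnegative power of $h_T$ can be absorbed into a constant.

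For \eqref{bq1} with $j=0$ I would commute $\Pi_r^o$ with the affine pullback (which maps $\mathcal P_r(T)$ onto $\mathcal P_r(\widehat T)$) and estimate $\|\widehat{\Pi_r^o v}\|_{0,\mu,\widehat T}\lesssim\|\widehat{\Pi_r^o v}\|_{0,2,\widehat T}\le\|\widehat v\|_{0,2,\widehat T}\lesssim\|\widehat v\|_{0,\mu,\widehat T}$ (using $\mu\ge 2$, $|\widehat T|<\infty$, and norm equivalence on $\mathcal P_r(\widehat T)$); the $h_T$-powers cancel under scaling back. For $1\le j\le r+1$ I would exploit that $\Pi_r^o$ reproduces $\mathcal P_{j-1}\subseteq\mathcal P_r$: for any $q\in\mathcal P_{j-1}(T)$ one has $|\Pi_r^o v|_{j,\mu,T}=|\Pi_r^o(v-q)|_{j,\mu,T}$; scaling to $\widehat T$ and applying the inverse inequality on $\mathcal P_r(\widehat T)$ gives $|\widehat{\Pi_r^o(v-q)}|_{j,\mu,\widehat T}\lesssim\|\widehat v-\widehat q\|_{0,2,\widehat T}$; choosing $\widehat q$ to be an averaged Taylor polynomial, the Deny--Lions/Bramble--Hilbert lemma yields $\|\widehat v-\widehat q\|_{0,2,\widehat T}\lesssim\|\widehat v-\widehat q\|_{0,\mu,\widehat T}\lesssim|\widehat v|_{j,\mu,\widehat T}$, and the factors $h_T^{-j+d/\mu}$ and $h_T^{\,j-d/\mu}$ cancel. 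Estimate \eqref{bq2} is the $(d-1)$-dimensional analogue of the $j=0$ case, applied face by face.

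Estimates \eqref{bq3} and \eqref{bq5} both rest on the key observation that a boundary $L^6$ norm of a piecewise polynomial can be traded for an interior $L^6$ (or $L^2$) norm by an inverse inequality, which bypasses any sharp fractional trace embedding. For \eqref{bq5}: $\Pi_r^o v\in\mathcal P_r(T)$, so scaling gives $\|\Pi_r^o v\|_{0,6,\partial T}\lesssim h_T^{-1/6}\|\Pi_r^o v\|_{0,6,T}$, and \eqref{bq1} (with $j=0$, $\mu=6$) finishes it. For \eqref{bq3}: $\Pi_r^\partial v|_E\in\mathcal P_r(E)$, so an $L^6$--$L^2$ inverse inequality on $E$, the $L^2$-stability of $\Pi_r^\partial$, the scaled trace inequality $\|v\|_{0,2,E}\lesssim h_T^{-1/2}\|v\|_{0,2,T}+h_T^{1/2}|v|_{1,T}$ and H\"older's inequality $\|v\|_{0,2,T}\lesssim h_T^{d/3}\|v\|_{0,6,T}$ combine to give $\|\Pi_r^\partial v\|_{0,6,\partial T}\lesssim h_T^{-1/6}\|v\|_{0,6,T}+h_T^{(5-2d)/6}|v|_{1,T}$; since $(5-2d)/6\ge-1/6$ for $d\in\{2,3\}$, the claim follows after absorbing $h_T\lesssim 1$. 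For \eqref{bq4} the same affine scaling applied to a Sobolev trace embedding on $\widehat T$ into $L^6(\widehat E)$ does the job, with the $\|v\|_{0,6,T}$ term on the right needed to control $\|\widehat v\|_{0,2,\widehat T}$ after rescaling.

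The computations are routine; the only delicate point is the bookkeeping of the $h_T$-exponents in the boundary $L^6$ bounds \eqref{bq3}--\eqref{bq5} (and, for $d=3$, the recognition that it is precisely the polynomial structure of $\Pi_r^o v$ and $\Pi_r^\partial v$ that makes those $L^6$ boundary estimates available), after which $h_T\lesssim 1$ converts every estimate to the stated form.
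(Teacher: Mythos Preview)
Your treatment of \eqref{bq1} and \eqref{bq2} is essentially the paper's: affine scaling, $L^2$-stability of the projection, and norm equivalence on polynomials; for $j\ge 1$ you subtract a Taylor polynomial and invoke Bramble--Hilbert, whereas the paper subtracts $\Pi_{j-1}^o v$ and uses the approximation property of $\Pi_{j-1}^o$, which amounts to the same thing.

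For \eqref{bq3} and \eqref{bq5} you take different, and in fact cleaner, routes. Your proof of \eqref{bq5} (polynomial trace inverse $\|\Pi_r^o v\|_{0,6,\partial T}\lesssim h_T^{-1/6}\|\Pi_r^o v\|_{0,6,T}$ followed by \eqref{bq1}) is more direct than the paper's, which detours through \eqref{bq3} via the splitting $\Pi_r^o v=(\Pi_r^o v-\Pi_r^\partial v)+\Pi_r^\partial v$. For \eqref{bq3} you use a face $L^6$--$L^2$ inverse inequality together with the $L^2$ trace and H\"older, whereas the paper writes $\Pi_r^\partial v=\Pi_r^\partial(v-\Pi_r^o v)+\Pi_r^o v$ and bounds the first piece by \eqref{bq2} plus the boundary $L^6$ approximation estimate \eqref{Qik2}. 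Your route has the advantage that it touches the boundary only through $L^2$ quantities before the final inverse step, so the exponent bookkeeping $(5-2d)/6\ge -1/6$ is transparent and does not rely on any $L^6$ boundary approximation estimate.

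For \eqref{bq4} your sketch has a gap when $d=3$: the Sobolev trace embedding $H^1(\widehat T)\hookrightarrow L^6(\partial\widehat T)$ you invoke does not hold in three dimensions, since the trace of $H^1$ lies in $H^{1/2}$ of a two-dimensional manifold, which embeds only into $L^4$. The paper does not attempt a direct trace embedding here; instead it splits $v=(v-\Pi_r^\partial v)+\Pi_r^\partial v$, applies the already-established \eqref{bq3} to the second term, and invokes the boundary approximation property \eqref{Qbk} of $\Pi_r^\partial$ for the first, arriving at the same $h_T^{5/6-d/3}$ exponent as in \eqref{bq3}. If you want to repair your argument along the paper's lines, that splitting is the ingredient you are missing.
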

	\begin{proof}
		We first show \eqref{bq1}.
		For $j=0$, by an scaling argument we have
		\begin{align}\label{341}
		\|\Pi_r^ov\|_{0,\mu, T}\lesssim&h_T^{-d(\frac{1}{2}-\frac{1}{\mu})} \|\Pi_r^ov\|_{0, T} \nonumber\\
		\le&h_T^{-d(\frac{1}{2}-\frac{1}{\mu})} \|v\|_{0,T} \nonumber\\
		\lesssim&h_T^{-d(\frac{1}{2}-\frac{1}{\mu})} h_T^{\frac{d}{2}}\|\hat{v}\|_{0,\hat{T}} \nonumber\\
		\lesssim&h_T^{-d(\frac{1}{2}-\frac{1}{\mu})} h_T^{\frac{d}{2}}\|\hat{v}\|_{0,\mu, \hat{T}} \nonumber\\
		\lesssim&h_T^{-d(\frac{1}{2}-\frac{1}{\mu})} h_T^{\frac{d}{2}-\frac{d}{\mu}}\|{v}\|_{0,\mu, T}
		=\|{v}\|_{0,\mu, T},
		\end{align}
		For $j\ge 1$, we obtain
		\begin{align*}
		|\Pi_r^ov|_{j,\mu, T}=&|\Pi_r^ov-\Pi_{j-1}^ov|_{j,\mu, T} \nonumber\\
		=&|\Pi_r^o(v-\Pi_{j-1}^ov)|_{j,\mu, T} \nonumber\\
		\lesssim&h_T^{-j-d(\frac{1}{2}-\frac{1}{\mu})}\|\Pi_r^o(v-\Pi_{j-1}^ov)\|_{0, T} \nonumber\\
		\le&h_T^{-j-d(\frac{1}{2}-\frac{1}{\mu})}\|v-\Pi_{j-1}^ov\|_{0, T} \nonumber\\
		\le&|v|_{j,\mu, T}.
		\end{align*}
		Combining the above two estimates yields  \eqref{bq1}.
		
		Secondly, let's prove  \eqref{bq2}.
		Similar to \eqref{341}, it holds
		\begin{align*}
		\|\Pi_r^{\partial}v\|_{0,\mu,\partial T}\lesssim&h_T^{-(d-1)(\frac{1}{2}-\frac{1}{\mu})} \|\Pi_r^{\partial}v\|_{0,\partial T} \nonumber\\
		\le&h_T^{-(d-1)(\frac{1}{2}-\frac{1}{\mu})} \|v\|_{0,\partial T} \nonumber\\
		\lesssim&h_T^{-(d-1)(\frac{1}{2}-\frac{1}{\mu})} h_T^{\frac{d-1}{2}}\|\hat{v}\|_{0,\partial \hat{T}} \nonumber\\
		\lesssim&h_T^{-(d-1)(\frac{1}{2}-\frac{1}{\mu})} h_T^{\frac{d-1}{2}}\|\hat{v}\|_{0,\mu,\partial \hat{T}} \nonumber\\
		\lesssim&h_T^{-(d-1)(\frac{1}{2}-\frac{1}{\mu})} h_T^{\frac{d-1}{2}}h_T^{\frac{1-d}{\mu}}\|{v}\|_{0,\mu,\partial T} 
		=\|{v}\|_{0,\mu,\partial T},
		\end{align*}
		which means   \eqref{bq2}.
		
		Next we  derive 
		\eqref{bq3}. We have
		\begin{align*}
		\|\Pi_r^{\partial}v\|_{0,6,\partial T}\le&\|\Pi_r^{\partial}(v-\Pi_r^ov)\|_{0,6,\partial T}+\|\Pi_r^ov\|_{0,6,\partial T}\nonumber\\
		\lesssim&\|v-\Pi_r^ov\|_{0,6,\partial T}+h_T^{-\frac 1 6}\|\Pi_r^ov\|_{0,6, T}\nonumber\\
		\lesssim&h_T^{5/6-d/3}|v|_{1, T}+h_T^{-\frac 1 6}\|\Pi_r^ov\|_{0,6, T}\nonumber\\
		\lesssim&h_T^{-\frac 1 6}(|v|_{1, T}+\|v\|_{0,6, T}),
		\end{align*}
		i.e. \eqref{bq3} holds.
		
		Now we turn to  show \eqref{bq4}. It is easy to see
		\begin{align*}
		\|v\|_{0,6,\partial T}\le&\|v-\Pi_r^{\partial}v\|_{0,6,\partial T}+\|\Pi_r^{\partial}v\|_{0,6,\partial T} \nonumber\\
		\lesssim&h_T^{5/6-d/3}|v|_{1, T}+h_T^{-\frac 1 6}(|v|_{1, T}+\|v\|_{0,6, T})\nonumber\\
		\lesssim&h_T^{-\frac 1 6}(|v|_{1, T}+\|v\|_{0,6, T}),
		\end{align*}
		which yields \eqref{bq4}.
		
		Finally, it remains to prove
		\eqref{bq5}. We easily know
		\begin{align*}
		\|\Pi_r^ov\|_{0,6,\partial T}\le&\|\Pi_r^ov-\Pi_r^{\partial}v\|_{0,6,\partial T}+\|\Pi_r^{\partial}v\|_{0,6,\partial T} \nonumber\\
		\lesssim&\|\Pi_r^ov-v\|_{0,6,\partial T}+\|\Pi_r^{\partial}v\|_{0,6,\partial T} \nonumber\\
		\lesssim&h_T^{5/6-d/3}|v|_{1, T}+h_T^{-\frac 1 6}(|v|_{1, T}+\|v\|_{0,6, T})\nonumber\\
		\lesssim&h_T^{-\frac 1 6}(|v|_{1, T}+\|v\|_{0,6, T}),
		\end{align*}
		which finishes the proof.
	\end{proof}
	
	\subsection{RT interpolation}

	For any nonnegative integer $r$, we introduce the local Raviart-Thomas (RT) element
	\begin{eqnarray}
	\bm{RT}_r(T)=[\mathcal{P}_r(T)]^d+\bm{x}\mathcal{P}_r(T).\nonumber
	\end{eqnarray}
	
	Lemmas \ref{lemma3.1}-\ref{lemma33} show   some properties of the RT projection  \cite{BDM}.  
	\begin{lemma}\label{lemma3.1} For any $\bm{v}_{h}\in \bm{RT}_r(T)$,  $\nabla\cdot\bm{v}_{h}|_T=0$ implies $\bm{v}_{h}\in [\mathcal{P}_r(T)]^d$\color{black}.\color{black}
		
	\end{lemma}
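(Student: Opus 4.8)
\textbf{Proof proposal for Lemma \ref{lemma3.1}.}

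The plan is to combine the well-known unisolvence of the Raviart--Thomas space with a dimension-counting argument. First I would recall the direct-sum decomposition $\bm{RT}_r(T) = [\mathcal{P}_r(T)]^d \oplus \widetilde{\bm x\,\mathcal{P}_r}(T)$, where $\widetilde{\bm x\,\mathcal{P}_r}(T)$ denotes the space of fields of the form $\bm x\, q$ with $q$ a \emph{homogeneous} polynomial of degree exactly $r$; this is the standard way to see that $\dim \bm{RT}_r(T) = d\binom{r+d}{d} + \binom{r+d-1}{d-1}$. Any $\bm v_h \in \bm{RT}_r(T)$ can thus be written uniquely as $\bm v_h = \bm p + \bm x\, q$ with $\bm p \in [\mathcal{P}_r(T)]^d$ and $q$ homogeneous of degree $r$.

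Next I would apply the divergence. Since $\nabla\cdot(\bm x\, q) = q\,\nabla\cdot\bm x + \bm x\cdot\nabla q = d\,q + \bm x\cdot\nabla q$, and $q$ is homogeneous of degree $r$ so that Euler's identity gives $\bm x\cdot\nabla q = r\,q$, we obtain $\nabla\cdot(\bm x\,q) = (d+r)\,q$. Hence $\nabla\cdot\bm v_h = \nabla\cdot\bm p + (d+r)\,q$. The hypothesis $\nabla\cdot\bm v_h|_T = 0$ then forces $(d+r)\,q = -\nabla\cdot\bm p$. The left-hand side is a homogeneous polynomial of degree $r$ (or zero), while the right-hand side, being the divergence of a vector polynomial of degree $\le r$, is a polynomial of degree $\le r-1$. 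A homogeneous polynomial of degree $r$ that also has degree $\le r-1$ must vanish identically, so $q \equiv 0$ (using $d+r \neq 0$). Therefore $\bm v_h = \bm p \in [\mathcal{P}_r(T)]^d$, which is the claim.

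There is no serious obstacle here; the only point requiring a little care is the bookkeeping on polynomial degrees — making sure the decomposition uses the homogeneous-degree-$r$ part of $\bm x\,\mathcal{P}_r(T)$ rather than the full $\bm x\,\mathcal{P}_r(T)$, since otherwise the argument ``degree $r$ meets degree $\le r-1$ implies zero'' does not go through cleanly. An alternative, essentially equivalent route would be to invoke directly the standard fact (see \cite{BDM}) that $\nabla\cdot : \bm{RT}_r(T) \to \mathcal{P}_r(T)$ is surjective with kernel exactly $\{\bm v \in [\mathcal{P}_r(T)]^d : \nabla\cdot\bm v = 0\}$, from which the statement is immediate; but the self-contained Euler-identity computation above is short enough that I would prefer to include it.
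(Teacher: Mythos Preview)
Your argument is correct. The decomposition $\bm{RT}_r(T)=[\mathcal{P}_r(T)]^d\oplus \bm{x}\,\widetilde{\mathcal{P}}_r(T)$ with $\widetilde{\mathcal{P}}_r(T)$ the homogeneous polynomials of degree exactly $r$ is the right one, and the Euler-identity computation $\nabla\cdot(\bm{x}\,q)=(d+r)q$ together with the degree comparison cleanly forces $q=0$. The only cosmetic point is the phrasing ``a homogeneous polynomial of degree $r$ that also has degree $\le r-1$'': more precisely, comparing the degree-$r$ homogeneous components of both sides of $(d+r)q=-\nabla\cdot\bm p$ gives $(d+r)q=0$ directly, since $\nabla\cdot\bm p\in\mathcal{P}_{r-1}(T)$ has no degree-$r$ part. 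This is exactly what you mean, just stated a bit more crisply.

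As for comparison with the paper: the paper does not actually prove this lemma. It lists it among ``some properties of the RT projection'' and cites \cite{BDM} for all of Lemmas~\ref{lemma3.1}--\ref{lemma33}. Your self-contained Euler-identity proof is therefore an addition rather than an alternative; it is short, standard, and entirely appropriate to include if you want the appendix to be self-contained. The ``alternative route'' you mention at the end (surjectivity of $\nabla\cdot:\bm{RT}_r(T)\to\mathcal{P}_r(T)$ with kernel in $[\mathcal{P}_r(T)]^d$) is essentially what one would extract from \cite{BDM}, so your preference for the direct computation is reasonable.
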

	\begin{lemma} \label{lemma32} For any $T\in\mathcal{T}_h$ and $\bm{v}\in [H^1(T)]^d$, there exists a unique $\bm{P}^{RT}_r\bm{v}\in \bm{RT}_r(T)$ such that
		\begin{subequations}
			\begin{align}
			\langle\bm{P}^{RT}_r\bm{v}\cdot \bm{n}_E,w_r \rangle_E=&\langle \bm{v}\cdot \bm{n}_E,w_r\rangle_E& \forall w_r\in \mathcal{P}_r(E),   E\subset\partial T,\label{RT1}\\
			(\bm{P}^{RT}_r\bm{v},\bm{w}_{r-1})_T=&(\bm{v},\bm{w}_{r-1})_T& \forall \bm{w}_{r-1}\in [P_{r-1}(T)]^d.\label{RT2}
			\end{align}
		\end{subequations}
		If $r=0$, then $\bm{P}^{RT}_r\bm{v}$ is  determined only by $(\ref{RT1})$.
		Moreover, for any nonnegative integer $s$, the following interpolation \color{black} approximation \color{black}property holds:
		\begin{align}
		\|\bm{v}-\bm{P}^{RT}_r\bm{v}\|_{0,T}&\lesssim h_T^{s}|\bm{v}|_{s,T}&\forall 1\le s\le r+1,\bm{v}\in [H^s(T)]^d.\label{RT3}
		\end{align}
		
	\end{lemma}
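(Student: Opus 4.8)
The plan is to establish the local unisolvence of the degrees of freedom \eqref{RT1}--\eqref{RT2} first, and then to deduce \eqref{RT3} from a scaled stability bound together with the Bramble--Hilbert lemma; the well-posedness part is entirely classical (cf. \cite{BDM}), so I would only sketch it. First I would check, by a direct dimension count (using Pascal's rule), that the total number of conditions in \eqref{RT1}--\eqref{RT2} --- namely $(d+1)\dim\mathcal{P}_r(E)$ face moments plus $d\dim\mathcal{P}_{r-1}(T)$ interior moments --- equals $\dim\bm{RT}_r(T)$. Hence existence and uniqueness of $\bm{P}^{RT}_r\bm{v}$ follow once the homogeneous system is shown to have only the trivial solution. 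So let $\bm{q}\in\bm{RT}_r(T)$ annihilate all these moments. Since $\nabla\cdot\bm{q}\in\mathcal{P}_r(T)$ and $\nabla p\in[\mathcal{P}_{r-1}(T)]^d$ for every $p\in\mathcal{P}_r(T)$, integration by parts gives $(\nabla\cdot\bm{q},p)_T=-(\bm{q},\nabla p)_T+\langle\bm{q}\cdot\bm{n},p\rangle_{\partial T}=0$; taking $p=\nabla\cdot\bm{q}$ shows $\nabla\cdot\bm{q}=0$, and then Lemma \ref{lemma3.1} gives $\bm{q}\in[\mathcal{P}_r(T)]^d$. The face moments then force $\bm{q}\cdot\bm{n}_E=0$ on every face $E$, and using the interior moments together with a polynomial-potential argument on the simplex (representing a divergence-free $\bm{q}$ with vanishing normal trace through a potential that vanishes on $\partial T$) one concludes $\bm{q}=\bm{0}$. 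For $r=0$ there are no interior moments and exactly $d+1=\dim\bm{RT}_0(T)$ face conditions, so \eqref{RT1} alone determines $\bm{P}^{RT}_0\bm{v}$. Finally, since $[\mathcal{P}_r(T)]^d\subset\bm{RT}_r(T)$, uniqueness yields the reproduction property $\bm{P}^{RT}_r\bm{p}=\bm{p}$ for all $\bm{p}\in[\mathcal{P}_r(T)]^d$.

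For the approximation bound I would use that $\bm{P}^{RT}_r$ commutes with the contravariant Piola transform of the affine map onto $T$; since on the reference simplex the degrees of freedom are bounded on $[H^1]^d$ (the face moments require a trace), a standard scaling argument for a shape-regular simplex gives the local stability estimate $\|\bm{P}^{RT}_r\bm{w}\|_{0,T}\lesssim\|\bm{w}\|_{0,T}+h_T|\bm{w}|_{1,T}$ for all $\bm{w}\in[H^1(T)]^d$. Given $\bm{v}\in[H^s(T)]^d$ with $1\le s\le r+1$, I would then choose $\bm{q}\in[\mathcal{P}_{s-1}(T)]^d\subset[\mathcal{P}_r(T)]^d$ satisfying the Deny--Lions/Bramble--Hilbert estimates $\|\bm{v}-\bm{q}\|_{0,T}\lesssim h_T^s|\bm{v}|_{s,T}$ and $|\bm{v}-\bm{q}|_{1,T}\lesssim h_T^{s-1}|\bm{v}|_{s,T}$; since $\bm{q}$ is reproduced,
\[
\|\bm{v}-\bm{P}^{RT}_r\bm{v}\|_{0,T}=\|(\bm{v}-\bm{q})-\bm{P}^{RT}_r(\bm{v}-\bm{q})\|_{0,T}\lesssim\|\bm{v}-\bm{q}\|_{0,T}+h_T|\bm{v}-\bm{q}|_{1,T}\lesssim h_T^s|\bm{v}|_{s,T},
\]
which is exactly \eqref{RT3}.

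The hard part will be the unisolvence step, specifically showing that a vector field in $[\mathcal{P}_r(T)]^d$ with vanishing normal trace on $\partial T$ and vanishing moments against $[\mathcal{P}_{r-1}(T)]^d$ is identically zero; the divergence reduction, the reproduction property, and the Bramble--Hilbert/Piola steps are then routine bookkeeping. Since all of this is standard, in the paper I would either reproduce the potential construction in a line or two or simply cite \cite{BDM}.
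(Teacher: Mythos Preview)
Your sketch is correct and follows the classical argument (dimension count, divergence reduction via integration by parts plus Lemma~\ref{lemma3.1}, potential representation for the remaining divergence-free piece, and then Piola scaling with Bramble--Hilbert for \eqref{RT3}). Note, however, that the paper does not actually prove this lemma: it is stated as a known result with a citation to \cite{BDM}, so there is no in-paper proof to compare against---your outline is precisely the standard proof one finds in that reference.
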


	\begin{lemma}\label{lemma33} The operator $\bm{P}^{RT}_r$ defined in Lemma \ref{lemma32} satisfies
		\begin{align}
		(\nabla\cdot\bm{P}^{RT}_r\bm{v},q_h)_T&=(\nabla\cdot\bm{v},q_h)_T& \forall q_h\in \mathcal{P}_r(T),T\in\mathcal{T}_h. \label{RT4}
		\end{align}
		
	\end{lemma}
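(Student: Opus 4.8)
The plan is to derive the identity purely by integration by parts, converting the divergence acting on $\bm{P}^{RT}_r\bm{v}$ into a gradient acting on the test polynomial, so that the two defining relations \eqref{RT1}–\eqref{RT2} of the Raviart–Thomas projection can be applied termwise. First I would fix an element $T\in\mathcal{T}_h$ and an arbitrary $q_h\in\mathcal{P}_r(T)$, and write
\[
(\nabla\cdot\bm{P}^{RT}_r\bm{v},q_h)_T=-(\bm{P}^{RT}_r\bm{v},\nabla q_h)_T+\langle\bm{P}^{RT}_r\bm{v}\cdot\bm{n},q_h\rangle_{\partial T}.
\]

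Next I would treat the two resulting terms separately. For the volume term, since $\nabla q_h\in[\mathcal{P}_{r-1}(T)]^d$, the orthogonality \eqref{RT2} gives $(\bm{P}^{RT}_r\bm{v},\nabla q_h)_T=(\bm{v},\nabla q_h)_T$. For the boundary term, on each face $E\subset\partial T$ one has $q_h|_E\in\mathcal{P}_r(E)$, so \eqref{RT1} gives $\langle\bm{P}^{RT}_r\bm{v}\cdot\bm{n},q_h\rangle_{\partial T}=\langle\bm{v}\cdot\bm{n},q_h\rangle_{\partial T}$. Substituting both back and integrating by parts once more,
\[
(\nabla\cdot\bm{P}^{RT}_r\bm{v},q_h)_T=-(\bm{v},\nabla q_h)_T+\langle\bm{v}\cdot\bm{n},q_h\rangle_{\partial T}=(\nabla\cdot\bm{v},q_h)_T,
\]
which is the assertion, and since $q_h$ and $T$ were arbitrary the proof is complete.

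There is no genuine obstacle here: every step is an elementary integration by parts combined with the defining identities of $\bm{P}^{RT}_r$ recalled in Lemma \ref{lemma32}. The only point deserving a remark is the degenerate case $r=0$, in which \eqref{RT2} is vacuous; this causes no trouble because then $q_h$ is constant on $T$, so $\nabla q_h=\bm 0$ and the volume term drops out on both sides, leaving the boundary identity \eqref{RT1} to do all the work.
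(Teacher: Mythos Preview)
Your proof is correct and is exactly the standard argument: integrate by parts, invoke the defining relations \eqref{RT1}--\eqref{RT2} of the Raviart--Thomas projection, and integrate by parts back; the remark on the degenerate case $r=0$ is also accurate. The paper itself does not supply a proof of this lemma but simply cites it as a known property of the RT projection from \cite{BDM}, so there is nothing to compare against---your argument is the usual one found in that reference.
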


	The following result is very useful to our analysis.
	
	\begin{lemma} \label{tr} For all $T\in\mathcal{T}_h$, $w\in H^1(T)$, and $\mu\in [1,\infty]$, it holds
		\begin{align}
		\|w\|_{0,\mu,\partial T}\lesssim h_T^{-1/\mu}\|w\|_{0,\mu,T}+h_T^{(\mu-1)/\mu}|{w}|_{1,\mu,{T}}.\label{partial1}
		\end{align}
		In addition, for all   $w\in  \mathcal{P}_{k}(T)$, it holds
		\begin{eqnarray}
		\|w\|_{0,\mu,\partial T}\lesssim h_T^{-1/\mu}\|w\|_{0,T}.\label{partial2}
		\end{eqnarray}
	\end{lemma}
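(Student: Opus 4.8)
The plan is to derive \eqref{partial1} as the affinely scaled form of the classical Sobolev trace embedding, and then to obtain \eqref{partial2} by combining \eqref{partial1} with an inverse inequality on the polynomial space. First I would fix once and for all a reference simplex $\widehat T$ and, for each $T\in\mathcal T_h$, denote by $F_T:\widehat T\to T$, $F_T(\widehat x)=B_T\widehat x+b_T$, the affine bijection between them; shape regularity of $\mathcal T_h$ gives $\|B_T\|\lesssim h_T$, $\|B_T^{-1}\|\lesssim h_T^{-1}$ and $|\det B_T|\sim h_T^{d}$, with all hidden constants depending only on the shape-regularity parameter. For $w\in H^1(T)$ I set $\widehat w:=w\circ F_T\in H^1(\widehat T)$.

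On the fixed domain $\widehat T$ the Sobolev trace theorem furnishes the $h$-independent estimate $\|\widehat w\|_{0,\mu,\partial\widehat T}\lesssim\|\widehat w\|_{0,\mu,\widehat T}+|\widehat w|_{1,\mu,\widehat T}$ for every $\mu\in[1,\infty]$ (for $\mu=\infty$ this is the embedding $W^{1,\infty}(\widehat T)\hookrightarrow C(\overline{\widehat T})$, with $h_T^{-1/\mu}$ read as $h_T^{0}=1$). The next step is to transport this estimate back to $T$, keeping track of the Jacobians: the surface measure on $\partial T$ scales like $h_T^{d-1}$, the volume measure like $h_T^{d}$, and the chain rule $\nabla_x w=B_T^{-\mathsf T}\big((\nabla_{\widehat x}\widehat w)\circ F_T^{-1}\big)$ adds a factor $h_T^{-1}$ in the gradient. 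Concretely, $\|w\|_{0,\mu,\partial T}\sim h_T^{(d-1)/\mu}\|\widehat w\|_{0,\mu,\partial\widehat T}$, $\|\widehat w\|_{0,\mu,\widehat T}\sim h_T^{-d/\mu}\|w\|_{0,\mu,T}$ and $|\widehat w|_{1,\mu,\widehat T}\sim h_T^{1-d/\mu}|w|_{1,\mu,T}$. Substituting into the reference-element estimate and simplifying the exponents ($\tfrac{d-1}{\mu}-\tfrac{d}{\mu}=-\tfrac1\mu$ and $\tfrac{d-1}{\mu}+1-\tfrac{d}{\mu}=\tfrac{\mu-1}{\mu}$) yields exactly \eqref{partial1}.

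For \eqref{partial2} I would use that $w|_T\in\mathcal P_k(T)$ to invoke, via the same scaling map together with equivalence of norms on the finite-dimensional space $\mathcal P_k(\widehat T)$, the inverse inequality $|w|_{1,\mu,T}\lesssim h_T^{-1}\|w\|_{0,\mu,T}$ (with a constant now also depending on $k$). Inserting this into \eqref{partial1} and noting that $h_T^{(\mu-1)/\mu}\,h_T^{-1}=h_T^{-1/\mu}$ collapses the two terms on the right of \eqref{partial1} into a single one, giving \eqref{partial2}. There is no essential difficulty here; the whole content is careful bookkeeping of the powers of $h_T$ and of the dependence of the constants. The one point I would be most attentive to is carrying the scaling relations uniformly over the entire range $\mu\in[1,\infty]$, endpoints included, rather than arguing case by case, and making sure every constant introduced through the reference element depends only on $\widehat T$ — hence on the shape regularity of $\mathcal T_h$ and, for \eqref{partial2}, on $k$ — and never on $h_T$.
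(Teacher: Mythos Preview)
Your proposal is correct and follows essentially the same approach as the paper's own proof: both scale to a fixed reference simplex, apply the trace inequality $\|\widehat w\|_{0,\mu,\partial\widehat T}\lesssim\|\widehat w\|_{1,\mu,\widehat T}$ there, transport back with the standard Jacobian factors to obtain \eqref{partial1}, and then deduce \eqref{partial2} by combining \eqref{partial1} with the inverse inequality on $\mathcal P_k(T)$. Your write-up is a bit more explicit about the affine map and the endpoint $\mu=\infty$, but the underlying argument is identical.
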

	\begin{proof} For any $w\in {H}^1(T)$, we use the trace theorem on the reference element $\hat{T}$ to get
		\begin{align*}
		\|w\|_{0,\mu,\partial T}^{\mu}=&\int_{\partial \hat{T}}|\hat{w}|^{\mu}\frac{|\partial T|}{|\partial\hat{T}|}d\hat{s}\lesssim h_T^{d-1}\|\hat{w}\|^{\mu}_{0,\mu,\partial\hat{T}}\lesssim h_T^{d-1}\|\hat{w}\|^{\mu}_{1,\mu,\hat{T}} \nonumber\\
		=&h_T^{d-1}(\|\hat{w}\|^{\mu}_{0,\mu,\hat{T}}+|\hat{w}|^{\mu}_{1,\mu,\hat{T}})
		\nonumber\\
		\lesssim&h_T^{-1}(\|w\|^{\mu}_{0,\mu,T}+h_T^{\mu}|{w}|^{\mu}_{1,\mu,{T}})
		\nonumber\\
		\lesssim&h_T^{-1}\|w\|^{\mu}_{0,\mu,T}+h_T^{\mu-1}|{w}|^{\mu}_{1,\mu,{T}},
		\end{align*}
		\color{black} which \color{black} indicates \eqref{partial1}.
		The   result \eqref{partial2}  follows from \eqref{partial1} and the inverse inequality.
	\end{proof}

	By Lemma \ref{tr} we can get the following approximation properties of $\bm{P}_{r}^{RT}$.
	
	\begin{lemma} \label{lem3.10} For $s\in \{1,2,\cdots,r+1\}$ 
		and $j\in \{0,1,\cdots,s\}$,  it  holds
		\begin{align}
		|\bm{v}-\bm{P}_{r}^{RT}\bm{v}|_{j,\mu,T}
		&\lesssim h_T^{s-j-d(\frac{1}{2}-\frac{1}{\mu})}|\bm{v}|_{s,T} &\forall \bm{v}\in [H^s(T)]^d \label{RT-approximation2}
		\end{align}
		for  $\mu$ satisfying
		\begin{eqnarray*}
			\left\{
			\begin{aligned}
				&2\le \mu\le \frac{2d}{d-2(s-j)},&\text{ if } \ 2(s-j)<d,\\
				&2\le \mu< \infty,&\text{ if } \ 2(s-j)=d,\\
				&2\le \mu\le \infty,&\text{ if } \ 2(s-j)>d,
			\end{aligned}
			\right.
		\end{eqnarray*}
		and
		\begin{eqnarray}
		\|\nabla^j(\bm{v}-\bm{P}_{r}^{RT}\bm{v})\|_{0,\mu,\partial T}
		&\lesssim&h_T^{s-j-\frac{1}{\mu}-d(\frac{1}{2}-\frac{1}{\mu})}|\bm{v}|_{s,T}, \forall \bm{v}\in [H^s(T)]^d\label{RT-approximation3}
		\label{RT-addition3}
		\end{eqnarray}
		for $\mu$ satisfying
		\begin{eqnarray*}
			\left\{
			\begin{aligned}
				&2\le \mu\le \frac{2(d-1)}{d-2(s-j)},&\text{ if } \ 2(s-j)<d,\\
				&2\le \mu< \infty,&\text{ if } \ 2(s-j)=d,\\
				&2\le \mu\le \infty,&\text{ if } \ 2(s-j)>d.
			\end{aligned}
			\right.
		\end{eqnarray*}
	\end{lemma}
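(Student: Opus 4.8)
The plan is to reduce both estimates to results already available: the approximation and boundedness properties of the $L^2$-projection $\Pi_r^o$ (applied componentwise) collected in Lemma~\ref{lem3.3}, the $L^2$-approximation bound \eqref{RT3} for $\bm{P}^{RT}_r$ in Lemma~\ref{lemma32}, the trace inequalities \eqref{partial1}--\eqref{partial2} of Lemma~\ref{tr}, and standard inverse inequalities on a shape-regular simplex. The reason for routing through $\Pi_r^o$ is that $\bm{P}^{RT}_r$ is convenient to control only in the $L^2(T)$-norm, whereas $\Pi_r^o\bm{v}-\bm{P}^{RT}_r\bm{v}$ lies in the finite-dimensional polynomial space $\bm{RT}_r(T)$, so all of its higher-order and higher-integrability seminorms are controlled by its $L^2(T)$-norm via inverse inequalities, while that $L^2(T)$-norm is in turn bounded by $h_T^{\,s}|\bm{v}|_{s,T}$ using \eqref{RT3} together with the $L^2(T)$-approximation of $\Pi_r^o$. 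Note that $s\le r+1$ forces $s\ge 1$, so the face moments defining $\bm{P}^{RT}_r\bm{v}$ are well defined and \eqref{RT3} applies.

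For the interior estimate \eqref{RT-approximation2} I would split $\bm{v}-\bm{P}^{RT}_r\bm{v}=(\bm{v}-\Pi_r^o\bm{v})+(\Pi_r^o\bm{v}-\bm{P}^{RT}_r\bm{v})$. The first term is estimated directly by \eqref{I1} with $\rho=2$, which yields $|\bm{v}-\Pi_r^o\bm{v}|_{j,\mu,T}\lesssim h_T^{\,s-j-d(1/2-1/\mu)}|\bm{v}|_{s,T}$ for the stated range of $\mu$, which is contained in the admissible range of \eqref{I1} with $\rho=2$. For the second term, an inverse inequality on $T$ gives $|\Pi_r^o\bm{v}-\bm{P}^{RT}_r\bm{v}|_{j,\mu,T}\lesssim h_T^{-j-d(1/2-1/\mu)}\|\Pi_r^o\bm{v}-\bm{P}^{RT}_r\bm{v}\|_{0,T}$, and a triangle inequality combined with \eqref{I1} (case $j=0$, $\rho=\mu=2$) and \eqref{RT3} gives $\|\Pi_r^o\bm{v}-\bm{P}^{RT}_r\bm{v}\|_{0,T}\le\|\bm{v}-\Pi_r^o\bm{v}\|_{0,T}+\|\bm{v}-\bm{P}^{RT}_r\bm{v}\|_{0,T}\lesssim h_T^{\,s}|\bm{v}|_{s,T}$; multiplying reproduces the claimed rate.

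For the boundary estimate \eqref{RT-approximation3} I would use the same splitting. The first term $\|\nabla^j(\bm{v}-\Pi_r^o\bm{v})\|_{0,\mu,\partial T}$ is controlled by \eqref{Qik2} with $\rho=2$, whose exponent $s-j+(d-1)/\mu-d/2$ equals the claimed $s-j-1/\mu-d(1/2-1/\mu)$ and whose admissible $\mu$-range contains the one in the statement; here one uses $j\le s-1$, which is automatic since for $j=s$ the stated range of $\mu$ is empty. For the polynomial term $\nabla^j(\Pi_r^o\bm{v}-\bm{P}^{RT}_r\bm{v})$ I would apply the inverse trace bound \eqref{partial2}, then an inverse inequality on $T$, and finally the estimate $\|\Pi_r^o\bm{v}-\bm{P}^{RT}_r\bm{v}\|_{0,T}\lesssim h_T^{\,s}|\bm{v}|_{s,T}$ from the previous step; this produces a factor $h_T^{\,s-j-1/\mu}$, which is at most $h_T^{\,s-j-1/\mu-d(1/2-1/\mu)}$ since $h_T\lesssim 1$ and $d(1/2-1/\mu)\ge 0$. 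Summing the two contributions completes the proof. The only real work is the bookkeeping of the $h_T$-exponents and checking that every invocation of Lemma~\ref{lem3.3} stays inside its hypotheses (in particular that, with $\rho=2$, the ranges of $\mu$ there contain exactly those claimed here); no genuinely new idea is needed beyond the polynomial/inverse-estimate trick for isolating the non-polynomial part of the error.
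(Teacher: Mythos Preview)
Your proposal is correct and follows essentially the same route as the paper's proof: split $\bm v-\bm P_r^{RT}\bm v$ through an $L^2$-projection, bound the non-polynomial part by Lemma~\ref{lem3.3}, and bound the polynomial part by inverse/trace inequalities combined with $\|\Pi^o\bm v-\bm P_r^{RT}\bm v\|_{0,T}\lesssim h_T^{\,s}|\bm v|_{s,T}$ from \eqref{RT3}. The only cosmetic differences are that the paper uses $\bm\Pi_{s-1}^o$ rather than $\bm\Pi_r^o$ as the intermediate projection, and in the boundary polynomial term it applies an additional $L^2\!\to\!L^\mu$ inverse inequality on $T$ to obtain the sharp exponent $s-j-\tfrac{1}{\mu}-d(\tfrac12-\tfrac1\mu)$ directly, rather than invoking $h_T\lesssim 1$ as you do.
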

	\begin{proof} 
		By   a \color{black} triangle inequality,   an \color{black} inverse inequality, the approximation properties of $\bm{\Pi}^o_{s-1}$, and the approximation properties of $\bm{P}^{RT}_r$ in \eqref{RT3}, we  get
		\begin{align*}
		|\bm{v}-\bm{P}_{r}^{RT}\bm{v}|_{j,\mu,T}&\le  |\bm{v}-\bm{\Pi}_{s-1}^o\bm{v}|_{j,\mu,T}+|\bm{\Pi}_{s-1}^o\bm{v}-\bm{P}_{r}^{RT}\bm{v}|_{j,\mu,T} \nonumber\\
		&\lesssim  |\bm{v}-\bm{\Pi}_{s-1}^o\bm{v}|_{j,\mu,T}+h_T^{-j-d(\frac{1}{2}-\frac{1}{\mu})}\|\bm{\Pi}_{s-1}^o\bm{v}-\bm{P}_{r}^{RT}\bm{v}\|_{0,T} \nonumber\\
		&\lesssim  |\bm{v}-\bm{\Pi}_{s-1}^o\bm{v}|_{j,\mu,T}+h_T^{-j-d(\frac{1}{2}-\frac{1}{\mu})}\|\bm{\Pi}_{s-1}^o\bm{v}-\bm{v}\|_{0,T} \nonumber\\
		&\quad+h_T^{-j-d(\frac{1}{2}-\frac{1}{\mu})}\|\bm{v}-\bm{P}_{r}^{RT}\bm{v}\|_{0,T} \nonumber\\
		&\lesssim h_T^{s-j-d(\frac{1}{2}-\frac{1}{\mu})}|\bm{v}|_{s,T},
		\end{align*}
		which yields \eqref{RT-approximation2}.
		
		It remains to show  \eqref{RT-approximation3}.
		By the triangle inequality, \eqref{partial2}, the approximation properties of $\bm{\Pi}^o_{s-1}$, and the approximation property of $\bm{P}^{RT}_r$ in \eqref{RT3}, we have
		\begin{align*}
		\|\nabla^j(\bm{v}-\bm{P}_{r}^{RT}\bm{v})\|_{0,\mu,\partial T}\le&  \|\nabla^j(\bm{v}-\bm{\Pi}_{s-1}^o\bm{v})\|_{0,\mu,\partial T}+\|\nabla^j(\bm{\Pi}_{s-1}^o\bm{v}-\bm{P}_{r}^{RT}\bm{v})\|_{0,\mu,\partial T} \nonumber\\
		\lesssim&  \|\nabla^j(\bm{v}-\bm{\Pi}_{s-1}^o\bm{v})\|_{0,\mu,\partial T}+h_T^{-j-\frac{1}{\mu}}\|\bm{\Pi}_{s-1}^o\bm{v}-\bm{P}_{r}^{RT}\bm{v}\|_{0, \mu,T} \nonumber\\
		\lesssim&  \|\nabla^j(\bm{v}-\bm{\Pi}_{s-1}^o\bm{v})\|_{0,\mu,\partial T}\nonumber\\
		&+h_T^{-j-\frac{1}{\mu}-d(\frac{1}{2}-\frac{1}{\mu})}\|\bm{\Pi}_{s-1}^o\bm{v}-\bm{P}_{r}^{RT}\bm{v}\|_{0,T} \nonumber\\
		\lesssim&  \|\nabla^j(\bm{v}-\bm{\Pi}_{s-1}^o\bm{v})\|_{0,\mu,\partial T}+h_T^{-j-\frac{1}{\mu}-d(\frac{1}{2}-\frac{1}{\mu})}\|\bm{\Pi}_{s-1}^o\bm{v}-\bm{v}\|_{0,T} \nonumber\\
		&+h_T^{-j-\frac{1}{\mu}-d(\frac{1}{2}-\frac{1}{\mu})}\|\bm{v}-\bm{P}_{r}^{RT}\bm{v}\|_{0,T} \nonumber\\
		\lesssim&h_T^{s-j-\frac{1}{\mu}-d(\frac{1}{2}-\frac{1}{\mu})}|\bm{v}|_{s,T}.
		\end{align*}
		This completes the proof.
	\end{proof}
	

	\subsection{BDM interpolation}
	We  recall the definition and properties of the BDM projection $\bm{P}^{BDM}_r$ from \cite{BDM} for 2D and \cite{BDM3D} for 3D. \color{black}
	\begin{lemma}
		For any $T\in\mathcal{T}_h$, $\bm{v}\in [H^1(T)]^d$, and integer $ r\ge 1$, there exists a unique $\bm{P}^{BDM}_r\bm{v}\in [\mathcal{P}_r(T)]^d $ such that
		\begin{subequations}
			\begin{align}
			\langle\bm{P}^{BDM}_r\bm{v}\cdot \bm{n}_E,w_r \rangle_E=&\langle \bm{v}\cdot \bm{n}_E,w_r\rangle_E,\quad &\forall w_r\in \mathcal{P}_r(E),  E\color{black}\subset\color{black}\partial T,\label{BDM1}\\
			(\bm{P}^{BDM}_r\bm{v},\nabla p_{r-1})_T=&(\bm{v},\nabla p_{r-1})_T, &\forall {p}_{r-1}\in P_{r-1}(T), \label{BDM2}\\
			\label{bT}
			(\bm{P}^{BDM}_r\bm{v},{  {\bf{curl}}}(b_T p_{j-2}))_T=&(\bm{v},{  {\bf{curl}}}(b_T p_{r-2}))_T, &\forall {p}_{r-2}\in P_{r-2}(T),
			\end{align}
			when $d=2,3$ and $j\ge 2$, and
			\begin{eqnarray}
			(\bm{P}^{BDM}_r\bm{v},\bm{w})_T=(\bm{v},\bm{w})_T,\quad \forall \bm{w}\in \bm{P}^*_r(T),
			\end{eqnarray}
		\end{subequations}
		when $d=3$, where $b_T$ in \eqref{bT}  is the bubble function on $T$, and
		$$\bm{P}^*_r(T):=\left\{\bm{v}\in [\mathcal{P}_r(T)]^d:  \nabla\cdot\bm{v}=0,\
		\bm{v}\cdot\bm{n}_E=0,\ \  \forall E\color{black}\subset\color{black}\partial T\right\} .
		$$
		\color{black}
		Moreover, for any  integer $s$ with $ 1\le s\le r+1$, and  $\bm{v}\in [H^r(T)]^d$, it holds the following interpolation \color{black}approximation properties:
		\begin{eqnarray}
		&&\|\bm{v}-\bm{P}^{BDM}_r\bm{v}\|_{0,T}\lesssim h_T^{s}|\bm{v}|_{s,T} ,\label{BDM3}\\
		&&\|\bm{v}-\bm{P}^{BDM}_r\bm{v}\|_{0,\partial T}\lesssim h_T^{s-\frac{1}{2}}|\bm{v}|_{s,T} .\label{BDM31}
		\end{eqnarray}
	\end{lemma}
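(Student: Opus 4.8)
The plan is to prove the lemma in two independent pieces: (i) the well-posedness of $\bm{P}^{BDM}_r$, which is the classical unisolvency of the listed degrees of freedom on $[\mathcal{P}_r(T)]^d$, and (ii) the approximation bounds \eqref{BDM3}--\eqref{BDM31}, which follow from a Bramble--Hilbert argument on the reference simplex together with Piola scaling. Since this is a textbook result, in the paper one would just cite \cite{BDM,BDM3D}, but the argument I would record is as follows.

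For (i), all the prescribed conditions are linear functionals on $[\mathcal{P}_r(T)]^d$: the face moments $\langle\,\cdot\,\bm{n}_E,w_r\rangle_E$ against $\mathcal{P}_r(E)$, the interior moments $(\,\cdot\,,\nabla p_{r-1})_T$ against $\nabla\mathcal{P}_{r-1}(T)$, the curl--bubble moments $(\,\cdot\,,\mathbf{curl}(b_T p_{r-2}))_T$, and in 3D the extra moments against $\bm{P}^*_r(T)$. A dimension count shows the number of these functionals equals $\dim[\mathcal{P}_r(T)]^d$ (e.g.\ in 2D, $3(r+1)+(\dim\mathcal{P}_{r-1}(T)-1)+\dim\mathcal{P}_{r-2}(T)=(r+1)(r+2)$), so it suffices to show that a field $\bm v$ annihilated by all of them is zero. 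First, the face moments force $\bm{v}\cdot\bm{n}_E=0$ on every $E\subset\partial T$; integration by parts then gives $(\nabla\cdot\bm{v},p)_T=-(\bm{v},\nabla p)_T+\langle\bm{v}\cdot\bm{n},p\rangle_{\partial T}=0$ for all $p\in\mathcal{P}_{r-1}(T)$, and taking $p=\nabla\cdot\bm{v}$ yields $\nabla\cdot\bm{v}=0$. Thus $\bm{v}$ is divergence-free with vanishing normal trace; by the polynomial exact sequence on a simplex it is the curl of a polynomial potential that is constant, hence (after subtracting that constant) vanishes on $\partial T$, so in 2D $\bm{v}=\mathbf{curl}(b_T q)$ for some $q\in\mathcal{P}_{r-2}(T)$, and the curl--bubble moments then give $\|\mathbf{curl}(b_T q)\|_{0,T}^2=0$, i.e.\ $\bm v=0$. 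In 3D one argues analogously with the three-dimensional exact sequence (a vector potential), using the interior curl-type moments and the moments against $\bm{P}^*_r(T)$ — noting that once $\bm v$ is divergence-free with zero normal trace it lies in $\bm{P}^*_r(T)$ itself, so testing that moment with $\bm w=\bm v$ forces $\|\bm v\|_{0,T}=0$; Lemma \ref{lemma3.1} and the structure of such polynomial fields close the count.

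For (ii), note first that unisolvency makes $\bm{P}^{BDM}_r$ a projection onto $[\mathcal{P}_r(T)]^d$ and that it commutes with the contravariant Piola transform between $T$ and the reference simplex $\widehat T$ (the canonical feature of BDM). On $\widehat T$ the defining functionals are bounded on $[H^1(\widehat T)]^d$ (the face moments via the trace theorem), so $\mathrm{Id}-\bm{P}^{BDM}_r$ is bounded on $[H^s(\widehat T)]^d$ and annihilates $[\mathcal{P}_{\min\{r,s-1\}}(\widehat T)]^d$ for $1\le s\le r+1$; the Bramble--Hilbert lemma then gives $\|\widehat{\bm v}-\bm{P}^{BDM}_r\widehat{\bm v}\|_{0,\widehat T}+|\widehat{\bm v}-\bm{P}^{BDM}_r\widehat{\bm v}|_{1,\widehat T}\lesssim|\widehat{\bm v}|_{s,\widehat T}$. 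Mapping back on a shape-regular simplex (where $\|B\|\sim h_T$ and $|\det B|\sim h_T^d$) converts the $L^2$ estimate into \eqref{BDM3} and the $H^1$-seminorm estimate into $|\bm v-\bm{P}^{BDM}_r\bm v|_{1,T}\lesssim h_T^{s-1}|\bm v|_{s,T}$; inserting both into the trace inequality \eqref{partial1} applied to $\bm v-\bm{P}^{BDM}_r\bm v$ then yields \eqref{BDM31}.

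\textbf{Main obstacle.} The delicate part is the 3D unisolvency: identifying $\dim\bm{P}^*_r(T)$ correctly so the counting closes, and running the ``divergence-free with vanishing normal trace'' reduction with the three-dimensional exact sequence rather than the 2D stream function, paying attention to the boundary behaviour of the potential. A secondary point of care is making the Piola-scaling bookkeeping precise so that the powers of $h_T$ in \eqref{BDM3}--\eqref{BDM31} emerge exactly as stated; everything else is routine.
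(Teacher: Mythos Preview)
The paper does not prove this lemma at all; it is stated as a recalled classical result with the prefatory sentence ``We recall the definition and properties of the BDM projection $\bm{P}^{BDM}_r$ from \cite{BDM} for 2D and \cite{BDM3D} for 3D,'' exactly as you anticipated. Your sketch is the standard textbook argument (dimension count plus injectivity for unisolvency; Piola transform plus Bramble--Hilbert plus trace inequality for the approximation bounds) and is essentially correct, so there is nothing to compare against.

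One minor remark on your 3D injectivity step: your observation that once $\bm v$ is divergence-free with vanishing normal trace it already lies in $\bm P^*_r(T)$, and hence testing condition (d) with $\bm w=\bm v$ kills it, is the right shortcut. The lemma as printed in the paper has evident typos (``$j\ge 2$'' should read ``$r\ge 2$'', and the curl-of-a-scalar condition \eqref{bT} only makes sense for $d=2$), so reading \eqref{bT} as the 2D-only interior moments and (d) as the 3D-only interior moments --- which is what you effectively did --- is the intended interpretation and matches the cited references.
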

	
	\color{black}
	\subsection{Approximation properties extended to real number}  
	
	In this section, we will extend the approximation properties in 
	\Cref{lem3.2,lem3.3,lem3.10} to real index $s$.
	
	First, we recall the following classical results.
	\begin{theorem} [cf. {\cite[Page 220, Theorem 7.23]{MR2424078}, \cite[Page 373, Proposition 14.1.5]{MR2373954}}]
		\label{lemma366}
		Given $0<\theta<1$ and $1\le p\le \infty$, and given Banach spaces $A_1\hookrightarrow A_0$, $B_1\hookrightarrow B_0$. Let $\mathcal K$ be a bounded linear operator from $A_0+A_1$ into $B_0+B_1$ having the property that $\mathcal K$ is bounded from $A_i$ into $B_i$, with norm at most $M_i$, $i=0,1$; that is
		\begin{align*}
		\|\mathcal K u_i\|_{B_i}\le M_i\|u_i\|_{A_i}\qquad \forall u_i\in A_i, i=1,2.
		\end{align*}
		Then $\mathcal K:A_{\theta,p}\to B_{\theta,p}$ is a bounded linear operator and
		\begin{align*}
		\|\mathcal K\|_{A_{\theta,p}\to B_{\theta,p}}\le  \|\mathcal K\|_{A_{0}\to B_{0}}^{1-\theta}  \|\mathcal K\|^{\theta}_{A_{1}\to B_{1}},
		\end{align*}
		where $A_{\theta,p}:=[A_0,A_1]_{\theta,p}$, $B_{\theta,p}:=[B_0,B_1]_{\theta,p}$, see \cite[Page 372]{MR2373954} for detailed definitions.
	\end{theorem}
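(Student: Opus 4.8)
The plan is to run the classical argument built on Peetre's $K$-functional, which underlies the real interpolation functor $[\cdot,\cdot]_{\theta,p}$. Recall that for $u$ in the sum space $A_0+A_1$ and $t>0$ one sets
\[
K(t,u;A_0,A_1):=\inf_{\substack{u=u_0+u_1\\ u_0\in A_0,\ u_1\in A_1}}\big(\|u_0\|_{A_0}+t\|u_1\|_{A_1}\big),
\]
and that the norm of $A_{\theta,p}=[A_0,A_1]_{\theta,p}$ is, for $1\le p<\infty$,
\[
\|u\|_{A_{\theta,p}}=\Big(\int_0^\infty\big(t^{-\theta}K(t,u;A_0,A_1)\big)^p\,\frac{dt}{t}\Big)^{1/p},
\]
the supremum over $t>0$ replacing the integral when $p=\infty$; the norm of $B_{\theta,p}$ is defined in the same way from $K(t,\cdot;B_0,B_1)$. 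Since $A_1\hookrightarrow A_0$ and $B_1\hookrightarrow B_0$, the couples $(A_0,A_1)$, $(B_0,B_1)$ are compatible, one has $A_{\theta,p}\subset A_0+A_1$ and $B_{\theta,p}\subset B_0+B_1$, and $\mathcal K$ is defined on $A_0+A_1$ by hypothesis. It suffices to prove the bound with $M_i:=\|\mathcal K\|_{A_i\to B_i}$; if $M_0=0$ or $M_1=0$ then $\mathcal K$ vanishes on $A_{\theta,p}$ and the estimate is trivial, so assume $M_0,M_1>0$.

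The only step requiring an idea is the pointwise domination of $K$-functionals under $\mathcal K$:
\[
K\big(t,\mathcal Ku;B_0,B_1\big)\le M_0\,K\Big(\tfrac{M_1}{M_0}\,t,\;u;A_0,A_1\Big)\qquad\text{for all }t>0.
\]
To see this, fix any admissible splitting $u=u_0+u_1$ with $u_i\in A_i$; then $\mathcal Ku=\mathcal Ku_0+\mathcal Ku_1$ is an admissible splitting of $\mathcal Ku$ in $B_0+B_1$, so
\begin{align*}
K\big(t,\mathcal Ku;B_0,B_1\big)
&\le\|\mathcal Ku_0\|_{B_0}+t\,\|\mathcal Ku_1\|_{B_1}\\
&\le M_0\|u_0\|_{A_0}+tM_1\|u_1\|_{A_1}
=M_0\Big(\|u_0\|_{A_0}+\tfrac{M_1}{M_0}t\,\|u_1\|_{A_1}\Big),
\end{align*}
and taking the infimum over all splittings of $u$ yields the displayed inequality.

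Then the conclusion is pure bookkeeping: for $1\le p<\infty$ I would insert the last inequality into the definition of $\|\mathcal Ku\|_{B_{\theta,p}}$ and substitute $s=\tfrac{M_1}{M_0}t$ (so $\tfrac{dt}{t}=\tfrac{ds}{s}$ and $t^{-\theta}=(\tfrac{M_1}{M_0})^{\theta}s^{-\theta}$), obtaining
\begin{align*}
\|\mathcal Ku\|_{B_{\theta,p}}^p
&\le M_0^p\int_0^\infty\Big(t^{-\theta}K\big(\tfrac{M_1}{M_0}t,u;A_0,A_1\big)\Big)^p\frac{dt}{t}\\
&=M_0^p\Big(\tfrac{M_1}{M_0}\Big)^{\theta p}\|u\|_{A_{\theta,p}}^p
=M_0^{(1-\theta)p}M_1^{\theta p}\,\|u\|_{A_{\theta,p}}^p,
\end{align*}
and taking $p$-th roots gives $\|\mathcal Ku\|_{B_{\theta,p}}\le M_0^{1-\theta}M_1^{\theta}\|u\|_{A_{\theta,p}}$; the case $p=\infty$ is identical with $\sup_{t>0}$ in place of $\int_0^\infty(\cdot)\,\tfrac{dt}{t}$. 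Recalling $M_i=\|\mathcal K\|_{A_i\to B_i}$ finishes the proof. There is no genuine obstacle here: once the $K$-functional machinery is granted, the argument is routine, and the only place demanding a little care is the scaling in the $K$-functional estimate together with the accompanying change of variables.
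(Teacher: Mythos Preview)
The paper does not supply its own proof of this theorem: it is quoted as a classical result with citations to \cite{MR2424078,MR2373954} and used as a black box. Your argument is the standard $K$-functional proof of the real-interpolation theorem and is correct as written, so there is nothing to compare against.
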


	\begin{theorem}[cf. {\cite[Page 375, Theorem 14.2.3]{MR2373954}}] \label{H-space}
		Let $0<s<1$, $1\le p\le \infty$ and $\ell\ge 0$ be an integer, if $\Omega$ has a Lipschitz boundary, then
		\begin{align*}
		[W^{\ell,p}(\Omega),W^{\ell+1,p}(\Omega)]_{s,p}=W^{\ell+s,p}(\Omega).
		\end{align*}
	\end{theorem}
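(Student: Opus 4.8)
The plan is to reduce the statement on the Lipschitz domain $\Omega$ to the corresponding statement on $\mathbb{R}^d$ and then transport it back by a retraction argument based on \Cref{lemma366}. Two ingredients are needed. The first is a universal extension operator for Lipschitz domains (Stein's extension theorem): a linear map $\mathcal{E}$ that is bounded $W^{m,p}(\Omega)\to W^{m,p}(\mathbb{R}^d)$ for every real $m\ge0$ --- in particular for $m=\ell$, $\ell+1$ and $\ell+s$ --- paired with the trivially bounded restriction $\mathcal{R}\colon W^{m,p}(\mathbb{R}^d)\to W^{m,p}(\Omega)$, which satisfies $\mathcal{R}\mathcal{E}=\mathrm{Id}$ on $W^{m,p}(\Omega)$. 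The second is the whole-space identity $[W^{\ell,p}(\mathbb{R}^d),W^{\ell+1,p}(\mathbb{R}^d)]_{s,p}=W^{\ell+s,p}(\mathbb{R}^d)$, i.e.\ the model case of the theorem.

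Granting these, the transport step is short. Applying \Cref{lemma366} with $\mathcal{K}=\mathcal{E}$ (bounded $W^{\ell,p}\to W^{\ell,p}$ and $W^{\ell+1,p}\to W^{\ell+1,p}$ from the domain to $\mathbb{R}^d$) shows $\mathcal{E}$ is bounded $[W^{\ell,p}(\Omega),W^{\ell+1,p}(\Omega)]_{s,p}\to[W^{\ell,p}(\mathbb{R}^d),W^{\ell+1,p}(\mathbb{R}^d)]_{s,p}$; applying it with $\mathcal{K}=\mathcal{R}$ gives boundedness in the opposite direction. For the inclusion ``$\subseteq$'' I would take $u\in[W^{\ell,p}(\Omega),W^{\ell+1,p}(\Omega)]_{s,p}$, note $\mathcal{E}u\in[W^{\ell,p}(\mathbb{R}^d),W^{\ell+1,p}(\mathbb{R}^d)]_{s,p}=W^{\ell+s,p}(\mathbb{R}^d)$, and recover $u=\mathcal{R}\mathcal{E}u\in W^{\ell+s,p}(\Omega)$ with norm bounded by $\|\mathcal{R}\|\,\|\mathcal{E}\|$ times the interpolation norm of $u$. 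For ``$\supseteq$'' I would take $u\in W^{\ell+s,p}(\Omega)$, note $\mathcal{E}u\in W^{\ell+s,p}(\mathbb{R}^d)=[W^{\ell,p}(\mathbb{R}^d),W^{\ell+1,p}(\mathbb{R}^d)]_{s,p}$, and recover $u=\mathcal{R}\mathcal{E}u\in[W^{\ell,p}(\Omega),W^{\ell+1,p}(\Omega)]_{s,p}$ with the matching bound. The two continuous inclusions give the asserted equality up to equivalence of norms.

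The analytic heart --- and the step I expect to be the main obstacle --- is the whole-space identity. I would prove it by showing that the Gagliardo--Slobodeckij norm of $W^{\ell+s,p}(\mathbb{R}^d)$, i.e.\ $\|u\|_{W^{\ell,p}}+\sum_{|\alpha|=\ell}\big(\iint|x-y|^{-d-sp}|\partial^\alpha u(x)-\partial^\alpha u(y)|^p\,dx\,dy\big)^{1/p}$ (with the obvious Hölder reading when $p=\infty$), is equivalent to the $K$-functional norm $\big(\int_0^\infty(t^{-s}K(t,u))^p\,dt/t\big)^{1/p}$, where $K(t,u)=\inf_{u=u_0+u_1}(\|u_0\|_{W^{\ell,p}(\mathbb{R}^d)}+t\|u_1\|_{W^{\ell+1,p}(\mathbb{R}^d)})$. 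For the upper bound I would use the mollifier splitting $u=(u-u\ast\rho_t)+u\ast\rho_t$, bound $\|\partial^\alpha(u-u\ast\rho_t)\|_{L^p}$ and $t\|\partial^{\alpha+e_i}(u\ast\rho_t)\|_{L^p}$ by the local $L^p$-average $t^{-d}\int_{|h|\le t}\|\partial^\alpha u(\cdot-h)-\partial^\alpha u\|_{L^p}\,dh$, and then integrate against $t^{-sp-1}\,dt$, where a Fubini/Hardy step reproduces the Gagliardo seminorm (it is crucial to keep the $L^p$-average here rather than pass to a supremum). For the lower bound I would use that any decomposition $u=u_0+u_1$ gives $\|\partial^\alpha u(\cdot+h)-\partial^\alpha u\|_{L^p}\le 2\|u_0\|_{W^{\ell,p}}+|h|\,\|u_1\|_{W^{\ell+1,p}}$, hence $\lesssim K(|h|,u)$; integrating $|h|^{-d-sp}$ against the $p$-th power then expresses the seminorm through the $K$-functional norm. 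This is the classical identification of $[W^{\ell,p}(\mathbb{R}^d),W^{\ell+1,p}(\mathbb{R}^d)]_{s,p}$ with $B^{\ell+s}_{p,p}(\mathbb{R}^d)=W^{\ell+s,p}(\mathbb{R}^d)$, the case $p=\infty$ being handled the same way with $L^\infty$ norms and the supremum form of $K$. The remaining genuine work, besides this, is the construction of Stein's extension operator on a Lipschitz domain bounded on the full Sobolev scale; by contrast the retraction step via \Cref{lemma366} is essentially free.
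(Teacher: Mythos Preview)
The paper does not supply its own proof of this statement: it is quoted verbatim as a known result with a pointer to \cite[Page 375, Theorem 14.2.3]{MR2373954}, and is then used as a black box in the subsequent lemma on fractional approximation properties. So there is no ``paper's proof'' to compare against.

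That said, your plan is the standard retraction argument and is correct in outline. One point to tighten: you invoke boundedness of Stein's extension $\mathcal{E}\colon W^{m,p}(\Omega)\to W^{m,p}(\mathbb{R}^d)$ ``for every real $m\ge0$'', in particular for the fractional order $m=\ell+s$, which you use in the ``$\supseteq$'' direction. Stein's theorem as usually stated gives boundedness only for integer orders; deducing the fractional case by interpolation would be circular here. The clean fix is either to verify directly that Stein's operator is bounded with respect to the intrinsic Gagliardo seminorm (a short computation using that the reflection-type extension controls first differences), or to take as the definition of $W^{\ell+s,p}(\Omega)$ the restriction space $\{u|_\Omega:u\in W^{\ell+s,p}(\mathbb{R}^d)\}$ with the quotient norm, in which case the ``$\supseteq$'' inclusion is tautological and only the ``$\subseteq$'' direction (which needs $\mathcal{E}$ only on the integer endpoints) requires work. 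Either route closes the gap.

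A second minor remark: at $p=\infty$ the identification $B^{\ell+s}_{\infty,\infty}=W^{\ell+s,\infty}$ depends on which definition of $W^{\ell+s,\infty}$ is in force (Hölder--Zygmund vs.\ classical Hölder); the $K$-functional argument you sketch lands naturally in the Zygmund scale, which is the correct reading of the theorem.
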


	With the above results, we are ready to prove the following fractional approximation properties of the $L^2$-projection $\Pi_j^o$.

	\begin{lemma} Let $r$ be a nonnegative integer and $\rho\in [1,\infty]$. For $j\in \{0,1,\cdots,r+1\}$ and real number $s\in [j,r+1]$, assume that $2\le \frac{d\rho}{d-[s]\rho}$ when $[s]\rho<d$. Then there exists a constant $C$ independent of  $T$ such that
		\begin{eqnarray}\label{F1}
		|v-\Pi_r^ov|_{j,\mu,T}\le Ch_T^{s-j+\frac{d}{\mu}-\frac{d}{\rho}}\|v\|_{s,\rho,T},\forall v\in W^{s,\rho}(T)
		\end{eqnarray}
		holds for   $\mu$ satisfying
		\begin{eqnarray}
		\left\{
		\begin{aligned}
		&1\le \mu\le \frac{d\rho}{d-([s]-j)\rho},&\text{ if } \ ([s]-j)\rho<d,\\
		&1\le \mu< \infty,&\text{ if } \ ([s]-j)\rho=d,\\
		&1\le \mu\le \infty,&\text{ if } \ ([s]-j)\rho>d.
		\end{aligned}
		\right.
		\end{eqnarray}
		where $[s]$ stands for the integer part of $s$.
		In addition,
		for $j\in \{0,1,\cdots,r+1\}$ and $s\in[j+1,r+1]$, assume that $2\le \frac{d\rho}{d-[s]\rho}$ when $[s]\rho<d$. Then there exists a constant $C$ independent of $T$ such that
		\begin{eqnarray}
		|\nabla^j(v-\Pi_r^ov)|_{0,\mu,\partial T}\le Ch_T^{s-j+\frac{d-1}{\mu}-\frac{d}{\rho}}\|v\|_{s,\rho,T},\forall v\in W^{s,\rho}(T)
		\end{eqnarray}
		holds for   $\mu$ satisfying
		\begin{eqnarray}
		\left\{
		\begin{aligned}
		&1\le \mu\le \frac{(d-1)\rho}{d-([s]-j)\rho},&\text{ if } \ ([s]-j)\rho<d,\\
		&1\le \mu< \infty,&\text{ if } \ ([s]-j)\rho=d,\\
		&1\le \mu\le \infty,&\text{ if } \ ([s]-j)\rho>d.
		\end{aligned}
		\right.
		\end{eqnarray}
	\end{lemma}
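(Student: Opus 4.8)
The plan is to prove the fractional-order estimate \eqref{F1} (and its boundary analogue) by real interpolation between two consecutive integer-order estimates that are already available in Lemma~\ref{lem3.3}. Fix $j$ and $\rho$, and write $\ell=[s]$, $\theta=s-\ell\in[0,1)$, so that $s=\ell+\theta$ and $\ell\in\{j,\dots,r\}$ (the case $\theta=0$ being exactly Lemma~\ref{lem3.3}, nothing to do). The key observation is that the operator $\mathcal K:=\mathrm{Id}-\Pi_r^o$ is linear and, by \eqref{I1} applied with the two integer indices $\ell$ and $\ell+1$, is bounded
\[
\mathcal K: W^{\ell,\rho}(T)\to W^{j,\mu}(T),\qquad
\mathcal K: W^{\ell+1,\rho}(T)\to W^{j,\mu}(T),
\]
with operator norms controlled by $C h_T^{\ell-j+\frac d\mu-\frac d\rho}$ and $C h_T^{\ell+1-j+\frac d\mu-\frac d\rho}$, respectively (here I use that $W^{j,\mu}$ is a seminorm-controlled target; strictly one works with the quotient space or with a fixed representative, a routine technicality). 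Then Theorem~\ref{lemma366} with this $\theta$ and $p=\rho$ gives
\[
\|\mathcal K\|_{[W^{\ell,\rho},W^{\ell+1,\rho}]_{\theta,\rho}\to W^{j,\mu}}
\le C\,\big(h_T^{\ell-j+\frac d\mu-\frac d\rho}\big)^{1-\theta}\big(h_T^{\ell+1-j+\frac d\mu-\frac d\rho}\big)^{\theta}
= C\,h_T^{s-j+\frac d\mu-\frac d\rho},
\]
and Theorem~\ref{H-space} identifies the interpolation space $[W^{\ell,\rho}(T),W^{\ell+1,\rho}(T)]_{\theta,\rho}$ with $W^{\ell+\theta,\rho}(T)=W^{s,\rho}(T)$ (using that $T$ has a Lipschitz boundary). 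Combining these two facts yields \eqref{F1}; the boundary estimate follows identically, using the boundary bound \eqref{Qik2} at indices $\ell$ and $\ell+1$ as the endpoints and targeting $L^\mu(\partial T)$.

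I would carry out the steps in this order: first reduce to the case $\theta\in(0,1)$; second, record the two endpoint estimates from Lemma~\ref{lem3.3}, being careful to choose the range of $\mu$ so that \emph{both} endpoint estimates are valid — this is why the hypothesis is stated in terms of $[s]$ rather than $s$, since the admissible $\mu$-range in \eqref{I1} depends only on the integer $\ell-j$ (equivalently $[s]-j$); third, invoke Theorem~\ref{lemma366} to interpolate the operator norms; fourth, invoke Theorem~\ref{H-space} to name the domain space; fifth, assemble and, for the trace estimate, repeat with the $\partial T$ bounds. I would also remark that the constant $C$ may be taken independent of $h_T$ after a standard scaling-to-the-reference-element argument, since the interpolation-space norms scale with the expected powers of $h_T$.

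The main obstacle I anticipate is purely bookkeeping rather than conceptual: $|\cdot|_{j,\mu,T}$ is only a seminorm, so $\mathcal K=\mathrm{Id}-\Pi_r^o$ is not literally a bounded operator into a normed space unless one either passes to the quotient $W^{j,\mu}(T)/\mathcal P_{j-1}(T)$ (on which the seminorm is a norm and the interpolation theorem applies verbatim) or, more cheaply, replaces the target by the full norm $\|\cdot\|_{j,\mu,T}$ and absorbs the lower-order terms afterwards using an inverse inequality on the finite-dimensional range of $\Pi_r^o$. Either route works; I would take the quotient-space route to keep constants clean. A second, minor point is to double-check that the hypothesis $2\le \frac{d\rho}{d-[s]\rho}$ when $[s]\rho<d$ is exactly what is needed for \emph{both} endpoint invocations of Lemma~\ref{lem3.3} (it guarantees $L^\rho(T)\hookrightarrow L^2(T)$, so that $\Pi_r^o$ is well defined and stable on $W^{\ell,\rho}$ and $W^{\ell+1,\rho}$), after which the interpolation machinery is automatic.
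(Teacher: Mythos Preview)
Your approach is correct and uses the same interpolation machinery (Theorems~\ref{lemma366} and~\ref{H-space}), but the paper organizes the argument differently and thereby sidesteps the seminorm technicality you flag. Instead of interpolating directly into the $\mu$-target, the paper first interpolates $\mathcal K=\mathrm{Id}-\Pi_r^o$ between $W^{m-1,\rho}(T)$ and $W^{m,\rho}(T)$ with the \emph{same} target $B_0=B_1=W^{m-1,\rho}(T)$ (a genuine normed space, no quotient needed), obtaining $\|\mathcal K v\|_{m-1,\rho,T}\lesssim h_T^{s-(m-1)}\|v\|_{s,\rho,T}$. It then uses the idempotency $\mathcal K v=\mathcal K(\mathcal K v)$ and applies the integer-order estimate \eqref{I1} at level $m-1$ to the function $w=\mathcal K v$, which produces the $|\cdot|_{j,\mu,T}$ bound in one clean stroke. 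The advantage of the paper's route is that all $\mu$-dependence is confined to a single integer-order application of Lemma~\ref{lem3.3}, and the interpolation step itself lives entirely in $W^{\cdot,\rho}$ spaces with full norms; your route is more direct but requires the quotient-space (or full-norm plus absorption) workaround you describe.
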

	\begin{proof} We only give a proof for \eqref{F1}.
		When $s$ is an integer, the result is followed by \ref{I1}, immediately. Therefore, we assume $m-1<s<m$, where $m\ge 1$ is an integer, and
		we take $A_0=W^{m-1,\rho}(\Omega)$, $A_1=W^{m,\rho}(\Omega)$, $B_0=B_1=W^{m-1,\rho}(\Omega)$,
		$\theta=s-(m-1)$.
		From Theorem \ref{lemma366}, Theorem \ref{H-space}, \eqref{I1} and the fact $[s]=m-1$, we have
		\begin{align*}
		\frac{\|(Id-\Pi_r^o) v\|_{m-1,\rho,T}}{\| v\|_{s,\rho,T}}&\le \|(Id-\Pi_r^o)\|_{W^{s,\rho}(T)\to W^{m-1,\rho}(T)}\nonumber\\
		&\le \|(Id-\Pi_r^o)\|_{W^{m-1,\rho}(T)\to W^{m-1,\rho}(T)}^{1-\theta}\|(Id-\Pi_r^o)\|^{\theta}_{W^{m,\rho}(T)\to W^{m-1,\rho}(T)}\nonumber\\
		&\lesssim h_T^{\theta}\nonumber\\
		&=h_T^{s-(m-1)},
		\end{align*}
		where ${Id}$ is the identity operator. With the above estimate,  \eqref{I1}, and the fact $[s]=m-1$, we can get
		\begin{align*}
		|v-\Pi_r^ov|_{j,\mu,T}&=	|v-\Pi_r^ov-\Pi_r^o(v-\Pi_r^ov)|_{j,\mu,T}\nonumber\\
		&\lesssim h_T^{m-1-j+\frac{d}{\mu}-\frac{d}{\rho}}|v-\Pi_r^ov|_{m-1,\rho,T} \nonumber\\
		&\lesssim h_T^{s-j+\frac{d}{\mu}-\frac{d}{\rho}}\|v\|_{s,\rho,T}
		\end{align*}
		holds for all $v\in W^{s,\rho}(T)$.
		Then we finish our proof.
	\end{proof}
	
	The proofs for the following two lemmas can be done in the same way as above.

	\begin{lemma} Let $r$ be a nonnegative integer and $\rho\in [1,\infty]$. For $s\in\{1,2,\cdots,r+1\}$, assume that $2\le \frac{d\rho}{d-[s]\rho}$ when $[s]\rho<d$. Then there exists a constant $C$ independent of $T$ such that
		\begin{eqnarray}
		\|v-\Pi_r^{\partial}v\|_{0,\mu,\partial T}\le Ch_T^{s+\frac{d-1}{\mu}-\frac{d}{\rho}}\|v\|_{s,\rho, T},\forall v\in W^{s,\rho}(T)
		\label{Qbk}
		\end{eqnarray}
		holds for  $\mu$ satisfying
		\begin{eqnarray}
		\left\{
		\begin{aligned}
		&1\le \mu\le \frac{(d-1)\rho}{d-[s]\rho},&\text{ if } \ [s]\rho<d,\\
		&1\le \mu< \infty,&\text{ if } \  [s]\rho=d,\\
		&1\le \mu\le \infty,&\text{ if } \ [s]\rho>d.
		\end{aligned}
		\right.
		\end{eqnarray}
		
	\end{lemma}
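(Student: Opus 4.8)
The plan is to reduce the desired boundary estimate for the $L^2$-trace projection $\Pi_r^\partial$ to the fractional estimates for the elementwise projection $\Pi_r^o$ established just above, by exploiting that $\Pi_r^\partial$ reproduces the trace of any polynomial of degree $\le r$. Fix $T\in\mathcal T_h$. For every face $E\subset\partial T$ the restriction $(\Pi_r^o v)|_E$ is a polynomial of total degree $\le r$ on $E$, hence $\Pi_r^\partial\big((\Pi_r^o v)|_E\big)=(\Pi_r^o v)|_E$; by linearity of $\Pi_r^o$ and $\Pi_r^\partial$ this yields, on $\partial T$,
\[
v-\Pi_r^\partial v=(v-\Pi_r^o v)-\Pi_r^\partial(v-\Pi_r^o v).
\]
Here the standing hypothesis $2\le\frac{d\rho}{d-[s]\rho}$ when $[s]\rho<d$ — inherited verbatim from Lemma~\ref{lem3.4} and the preceding fractional $\Pi_r^o$ lemma — is precisely what guarantees that $v$ possesses an $L^2$ trace on $\partial T$, so that all projections above are meaningful.

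Next I would estimate the right-hand side in $L^\mu(\partial T)$. Using the triangle inequality together with the $L^\mu(\partial T)$-stability of $\Pi_r^\partial$ recorded in Lemma~\ref{lemma2.5} (estimate~\eqref{bq2}),
\[
\|v-\Pi_r^\partial v\|_{0,\mu,\partial T}\le\|v-\Pi_r^o v\|_{0,\mu,\partial T}+\|\Pi_r^\partial(v-\Pi_r^o v)\|_{0,\mu,\partial T}\lesssim\|v-\Pi_r^o v\|_{0,\mu,\partial T}.
\]
It then suffices to invoke the fractional trace estimate for $\Pi_r^o$ from the preceding lemma, taken with $j=0$, the same real index $s\in[1,r+1]$ and exponent $\rho$, namely $\|v-\Pi_r^o v\|_{0,\mu,\partial T}\le Ch_T^{\,s+\frac{d-1}{\mu}-\frac d\rho}\|v\|_{s,\rho,T}$, valid for exactly the admissible range of $\mu$ ($1\le\mu\le\frac{(d-1)\rho}{d-[s]\rho}$ when $[s]\rho<d$, and the two limiting cases otherwise). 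Chaining the two displays gives \eqref{Qbk}.

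Alternatively, in the spirit of the remark that the proof goes "in the same way as above", the bound on $\|v-\Pi_r^o v\|_{0,\mu,\partial T}$ may be opened up inline: for non-integer $s$ with $m-1<s<m$ one again writes $v-\Pi_r^\partial v=(\mathrm{Id}-\Pi_r^\partial)(v-\Pi_r^o v)$, applies the integer-order estimate of Lemma~\ref{lem3.4} to $w:=v-\Pi_r^o v\in W^{m-1,\rho}(T)$ (noting $w-\Pi_r^\partial w=v-\Pi_r^\partial v$, since $\Pi_r^\partial$ fixes the trace of $\Pi_r^o v$), and then uses $\|v-\Pi_r^o v\|_{m-1,\rho,T}\lesssim h_T^{\,s-(m-1)}\|v\|_{s,\rho,T}$, which follows from the real-interpolation estimates of Theorem~\ref{lemma366} and Theorem~\ref{H-space} exactly as in the proof of the fractional $\Pi_r^o$ lemma; the integer case $s\in\{1,\dots,r+1\}$ is just Lemma~\ref{lem3.4}. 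I do not expect any genuine obstacle here: the only fiddly points are the bookkeeping of the admissible $\mu$-range, which is copied from the $\Pi_r^o$ statement, and checking that the hypothesis on $\rho,[s],d$ is exactly what renders traces and $L^2$-projections well defined — both already handled for the integer and fractional $\Pi_r^o$ lemmas.
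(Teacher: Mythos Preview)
Your proposal is correct. The paper gives no explicit proof here, only the remark that ``the proofs for the following two lemmas can be done in the same way as above'', meaning the interpolation argument just used for the fractional $\Pi_r^o$ estimate. Your alternative route --- writing $v-\Pi_r^\partial v=(\mathrm{Id}-\Pi_r^\partial)(v-\Pi_r^o v)$, applying Lemma~\ref{lem3.4} at integer level $m-1=[s]$ to $w=v-\Pi_r^o v$, and then invoking the interpolation bound $\|v-\Pi_r^o v\|_{m-1,\rho,T}\lesssim h_T^{\,s-(m-1)}\|v\|_{s,\rho,T}$ from Theorems~\ref{lemma366}--\ref{H-space} --- is exactly that argument.

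One small caveat on your primary route: the stability estimate~\eqref{bq2} in Lemma~\ref{lemma2.5} is stated only for $\mu\in[2,\infty]$, whereas the target lemma allows $1\le\mu<2$ as well. This is not a genuine obstruction --- boundedness of $\Pi_r^\partial$ on $L^\mu(\partial T)$ for $\mu<2$ follows by duality from the case $\mu'>2$, since $\Pi_r^\partial$ is $L^2$-self-adjoint --- but as written your chain of inequalities does not cover the full $\mu$-range claimed. Your alternative approach sidesteps this entirely, since Lemma~\ref{lem3.4} already handles all $\mu\ge 1$.
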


	\begin{lemma}  For any real number $s\in [1,r+1]$ 
		and integer $j\in \{0,1,\cdots,s\}$,  it  holds
		\begin{align}
		|\bm{v}-\bm{P}_{r}^{RT}\bm{v}|_{j,\mu,T}
		&\lesssim h_T^{s-j-d(\frac{1}{2}-\frac{1}{\mu})}\|\bm{v}\|_{s,T} &\forall \bm{v}\in [H^s(T)]^d 
		\end{align}
		for  $\mu$ satisfying
		\begin{eqnarray*}
			\left\{
			\begin{aligned}
				&2\le \mu\le \frac{2d}{d-2(s-j)},&\text{ if } \ 2(s-j)<d,\\
				&2\le \mu< \infty,&\text{ if } \ 2(s-j)=d,\\
				&2\le \mu\le \infty,&\text{ if } \ 2(s-j)>d,
			\end{aligned}
			\right.
		\end{eqnarray*}
		and
		\begin{align}
		\|\nabla^j(\bm{v}-\bm{P}_{r}^{RT}\bm{v})\|_{0,\mu,\partial T}
		&\lesssim h_T^{s-j-\frac{1}{\mu}-d(\frac{1}{2}-\frac{1}{\mu})}\|\bm{v}\|_{s,T}, \forall \bm{v}\in [H^s(T)]^d
		\end{align}
		for $\mu$ satisfying
		\begin{align*}
		\left\{
		\begin{aligned}
		&2\le \mu\le \frac{2(d-1)}{d-2([s]-j)},&\text{ if } \ 2([s]-j)<d,\\
		&2\le \mu< \infty,&\text{ if } \ 2([s]-j)=d,\\
		&2\le \mu\le \infty,&\text{ if } \ 2([s]-j)>d.
		\end{aligned}
		\right.
		\end{align*}
	\end{lemma}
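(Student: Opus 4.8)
The plan is to mimic, almost verbatim, the argument already used above to pass from integer to fractional indices for the $L^2$-projection $\Pi^o_r$, now with the Raviart--Thomas projection $\bm P^{RT}_r$ in place of $\Pi^o_r$. When $s$ is an integer the two asserted estimates are precisely \eqref{RT-approximation2} and \eqref{RT-approximation3} of Lemma \ref{lem3.10}, so the only work is the non-integer case; write $m-1<s<m$ for an integer $m$, and note that since $s\in[1,r+1]$ this forces $r\ge 1$ and $m\ge 2$, hence $m-1\ge 1$ and $m\le r+1$. The first observation I would record is that $\bm P^{RT}_r$ is a genuine projection: by unisolvence of the degrees of freedom \eqref{RT1}--\eqref{RT2}, $\bm P^{RT}_r\bm w=\bm w$ for every $\bm w\in\bm{RT}_r(T)$, so $\bm P^{RT}_r(\bm P^{RT}_r\bm v)=\bm P^{RT}_r\bm v$. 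The strategy is then: (i) prove the intermediate bound $\|\bm v-\bm P^{RT}_r\bm v\|_{m-1,T}\lesssim h_T^{s-(m-1)}\|\bm v\|_{s,T}$ by operator interpolation, and (ii) feed it into the integer estimates of Lemma \ref{lem3.10}.

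For step (i) I would apply Theorem \ref{lemma366} to $\mathcal K=\mathbb{Id}-\bm P^{RT}_r$ with $A_0=B_0=B_1=[H^{m-1}(T)]^d$, $A_1=[H^{m}(T)]^d$, $\theta=s-(m-1)$, invoking the identification $[H^{m-1}(T),H^{m}(T)]_{\theta,2}=H^{s}(T)$ from Theorem \ref{H-space} so that $A_{\theta,2}=[H^{s}(T)]^d$ and $B_{\theta,2}=[H^{m-1}(T)]^d$. The two operator-norm inputs are: the $A_1\to B_1$ bound $\|(\mathbb{Id}-\bm P^{RT}_r)\bm v\|_{m-1,T}\lesssim h_T\|\bm v\|_{m,T}$, obtained by summing \eqref{RT-approximation2} with $\mu=2$, $s=m$ over $0\le j\le m-1$; and the $A_0\to B_0$ stability $\|(\mathbb{Id}-\bm P^{RT}_r)\bm v\|_{m-1,T}\lesssim\|\bm v\|_{m-1,T}$. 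This last bound is the one delicate point: $\bm P^{RT}_r$ is \emph{not} bounded on $[L^2(T)]^d$, but it is bounded on $[H^{m-1}(T)]^d$ because $m-1\ge1$; concretely one writes $\bm P^{RT}_r\bm v-\bm\Pi_{j-1}^o\bm v$ (a polynomial, so an inverse inequality applies; the $\bm\Pi_{j-1}^o\bm v$ piece has vanishing $j$-th seminorm), and combines \eqref{RT3} with the standard $L^2$-projection estimate to get $|\bm P^{RT}_r\bm v|_{j,T}\lesssim|\bm v|_{j,T}$ for $1\le j\le m-1$ and $\|\bm P^{RT}_r\bm v\|_{0,T}\lesssim\|\bm v\|_{0,T}+h_T|\bm v|_{1,T}\lesssim\|\bm v\|_{m-1,T}$; summing over $j$ yields the claim. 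Theorem \ref{lemma366} then gives $\|\mathcal K\|_{[H^s(T)]^d\to[H^{m-1}(T)]^d}\lesssim 1^{1-\theta}(h_T)^{\theta}=h_T^{s-(m-1)}$, which is exactly the intermediate estimate.

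For step (ii), using idempotency I write $\bm v-\bm P^{RT}_r\bm v=(\mathbb{Id}-\bm P^{RT}_r)(\bm v-\bm P^{RT}_r\bm v)$ and apply the \emph{integer-index} estimates \eqref{RT-approximation2}, \eqref{RT-approximation3} of Lemma \ref{lem3.10} with the integer $[s]=m-1$ to the function $\bm v-\bm P^{RT}_r\bm v\in[H^{m-1}(T)]^d$; this produces the factor $h_T^{(m-1)-j-d(1/2-1/\mu)}\|\bm v-\bm P^{RT}_r\bm v\|_{m-1,T}$, and multiplying by the intermediate bound of step (i) and using $(m-1-j)+(s-(m-1))=s-j$ gives the two desired estimates with full norm $\|\bm v\|_{s,T}$ on the right. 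The admissible ranges of $\mu$ are inherited verbatim from Lemma \ref{lem3.10}, with $[s]$ replacing $s$ in the boundary-trace conditions exactly as in the statement. The only genuine obstacle is the $[H^{m-1}(T)]^d$-stability of $\bm P^{RT}_r$ described above; the rest is interpolation-space bookkeeping identical to the $\Pi^o_r$ case, so I expect no further difficulty.
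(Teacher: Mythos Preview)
Your approach is correct and follows exactly the interpolation-space strategy the paper intends (the paper's own ``proof'' is simply the remark that it ``can be done in the same way as above''). You even make explicit the one point the paper glosses over, namely the $[H^{m-1}(T)]^d$-stability of $\bm P^{RT}_r$, which requires $m-1\ge 1$ and which you correctly deduce from $s>1$ in the non-integer case.
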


	\color{black}

\end{appendices}



\color{black}
\bibliographystyle{siam}
\bibliography{mybib}{}
\color{black}



\end{document}